\theoremstyle{remark}
\DeclareMathAlphabet{\mathchanc}{OT1}{pzc}%
                                 {m}{it}
\newcommand{\bN}{\mathbb{N}}
\newcommand{\bP}{\mathbb{P}}
\newcommand{\bQ}{\mathbb{Q}}
\newcommand{\bZ}{\mathbb{Z}}
\newcommand{\bfP}{\mathbf{P}}
\newcommand{\scr}{\mathscr}
\newcommand{\sA}{\scr{A}}
\newcommand{\sB}{\scr{B}}
\newcommand{\sD}{\scr{D}}
\newcommand{\sE}{\scr{E}}
\newcommand{\sF}{\scr{F}}
\newcommand{\sG}{\scr{G}}
\newcommand{\sH}{\scr{H}}
\newcommand{\sI}{\scr{I}}
\newcommand{\sJ}{\scr{J}}
\newcommand{\sL}{\scr{L}}
\newcommand{\sM}{\scr{M}}
\newcommand{\sN}{\scr{N}}
\newcommand{\sO}{\scr{O}}
\newcommand{\sT}{\scr{T}}
\newcommand{\sU}{\scr{U}}
\newcommand{\sX}{\scr{X}}
\newcommand{\sZ}{\scr{Z}}
\newcommand{\ol}[1]{\overline{#1}}
\newcommand{\of}{\overline{f}}
\newcommand{\oA}{\overline{A}}
\newcommand{\oB}{\overline{B}}
\newcommand{\oD}{\overline{D}}
\newcommand{\oE}{\overline{E}}
\newcommand{\oH}{\overline{H}}
\newcommand{\oT}{\overline{T}}
\newcommand{\oX}{\overline{X}}
\newcommand{\oY}{\overline{Y}}
\newcommand{\oZ}{\overline{Z}}
\DeclareMathOperator{\vol}{{vol}}
\DeclareMathOperator{\lct}{{lct}}
\DeclareMathOperator{\can}{{can}}
\DeclareMathOperator{\Sym}{{Sym}}
\DeclareMathOperator{\Aut}{Aut}
\DeclareMathOperator{\codim}{codim}
\DeclareMathOperator{\depth}{{depth}}
\DeclareMathOperator{\Gr}{{Gr}}
\DeclareMathOperator{\Hom}{Hom}
\newcommand{\sfIsom}[0]{{\sf Isom}}
\DeclareMathOperator{\id}{{id}}
\DeclareMathOperator{\im}{{im}}
\DeclareMathOperator{\Isom}{Isom}
\DeclareMathOperator{\Mor}{{Mor}}
\DeclareMathOperator{\mult}{mult}
\DeclareMathOperator{\Proj}{{Proj}}
\DeclareMathOperator{\red}{red}
\DeclareMathOperator{\rk}{{rk}}
\DeclareMathOperator{\Spec}{{Spec}}
\DeclareMathOperator{\coeff}{{coeff}}
\DeclareMathOperator{\Supp}{{Supp}}
\DeclareMathOperator{\sym}{{Sym}}
\DeclareMathOperator{\var}{{Var}}
\newcommand{\factor}[2]{\left. \raise 2pt\hbox{\ensuremath{#1}} \right/
        \hskip -2pt\raise -2pt\hbox{\ensuremath{#2}}}
\renewcommand\subsubsection{
  \renewcommand{\sfdefault}{pag}
  \@startsection{subsubsection}%
  {2}{0pt}{.4\baselineskip}{0\baselineskip}{\raggedright
    \itshape
  }}
\renewcommand\subsection{
  \renewcommand{\sfdefault}{pag}
  \@startsection{subsection}%
  {2}{0pt}{.8\baselineskip}{0.2\baselineskip}{\raggedright
    \sffamily\itshape\small\bfseries 
  }}
\renewcommand\section{
  \renewcommand{\sfdefault}{phv}
  \@startsection{section} %
  {1}{0pt}{\baselineskip}{.8\baselineskip}{\centering
    \sffamily
    \scshape
    \bfseries
  }}
\DeclareSymbolFont{largesymbolsA}{U}{jkpexa}{m}{n}
\DeclareMathSymbol{\varprod}{\mathop}{largesymbolsA}{16}
\newcommand{\kdot}{{{\,\begin{picture}(1,1)(-1,-2)\circle*{2}\end{picture}\
    }}} 
\newcommand{\into}{\hookrightarrow}
\newcommand{\wt}{\widetilde}
\newcommand{\otau}{\overline{\tau}}
\newcommand{\ophi}{\overline{\phi}}
\newcommand\mtimes[3]{{\varprod_{#1}^{#2}}_{\raise 1ex
    \hbox{\scriptsize #3}}} 
\numberwithin{equation}{theorem}
\DeclareMathOperator{\GL}{GL}
\DeclareMathOperator{\Mat}{Mat}
\author{S\'andor J Kov\'acs}%
\address{\hskip-1em University of Washington, Department of Mathematics, Box 354350
  Seattle, WA 98195-4350, USA}%
\email{skovacs@uw.edu}
\author{Zsolt Patakfalvi}%
\address{\hskip-1em Department of Mathematics, Princeton University, Fine Hall,
  Washington Road, NJ 08544-1000, USA}%
\email{pzs@math.princeton.edu}
\title[Projectivity of moduli and subadditvity of log-Kodaira dimension]{Projectivity
  of the moduli space of stable log-varieties and subadditvity of log-Kodaira
  dimension}
\begin{document}

\begin{abstract}
  We prove a strengthening of Koll\'ar's Ampleness Lemma and use it to prove that any
  proper coarse moduli space of stable log-varieties of general type is
  projective. We also prove subadditivity of log-Kodaira dimension for fiber spaces
  whose general fiber is of log general type.
\end{abstract}

\maketitle

\setcounter{tocdepth}{1}
\tableofcontents

\section{Introduction}
\label{sec:introduction}

Since Mumford's seminal work on the subject, $\sM_g$, the moduli space of smooth
projective curves of genus $g \geq 2$, has occupied a central place in algebraic
geometry and the study of $\sM_g$ has yielded numerous applications. An important
aspect of the applicability of the theory is that these moduli spaces are naturally
contained as open sets in $\ol\sM_g$ the moduli space of stable curves of genus $g$
and the fact that this later space admits a projective coarse moduli scheme.

Even more applications stem from the generalization of this moduli space,
$\sM_{g,n}$, the moduli space of $n$-pointed smooth projective curves of genus $g$
and its projective compactification, $\ol\sM_{g,n}$, the moduli space of $n$-pointed
stable curves of genus $g$.

It is no surprise that after the success of the moduli theory of curves huge efforts
were devoted to develop a similar theory for higher dimensional varieties. However,
the methods used in the curve case, most notably GIT, proved inadequate for the
higher dimensional case. Gieseker \cite{MR0498596} proved that the moduli space of
smooth projective surfaces of general type is quasi-projective, but the proof did not
provide a modular projective compactification. In fact, Wang and Xu showed recently
that such GIT compactification using asymptotic Chow stability is impossible
\cite{Wang_Xu_Nonexistence_of_asymptotic_GIT_compactification}.  The right definition
of stable surfaces only emerged after the development of the minimal model program
allowed bypassing the GIT approach
\cite{Kollar_Shepher_Barron_Threefolds_and_deformations} and the existence and
projectivity of the moduli space of stable surfaces and higher dimensional varieties
have only been proved very recently as the combined result of the effort of several
people over several years
\cite{Kollar_Shepher_Barron_Threefolds_and_deformations,Kollar_Projectivity_of_complete_moduli,Alexeev_Boundedness_and_K_2_for_log_surfaces,Viehweg_Quasi_projective_moduli,Hassett_Kovacs_Reflexive_pull_backs,Abramovich_Hassett_Stable_varieties_with_a_twist,Kollar_Hulls_and_Husks,Kollar_Moduli_of_varieties_of_general_type,Kollar_Singularities_of_the_minimal_model_program,Fujino_Semi_positivity_theorems_for_moduli_problems,Hacon_McKernan_Xu_Boundedness_of_moduli_of_varieties_of_general_type,Kollar_Second_moduli_book}.

Naturally, one would also like a higher dimensional analogue of $n$-pointed curves
and extend the existing results to that case
\cite{Alexeev_Moduli_spaces_M_g_n_W_for_surfaces}. The obvious analogue of an
$n$-pointed smooth projective curve is a smooth projective log-variety, that is, a
pair $(X,D)$ consisting of a smooth projective variety $X$ and a simple normal
crossing divisor $D\subseteq X$. For reasons originating in the minimal model theory
of higher dimensional varieties, one would also like to allow some mild singularities
of $X$ and $D$ and fractional coefficients in $D$, but we will defer the discussion
of the precise definition to a later point in the paper (see
\autoref{def:stable_log_variety}).  In any case, one should mention that the
introduction of fractional coefficients for higher dimensional pairs led Hassett to
go back to the case of $n$-pointed curves and study a weighted version in
\cite{Hassett_Moduli_spaces_of_weighted_pointed_stable_curves}. These moduli spaces
are more numerous and have greater flexibility than the traditional ones. In fact,
they admit natural birational transformations and demonstrate the workings of the
minimal model program in concrete highly non-trivial examples. Furthermore, the log
canonical models of these moduli spaces of weighted stable curves may be considered
to approximate the canonical model of $\ol\sM_{g,n}$
\cite{HB_HD_LMM_TFDC,HB_HD_LMM_TFF}.

It turns out that the theory of \emph{moduli of stable log-varieties}, also known as
\emph{moduli of semi-log canonical models} or \emph{KSBA stable pairs}, which may be
regarded as the higher dimensional analogues of Hassett's moduli spaces above, is
still very much in the making. It is clear what a stable log-variety should be: the
correct class (for surfaces) was identified in
\cite{Kollar_Shepher_Barron_Threefolds_and_deformations} and further developed in
\cite{Alexeev_Moduli_spaces_M_g_n_W_for_surfaces}. This notion, is easy to generalize
to arbitrary dimension cf.\ \cite{Kollar_Moduli_of_varieties_of_general_type}.  On
the other hand, at the time of the writing of this article it is not entirely obvious
what the right definition of the corresponding moduli functor is over non reduced
bases. For a discussion of this issue we refer to
\cite[\S6]{Kollar_Moduli_of_varieties_of_general_type}.  A major difficulty is that
in higher dimensions when the coefficients of $D$ are not all greater than $1/2$ a
deformation of a log-variety cannot be simplified to studying deformations of the
ambient variety $X$ and then deformations of the divisor $D$.  An example of this
phenomenon, due to Hassett, is presented in \autoref{ex:Hassett}, where a family
$(X,D) \to \bP^1$ of stable log varieties is given such that $D \to \bP^1$ does not
form a flat family of pure codimension one subvarieties. In fact, the flat limit
$D_0$ acquires an embedded point, or equivalently, the scheme theoretic restriction
of $D$ onto a fiber is not equal to the divisorial restriction. Therefore, in the
moduli functor of stable log-varieties one should allow both deformations that
acquire and also ones that do not acquire embedded points on the boundary
divisors. This is easy to phrase over nice (e.g., normal) bases see
\autoref{def:a_functor} for details. However, at this point it is not completely
clarified how it should be presented in more intricate cases, such as for instance
over a non-reduced base. Loosely speaking the infinitesimal structure of the moduli
space is not determined yet (see \autoref{rem:m_ambiguity} for a discussion on this),
although there are also issues about the implementation of labels or markings on the
components of the boundary divisor (cf. \autoref{rem:labeling}).


By the above reasons, several functors have been suggested, but none of them yet
emerged as the obvious ``best''. However, our results apply to any moduli functor for
which the objects are the stable log-varieties (see \autoref{def:a_functor} for the
precise condition on the functors). In particular, our results apply to any moduli
space that is sometimes called a \emph{KSBA compactification} of the moduli space of
log-canonical models.

Our main result is the following.  Throughout the article we are working over an
algebraically closed base field $k$ of characteristic zero.
\begin{theorem}[=\autoref{cor:projective}]
\label{thm:main}
  Any algebraic space that is the coarse moduli space of a moduli functor of stable
  log-varieties with fixed volume, dimension and coefficient set (as defined in
  \autoref{def:a_functor}) is a projective variety over $k$.
\end{theorem}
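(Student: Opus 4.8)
The plan is to realize the coarse moduli space $M$ as a projective variety by proving that it is proper (which will follow from properness of the moduli functor, part of the standing hypotheses encoded in \autoref{def:a_functor}) and that it carries an ample line bundle. For the ampleness part I would follow the now-standard strategy initiated by Koll\'ar: produce, on $M$, a line bundle $\lambda$ that is the descent of a suitable power of a Hodge-type bundle $\det f_*\omega_{X/S}^{[m]}(mD)$ attached to a family $(X,D)\to S$ of stable log-varieties, and then verify the numerical criterion for ampleness of $\lambda$ on $M$ using the Nakai--Moishezon-type argument that reduces to checking positivity over curves. Concretely, given any projective curve $C\to M$ mapping into $M$, after a finite base change one obtains a family $(X,D)\to C$ of stable log-varieties (up to the subtleties about embedded points, which are harmless here because $C$ is a smooth curve, hence a nice base in the sense of \autoref{def:a_functor}), and one must show $\deg_C \lambda > 0$ whenever the family is non-isotrivial, and $\deg_C\lambda = 0$ exactly in the isotrivial case.

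The key input is the strengthened Ampleness Lemma advertised in the abstract: it takes as input a locally free sheaf on $C$ equipped with a fiberwise-injective map into a trivial bundle twisted by a representation (coming from $f_*\omega^{[m]}_{X/C}(mD)$ and its exterior/symmetric powers, together with the Mumford--Knudsen-type functorial construction of $\lambda$), plus a "weak positivity" hypothesis, and concludes that the induced map to a Grassmannian, hence $\lambda$, is ample modulo the locus where the moduli map is constant. So the steps, in order, are: (1) assemble the CM-type line bundle $\lambda$ on the moduli functor and show it descends to the coarse space $M$ — this uses boundedness (so that a single $m$ works for all members of the functor) and the seminormality/separatedness built into the functor; (2) establish the requisite positivity of $f_*\omega^{[m]}_{X/C}(mD)$ over curves — this is the semipositivity statement for log-pluricanonical pushforwards, which I expect to cite from the Fujino/Koll\'ar-type results (\cite{Fujino_Semi_positivity_theorems_for_moduli_problems}) suitably adapted to the pair setting, and it is really the technical heart that makes the Ampleness Lemma applicable; (3) invoke the strengthened Ampleness Lemma to deduce that $\lambda$ is ample on each complete curve in $M$ not contracted by a "variation" map, and separately handle contracted curves by a rigidity/isotriviality argument showing the fibers are then all isomorphic as stable log-pairs, so such curves cannot exist in $M$ (which is, up to the moduli map, injective on isomorphism classes); (4) conclude by Nakai--Moishezon (or by the Koll\'ar criterion for ampleness via finite covers) that $\lambda$ is ample on $M$, and combine with properness to get projectivity.

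The main obstacle, as usual in this circle of ideas, is step (2) together with the correct bookkeeping of the boundary: one needs weak positivity of pushforwards of log-pluricanonical sheaves $f_*\omega^{[m]}_{X/C}(mD)$ for a family of \emph{slc} pairs with arbitrary (rational) coefficients, including coefficients $\le 1/2$ where, as the introduction stresses, one cannot separate the deformation of $X$ from that of $D$ and the boundary may fail to restrict divisorially. Handling this requires running the argument on a simultaneous semi-log-canonical model / using the theory of husks or the normalization-plus-gluing description of the family, and checking that the resulting sheaf on $C$ still satisfies the hypotheses of the Ampleness Lemma (local freeness after the base change, fiberwise injectivity of the evaluation map, and weak positivity). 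A secondary obstacle is making the descent in step (1) work for \emph{any} of the competing moduli functors uniformly: the argument must only use that objects are stable log-varieties and that the functor is bounded, separated and proper over normal bases, which is exactly what \autoref{def:a_functor} is designed to guarantee, so the proof should be phrased to invoke only those properties and nothing about the (still unsettled) non-reduced infinitesimal structure.
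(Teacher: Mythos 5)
Your proposal is built on a misreading of the Nakai--Moishezon criterion: it does \emph{not} reduce to positivity over curves. For a nef line bundle $\lambda$ on a proper scheme, having $\deg_C\lambda>0$ for every curve $C$ only gives \emph{strict nefness}, which is strictly weaker than ampleness (Mumford's example on a ruled surface already shows this). Nakai--Moishezon demands $\lambda^{\dim V}\cdot V>0$ for \emph{every} irreducible closed subvariety $V$, so after pulling back to a finite cover supporting an actual family you must show that $\det f_*\sO_X(q(K_{X/Y}+D))$ is \emph{big} over bases $Y$ of \emph{arbitrary} dimension, not that it has positive degree over curves. This is exactly where your strategy collapses and where the real difficulty of the problem lives.

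Indeed, the paper's argument is shaped around this exact obstruction. Over a one-dimensional base, one can essentially mimic the Hassett/Koll\'ar curve strategy, and the Ampleness Lemma plus weak positivity over curves would suffice. Over a higher-dimensional base the situation is genuinely different: the paper points out (see \autoref{rem:fiber_power_necessary}) that $K_{X/Y}+D$ may fail to be big on every component of $X$, so one is forced to pass to a high fiber power $X^{(r)}$ and to prove bigness of $K_{X^{(r)}/Y}+D_{X^{(r)}}$ on at least one component (\autoref{thm:big_higher_dim_base}.\ref{ketto}), via the strengthened Ampleness Lemma and the dlt-blowup argument of \autoref{lem:long}. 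From this one extracts bigness of the determinant over a dense open subset of $Y$ (\autoref{prop:big_higher_dim_base}), and the missing bad locus is handled by Noetherian induction on dimension in the proof of \autoref{cor:projective}. None of this structure is visible from a curves-only reduction. Your steps (1), (2), and (4) are sound as far as they go — properness from \autoref{prop:proper}, descent of the polarization using finiteness of automorphisms (\autoref{prop:finite_automorphism}) and \autoref{cor:finite_cover}, and weak positivity of the pushforward from Fujino's semipositivity — but they only set the stage. The heart of the proof, the establishment of bigness of the determinant over higher-dimensional bases using fiber powers and the Ampleness Lemma applied over a base of positive dimension, is absent from your outline, and without it the argument does not close.
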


For auxiliary use, in \autoref{sec:particular_functor} we also present one particular
functor as above, based on a functor suggested by Koll\'ar
\cite[\S6]{Kollar_Moduli_of_varieties_of_general_type}. In particular, the above
result is not vacuous.

%
%
As mentioned Mumford's GIT method used in the case of moduli of stable curves does
not work in higher dimensions and so we study the question of projectivity in a
different manner. The properness of any algebraic space as in \autoref{thm:main} is
shown in \cite{Kollar_Second_moduli_book}.  For the precise statement see
\autoref{prop:proper}. Hence, to prove projectivity over $k$ one only has to exhibit
an ample line bundle on any such algebraic space.  Variants of this approach have
been already used in
\cite{Knudsen_The_projectivity_of_the_moduli_space_of_stable_curves_III,
  Kollar_Projectivity_of_complete_moduli,
  Hassett_Moduli_spaces_of_weighted_pointed_stable_curves}.  Generalizing Koll\'ar's
method to our setting \cite{Kollar_Projectivity_of_complete_moduli}, we use the
polarizing line bundle $\det f_* \sO_X(r (K_{X/Y} + D))$, where $f:(X,D)\to Y$ is a
stable family and $r>0$ is a divisible enough integer. Following Koll\'ar's idea and
using the Nakai-Moishezon criterion it is enough to prove that this line bundle is
big for a maximal variation family over a normal base. However, Koll\'ar's Ampleness
Lemma \cite[3.9,3.13]{Kollar_Projectivity_of_complete_moduli} is unfortunately not
strong enough for our purposes and hence we prove a stronger version in
\autoref{thm:generalized_ampleness_lemma}. There, we also manage to drop an
inconvenient condition on the stabilizers from
\cite[3.9,3.13]{Kollar_Projectivity_of_complete_moduli}, which is not necessary for
the current application, but we hope will be useful in the future.  Applying
\autoref{thm:generalized_ampleness_lemma} and some other arguments outlined in
\autoref{sec:outline} we prove that the above line bundle is big in
\autoref{thm:big_higher_dim_base}.

A side benefit of this approach is that proving a positivity property of $K_{X/Y} +
D$ opens the doorq to other applications. For example, a related problem in the
classification theory of algebraic varieties is the subadditivity of log-Kodaira
dimension.  We prove this assuming the general fiber is of log general type in
\autoref{thm:subadditivity}.  This generalizes to the logarithmic case the celebrated
results on the subadditivity of Kodaira dimension
\cite{Kawamata_Characterization_of_abelian_varieties,Kawamata_Minimal_models_and_the_Kodaira_dimension_of_agebraic_fiber_spaces,Viehweg_Weak_positivity,Viehweg_Weak_positivity_and_the_additivity_of_the_Kodaira_dimension_II_The_local_Torelli_map,Kollar_Subadditivity_of_the_Kodaira_dimension},
also known as Iitaka's conjecture $C_{n,m}$ and its strengthening by Viehweg
$C^+_{n,m}$. For \autoref{thm:subadditivity} we refer to \autoref{sec:subadditivity},
here we only state two corollaries that need less preparation.

\begin{theorem} (= \autoref{thm:to_subadditivity_for_q_proj_varieties} and
  \autoref{cor:subadditivty_q_proj})
  \label{thm:main_subadd}
  \begin{enumerate}
  \item If $f : (X, D) \to (Y,E) $ is a surjective map of log-smooth projective pairs
    with coefficients at most $1$, such that $D \geq f^*E$ and $K_{X_\eta} +
    D_{\eta}$ is big, where $\eta$ is the generic point of $Y$, then
    \begin{equation*}
      \kappa(K_X + D) \geq \kappa\left( K_{X_{\eta}} + D_{\eta} \right) + \kappa (K_Y + E).
    \end{equation*}
  \item Let $f : X \to Y $ be a dominant map of (not necessarily proper) algebraic
    varieties such that the generic fiber has maximal Kodaira dimension. Then
    \begin{equation*}
      \kappa(X) \geq \kappa\left(X_\eta \right)  + \kappa(Y).
    \end{equation*}
  \end{enumerate}
\end{theorem}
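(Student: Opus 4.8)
The engine is \autoref{thm:subadditivity}, the ``prepared'' subadditivity statement, which I would take for granted here; recall only that it is obtained by the usual weak–positivity/covering machinery --- weak positivity of the direct images $f_*\sO_X(m(K_{X/Y}+D))$, available through the same circle of semipositivity results that underlies \autoref{thm:generalized_ampleness_lemma}, combined with Viehweg's fibre–product trick --- with the hypothesis relating $D$ and $f^*E$ entering precisely when one compares $K_X+D$ with $f^*(K_Y+E)$ along the base. The plan is then to reduce each of the two assertions to the normalised situation in which \autoref{thm:subadditivity} applies; no new positivity input is needed, only bookkeeping of boundary divisors.

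For part (1), note first that the bigness of $K_{X_\eta}+D_\eta$ forces $\kappa(K_{X_\eta}+D_\eta)=\dim X_\eta=\dim X-\dim Y$, so the assertion reads $\kappa(K_X+D)\geq\dim X-\dim Y+\kappa(K_Y+E)$. I would reduce to the case of connected fibres via the Stein factorisation $f=g\circ h$, with $h\colon X\to Z$ having connected fibres and $g\colon Z\to Y$ finite: writing $g^*E=\sum_i a_iF_i$ and setting $E_Z:=\sum_i\min(1,a_i)F_i$, the ramification formula $K_Z=g^*K_Y+R_g$ with $R_g\geq 0$ gives, componentwise, $K_Z+E_Z\geq g^*(K_Y+E)$, hence $\kappa(K_Z+E_Z)\geq\kappa(K_Y+E)$, while $h^*E_Z\leq f^*E\leq D$ and the generic fibre of $h$ is still $(X_\eta,D_\eta)$. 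After resolving $Z$ and passing to a compatible log-smooth model of $(X,D)$ one is in the situation governed by \autoref{thm:subadditivity}, which yields the claim. The main obstacle is exactly this last step: carrying the inequality $D\geq f^*E$ and the bound $1$ on the coefficients through the resolutions and the Stein factorisation, which --- because $f$ may have multiple fibres --- must be handled through the logarithmic ramification formula rather than naive pullbacks of boundary divisors.

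For part (2) the plan is to reduce to part (1) by compactification. After replacing $X$ and $Y$ by resolutions, choose smooth projective $X\subseteq\oX$, $Y\subseteq\oY$ with reduced simple normal crossing boundaries $B_X=(\oX\setminus X)_{\red}$, $B_Y=(\oY\setminus Y)_{\red}$ and a morphism $\of\colon\oX\to\oY$ extending $f$ with $\of^{-1}(B_Y)\subseteq\Supp B_X$; then $\kappa(X)=\kappa(K_{\oX}+B_X)$, $\kappa(Y)=\kappa(K_{\oY}+B_Y)$, and the generic fibre of $(\oX,B_X)\to\oY$ is a log-smooth compactification of $X_\eta$ of log-Kodaira dimension $\dim X_\eta$ by hypothesis. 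To apply part (1) with $(D,E)=(B_X,B_Y)$ one needs $B_X\geq\of^*B_Y$, which can fail where $\of$ has multiple fibres over components of $B_Y$; I would remove this by a Kawamata-type cyclic cover $\tau\colon\tY\to\oY$ branched only along $B_Y$, to orders divisible by the relevant fibre multiplicities, and take a resolution $\tX$ of $\oX\times_{\oY}\tY$ with $\tf\colon\tX\to\tY$ and reduced boundaries $B_{\tY}=\tau^{-1}(B_Y)_{\red}$, $B_{\tX}=(\tf^{-1}(B_{\tY}))_{\red}$ together with the exceptional divisors. Then $B_{\tX}\geq\tf^*B_{\tY}$, so part (1) gives $\kappa(K_{\tX}+B_{\tX})\geq\dim X_\eta+\kappa(K_{\tY}+B_{\tY})$; since $\tau$, and hence $\tX\setminus B_{\tX}\to X$, is \'etale, and the (log-)Kodaira dimension is a finite-\'etale invariant --- because $\kappa(X,\Lambda)=\kappa(X,\Lambda^{\otimes N})$ and the pushforward of the structure sheaf along an \'etale cover decomposes into torsion line bundles --- the left-hand side equals $\kappa(X)$ and $\kappa(K_{\tY}+B_{\tY})\geq\kappa(Y)$, which combine to the desired inequality. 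As in part (1), the delicate point is the boundary bookkeeping, in particular ensuring that $B_{\tX}\geq\tf^*B_{\tY}$ survives the further blow-ups needed to make $\tf$ log smooth and the boundary simple normal crossing.
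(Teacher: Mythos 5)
Your account of part~(1) does not close as written. The theorem you name as the engine, \autoref{thm:subadditivity}, has the base entering only through its ordinary Kodaira dimension $\kappa(Y)=\kappa(K_Y)$; there is no boundary divisor on the base in that statement. So after your Stein factorisation $f=g\circ h$ and the ramification bookkeeping --- which, for coefficients $\leq 1$, does correctly give $K_Z+E_Z\geq g^*(K_Y+E)$ on a normal $Z$ and hence $\kappa(K_Z+E_Z)\geq\kappa(K_Y+E)$ --- applying \autoref{thm:subadditivity} to $h\colon(X,D)\to Z$ only yields $\kappa(K_X+D)\geq\kappa(Z)+\kappa(K_{X_\eta}+D_\eta)$. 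The divisor $E_Z$ you constructed never enters, and $\kappa(Z)$ can of course be much smaller than $\kappa(K_Z+E_Z)$. The correct engine is \autoref{prop:subadditivity}, which allows an arbitrary $\bQ$-Cartier divisor $M$ on the base with $\kappa(M)\geq 0$. Once that is available, the Stein factorisation and the whole $E_Z$ construction are unnecessary: the paper simply writes $K_X+D=K_{X/Y}+(D-f^*E)+f^*(K_Y+E)$, notes that $D-f^*E\geq 0$ by hypothesis, that $(D-f^*E)_\eta=D_\eta$ because $f^*E$ is $f$-vertical, and applies \autoref{prop:subadditivity} to $f$ itself with $M:=K_Y+E$.

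For part~(2) the paper just asserts that the corollary ``follows immediately'' from part~(1), so your more explicit route --- compactify to $(\oX,B_X)\to(\oY,B_Y)$, observe that $B_X\geq\of^*B_Y$ can fail over components of $B_Y$ with multiple fibres, repair this with a Kawamata cyclic cover $\tau\colon\tY\to\oY$ branched along $B_Y$ to suitable orders, and conclude by finite-\'etale invariance of the log Kodaira dimension --- is a genuine elaboration and is most likely what the authors have in mind. One point to tighten: when you define $B_{\tX}$ by throwing in ``the exceptional divisors'' of the resolution of $\oX\times_{\oY}\tY$, you risk deleting codimension-one loci of $\tX$ lying over $\tY\setminus B_{\tY}$, and then $\tX\setminus B_{\tX}\to X$ is no longer \'etale as you claim. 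Either arrange the resolution so that the exceptional divisors you add to $B_{\tX}$ all lie over $B_{\tY}$, or replace the on-the-nose \'etale identification by birational invariance of the log Kodaira dimension of the open interior together with \'etale invariance on the level of open varieties.
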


In the logarithmic case Fujino obtained results similar to the above in the case of
maximal Kodaira dimensional base \cite[Thm
1.7]{Fujino_Notes_on_the_weak_positivity_theorems} and relative one dimensional
families
\cite{Fujino_Subadditivity_of_the_logarithmic_Kodaira_dimension_for_morphisms_of_relative_dimension_one_revisited}. Another
related result of Fujino is subadditivity of the \emph{numerical} log-Kodaira
dimension \cite{Fujino_On_subadditivity_of_the_logarithmic_Kodaira_dimension}. A
version of the latter, under some additional assumptions, was also proved by Nakayama
\cite[V.4.1]{NN_ZDA}.  The numerical log-Kodaira dimension is expected to be equal to
the usual log-Kodaira dimension by the Abundance Conjecture. However, the latter is
usually considered the most difficult open problem in birational geometry currently.
Our proof does not use either the Abundance Conjecture or the notion of numerical
log-Kodaira dimension.

Further note that our proof of \autoref{thm:main_subadd} is primarily algebraic. That
is, we obtain our positivity results, from which \autoref{thm:main_subadd} is
deduced, algebraically, starting from the semi-positivity results of Fujino
\cite{Fujino_Semi_positivity_theorems_for_moduli_problems,Fujino_Notes_on_the_weak_positivity_theorems}. Hence,
our approach has a good chance to be portable to positive characteristic when the
appropriate semi-positivity results (and other ingredients such as the mmp) become
available in that setting. See
\cite{Patakfalvi_Semi_positivity_in_positive_characteristics} for the currently
available semi-positivity results in positive characteristic, and
\cite{Chen_Zhang_The_subadditivity_of_the_Kodaira-dimension_for_fibrations_of_relative_dimension_one_in_positive_characteristics,Patakfalvi_On_subadditivity_of_Kodaira_dimension_in_positive_characteristic}
for results on subadditivty of Kodaira-dimension.

\autoref{thm:main_subadd} is based on the following theorem stating that the sheaves
$f_* \sO_X(r (K_{X/Y} + \Delta))$ have more positivity properties than just that
their determinants are ample. This is a generalization of
\cite{Kollar_Subadditivity_of_the_Kodaira_dimension} and \cite[Thm
3.1]{Esnault_Viehweg_Effective_bounds_for_semipositive_sheaves_and_for_the_height_of_points_on_curves_over_complex_function_fields}
to the logarithmic case.

\begin{theorem} (=\autoref{thm:pushforward})
\label{thm:main_big}
 If $f : (X,D) \to Y$ is a family of stable log-varieties of maximal variation over
  a normal, projective variety $Y$ with klt general fiber, then $f_*
 \sO_X(r(K_{X/Y}+D))$ is big for every divisible enough integer $r>0$.
\end{theorem}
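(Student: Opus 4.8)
The plan is to prove bigness of $\sE:=f_*\sO_X(r(K_{X/Y}+D))$ by combining two facts: the semi-positivity of the pushforward sheaves (Fujino's theorems, the input of this whole circle of ideas) and the bigness of their determinants, already established via the strengthened Ampleness Lemma \autoref{thm:generalized_ampleness_lemma} in \autoref{thm:big_higher_dim_base}. The subtle point is that ``determinant big'' is strictly weaker than ``sheaf big'' — nef, even globally generated, sheaves with ample determinant need not be big — so the argument has to use the moduli structure in an essential way, and the engine for that is Viehweg's fibre-product construction, as used by Koll\'ar \cite{Kollar_Subadditivity_of_the_Kodaira_dimension} and Esnault--Viehweg in the absolute case.

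First I would pass to a convenient model. Since bigness of a torsion-free sheaf is a birational notion for the base, I may assume $Y$ is smooth, and after removing a closed subset of codimension $\ge 2$ I may work over a big open $U\subseteq Y$ over which $f$ is especially well behaved: the fibres are klt (using the klt–general–fibre hypothesis), $K_{X/Y}+D$ is $\bQ$-Cartier and $f$-ample, and, for $r$ divisible enough, each $\sE_m:=f_*\sO_X(rm(K_{X/Y}+D))$ is locally free with formation commuting with base change (base change together with Kawamata--Viehweg type vanishing for the fibres). Then bigness of $\sE=\sE_1$ over $U$ gives bigness over $Y$. Over $U$ one also has the graded $\sO_U$-algebra $\bigoplus_{m\ge 0}\sE_m$ whose relative $\Proj$ recovers $X|_U$; in particular the multiplication maps $\Sym^m\sE\to\sE_m$ encode the family, hence its moduli.

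The argument then runs as follows. By Fujino's semi-positivity theorems \cite{Fujino_Semi_positivity_theorems_for_moduli_problems,Fujino_Notes_on_the_weak_positivity_theorems}, each $\sE_m$ is weakly positive over $U$; crucially, the analogous statement holds for the pushforwards $\sE^{(s)}$ attached to a suitable resolution of the $s$-fold fibre product $f^{(s)}\colon(X^{(s)},D^{(s)})\to Y$, where $D^{(s)}$ must be taken to be the divisorial (not the scheme-theoretic) boundary, in view of the embedded–point phenomenon of \autoref{ex:Hassett}. There is a natural map $\bigotimes^s\sE_m\to\sE^{(s)}_m$, an isomorphism over $U$. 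On the other hand, the classifying (moduli) map of $f^{(s)}$ is the same as that of $f$ and, by maximal variation, is generically finite; feeding it into \autoref{thm:generalized_ampleness_lemma} gives that for $s\gg 0$ a natural line bundle on $Y$ — obtained from $\det\sE^{(s)}_m$ after twisting by a controlled power of $\det\sE^{(s)}$, and realised as a quotient of a symmetric power of $\sE^{(s)}$ — is big, while its complement inside that symmetric power remains weakly positive. Weak positivity of $\sE^{(s)}$ together with this big, suitably placed quotient then forces $\sO_{\bP(\sE^{(s)})}(1)$ to be big; unwinding the fibre-product construction (and the harmless twists by the nef line bundle $\det\sE$) over $U$ yields bigness of $\sE$, hence \autoref{thm:main_big}. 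Finally, bigness for one divisible $r$ propagates to all divisible $r$ by comparing the corresponding graded algebras.

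I expect the main obstacle to be precisely this ``$\det$ $\Rightarrow$ sheaf'' upgrade in the logarithmic, semi-log-canonical setting: one must make the fibre products and their desingularizations compatible with the flat/divisorial behaviour of the boundary (so that Fujino's weak positivity really applies to the $\sE^{(s)}_m$), keep the twists by powers of $\det\sE^{(s)}$ within the range in which weak positivity is preserved, and verify that the strengthened Ampleness Lemma \autoref{thm:generalized_ampleness_lemma} — in particular with the stabiliser hypothesis removed — is genuinely available for the classifying maps arising here. Secondary technical points are the base-change statements for $f_*\sO_X(rm(K_{X/Y}+D))$ over the various auxiliary bases and the passage between $U$ and $Y$.
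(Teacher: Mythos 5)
Your high-level skeleton is the right one — reduce to a smooth base, use Fujino-type semi-positivity of the pushforwards, identify $\bigotimes^m f_*\sO_X(q(K_{X/Y}+D))$ with a pushforward from the fiber power $X^{(m)}$ via \autoref{lem:pushforward_tensor_product_isomorphism}, and start from the bigness of $\det f_*\sO_X(q(K_{X/Y}+D))$ established in \autoref{thm:big_higher_dim_base}. That is indeed the paper's starting point. But the central step of your outline — ``weak positivity of $\sE^{(s)}$ together with this big, suitably placed quotient forces $\sO_{\bP(\sE^{(s)})}(1)$ to be big'' — is not a correct implication and is precisely where the real work of the theorem lives. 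For an arbitrary vector bundle the pattern fails: $\sE=\sA\oplus\sO_Y$ with $\sA$ ample is nef, has big determinant, admits big line-bundle quotients of its symmetric powers, and is still not big. So the upgrade from ``$\det$ big'' to ``sheaf big'' cannot be done by formal positivity bookkeeping on $\bP(\sE^{(s)})$; it must use the geometry of the fibers, and your proposal does not say how.

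The paper closes the gap with an ingredient that is absent from your sketch: a \emph{uniform lower bound on the log canonical threshold of the linear systems $|q(K_{X_y}+D_y)|$ on the fibers} (\autoref{lem:lct_semicontinuous_line_bundle}, via \autoref{lem:lct_semicontinuous_divisor} and \autoref{cor:lct_product}). Concretely, the inclusion $\sO_Y(lA)\simeq\det f_*\sO_X(q(K_{X/Y}+D))\hookrightarrow\bigotimes^m f_*\sO_X(q(K_{X/Y}+D))\simeq f^{(m)}_*\sO_{X^{(m)}}(q(K_{X^{(m)}/Y}+D_{X^{(m)}}))$ produces an effective $\Gamma$ with $(f^{(m)})^*lA+\Gamma\sim q(K_{X^{(m)}/Y}+D_{X^{(m)}})$; the lct bound (with $l=\lceil 1/c\rceil$) guarantees that $(X^{(m)}_y,\ D_{X^{(m)},y}+\tfrac1l\Gamma_y)$ is klt for general $y$, which is exactly what is needed to apply the multiplier-ideal/Nadel-vanishing weak-positivity statement \autoref{prop:nef} to $2q(K_{X^{(m)}/Y}+D_{X^{(m)}})-(f^{(m)})^*A$. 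That gives weak positivity of $\sO_Y(-A)\otimes\bigotimes^m f_*\sO_X(2q(K_{X/Y}+D))$, hence bigness of $f_*\sO_X(2q(K_{X/Y}+D))$ after twisting back by $\sO_Y(A)$. Note also that this is why the klt hypothesis on the general fiber is essential (cf.\ \autoref{ex:klt-is-needed}, \autoref{ex:klt-is-needed-two}, \autoref{ex:klt-is-needed-three}): the lct bound and the klt structure of $(X^{(m)}_y, D_{X^{(m)},y}+\tfrac1l\Gamma_y)$ are what turn the divisor $\Gamma$ coming from the $\det$-embedding into an input for a vanishing theorem. Without this or an equivalent control of the singularities introduced by $\Gamma$ on the fiber powers, the argument has no engine. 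Secondary points — you invoke the Ampleness Lemma again at this stage, whereas the paper only uses it once, to prove $\det$ is big; and the phrase ``twists by the nef line bundle $\det\sE$'' should read \emph{big}, which is exactly the property one has to cash in.
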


Note that \autoref{thm:main_big} fails without the klt assumption.  Also,
\autoref{thm:main_big} allows for numerous applications, such as, the already
mentioned \autoref{thm:main_subadd}, as well as upcoming work in progress on a
log-version of
\cite{Abramovich_A_high_fibered_power_of_a_family_of_varieties_of_general_type_dominates_a_variety_of_general_type}
in \cite{AKPT} and on the ampleness of the CM line bundle on the moduli space of
stable varieties in
\cite{Patakfalvi_Xu_Projectivity_of_CM_line_bundle_on_moduli_space_of_canonically_polarized_varieties}. We
also state \autoref{thm:main_big} and our other positivity results over almost
projective bases in \autoref{sec:almost_results}, that is, over bases that are big
open sets in projective varieties. We hope this will be helpful for some
applications.

%
%
%
%
%
%
%
%
%
%
%
%
%

\subsection{Outline of the proof}
\label{sec:outline}

As mentioned above, using the Nakai-Moishezon criterion for ampleness,
\autoref{thm:main} reduces to the following statement (=
\autoref{prop:big_higher_dim_base}): given a family of stable log-varieties $f :
(X,D) \to Y$ with maximal variation over a smooth, projective variety, $\det f_*
\sO_X(q(K_{X/Y} + D))$ is big for every divisible enough integer $q>0$. This follows
relatively easily from the bigness of $K_{X/Y} + D$. To be precise it also follows
from the bigness of the log canonical divisor $K_{X^{(r)}/Y} +D_{X^{(r)}}$ of some
large enough fiber power for some integer $r >0$ (see \autoref{notation:product} and
the proof of \autoref{prop:big_higher_dim_base}). In fact, one cannot expect to do
better for higher dimensional bases, see \autoref{rem:fiber_power_necessary} for
details. Here we review the proof of the bigness of these relative canonical
divisors, going from the simpler cases to the harder ones.

\subsubsection{The case of $\dim Y=1$ and $\dim X=2$.}
\label{sec:outline_1_1}\ 
In this situation, roughly speaking, we have a family of weighted stable curves as
defined by Hassett \cite{Hassett_Moduli_spaces_of_weighted_pointed_stable_curves}.
The only difference is that in our notion of a family of stable varieties there is no
marking (that is, the points are not ordered). This means that the marked points are
allowed to form not only sections but multisections as well. However, over a finite
cover of $Y$ these multisections become unions of sections, and hence we may indeed
assume that we have a family of weighted stable curves. Denote by $s_i: Y \to X$ $(1,
\dots, m)$ the sections given by the marking and let $D_i$ be the images of these
sections. Hassett proved projectivity \cite[Thm 2.1, Prop
3.9]{Hassett_Moduli_spaces_of_weighted_pointed_stable_curves} by showing that the
following line bundle is ample:
\begin{equation}
  \label{eq:line_bundle_Hassett}
  \det f_* \sO_X(r(K_{X/Y} + D)) \otimes \left( \bigotimes_{i=1}^m s_i^*
    \sO_X(r(K_{X/Y} + D)) \right) .
\end{equation}
Unfortunately, this approach does not work for higher dimensional fibers, because
according to the example of \autoref{ex:Hassett}, the sheaves corresponding to $s_i^*
\sO_X(r(K_{X/Y} + D))$ which is the same as $\left(f|_{D_i}\right)_*
\sO_{D_i}(r(K_{X/Y} + D)|_{D_i})$ are not functorial in higher dimensions. In fact,
the function $y \mapsto h^0\left((D_i)_y, \sO_{D_i}(r(K_{X/Y} + D)|_{D_i}) \right)$
jumps down in the limit in the case of example of \autoref{ex:Hassett}, which means
that there is no possibility to collect the corresponding space of sections on the
fibers into a pushforward sheaf. Note that here it is important that $(D_i)_y$ means
the divisorial restriction of $D_i$ onto $X_y$. Indeed, with the scheme theoretic
restriction there would be no jumping down, since $D_i$ is flat as a scheme over
$Y$. However, the scheme theoretic restriction of $D_i$ onto $X_y$ contains an
embedded point and therefore the space of sections on the divisorial restriction is
one less dimensional than on the scheme theoretic restriction.

So, the idea is to try to prove the ampleness of $\det f_* \sO_X(r(K_{X/Y} + D))$ in
the setup of the previous paragraph, hoping that that argument would generalize to
higher dimensions. Assume that $\det f_* \sO_X(r(K_{X/Y} + D))$ is not ample. Then by
the ampleness of \autoref{eq:line_bundle_Hassett}, for some $1 \leq i \leq m$, $s_i^*
\sO_X(r(K_{X/Y} + D))$ must be ample. Therefore, for this value of $i$, $D_i \cdot
(K_{X/Y} +D )>0$. Furthermore, by decreasing the coefficients slightly, the family is
still a family of weighted stable curves. Hence $K_{X/Y} + D - \varepsilon D_i$ is
nef for every $0 \leq \varepsilon \ll 1$ (see \autoref{lem:pushforward_nef}, although
this has been known by other methods for curves). Putting these two facts together
yields that
\begin{equation*}
  (K_{X/Y} + D)^2 = \underbrace{(K_{X/Y} + D) \cdot (K_{X/Y} + D - \varepsilon D_i
    )}_{\geq 0 \textrm{, because } K_{X/Y} + D \textrm{ and } K_{X/Y} + D - \varepsilon D_i \textrm{ are nef}}  +
  \underbrace{(K_{X/Y} + D) \cdot \varepsilon D_i}_{>0} >0  .
\end{equation*}
This proves the bigness of $K_{X/Y} + D$, and the argument indeed generalizes to
higher dimensions as explained below.

\subsubsection{The case of $\dim Y=1$ and arbitrary $\dim X$.}\ 
Let $f : (X,D) \to Y$ be an arbitrary family of non-isotrivial stable log-varieties
over a smooth projective curve.  Let $D_i$ ($i=1,\dots,m$) be the union of the
divisors (with reduced structure) of the same coefficient (cf.\
\autoref{def:D_c}). The argument in the previous case suggests that the key is to
obtain an inequality of the form
\begin{equation}
  \label{eq:goal_base_dim_one}
  \left( (K_{X/Y} + D)|_{D_i}\right)^{\dim D_i} >0.
\end{equation}
Note that it is considerably harder to reach the same conclusion from this
inequality, than in the previous case, because the $D_i$ are not necessarily
$\bQ$-factorial and then $(X, D - \varepsilon D_i)$ might not be a stable family. To
remedy this issue we pass to a $\bQ$-factorial dlt-blowup. For details see
\autoref{lem:long}.

Let us now turn to how one might obtain \autoref{eq:goal_base_dim_one}.  First, we
prove using our generalization (see \autoref{thm:generalized_ampleness_lemma}) of the
Ampleness Lemma a higher dimensional analogue of \autoref{eq:line_bundle_Hassett} in
\autoref{prop:big_downstairs}, namely, that the following line bundle is ample:
\begin{equation}
  \label{eq:line_bundle_us}
  \det f_* \sO_X(r(K_{X/Y} + D)) \otimes \left(\bigotimes_{i=1}^m \det \left(f|_{D_i}
    \right)_* \sO_{D_i}(r(K_{X/Y} + D)|_{D_i}) \right). 
\end{equation}
The main difference compared to \autoref{eq:line_bundle_Hassett} is that $f|_{D_i}$
is no longer an isomorphism between $D_i$ and $Y$ as it was in the previous case
where the $D_i$ were sections. In fact, $D_i \to Y$ has positive dimensional fibers
and hence $\sE_i:=\left(f|_{D_i} \right)_* \sO_{D_i}(r(K_{X/Y} + D)|_{D_i})$ is a
vector bundle of higher rank. As before, if $\det f_* \sO_X(r(K_{X/Y} + D))$ is not
ample, then for some $i$, $\det \sE_i$ has to be. However, since $\sE_i$ is higher
rank now, it is not as easy to obtain intersection theoretic information as earlier.

As a result one has to utilize a classic trick of Viehweg which leads to working with
fibered powers. Viehweg's trick is using the fact that there is an inclusion
\begin{equation}
  \label{eq:embeddint_E_i}
  \xymatrix{%
    \det \sE_i\ \ar@{^(->}[r] & \displaystyle\bigotimes_{j=1}^{d} \sE_i, 
  }
\end{equation}
where $d:= \rk \sE_i$, and where the latter sheaf can be identified with a
pushforward from the fiber product space $D_i^{(d)} \to Y$ (see
\autoref{notation:product}).  This way one obtains that
\begin{equation*}
  \left(\left.\left(K_{X^{(d)}/Y} + D_{X^{(d)}}\right) \right|_{D_i^{(d)}}
  \right)^{\dim     D_i^{(d)}} >0 ,
\end{equation*}
from which it is an easy 
computation to prove \autoref{eq:goal_base_dim_one}

\subsubsection{The case of both $\dim Y$ and $\dim X$ arbitrary.}\ 
We only mention briefly what goes wrong here compared to the previous case, and what
the solution is. The argument is very similar to the previous case until we show that
\autoref{eq:line_bundle_us} is big. However, it is no longer true that if $\det f_*
\sO_X(r(K_{X/Y} + D))$ is not big, then one of the $\det \sE_i$ is big. So, the
solution is to treat all the sheaves at once via an embedding as in
\autoref{eq:embeddint_E_i} of the whole sheaf from \autoref{eq:line_bundle_us} into a
tensor-product sheaf that can be identified with a pushforward from an appropriate
fiber product (see \autoref{eq:det_to_tensor}).  The downside of this approach is
that one then has to work on $X^{(l)}$ for some big $l$, but we still obtain an
equation of the type \autoref{eq:goal_base_dim_one}, although with $D_i$ replaced
with a somewhat cumbersome subvariety of fiber product type.

After that an enhanced version of the previous arguments yields that $K_{X^{(l)}/Y} +
D_{X^{(l)}}$ is big on at least one component, which is enough for our purposes. In
fact, in this case
we cannot expect that $K_{X/Y} +D$ would be big on any particular component, cf.\
\autoref{rem:fiber_power_necessary}. However, the bigness of $K_{X^{(l)}/Y} +
D^{(l)}$ on a component already implies the bigness of $\det f_* \sO_X(r(K_{X/Y} +
D))$ (see \autoref{prop:big_higher_dim_base}). This argument is worked out in
\autoref{sec:det}.

\subsubsection{Subadditivity of log-Kodaira dimension}\ 
First we prove \autoref{thm:main_big} in \autoref{sec:pushforward} using ideas
originating in the works of Viehweg. This implies that although in \autoref{sec:det}
we were not able to prove the bigness of $K_{X/Y} + D$ (only of $K_{X^{(l)}/Y} +
D^{(l)}$), it actually does hold for stable families of maximal variation with klt
general fibers (cf.\ \autoref{cor:big_total_space}). Then with a comparison process
(see the proof of \autoref{prop:subadditivity}) of an arbitrary log-fiber space $f' :
(X',D') \to Y'$ and of the image in moduli of the log-canonical model of its generic
fiber, we are able to obtain enough positivity of $K_{X'/Y'} + D'$ to deduce
subadditivity of log-Kodaira dimension if the log-canonical divisor of the general
fiber is big.

\subsection{An important example}
\label{ex:Hassett}
The following example is due to Hassett (cf.\
\cite[Example~42]{Kollar_Moduli_of_varieties_of_general_type}), and has been
referenced at a couple of places in the introduction.
  \begin{center}
    \includegraphics[scale=.5]{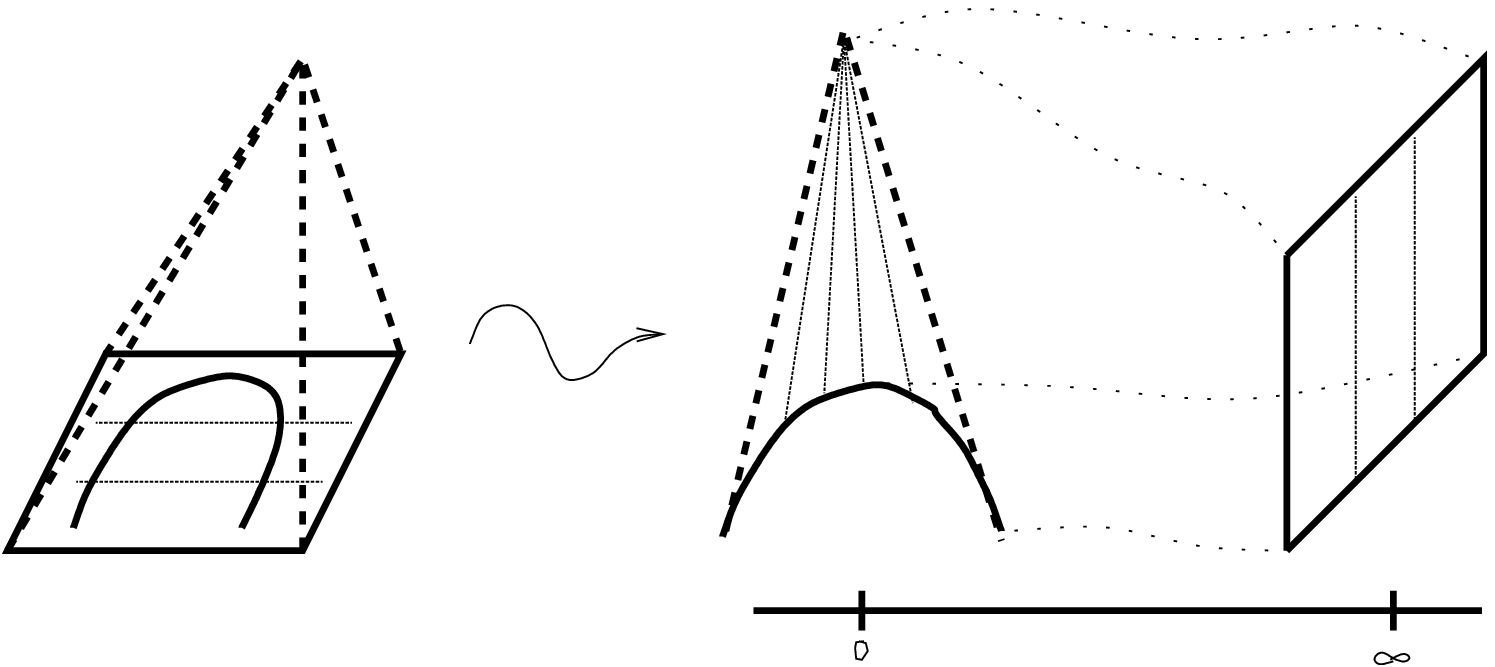} 
  \end{center}
  Let $\sX$ be the cone over $\bP^1 \times \bP^1$ with polarization $\sO_{\bP^1
    \times \bP^1}(2,1)$ and let $\sD$ be the conic divisor $\frac{1}{2} p_2^* P +
  \frac{1}{2} p_2^* Q$, where $p_2 : \bP^1 \times \bP^1 \to \bP^1$ is the projection
  to the second factor, and $P$ and $Q$ are general points. Let $H_0$ be a cone over
  a hyperplane section $C$ of $\bP^1 \times \bP^1$ with the given polarization, and
  $H_\infty$ a general hyperplane section of $\sX$ (which is isomorphic to $\bP^1
  \times \bP^1$). Note that since $\deg \sO_{\bP^1 \times \bP^1}(2,1)|_C = 4$, $H_0$
  is a cone over a rational normal curve of degree $4$. Let $f : \sH \to \bP^1$ be
  the pencil of $H_0$ and $H_\infty$. It is naturally a subscheme of the blowup
  $\sX'$ of $\sX$ along $H_0 \cap H_\infty$. Furthermore, the pullback of $\sD$ to
  $\sX'$ induces a divisor $\sD'$ on $\sH$, such that
  \begin{enumerate}
  \item its reduced fiber over $0$ is a cone over the intersection of $\frac{1}{2}
    p_2^* P + \frac{1}{2} p_2^* Q$ with $C$, that is, over $4$ distinct points on
    $\bP^1$ with coefficients $\frac{1}{2}$, and
  \item its fiber over $\infty$ is two members of one of the rulings of $\bP^1 \times
    \bP^1$ with coefficients $\frac{1}{2}$. In the limit both of these lines
    degenerate to a singular conic, and they are glued together at their singular
    points.
  \end{enumerate}
  In case the reader is wondering how this is relevant to stable log-varieties of
  general type, we note that this is actually a local model of a degeneration of
  stable log-varieties, but one can globalize it by taking a cyclic cover branched
  over a large enough degree general hyperplane section of $\sX$.  For us only the
  local behaviour matters, so we will stick to the above setup. Note that since
  $\chi(\sO_{\sD_\infty'})=2$, the above described reduced structure cannot agree
  with the scheme theoretic restriction $\sD_{0,\mathrm{sch}}'$ of $\sD'$ over $0$,
  since then $\chi(\sO_{\sD_{0,\mathrm{sch}}'})=1$ would hold. Therefore
  $\sD_{0,\mathrm{sch}}'$ is non-reduced at the cone point. Furthermore, note that
  the log canonical divisor of $(\sX, \sD)$ is the cone over a divisor corresponding
  to $\sO_{\bP^1 \times \bP^1}\left(-2 +2 , -2 +1 + \frac{1}{2} + \frac{1}{2} \right)
  \simeq \sO_{\bP^1 \times \bP^1}$. In particular, this log canonical class is
  $\bQ$-Cartier, and hence $(\sH, \sD')$ does yield a local model of a degeneration
  of stable log-varieties.

\subsection{Organization}

We introduce the basic notions on general and on almost proper varieties in
\autoref{sec:basic} and \autoref{sec:almost}. In \autoref{sec:bigness_lemma} we state
our version of the Ampleness Lemma. In \autoref{sec:moduli} we define moduli functors
of stable log-varieties and we also give an example of a concrete moduli functor for
auxiliary use. \autoref{sec:det} contains the proof of \autoref{thm:main} as well as
of the necessary positivity of $\det f_* \sO_X(r (K_{X/Y} +D
))$. \autoref{sec:pushforward} is devoted to the proof of
\autoref{thm:main_big}. \autoref{sec:subadditivity} contains the statements and the
proofs of the subadditivity statements including \autoref{thm:main_subadd}. Finally,
in \autoref{sec:almost_results} we shortly deduce almost projective base versions of
the previously proven positivity statements.

\subsubsection*{\sffamily\itshape\small\bfseries Acknowledgement}\ The authors are
thankful to J\'anos Koll\'ar for many insightful conversations on the topic; to Maksym
Fedorchuk for the detailed answers on their questions about the curve case; to
James McKernan and Chenyang Xu for the information on the results in the article
\cite{Hacon_McKernan_Xu_Boundedness_of_moduli_of_varieties_of_general_type}.


\section{Basic tools and definitions}
\label{sec:basic}

We will be working over an algebraically closed base field $k$
characteristic zero in the entire article. In this section we give those definitions
and auxiliary statements that are used in multiple sections of the article. Most
importantly we define stable log-varieties and their families here.

\begin{definition}
  A \emph{variety} will mean a reduced but possibly reducible 
  separated scheme of finite type over $k$.  A \emph{vector bundle} $W$ on a variety
  $Z$ in this article will mean a locally free sheaf. Its dual is denoted by $W^*$.
\end{definition}

\begin{remark}
  It will always be assumed that the support of a divisor does not contain any
  irreducible component of the conductor subscheme. Obviously this is only relevant
  on non-normal schemes. The theory of Weil, Cartier, and $\bQ$-Cartier divisors work
  essentially the same on \emph{demi-normal} schemes, i.e., on schemes that satisfy
  Serre's condition $S_2$ and are semi-normal and Gorenstein in codimension $1$. 
  For more details on demi-normal schemes and their properties, including the
  definition and basic properties of divisors on demi-normal schemes 
  see \cite[\S5.1]{Kollar_Singularities_of_the_minimal_model_program}.
\end{remark}

\begin{definition}
  \label{def:big_open_set}
  Let $Z$ be a scheme. A \emph{big open subset} $U$ of $Z$ is an open subset
  $U\subseteq Z$ such that $\depth_{Z\setminus U} \sO_Z\geq 2$.  If $Z$ is $S_2$,
  e.g., if it is normal, then this is equivalent to the condition that
  $\codim_Z(Z\setminus U)\geq 2$.
\end{definition}

\begin{definition}
  The dual of a coherent sheaf $\sF$ on a scheme $Z$ will be denoted by $\sF^*$ and
  the sheaf $\sF^{**}$ is called the \emph{reflexive hull} of $\sF$.  If the natural
  map $\sF\to \sF^{**}$ is an isomorphism, then $\sF$ is called \emph{reflexive}.
  For the basic properties of reflexive sheaves see
  \cite[\S1]{Hartshorne_Stable_reflexive_sheaves}.
  
  Let $Z$ be an $S_2$ scheme and $\sF$ a coherent sheaf on $Z$. Then
  the \emph{reflexive powers} of $\sF$ are the reflexive hulls of
  tensor powers of $\sF$ and are denoted the following way:
  $$
  \sF^{[m]}:=  \left( \sF^{\otimes m} \right)^{**}
  $$
  Obviously, $\sF$ is reflexive if and only if $\sF\simeq \sF^{[1]}$.  Let $\sG$ be
  coherent sheaf on $Z$. Then the \emph{reflexive product} of $\sF$ and $\sG$ (resp.\
  reflexive symmetric power of $\sF$) is the reflexive hull of their tensor product
  (resp.\ of the symmetric power of $\sF$) and is denoted the following way:
  $$
  \sF[\otimes]\sG:= \left( \sF\otimes \sG \right)^{**} \qquad \Sym^{[a]}(\sF):=
  \left( \Sym^a( \sF) \right)^{**}
  $$
\end{definition}

\begin{notation}
  \label{not:base-change}
  Let $f:X\to Y$ and $Z\to Y$ be morphisms of schemes. Then the base change to $Z$
  will be denoted by
  $$
  f_{Z}: X_{Z} \to Z,
  $$
  where $X_{Z}:= X\times _Y Z$ and $f_{Z}:=f\times_Y \id_{Z}$. If $Z=\{y\}$ for
  a point $y\in Y$, then we will use $X_y$ and $f_y$ to denote $X_{\{y\}}$ and
  $f_{\{y\}}$.
\end{notation}

\begin{lemma}
  \label{lem:pushforward_tensor_product_isomorphism}
  Let $f : X \to Y$ and $g : Z \to Y$ be surjective morphisms such that $Y$ is normal
  and let $\sL$ and $\sN$ be line bundles on $X$ and $Z$ respectively.  Assume that
  there is a big open set of $Y$ over which $X$ and $Z$ are flat and $f_* \sL$ and
  $g_* \sN$ are locally free. Then
  \begin{equation*}
    \left( {\left({f_Z}
        \right)}_* ( p_X^* \sL \otimes p_Z^* \sN)  \right)^{**} \simeq
    f_* \sL \left[       \otimes \right] g_* \sN .
  \end{equation*}
  Furthermore, if $X$ and $Z$ are flat and $f_* \sL$ and $g_* \sN$ are locally free
  over the entire $Y$, then the above isomorphism is true without taking reflexive
  hulls. 
\end{lemma}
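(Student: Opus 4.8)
The strategy is to reduce the global statement to the two standard local facts that (i) reflexive hulls of coherent sheaves on a normal scheme are determined by their restriction to a big open set, and (ii) over the locus where everything is flat and the relevant pushforwards are locally free, the formation of $(f_Z)_*$ of an exterior tensor product commutes with arbitrary base change and hence realizes the plain tensor product $f_*\sL \otimes g_*\sN$. Concretely, let $V\subseteq Y$ be a big open set over which $X\to Y$ and $Z\to Y$ are flat and $f_*\sL$, $g_*\sN$ are locally free. Since $Y$ is normal it is $S_2$, so $V$ is a big open set in the sense of \autoref{def:big_open_set}, and both sides of the claimed isomorphism are reflexive sheaves on $Y$: the right-hand side is a reflexive hull by definition, and the left-hand side is a reflexive hull by construction. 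Two reflexive sheaves on an $S_2$ scheme that agree on a big open set agree globally (this is the standard $S_2$-extension property of reflexive sheaves, cf.\ \cite[\S1]{Hartshorne_Stable_reflexive_sheaves}), so it suffices to produce a natural isomorphism over $V$.

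Over $V$ the argument is the cohomology-and-base-change computation, which I would carry out as the ``furthermore'' case directly. Write $W:=X\times_Y Z$ with projections $p_X : W\to X$, $p_Z : W\to Z$ and structure map $f_Z = g_X : W\to Z$ (base change of $f$ along $g$); here I am using \autoref{not:base-change}. The projection formula gives $(p_Z)_*\,p_X^*\sL \simeq g^* f_*\sL$ when $f$ is flat and cohomology-and-base-change applies, so that, again by the projection formula applied to $g$,
\begin{equation*}
  (f_Z)_*\bigl(p_X^*\sL \otimes p_Z^*\sN\bigr)
  \;\simeq\; g_*\bigl((p_Z)_*\,p_X^*\sL \otimes \sN\bigr)
  \;\simeq\; g_*\bigl(g^* f_*\sL \otimes \sN\bigr)
  \;\simeq\; f_*\sL \otimes g_*\sN,
\end{equation*}
where the last step is the projection formula for $g$ using that $f_*\sL$ is locally free. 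The delicate point is the first isomorphism: it requires that $R^0(p_Z)_*\,p_X^*\sL$ commute with the base change $Z\to Y$, which holds because $f$ is flat over $V$ and $f_*\sL$ is locally free there, so by Grauert's theorem / cohomology-and-base-change the formation of $f_*\sL$ commutes with all base change over $V$; pulling back along $g$ then identifies it with $(p_Z)_*\,p_X^*\sL$. This establishes the isomorphism without reflexive hulls over $V$, and in particular proves the ``furthermore'' assertion when $V=Y$.

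Finally I would descend: restricting the globally-defined reflexive sheaves $\bigl((f_Z)_*(p_X^*\sL\otimes p_Z^*\sN)\bigr)^{**}$ and $f_*\sL[\otimes]g_*\sN$ to $V$ and using that on $V$ the pushforward $(f_Z)_*(p_X^*\sL\otimes p_Z^*\sN)$ is already locally free (being isomorphic to $f_*\sL\otimes g_*\sN$) hence equal to its own reflexive hull, we get an isomorphism of the two sides over $V$; by the $S_2$-extension property quoted above this isomorphism extends uniquely to all of $Y$. I expect the only real subtlety to be bookkeeping about where ``flat and locally free'' is needed versus where it is automatic — in particular making sure that it is enough to assume flatness of $X, Z$ (not of $W$) over the big open set, which is fine since a fiber product of flat morphisms is flat and the projections of a flat base change are themselves flat. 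Everything else is a routine application of the projection formula and base change.
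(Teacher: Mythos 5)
Your proof takes essentially the same route as the paper: reduce to the big open set where $f,g$ are flat and $f_*\sL$, $g_*\sN$ are locally free (using that two reflexive sheaves on a normal scheme agreeing on a big open set agree everywhere), and then chain the projection formula for $p_Z$, flat base change, and the projection formula for $g$. One small terminological slip: you write ``the projection formula gives $(p_Z)_*\,p_X^*\sL \simeq g^* f_*\sL$'' — that step is flat base change (which you do correctly invoke in the justification), not the projection formula; the projection formula is what you use for $p_Z$ and $g$. With that relabeled your argument is exactly the paper's.
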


\begin{proof}
  Since the statement is about reflexive sheaves, we may freely pass to big open
  sets. In particular, we may assume that $f$ and $g$ are flat and $f_* \sL$ and $g_*
  \sN$ are locally free. Then
  \begin{equation*}    
    \begin{aligned}
    \xymatrix@R=.75em@C=0em{%
      \big(f_Z\big)_*
      ( p_X^* \sL \otimes p_Z^* \sN) 
      & 
      \simeq & g_* \left(  \left(p_{Z}\right)_{*} p_X^* \sL \otimes
        \sN \right)
      &      \simeq & g_* \left(  g^* f_* \sL  \otimes \sN \right)
      &      \simeq  & f_* \sL \otimes g_* \sN.
      \\
      \hskip-7em \ar@<3.65em>[u] \textrm{\tiny $g\circ p_Z$}
      & \hskip-7em \ar[u] \textrm{\tiny projection formula for $p_Z$} \hskip-7em & 
      & \hskip-7em \ar[u] \textrm{\tiny flat base-change} \hskip-7em &
      & \hskip-7em \ar[u] \textrm{\tiny projection formula for $g$} \hskip-7em &
    }
    \end{aligned}
    \qedhere 
  \end{equation*}
\end{proof}

\begin{notation}
  \label{notation:restriction_divisor}
  Let $f : X \to Y$ be a flat equidimensional morphism of demi-normal schemes, and
  $Z\to Y$ a morphism between normal varieties. Then for a $\bQ$-divisor $D$ on
  $X$ that avoids the generic and codimension $1$ singular points of the fibers of
  $f$, we will denote by $D_{Z}$ the \emph{divisorial pull-back} of $D$ to $X_{Z}$,
  which is defined as follows: As $D$ avoids the singular codimension $1$ points of
  the fibers, there is a big open set $U\subseteq X$ such that $D|_U$ is
  $\bQ$-Cartier.  Clearly, $U_{Z}$ is also a big open set in $X_{Z}$ and we define
  $D_{Z}$ to be the unique divisor on $X_{Z}$ whose restriciton to $U_{Z}$ is
  $\left(D|_U\right)_{Z}$. 
\end{notation}

\begin{remark}
  Note that this construction agrees with the usual pullback if $D$ itself is
  $\bQ$-Cartier, because the two divisors agree on $U_{Z}$.

  Also note that $D_{Z}$ is not necessarily the (scheme theoretic) base change of $D$
  as a subscheme of $X$. In particular, for a point $y\in Y$, $D_y$ is not
  necessarily equal to the scheme theoretic fiber of $D$ over $y$. The latter may
  contain smaller dimensional embedded components, but we restrict our attention to
  the divisorial part of this scheme theoretic fiber. This issue has already come up
  multiple times in \autoref{sec:introduction}, in particular in the example of
  \autoref{ex:Hassett}.

  Finally, note that if $q(K_{X/Y}+D)$ is Cartier, then using this definition the
  line bundle $\sO_X(q(K_{X/Y}+D))$ is compatible with base-change, that is, for a
  morphism $Z\to Y$,
  \begin{equation*}
    \left(\sO_X(q(K_{X/Y}+D))\right)_Z \simeq \sO_Z(q(K_{X_Z/Z}+D_Z)).
  \end{equation*}
  To see this, recall that this holds over $U_Z$ by definition and both sheaves are
  reflexive on $Z$. (See \autoref{def:relative_canonical} for the precise definition
  of $K_{X/Y}$.)
\end{remark}

\begin{definition}
  \label{def:stable_log_variety}
  A \emph{pair} $(Z,\Gamma)$ consist of an equidimensional demi-normal variety $Z$
  and an effective $\bQ$-divisor $\Gamma\subset Z$.
  A \emph{stable log-variety} $(Z,\Gamma)$ is a pair such that
  \begin{enumerate}
  \item $Z$ is proper,
  \item $(Z,\Gamma)$ has \emph{slc singularities}, and
  \item the $\bQ$-Cartier $\bQ$-divisor $K_Z+\Gamma$ is
    \emph{ample}.
  \end{enumerate}
  For the definition of slc singularities the reader is referred to
  \cite[5.10]{Kollar_Singularities_of_the_minimal_model_program}
\end{definition}

\begin{definition}
  \label{def:relative_canonical}
  If $f : X \to Y$ is either
  \begin{enumerate}
  \item a flat projective family of equidimensional demi-normal varieties, or
  \item a surjective morphism between normal projective varieties,
  \end{enumerate}
  then $\omega_{X/Y}$ is defined to be $f^! \sO_Y$. In particular, if $Y$ is
  Gorenstein (e.g., $Y$ is smooth), then $\omega_{X/Y} \simeq \omega_X \otimes
  f^*\omega_Y^{-1}$. In any case, $\omega_{X/Y}$ is a reflexive sheaf (c.f.,
  \cite[Lemma 4.9]{Patakfalvi_Schwede_Depth_of_F_singularities}) of rank
  $1$. Furthermore, if either in the first case $Y$ is also normal or in the second
  case $Y$ is smooth, then $\omega_{X/Y}$ is trivial at the codimension one points,
  and hence it corresponds to a Weil divisor that avoids the singular codimension one
  points \cite[5.6]{Kollar_Singularities_of_the_minimal_model_program}. This divisor
  can be obtained by fixing a big open set $U \subseteq X$ over which $\omega_{X/Y}$
  is a line bundle, and hence over which it corresponds to a Cartier divisor, and
  then extending this Cartier divisor to the unique Weil-divisor extension on
  $X$. Note that in the first case $U$ can be chosen to be the relative Gorenstein
  locus of $f$, and in the second case the regular locus of $X$. Furthermore, in the
  first case, we have $K_{X/Y}|_{V} \sim K_{X_V/V}$ for any $V \to Y$ base-change
  from a normal variety (here restriction is taken in the sense of
  \autoref{notation:restriction_divisor}).
\end{definition}

\begin{definition}
  \label{def:stable_lof_family}
  A \emph{family of stable log-varieties}, $f : (X, D) \to Y$ over a normal variety
  consists of a pair $(X,D)$ and a flat proper surjective morphism $f:X\to Y$ such
  that
  \begin{enumerate}
  \item $D$ avoids the generic and codimension $1$ singular points of every fiber,
  \item $K_{X/Y}+D$ is $\bQ$-Cartier, and
  \item $(X_y,D_y)$ is a connected stable log-variety for all $y\in Y$.
  \end{enumerate}
\end{definition}

\begin{notation}
  \label{notation:product}
  For a morphism $f : X \to Y$ of schemes and $m\in\bN_+$, define
  \begin{equation*}
    X^{(m)}_Y :=\ \mtimes{1}mY X \ =
    \underbrace{X \times_Y X \times_Y \dots \times_Y
      X}_{\textrm{$m$ times}} ,
  \end{equation*}
  and let $f^{(m)}_Y : X^{(m)}_Y \to Y$ be the induced natural
  map. For a sheaf of $\sO_X$-modules $\sF$ define
  \begin{equation*}
    \sF^{(m)}_Y := \bigotimes_{i=1}^m p_i^* \sF,
  \end{equation*}
  where $p_i$ is the $i$-th projection $X^{(m)}_Y \to X$. Similarly, if $f$ is flat,
  equidimensional with demi-normal fibers, then for a divisor $\Gamma$ on $X$ define
  \begin{equation*}
    \Gamma_{X^{(m)}_Y} := \sum_{i=1}^m p_i^* \Gamma,
  \end{equation*}
  a divisor on $X^{(m)}_Y$.

  Finally, for a subscheme $Z\subseteq X$, $Z^{(m)}_Y$ is naturally a subscheme of
  $X^{(m)}_Y$. Notice however that if $m>1$ and $Z$ has positive codimension in $X$,
  then $Z^{(m)}_Y$ is never a divisor in $X^{(m)}_Y$. In particular, if $Y$ is
  normal, $f$ is flat, equidimensional and has demi-normal fibers, and $\Gamma$ is an
  effective divisor that does not contain any generic or singular codimension $1$
  points of the fibers of $f$, then
  \begin{equation}
    \label{eq:reduced_intersection}
    \left( \Gamma^{(m)}_Y \right)_{\red} = \left( \bigcap _{i=1}^m p_i^* \Gamma
    \right)_{\red}.  
  \end{equation}

  Notice the difference between $\Gamma_{X^{(m)}_Y}$ and $\Gamma^{(m)}_{Y}$. The
  former corresponds to taking the $(m)^\text{th}$ box-power of a divisor as a sheaf,
  while the latter to taking fiber power as a subscheme. In particular,
  $$
  \sO_{X^{(m)}_Y}( \Gamma_{X^{(m)}_Y} ) \simeq \left( \sO_X(\Gamma) \right) ^{(m)}_Y,
  $$
  while $\Gamma^{(m)}_Y$ is not even a divisor if $m>1$.
 
  In most cases, we omit $Y$ from the notation. I.e., we use $X^{(m)}$,
  $\Gamma_{X^{(m)}}$, $\Gamma^{(m)}$, $f^{(m)}$ and $\sF^{(m)}$ instead of
  $X^{(m)}_Y$, $\Gamma_{X^{(m)}_Y}$, $\Gamma^{(m)}_Y$, $f^{(m)}_Y$ and $\sF^{(m)}_Y$,
  respectively.
\end{notation}

\section{Almost proper varieties and big line bundles}
\label{sec:almost}

\begin{definition}
  An \emph{almost proper} variety is a variety $Y$ that admits an embedding as a big
  open set into a proper variety $Y\hookrightarrow \oY$. If $Y$ is almost proper,
  then a \emph{proper closure} will mean a proper variety with such an embedding. The
  proper closure is not unique, but also, obviously, an almost proper variety is not
  necessarily a big open set for an arbitrary embedding into a proper (or other)
  variety. An almost proper variety $Y$ is called \emph{almost projective} when it
  has a proper closure $\oY$ which is projective. Such a proper closure will be
  called a \emph{projective closure}.
\end{definition}

\begin{lemma}\label{lem:upper-bound-on-sections}
  Let $Y$ be an almost projective variety of dimension $n$ and $B$ a Cartier divisor
  on $Y$.  Then there exists a constant $c>0$ such that for all $m>0$
  $$ 
  h^0(Y, \sO_Y(mB)) \leq c\cdot m^n 
  $$
\end{lemma}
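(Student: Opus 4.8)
The plan is to reduce to the projective case and then to a standard fact about asymptotic growth of sections on a projective variety. Let $\oY$ be a projective closure of $Y$, so that $Y \hookrightarrow \oY$ is a big open set. Since $B$ is a Cartier divisor on $Y$, it corresponds to a line bundle $\sO_Y(B)$, and I would first extend it to a coherent sheaf on $\oY$: pushing forward, set $\sG := j_* \sO_Y(B)$ where $j : Y \into \oY$ is the inclusion; this is a coherent sheaf on $\oY$ (using that $Y$ is a big open set, so $\depth_{\oY \setminus Y}\sO_{\oY} \geq 2$, which makes $j_*$ of a coherent sheaf coherent), and similarly $j_*\sO_Y(mB) = (\sG)^{[m]}$ or at worst a coherent subsheaf of the reflexive hull of $\sG^{\otimes m}$. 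The point is that $H^0(Y,\sO_Y(mB)) = H^0(\oY, j_*\sO_Y(mB))$, so it suffices to bound $h^0(\oY, \sF_m)$ for the coherent sheaves $\sF_m := j_*\sO_Y(mB)$ on the fixed projective variety $\oY$.

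Next I would pick an ample Cartier divisor $A$ on $\oY$ together with a bound relating $\sF_m$ to $\sO_{\oY}(A)$. Concretely, since $B$ extends to a Cartier divisor on a big open set, we can find a single effective Cartier divisor (or just a single power of an ample) $A$ on $\oY$ such that there is an injection $\sF_m \hookrightarrow \sO_{\oY}(mA)$ for all $m$; indeed choosing $A$ ample enough that $\sG \hookrightarrow \sO_{\oY}(A)$, one gets $\sF_m \subseteq (\sG^{\otimes m})^{**}\hookrightarrow (\sO_{\oY}(mA))^{**} = \sO_{\oY}(mA)$ after restricting the first inclusion to the big open locus and taking reflexive hulls on the normalization — here I would be a little careful and may instead work on the normalization $\oY^{\nu}$ and push forward, which only helps since global sections can only grow. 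Then $h^0(Y,\sO_Y(mB)) \leq h^0(\oY, \sO_{\oY}(mA))$.

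Finally, $h^0(\oY,\sO_{\oY}(mA))$ grows like a polynomial of degree $n = \dim \oY = \dim Y$ in $m$: for $A$ ample this is the Hilbert polynomial, which for $m \gg 0$ equals $\chi(\oY,\sO_{\oY}(mA))$ and has degree exactly $n$ with positive leading term, so $h^0(\oY,\sO_{\oY}(mA)) \leq c' m^n$ for a suitable constant $c'$ and all $m > 0$ (absorbing the finitely many small $m$ into the constant). This gives the claim with $c = c'$.

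The main obstacle is the bookkeeping in the middle step: producing a \emph{uniform} (independent of $m$) embedding $j_*\sO_Y(mB) \hookrightarrow \sO_{\oY}(mA)$, since $B$ need not extend to a Cartier divisor on all of $\oY$ and $j_*\sO_Y(mB)$ need not be reflexive or a line bundle. The cleanest fix is to replace $\oY$ by its normalization (which is again projective and has the same dimension), where reflexive hulls behave well, extend $\sO_Y(B)$ to a reflexive rank-one sheaf, dominate it by $\sO(A)$ for $A$ ample, and use that $(\sO(A)^{\otimes m})^{**} = \sO(mA)$ there; everywhere the passage only increases $h^0$, which is all we need for an upper bound.
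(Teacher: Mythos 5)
Your proposal follows the same overall strategy as the paper: push forward to a projective closure $\oY$, dominate by powers of a fixed ample (or very ample) line bundle on $\oY$, and invoke the standard $O(m^n)$ growth of $h^0$ of an ample bundle on a projective variety of dimension $n$ (cited in the paper as \cite[I.7.5]{Hartshorne_Algebraic_geometry}). The only real difference is in how you produce the uniform injections $\iota_*\sO_Y(mB) \into \sH^m$. You identify this as the main obstacle and propose a workaround using the normalization of $\oY$ and reflexive hulls; this works, but it is heavier than necessary. The paper's device is cleaner: choose $\sH$ very ample on $\oY$ with $H^0\big(\oY, \sH \otimes (\iota_*\sO_Y(B))^*\big) \neq 0$, giving a single nonzero, hence injective, map $\sO_Y(B) \into \sH|_Y$ of line bundles \emph{on $Y$}; then simply take its $m$-th tensor power \emph{on $Y$}, where everything is invertible, to get $\sO_Y(mB) \into \sH^m|_Y$ for every $m$ at once. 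Only afterwards does one push forward, using $\iota_*(\sH^m|_Y) \simeq \sH^m$, which holds because $\sH^m$ is a line bundle and $\depth_{\oY\setminus Y}\sO_\oY \geq 2$. This sidesteps all reflexive-hull and normalization bookkeeping. Both arguments are correct; yours is just slightly more elaborate in the middle step than it needs to be.
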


\begin{proof}
  Let $\iota: Y\hookrightarrow \oY$ be a projective closure of $Y$ 
  and set $\sB_m=\iota_*\sO_Y(mB)$. Let $\sH$ be a very ample invertible sheaf on
  $\oY$ such that $H^0(\oY, \sH\otimes (\sB_1)^{*})\neq 0$ where $(\sB_1)^{*}$ is the
  dual of $\sB_1$. It follows that there exists an embedding $\sO_Y(B)\into \sH|_Y$
  and hence for all $m>0$ another embedding $\sO_Y(mB)\into \sH^m|_Y$. Pushing this
  forward to $\oY$ one obtains that $\sB_m \subseteq \iota_*\sH^m|_Y\simeq \sH^m$.
  Note that the last isomorphism follows by the condition of $Y$ being almost
  projective/proper, that is, because $\depth_{\oY\setminus Y}\sO_{\oY}\geq 2$.
  Finally this implies that
  $$ 
  h^0(Y, \sO_Y(mB)) = h^0(\oY, \sB_m) \leq h^0(\oY, \sH^m) \sim c\cdot m^n,
  $$
  where the last inequality follows from
  \cite[I.7.5]{Hartshorne_Algebraic_geometry}.
\end{proof}

\begin{definition}
  Let $Y$ be an almost proper variety of dimension $n$. A Cartier divisor $B$ on $Y$
  is called \emph{big} if $h^0(Y,\sO_Y(mB))> c\cdot m^n$ for some $c>0$ constant and
  $m\gg 1$ integer. A line bundle $\sL$ is called \emph{big} if the associated
  Cartier divisor is big.
\end{definition}

\begin{lemma}
  \label{lem:big-is-the-same}
  Let $Y$ be an almost proper variety of dimension $n$ and $\iota: Y\hookrightarrow
  \oY$ a projective closure of $Y$.  Let $\oB$ be a Cartier divisor on $\oY$ and
  denote its restriction to $Y$ by $B=\oB|_Y$. Then $B$ is big if and only if $\oB$
  is big.
\end{lemma}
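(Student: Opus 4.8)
The plan is to observe that, because $Y$ is embedded in $\oY$ as a big open set, the two bigness conditions are literally conditions on one and the same sequence of integers, namely $h^0(\oY,\sO_{\oY}(m\oB))$ for $m>0$, so the equivalence becomes immediate once that identification is made. The only substantive input is the depth hypothesis packaged into the notion of a proper closure, exactly as exploited in the proof of \autoref{lem:upper-bound-on-sections}.

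First I would record that $\dim\oY=\dim Y=n$, since a big open set is dense. Now fix $m>0$. As $\oB$ is Cartier, $\sO_{\oY}(m\oB)$ is locally free, hence $\depth_{\oY\setminus Y}\sO_{\oY}(m\oB)=\depth_{\oY\setminus Y}\sO_{\oY}\geq 2$. A coherent sheaf whose depth along a closed subset is at least $2$ agrees with the pushforward of its restriction to the complement — this is the same fact already used in the proof of \autoref{lem:upper-bound-on-sections} — so the canonical map
\[
  \sO_{\oY}(m\oB)\;\longrightarrow\;\iota_*\bigl(\sO_{\oY}(m\oB)|_Y\bigr)=\iota_*\sO_Y(mB)
\]
is an isomorphism. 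Taking global sections yields $h^0(Y,\sO_Y(mB))=h^0(\oY,\sO_{\oY}(m\oB))$ for every $m>0$.

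It then only remains to unwind the definitions. By definition $B$ is big on $Y$ iff $h^0(Y,\sO_Y(mB))>c\,m^n$ for some $c>0$ and all $m\gg 1$; and, $\oY$ being projective, hence (almost) proper, of dimension $n$, the divisor $\oB$ is big iff $h^0(\oY,\sO_{\oY}(m\oB))>c\,m^n$ for some $c>0$ and all $m\gg 1$. Since the two functions of $m$ coincide, the two conditions are equivalent (and \autoref{lem:upper-bound-on-sections} provides the matching upper bound $h^0\le c'm^n$ on both sides, so bigness is in each case the assertion that the top growth rate $m^n$ is attained, though this is not needed for the equivalence). I do not expect a genuine obstacle here: the one step deserving attention is the identification $\iota_*\sO_Y(mB)\simeq\sO_{\oY}(m\oB)$, and that is forced by the depth condition built into the definition of a big open set.
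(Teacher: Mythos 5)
Your argument is correct and follows the same route as the paper: the key identification $\iota_*\sO_Y(mB)\simeq\sO_{\oY}(m\oB)$, forced by $\depth_{\oY\setminus Y}\sO_{\oY}\geq 2$ for the locally free sheaf $\sO_{\oY}(m\oB)$, is exactly what the paper's one-line proof invokes, and you have simply spelled out how it makes the two $h^0$-counting conditions literally the same. No issues.
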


\begin{proof}
  Clear from the definition and the fact that $\iota_*\sO_Y(mB)\simeq
  \sO_{\oY}(m\oB)$ for every $m\in \bZ$.
\end{proof}

\begin{remark}
  Note that it is generally not assumed that $B$ extends to $\oY$ as a Cartier
  divisor.
\end{remark}

\begin{lemma}
  \label{lem:equivalent-props-of-big}
  Let $Y$ be an almost projective variety of dimension $n$ and $B$ a Cartier divisor
  on $Y$.  Then the following are equivalent:
  \begin{enumerate}
  \item $mB\sim A+E$ where $A$ is ample and $E$ is effective for some $m>0$,
  \item the rational map $\phi_{|mB|}$
    associated to the linear system $|mB|$ is birational for some $m>0$,
  \item the projective closure of the image $\phi_{|mB|}$ has
    dimension $n$ for some $m>0$,  and
  \item $B$ is big.
  \end{enumerate}
\end{lemma}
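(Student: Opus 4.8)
The plan is to establish the cycle $(1)\Rightarrow(2)\Rightarrow(3)\Rightarrow(4)\Rightarrow(1)$. Throughout I would fix a projective closure $\iota\colon Y\hookrightarrow\oY$ and set $\sB_m:=\iota_*\sO_Y(mB)$; since $Y$ is a big open subset of $\oY$, this is a torsion-free coherent sheaf on $\oY$ with $H^0(\oY,\sB_m)=H^0(Y,\sO_Y(mB))$, exactly as in the proof of \autoref{lem:upper-bound-on-sections}. The implication $(2)\Rightarrow(3)$ is immediate, since a rational map that is birational onto its image has image of dimension $\dim Y=n$. For $(1)\Rightarrow(2)$: here ``$A$ ample'' is to be read as ``$A$ is the restriction to $Y$ of an ample Cartier divisor on a proper closure'' (any reasonable reading reduces to this), so for $k\gg 0$ the divisor $kA$ is the restriction of a very ample divisor on $\oY$ and $\phi_{|kA|}$ is the restriction to $Y$ of a closed embedding of $\oY$, in particular birational onto its image; multiplying a basis of $H^0(Y,\sO_Y(kA))$ by the section cutting out $kE$ exhibits it as a linear subsystem of $|kmB|$ defining the same rational map, so $\phi_{|kmB|}$ equals $\phi_{|kA|}$ followed by a linear projection, which forces $\phi_{|kmB|}$ to be generically injective and dominant onto an $n$-dimensional image, hence birational onto it.

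For $(3)\Rightarrow(4)$, which needs neither the closure nor \autoref{lem:upper-bound-on-sections}, I would fix $m$ with $\dim V=n$, where $V\subseteq\bP^N$ is the closure of the image of $\phi_{|mB|}$. Identifying $H^0(\bP^N,\sO_{\bP^N}(1))$ with $W:=H^0(Y,\sO_Y(mB))$ and $\sO_{\bP^N}(1)$ with the line bundle defining $\phi_{|mB|}$, the multiplication map $\Sym^k W\to H^0(Y,\sO_Y(kmB))$ has kernel exactly the degree-$k$ piece $I_V(k)$ of the homogeneous ideal of $V$: a degree-$k$ form in the chosen sections is the zero section on $Y$ if and only if it vanishes on the dense image of $\phi_{|mB|}$, i.e.\ on $V$. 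Hence $h^0(Y,\sO_Y(kmB))\ge\dim(\Sym^k W/I_V(k))$, the Hilbert function of $(V,\sO_V(1))$, which is $\ge c\,k^{n}$ for $k\gg 0$ because $\dim V=n$ and $\sO_V(1)$ is ample on $V$. That this growth along multiples of $m$ is genuine bigness is standard (and in any case can be recovered a posteriori once $(1)$ is known).

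The main step is $(4)\Rightarrow(1)$, a version of Kodaira's lemma over an almost proper base, where \autoref{lem:upper-bound-on-sections} plays the role that Serre vanishing plays in the proper case. Let $\sH$ be the very ample invertible sheaf on $\oY$ from the proof of \autoref{lem:upper-bound-on-sections}, so that $\sB_m\hookrightarrow\sH^{m}$ for all $m$. For a general $\oH_0\in|\sH|$ — avoiding the finitely many associated points of $\sB_m$ and of $\sH^{m}/\sB_m$ — tensoring $0\to\sO_{\oY}(-\oH_0)\to\sO_{\oY}\to\sO_{\oH_0}\to 0$ with $\sB_m$ stays exact and $\sB_m\hookrightarrow\sH^{m}$ restricts to an injection on $\oH_0$, so taking cohomology gives
\begin{equation*}
  h^0\bigl(\oY,\sB_m\otimes\sO_{\oY}(-\oH_0)\bigr)\ \ge\ h^0(\oY,\sB_m)-h^0\bigl(\oH_0,\sH^{m}|_{\oH_0}\bigr).
\end{equation*}
Since $B$ is big the first term on the right is $h^0(Y,\sO_Y(mB))>c\,m^{n}$, while $h^0(\oH_0,\sH^{m}|_{\oH_0})=O(m^{n-1})$ by applying \autoref{lem:upper-bound-on-sections} to the $(n-1)$-dimensional projective variety $\oH_0$. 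Hence for $m\gg 0$ there is a nonzero section of $\sB_m\otimes\sO_{\oY}(-\oH_0)$; restricting it to $Y$ shows $mB-\oH_0|_Y\sim E$ for an effective divisor $E$, i.e.\ $mB\sim A+E$ with $A:=\oH_0|_Y$ the restriction of a very ample divisor on $\oY$, which is $(1)$.

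I expect the crux to be exactly $(4)\Rightarrow(1)$: Kodaira's lemma is classical when $Y$ is proper, but over an almost proper base there is no direct Riemann–Roch, and the point is to transport the count of sections to the projective closure through the torsion-free pushforwards $\sB_m$ and then feed in the uniform polynomial bound of \autoref{lem:upper-bound-on-sections} twice — once via $\sB_m\hookrightarrow\sH^{m}$ for the estimate on $\oY$, and once for the $O(m^{n-1})$ bound on the hyperplane section $\oH_0$. The remaining points — coherence of $\sB_m$, that a general $\oH_0$ misses the relevant associated points, and the passage from bigness along an arithmetic progression to bigness — are routine consequences of $Y$ being a big open subset of $\oY$.
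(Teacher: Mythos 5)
Your proposal is correct and follows the same overall scaffolding as the paper's own proof (which itself adapts \cite[2.60]{Kollar_Mori_Birational_geometry_of_algebraic_varieties}): the elementary implications $(1)\Rightarrow(2)\Rightarrow(3)$, the pullback of sections from the image to get $(3)\Rightarrow(4)$, and a Kodaira--lemma style argument for the key implication $(4)\Rightarrow(1)$ driven by the polynomial bound of \autoref{lem:upper-bound-on-sections}. The only implementation difference worth noting is in $(4)\Rightarrow(1)$: the paper intersects $\oA$ down to $A=\oA\cap Y$, quotes \cite[5.2]{MR0460317} to conclude $A$ is almost projective, and then applies \autoref{lem:upper-bound-on-sections} to $(A,B|_A)$ together with the left-exact sequence on $Y$; you instead stay on $\oY$ with the torsion-free pushforwards $\sB_m$, tensor the hyperplane sequence with $\sB_m$ (choosing $\oH_0$ generically for each $m$ so that nothing gets killed), and apply the lemma to the projective variety $\oH_0$ via the embedding $\sB_m\hookrightarrow\sH^m$. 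Your variant is a bit more self-contained in that it dispenses with the Bertini-type reference for almost projective varieties, at the cost of the small exactness bookkeeping with $\sB_m$; the paper's version is shorter once that reference is taken as given. Both are valid, and the rest of your argument (including the honest caveat that $(3)\Rightarrow(4)$ a priori only gives growth along multiples of $m$, a point the paper's proof also elides) agrees with the paper's.
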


\begin{proof}
  The proof included in
  \cite[2.60]{Kollar_Mori_Birational_geometry_of_algebraic_varieties} works almost
  verbatim. We include it for the benefit of the reader since we are applying it in a
  somewhat unusual setup.
  
  Clearly, the implications $(1)\Rightarrow (2)\Rightarrow (3)$ are obvious.
  To prove $(3) \Rightarrow (4)$, let $T=\overline{\phi_{|mB|}(Y)}\subseteq \bP^N$. By
  assumption $\dim T=n$, so by \cite[I.7.5]{Hartshorne_Algebraic_geometry} the
  Hilbert polynomial of $T$ is
  $
  h^0(T, \sO_T(l)) = (\deg T/n!)\cdot l^n + (\text{lower order terms}).
  $
  By definition of the associated rational map $\phi_{|mB|}$ induces an injection
  $H^0(T, \sO_T(l)) \subseteq H^0(Y, \sO_Y(lmB))$, which proves $(3) \Rightarrow
  (4)$.

  To prove $(4) \Rightarrow (1)$, let $B$ be a Cartier divisor on $Y$ and let $\iota:
  Y\hookrightarrow \oY$ be a projective closure of $Y$.  Further let $\oA$ be a
  general member of a very ample linear system on $\oY$.  Then $A:=\oA\cap Y$ is an
  almost projective variety by \cite[5.2]{MR0460317}.  It follows by
  Lemma~\ref{lem:upper-bound-on-sections} that
    $h^0(A, \sO_A(mB|_A)) \leq c\cdot m^{n-1}$, 
    which, combined with the exact sequence
  $$
  0 \to H^0(Y, \sO_Y(mB-A)) \to H^0(Y, \sO_Y(mB)) \to H^0(A, \sO_A(mB|_A)),
  $$
  shows that if $B$ is big, then $H^0(Y, \sO_Y(mB-A))\neq 0$ for $m\gg 0$ which
  implies $(1)$ as desired.
\end{proof}

The notion of weak-positivity used in this article is somewhat weaker than that of
\cite{Viehweg_Quasi_projective_moduli}. The main difference is that we do not require
being global generated on a fixed open set for every $b>0$ in the next
definition. This is a minor technical issue and proofs of the basic properties works
just as for the definitions of \cite{Viehweg_Quasi_projective_moduli}, after
disregarding the fixed open set. The reason why this weaker form is enough for us is
that we use it only as a tool to prove bigness, where there is no difference between
our definition and that of \cite{Viehweg_Quasi_projective_moduli}.

\begin{definition}
  \label{def:wp_big}
  Let $X$ be a normal, almost projective variety and $\sH$ an ample line bundle on $X$.
  \begin{enumerate}
  \item A coherent sheaf $\sF$ on $X$ is weakly-positive, if for every integer $a>0$
    there is an integer $b>0$, such that $\Sym^{[ab]} ( \sF ) \otimes \sH^b$ is
    generically globally generated. Note that this does not depend on the choice of
    $\sH$ \cite[Lem 2.14.a]{Viehweg_Quasi_projective_moduli}.
  \item A coherent sheaf $\sF$ on $X$ is big if there is an integer $a>0$ such that
    $\Sym^{[a]}(\sF) \otimes \sH^{-1}$ is generically globally generated. This
    definition also does not depend on the choice of $\sH$ by a similar argument as for
    the previous point. Further, this definition is compatible with the above
    definition of bigness for divisors and the correspondence between divisors and
    rank one reflexive sheaves.
  \end{enumerate}
\end{definition}

\begin{lemma}
  \label{lem:wp_big_properties}
  Let $X$ be a normal, almost projective variety, $\sF$ a weakly-positive and $\sG$ a
  big coherent sheaf. Then
  \begin{enumerate}
  \item \label{itm:tensors} $\bigoplus \sF$, $\Sym^{[a]}(\sF)$, $\left[ \bigotimes
    \right] \sF$, $\det \sF$ are weakly-positive,
  \item \label{itm:gen_surj} generically surjective images of $\sF$ are
    weakly-positive, and those 
    of $\sG$ are big,
  \item \label{itm:wp_ample} if $\sA$ is an ample line bundle, then $\sF \otimes \sA$
    is big, and
  \item
    \label{itm:wp_big} if $\sG$ is of rank $1$, then $\sF [\otimes] \sG$ is big.
  \end{enumerate}
\end{lemma}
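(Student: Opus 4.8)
The statement is Lemma~\ref{lem:wp_big_properties}, collecting standard formal properties of weak positivity and bigness in the sense of \autoref{def:wp_big}. The plan is to reduce everything to the generic global generation condition, and to exploit the fact that all statements are insensitive to restricting to a big open set (so we may pretend $\sF$, $\sG$ are locally free and work with honest symmetric powers instead of reflexive ones). Throughout, fix an ample line bundle $\sH$ on $X$; recall that for both definitions the particular choice of $\sH$ is irrelevant.

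\begin{proof}
All statements concern reflexive sheaves, or rather their reflexive hulls; since forming reflexive hulls, symmetric powers, determinants and tensor products all commute with restriction to a big open set, and since generic global generation is detected on any dense open set, we may restrict to the smooth locus of $X$ and assume $\sF$, $\sG$ are locally free. We also fix an ample line bundle $\sH$ on $X$, all notions of weak positivity and bigness being independent of this choice by \cite[Lem 2.14.a]{Viehweg_Quasi_projective_moduli}.

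\emph{(\ref{itm:tensors})} For $\bigoplus\sF$ this is immediate, since $\Sym^{[ab]}(\bigoplus \sF)$ contains $\Sym^{[ab]}(\sF)$ as a direct summand, so twisting by $\sH^b$ and using generic global generation of $\Sym^{[ab]}(\sF)\otimes\sH^b$ gives the claim; the same observation, applied to the summand $\Sym^{[b]}(\Sym^{[a]}(\sF))\subseteq \Sym^{[ab]}(\sF)$ handled after replacing $a$ by $ab$ in the defining property, yields that $\Sym^{[a]}(\sF)$ is weakly positive. For $\left[\bigotimes\right]\sF$ one notes that a suitable reflexive tensor power of it is a direct summand of a reflexive symmetric power of $\bigoplus\sF$, or one argues directly: $\Sym^{[ab]}\big(\left[\bigotimes\right]\sF\big)$ is a quotient (or subsheaf, after restricting to a big open set) of an iterated tensor power of copies of $\Sym^{[*]}(\sF)$, each of which becomes generically globally generated after a twist by a bounded power of $\sH$, and a tensor product of generically globally generated sheaves is generically globally generated. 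Finally $\det\sF$ is a quotient of $\bigwedge^{\rk\sF}\sF$, which in turn is a quotient (after restricting to a big open set where $\sF$ is free) of $\sF^{\otimes \rk\sF}$, hence of a reflexive tensor power of $\sF$; so $\det\sF$ is a generically surjective image of $\left[\bigotimes\right]\sF$ and the result will follow from part~(\ref{itm:gen_surj}).

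\emph{(\ref{itm:gen_surj})} Let $\sF\twoheadrightarrow \sQ$ be generically surjective with $\sF$ weakly positive. After restricting to a big open set we may assume the map is surjective, and then $\Sym^{[ab]}(\sF)\otimes\sH^b\twoheadrightarrow \Sym^{[ab]}(\sQ)\otimes\sH^b$ is surjective; the source is generically globally generated for suitable $b$ by hypothesis, hence so is the target, proving $\sQ$ weakly positive. The argument for $\sG$ big is identical, using a single $a$ with $\Sym^{[a]}(\sG)\otimes\sH^{-1}$ generically globally generated. Note this closes the gap left in part~(\ref{itm:tensors}) for $\det\sF$.

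\emph{(\ref{itm:wp_ample})} Let $\sA$ be ample and $\sF$ weakly positive. Choose $a>0$ so large that $\sA^{a}\otimes\sH^{-1}$ is ample, and then choose $b>0$ with $\Sym^{[ab]}(\sF)\otimes\sH^b$ generically globally generated. Since $\sA^{ab}\otimes\sH^{-b}$ is ample (it is a power of $\sA^a\otimes\sH^{-1}$ times an ample line bundle), the sheaf
\begin{equation*}
\Sym^{[ab]}(\sF\otimes\sA)\otimes\sH^{-1}\;=\;\Sym^{[ab]}(\sF)\otimes\sA^{ab}\otimes\sH^{-1}\;=\;\big(\Sym^{[ab]}(\sF)\otimes\sH^{b}\big)\otimes\big(\sA^{ab}\otimes\sH^{-b-1}\big)
\end{equation*}
is a tensor product of a generically globally generated sheaf with an ample line bundle (after enlarging $a$ once more so that $\sA^{ab}\otimes\sH^{-b-1}$ is still ample and globally generated), hence is generically globally generated. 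This exhibits a single exponent $ab$ witnessing bigness of $\sF\otimes\sA$.

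\emph{(\ref{itm:wp_big})} Let $\sG$ be big of rank $1$ and $\sF$ weakly positive. By definition fix $a_0>0$ with $\sG^{[a_0]}\otimes\sH^{-1}$ generically globally generated, so $\sG^{[a_0]}$ is, generically, of the form (ample line bundle) $\otimes$ (globally generated), i.e.\ after restricting to a big open set $\sG^{[a_0 N]}\otimes \sH^{-N}$ is generically globally generated for all $N>0$. Given $a>0$, apply weak positivity of $\sF$ with exponent $a$: there is $b>0$ with $\Sym^{[ab]}(\sF)\otimes\sH^{b}$ generically globally generated. Then
\begin{equation*}
\Sym^{[ab a_0]}\big(\sF[\otimes]\sG\big)\otimes\sH^{-1}\;\cong\;\Sym^{[aba_0]}(\sF)\otimes\sG^{[aba_0]}\otimes\sH^{-1}
\end{equation*}
up to reflexive hull; writing $\Sym^{[aba_0]}(\sF)=\big(\Sym^{[ab]}(\sF)\otimes\sH^{b}\big)^{\text{(reflexive power)}}\otimes \sH^{-\text{(bounded)}}$ and absorbing the bounded negative power of $\sH$ into $\sG^{[aba_0]}\otimes\sH^{-1}$, which is still generically globally generated for $a$ large by bigness of $\sG$, the right-hand side becomes a tensor product of generically globally generated sheaves, hence generically globally generated. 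Thus $\sF[\otimes]\sG$ is big.
\end{proof}

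\emph{Main obstacle.} The only real subtlety — and the place where one must be careful rather than merely formal — is the bookkeeping with reflexive hulls versus honest symmetric/tensor powers in parts~(\ref{itm:wp_big}) and~(\ref{itm:tensors}): reflexive operations do not commute on the nose with tensor product or quotient, so one genuinely needs the reduction to a big open set (where $\sF$, $\sG$ are locally free) to move freely between $\Sym^{[*]}$ and $\Sym^{*}$, and then to observe that generic global generation is unaffected by this restriction. The ampleness juggling in~(\ref{itm:wp_ample}) and~(\ref{itm:wp_big}) — splitting an ample twist as $\sH^{b}\cdot(\text{ample})$ for suitably large exponent — is routine once one commits to choosing the exponent $a$ large enough at the outset.
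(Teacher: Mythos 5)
Part (2) matches the paper, and parts (3)--(4) take a more direct route than the paper's (which derives (4) from (1)--(3) via the map $\sH\to\sG^{[a]}$); your direct arguments are morally sound but need bookkeeping: in (3) the exponent $b$ is produced \emph{after} you pick $a$, so you cannot then ``enlarge $a$ once more'' --- instead choose $a$ at the outset with $\sA^a\otimes\sH^{-2}$ globally generated and take $\sH$ very ample, so that $\sA^{ab}\otimes\sH^{-b-1}=(\sA^a\otimes\sH^{-2})^b\otimes\sH^{b-1}$ is globally generated for any $b\ge 1$; and in (4) the ``equation'' $\Sym^{[aba_0]}(\sF)=(\Sym^{[ab]}(\sF)\otimes\sH^b)^{(\mathrm{power})}\otimes\sH^{-(\mathrm{bdd})}$ should be replaced by the surjection $\Sym^{[a_0]}\bigl(\Sym^{[ab]}(\sF)\otimes\sH^b\bigr)\twoheadrightarrow\Sym^{[aba_0]}(\sF)\otimes\sH^{a_0b}$, after which the claim follows as you indicate provided $a>a_0$.

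The genuine gap is in your part (1), which the paper does not prove but cites from \cite[2.16(b), 2.20]{Viehweg_Quasi_projective_moduli} precisely because it is not a triviality. Your argument for $\bigoplus\sF$ is logically backwards: you deduce generic global generation of $\Sym^{[ab]}(\bigoplus\sF)\otimes\sH^b$ from the fact that it contains the generically globally generated $\Sym^{[ab]}(\sF)\otimes\sH^b$ as a direct summand. But a sheaf with a generically globally generated direct summand need not be generically globally generated (consider $\sO\oplus\sO(-1)$ on $\bP^1$). What is actually needed is control of \emph{every} summand of $\Sym^{[ab]}(\bigoplus\sF)$, i.e.\ of the mixed pieces $\Sym^{[i]}(\sF)\otimes\Sym^{[j]}(\sF)$ with $i+j=ab$, and more generally of the Schur functors of $\sF$ --- this is the nontrivial plethysm content of Viehweg's lemmas. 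Your treatment of $\Sym^{[a]}(\sF)$ has an additional problem: in characteristic zero $\Sym^{ab}(V)$ is a direct summand of $\Sym^{b}(\Sym^{a}(V))$, not the reverse as you assert, and even with the corrected inclusion the same ``summand implies whole'' fallacy recurs. The argument for $[\bigotimes]\sF$ rests on (1) for $\bigoplus\sF$ and inherits the same gap. Here you should simply invoke Viehweg as the paper does, rather than claim the statement is ``immediate.''
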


\begin{proof}
  Let us fix an ample line bundle $\sH$.  \autoref{itm:tensors} follows verbatim from
  \cite[2.16(b) and 2.20]{Viehweg_Quasi_projective_moduli}, and
  \autoref{itm:gen_surj} follows immediately from the definition. Indeed, given
  generically surjective morphisms $\sF \to \sF'$ and $\sG \to \sG'$, there are
  generically surjective morphisms $\Sym^{[ab]} ( \sF ) \otimes \sH^b \to \Sym^{[ab]}
  ( \sF' ) \otimes \sH^b$ and $\Sym^{[a]}(\sG) \otimes \sH^{-1} \to \Sym^{[a]}(\sG')
  \otimes \sH^{-1}$ proving the required generic global generation.

  To prove \autoref{itm:wp_ample}, take an $a>0$, such that $\sA^a \otimes \sH^{-1}$
  is effective and $\sA^{c}$ is very ample for $c>a$. Then for a $b>a$ such that
  $\Sym^{[3b]} ( \sF ) \otimes \sA^{b}$ is globally generated, the embedding
  \begin{equation*}
    \Sym^{[3b]} ( \sF ) \otimes \sA^{b} \hookrightarrow  \Sym^{[3b]} ( \sF ) \otimes
    \sA^{3b -a} \simeq  \Sym^{[3b]} ( \sF \otimes \sA ) \otimes \sA^{-a} \hookrightarrow
    \Sym^{[3b]} ( \sF \otimes \sA ) \otimes \sH^{-1} 
  \end{equation*}
  is generically surjective which implies the statement.

  To prove \autoref{itm:wp_big} take an $a$, such that $\sH^{-1} \otimes \sG^{[a]} $
  is generically globally generated. This corresponds to a generically surjective
  embedding $\sH \to \sG^{[a]}$. According to \autoref{itm:tensors} and
  \autoref{itm:wp_ample}, $\left(\Sym^{[a]}(\sF) \otimes \sH\right)$ is big. Hence,
  by \autoref{itm:gen_surj}, $\Sym^{[a]}(\sF) \otimes \sG^{[a]} \simeq \Sym^{[a]}(\sF
  [ \otimes] \sG)$ is also big. Therefore, for some $b>0$, $\Sym^{[b]}(
  \Sym^{[a]}(\sF [ \otimes] \sG)) \otimes \sH^{-1}$ is generically globally generated
  and then the surjection $\Sym^{[b]}( \Sym^{[a]}(\sF [ \otimes] \sG)) \to
  \Sym^{[ab]}(\sF [ \otimes] \sG))$ concludes the proof.
\end{proof}

\section{Ampleness Lemma}
\label{sec:bigness_lemma}

\begin{theorem}
  \label{thm:generalized_ampleness_lemma}
  \label{prop:ampleness}
  Let $W$ be a weakly-positive vector bundle of rank $w$ on a normal almost
  projective variety $Y$ with a reductive structure group $G \subseteq \GL(k,w)$ the
  closure of the image of which in the projectivization $\bP(\Mat(k,w))$ of the space
  of $w \times w$ matrices is normal and let $Q_i$ be vector bundles of rank $q_i$ on
  $Y$ admitting generically surjective homomorphisms $\alpha_i : W \rightarrow Q_i$
  for $i=0,\dots,n$. Let $Y(k) \to \varprod_{i=0}^n \Gr(w,q_i)(k)/G(k)$ be the
  induced classifying map of sets. Assume that this map has finite fibers on a dense
  open set of $Y$.  Then
  $ \bigotimes_{i=0}^n  \det Q_i$
  is big.
\end{theorem}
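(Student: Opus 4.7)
The plan is to produce a $G$-equivariant classifying morphism from the frame bundle of $W$ into a Segre--Pl\"ucker ambient $\bP(V)$, descend to the GIT quotient of $\bP(V)$ by $G$ using reductivity and the normality of $\overline{G}\subset \bP(\Mat(k,w))$, and then combine the finite-fibers hypothesis with the weak positivity of $W$ to conclude bigness of $\bigotimes_{i=0}^n \det Q_i$.

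Concretely, over a big open set $Y^\circ\subseteq Y$ where every $\alpha_i$ is surjective, each quotient $W\twoheadrightarrow Q_i$ defines a point of the relative Grassmannian $\underline{\Gr}(W,q_i)$ of rank-$q_i$ quotients. The structure-group hypothesis realizes $W$ as associated to a principal $G$-bundle $P\to Y^\circ$, so the $\alpha_i$ lift to a $G$-equivariant map $P\to \prod_{i=0}^n \Gr(w,q_i)$. Composing with the Pl\"ucker embeddings $\Gr(w,q_i)\hookrightarrow \bP(\wedge^{q_i} k^w)$ and the Segre embedding, with $V:=\bigotimes_{i=0}^n \wedge^{q_i} k^w$, yields a $G$-equivariant map $\Phi: P\to \bP(V)$ whose pullback of $\sO_{\bP(V)}(1)$ descends on $Y^\circ$ to $\bigotimes_{i=0}^n \det Q_i$.

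Next I would invoke GIT. Since $G$ is reductive, the invariant ring $R:=\bigoplus_{m\geq 0} H^0(\bP(V),\sO(m))^G$ is finitely generated, and its $\Proj$ is the GIT quotient $\bP(V)/\!/G$, carrying an ample line bundle $\sL$ whose $m$-th power pulls back on the semistable locus to $\sO_{\bP(V)}(m)$. The normality of $\overline{G}\subset \bP(\Mat(k,w))$ is used at this step to guarantee that enough $G$-invariant sections of $\sO_{\bP(V)}(m)$ exist and separate the orbits lying in the image of $\Phi$, without the stabilizer hypothesis of Koll\'ar's original ampleness lemma. The finite-fibers hypothesis then says that the induced rational map $\overline{\Phi}:Y^\circ \dashrightarrow \bP(V)/\!/G$ is generically finite onto its image, so its image has dimension $\dim Y$ and $\overline{\Phi}^*\sL$ is big.

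Finally, to bring in the weak positivity of $W$ and conclude bigness of $\bigotimes_{i=0}^n \det Q_i$ itself, I would observe that each $G$-invariant section of $\sO_{\bP(V)}(m)$ pulls back, via descent along $P\to Y^\circ$, to a global section of a reflexive sheaf built from $\Sym^{[\bullet]}(W)$ tensored with $\bigotimes_{i=0}^n \det Q_i^{\otimes m}$. Weak positivity of $W$, together with \autoref{lem:wp_big_properties}, ensures that this auxiliary sheaf becomes generically globally generated after an $\sH$-twist of asymptotically negligible weight. Combining this with the generic finiteness of $\overline{\Phi}$ and \autoref{lem:equivalent-props-of-big}, one produces, for some fixed large $m$, enough sections of a positive multiple of $\bigotimes_{i=0}^n \det Q_i$ defining a generically finite rational map to projective space, which is the required bigness. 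The main obstacle is the GIT descent under the weakened hypotheses: one must guarantee existence and separating properties of enough $G$-invariant polynomials on $V$, pin down their pullbacks as living in reflexive powers of $\bigotimes_{i=0}^n \det Q_i$ rather than in some more complicated sheaf, and verify that the weak-positivity twist by $\sH$ can be absorbed without destroying generic finiteness; the normality of $\overline{G}\subset \bP(\Mat(k,w))$ is the technical input that makes this possible.
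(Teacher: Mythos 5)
Your overall architecture (frame bundle, Pl\"ucker/Segre into a projective space, then use the finite-fibers hypothesis to get generic finiteness and hence bigness, then feed weak positivity of $W$ through a twist) is in the right spirit, but the central GIT step has a genuine gap, and it is precisely the point where the paper does something different. When you pass to $\bP(V)/\!/G$ you need (a) the image of your classifying map to lie in the semistable locus, and (b) the invariant sections to separate the relevant orbits. Neither is available here: nothing guarantees semistability of the quotients $[W_y\twoheadrightarrow (Q_i)_y]$, and invariants only separate \emph{closed} orbits, so two points $y,z\in Y$ whose quotient tuples lie in distinct $G(k)$-orbits can still be identified in $\bP(V)/\!/G$ whenever their orbit closures meet. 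Consequently the hypothesis that the set-theoretic classifying map $Y(k)\to\varprod_i\Gr(w,q_i)(k)/G(k)$ has finite fibers does \emph{not} yield generic finiteness of $\overline{\Phi}:Y^\circ\dashrightarrow \bP(V)/\!/G$; to make that inference you would need properness/closedness of the generic orbit with finite stabilizer, i.e.\ essentially the stabilizer-type hypothesis of Koll\'ar's original Ampleness Lemma that this theorem is explicitly designed to drop. Also, the normality of $\overline{G}\subseteq\bP(\Mat(k,w))$ cannot rescue this: normality of an orbit closure has no bearing on semistability or on separation of orbits by invariants.

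The paper avoids the quotient altogether. It compactifies the frame-bundle side inside $\bP(\oplus^w W^*)$, restricts to the $G$-invariant subscheme $\mathbf{P}$ swept out by $\overline{G}$ (this is where normality of $\overline{G}$ is actually used: it makes $\mathbf{P}$ normal, so that after blowing up one can push the section back down with $\sigma_*\sO_{\wt{\mathbf{P}}}\simeq\sO_{\mathbf{P}}$), maps $\mathbf{P}\setminus\Upsilon$ to the honest Grassmannian $\Gr(w,q)$, and then works with the image $T\subseteq Y\times\Gr$ of the graph. The finite-fibers hypothesis is used only to show that the projection $T\to\Gr$ is generically finite, so the \emph{ample} Pl\"ucker bundle $\sO_{\Gr}(1)$ pulls back to a big line bundle on $T$ --- no invariant sections are ever needed. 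The resulting section of $\wt\nu^*\sO_{\Gr}(m)\otimes\wt\pi^*\sO_Y(-H)$ descends to a nonzero map $(\pi_*\sO_{\mathbf{P}}(mq))^*\otimes\sO_Y(H)\to(\det Q)^m$, and reductivity enters only through the fact (characteristic $0$) that the $G$-invariant subbundle $(\pi_*\sO_{\mathbf{P}}(mq))^*$ of $\Sym^{mq}(\oplus^w W)$ is a direct summand, hence weakly positive; bigness of $\det Q$ follows from \autoref{lem:wp_big_properties}. If you want to salvage your plan, replace the GIT quotient with this graph/correspondence argument (or justify stability of the generic point, which would reinstate the hypothesis the theorem removes); also note the paper first reduces to a single quotient bundle via a diagonal embedding of the product of Grassmannians, rather than handling all $Q_i$ at once.
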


\begin{remark}
  \label{rem:identification}
  One way to define the above classifying map is to choose a basis on every fiber of
  $W$ over every closed point up to the action of $G(k)$. For this it is enough to
  fix a basis on one fiber of $W$ over a closed point, and transport it around using
  the $G$-structure. In fact, a little less is enough. Given a basis, multiplying
  every basis vector by an element of $k^\times$ does not change the corresponding
  rank $q$ quotient space, and hence the classifying map, so we only need to fix a
  basis up to scaling by an element of $k^\times$. To make it easier to talk about
  these in the sequel we will call a basis which is determined up to scaling by an
  element of $k^\times$ a \emph{homogenous basis}. 
\end{remark}

\begin{remark}
  \label{rem:normal}
  The normality assumption in \autoref{thm:generalized_ampleness_lemma} is satisfied
  if $W= \Sym^d V$ with $v:= \rk V$ and $G:=\GL(k,v)$ acting via the representation
  $\Sym^d$. Indeed, in this case the closure of the image of $G$ in $\bP(\Mat(k,w))$
  agrees with the image of the embedding $\Sym^d : \bP(\Mat(k,v)) \to
  \bP(\Mat(k,w))$. In particular, it is isomorphic to $\bP(\Mat(k,v))$, which is
  smooth.

  For more results regarding when this normality assumption is satisfied in more
  general situations see \cite{MR1992080,MR2090668,MR2976317} and other references in
  those papers.
\end{remark}

\begin{remark}
  \autoref{thm:generalized_ampleness_lemma} is a direct generalization of the core
  statement \cite[3.13]{Kollar_Projectivity_of_complete_moduli} of Koll\'ar's
  Ampleness Lemma \cite[3.9]{Kollar_Projectivity_of_complete_moduli}.
  This statement is more general in several ways:
  \begin{itemize}[leftmargin=1.5em]
  \item The finiteness assumption on the classifying map is weaker (no assumption on
    the stabilizers).
  \item The ambient variety $Y$ is only assumed to be almost
    projective instead of projective.
  \end{itemize}
  Our proof is based on Koll\'ar's original idea with some modifications to allow for
  weakening the finiteness assumptions. 

  Note that if $Y$ is projective and $W$ is nef on $Y$, then it is also weakly
  positive \cite[Prop.~2.9.e]{Viehweg_Quasi_projective_moduli}.
\end{remark}

We will start by making a number of reduction steps to simplify the statement.  The
goal of this reduction is to show that it is enough to prove the following theorem
which contains the essential statement.

\begin{theorem}
  \label{thm:impr-bign-lemma}
  Let $W$ be a weakly-positive vector bundle of rank $w$ on a normal almost
  projective variety $Y$ with a reductive structure group $G \subseteq \GL(k,w)$ the
  closure of the image of which in the projectivization $\bP(\Mat(k,w))$ of the space
  of $w \times w$ matrices is normal and let $\alpha: W\twoheadrightarrow Q$ be a
  surjective morphism onto a vector bundle of rank $q$.  Let $Y(k) \to
  \Gr(w,q)(k)/G(k)$ be the induced classifying map. If this map has finite fibers on
  a dense open set of $Y$, then the line bundle
  $\det Q$
  is big.
\end{theorem}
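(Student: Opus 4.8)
The plan is to follow Koll\'ar's strategy for the Ampleness Lemma, carried out so that the auxiliary line bundle it produces is a power of $\det Q$ twisted by the inverse of a weakly-positive line bundle coming from $W$ --- harmless for bigness by \autoref{lem:wp_big_properties} --- and organized so that only the stated generic finiteness of the classifying map is used. By \autoref{lem:equivalent-props-of-big} it is enough to produce, for some $m>0$ and some weakly-positive line bundle $\sM$ obtained by a fixed tensorial construction from $W$, a linear subsystem of $\left|(\det Q)^{\otimes m}\otimes\sM^{-1}\right|$ defining a rational map $Y\dashrightarrow\bP^{N}$ whose image has dimension $\dim Y$: such a map shows $(\det Q)^{\otimes m}\otimes\sM^{-1}$ is big, and since $\sM$ is a weakly-positive line bundle \autoref{lem:wp_big_properties} then gives that $(\det Q)^{\otimes m}$, hence $\det Q$, is big.

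To set this up I would first pass to the frame bundle. The reductive structure group on $W$ means exactly that $W=P\times_{G}k^{w}$ for a principal $G$-bundle $\pi\colon P\to Y$; the surjection $\alpha$ lifts to a $G$-equivariant morphism $\tilde\alpha\colon P\to\Gr(w,q)$ to the Grassmannian of rank-$q$ quotients of $k^{w}$, and dividing by $G$ after composing with the Pl\"ucker embedding $\Gr(w,q)\hookrightarrow\bP\big(\textstyle\bigwedge^{q}(k^{w})^{*}\big)$ recovers the classifying map of the statement. The tautological quotient on $\Gr(w,q)$ pulls back to $\pi^{*}Q$, so $\tilde\alpha^{*}\sO_{\Gr(w,q)}(1)\cong\pi^{*}\det Q$, and since $P$ is a $G$-torsor (hence all stabilizers are trivial) this $G$-equivariant line bundle descends to $\det Q$ on $Y$. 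In particular $\det Q$ is a quotient of $\bigwedge^{q}W$, a direct summand of $W^{\otimes q}$, so $\det Q$ is already weakly-positive by \autoref{lem:wp_big_properties}; only its bigness is in question.

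The extra sections come from a uniform Chow-form construction. Put $M=\bP(\Mat(k,w))$ and let $\oG\subseteq M$ be the closure of the image of $G$, which is normal by hypothesis and stable under left multiplication by $G$. For a classifying point $\xi=\tilde\alpha(p)$ let $Z_{\xi}\subseteq M\times\Gr(w,q)$ be the closure of $\{(\,\overline{g},\,g^{-1}\!\cdot\xi\,):g\in G\}$; its first projection is a proper birational morphism onto the normal variety $\oG$, so the first projection of $Z_{\xi}$ is exactly the fixed variety $\oG$, while its second projection is $\overline{G\cdot\xi}$. Because the first factor is fixed, as $\xi$ ranges over a suitable dense open the cycles $Z_{\xi}$ form a bounded family of constant multidegree, so their Chow forms all lie in one fixed projective space $\bP^{N}$; the assignment $Z_{\xi}\mapsto[\mathrm{CF}(Z_{\xi})]$ is $G$-equivariant, and each $[\mathrm{CF}(Z_{\xi})]$ is $G$-semistable because a one-parameter subgroup of $G$ moves $Z_{\xi}$ to a cycle with the same first projection $\oG$ and second projection still $\overline{G\cdot\xi}$, hence to a cycle with a well-defined nonzero limit, so the Chow form never degenerates to $0$. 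Thus $p\mapsto[\mathrm{CF}(Z_{\tilde\alpha(p)})]$ descends to a rational map $\Psi\colon Y\dashrightarrow\bP^{N}/\!\!/G$ into the projective GIT quotient; since the Chow form of $Z_{\xi}$ determines the cycle, hence $\overline{G\cdot\xi}$, hence the orbit $G\cdot\xi$, two points with the same $\Psi$-image have the same classifying image, so $\Psi$ has finite fibres on the dense open where the classifying map does and its image has dimension $\dim Y$. A relative version of the defining equations of the Chow form then identifies the pullback of $\sO_{\bP^{N}/\!\!/G}(1)$ along $\Psi$ with $(\det Q)^{\otimes m}\otimes\sM^{-1}$ for a suitable $m>0$ and a weakly-positive line bundle $\sM$ --- a fixed power of $\det W$, coming from the $M$-factor --- and by the first paragraph this finishes the proof.

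I expect the main difficulty to be the third paragraph: establishing the uniformity of the family $\{Z_{\xi}\}$ --- bounded, of constant multidegree, landing in a single $\bP^{N}$ --- and carrying out the relative Chow-form computation so as to identify the pullback line bundle precisely as $(\det Q)^{\otimes m}\otimes\sM^{-1}$ with $\sM$ weakly-positive. This is exactly where the normality of $\oG$ is essential, since it is what controls the boundary $\oG\setminus G$ and makes $Z_{\xi}\to\oG$, and hence the whole family, well-behaved; and it is also where the weak-positivity hypothesis on $W$ enters, because the twist $\sM$ extracted from the $M=\bP(\Mat(k,w))$-factor must be dominated by a positivity property of $W$ in order for bigness of the pullback to descend to bigness of $\det Q$ alone. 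The remaining ingredients --- the descent of sections along the torsor $P\to Y$, the semistability observation, and the final appeal to \autoref{lem:equivalent-props-of-big} --- are formal.
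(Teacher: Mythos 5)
Your approach is genuinely different from the paper's. The paper follows Koll\'ar's construction: it compactifies the frame bundle inside $\bP(\bigoplus W^*)$, cuts out the $G$-orbit closure $\bfP$, maps to $\Gr(w,q)$, blows up, and then studies the image of $\wt\pi\times\wt\nu$ in $Y\times\Gr$ — the generic finiteness of the classifying map is converted into generic finiteness of the projection $T\to\Gr$, from which one directly manufactures a nonzero map $(\pi_*\sO_{\bfP}(mq))^*\otimes\sO_Y(H)\to(\det Q)^m$ and concludes using that $(\pi_*\sO_{\bfP}(mq))^*$ is a $G$-subbundle of $\sym^{mq}\bigl(\bigoplus W\bigr)$, hence weakly positive. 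You instead propose a Viehweg-style GIT/Chow-form argument: build cycles $Z_\xi\subseteq\oG\times\Gr$, pass to Chow forms in a fixed $\bP^N$, take the GIT quotient $\bP^N/\!\!/G$, and descend. There is real content in the comparison — Koll\'ar invented his Ampleness Lemma precisely to bypass the GIT machinery — but as written your proposal has two serious gaps.

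First, the semistability claim in the third paragraph is not correct. You argue that $[\mathrm{CF}(Z_\xi)]$ is $G$-semistable because a one-parameter subgroup $\lambda$ moves $Z_\xi$ to a cycle with a well-defined nonzero limit. That only tells you the \emph{projective} limit $\lim_{t\to0}\lambda(t)\cdot[\mathrm{CF}(Z_\xi)]$ is a bona fide Chow form, which is automatic by properness of the Chow variety and says nothing about semistability. Semistability is about the \emph{affine} limit of a lift $\widehat{\mathrm{CF}}(Z_\xi)$ in the linearized cone, i.e.\ about the sign of the Hilbert--Mumford weight $\mu(\widehat{\mathrm{CF}}(Z_\xi),\lambda)$, and that can perfectly well be positive even when the projective limit is a nonzero Chow form. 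Controlling it typically requires either an explicit weight computation or a hypothesis on stabilizers — and note that a major point of \autoref{thm:impr-bign-lemma} (stated in the remark following \autoref{thm:generalized_ampleness_lemma}) is precisely to \emph{drop} the stabilizer hypothesis that appears in Koll\'ar's original \cite[3.9,3.13]{Kollar_Projectivity_of_complete_moduli}. Your torsor remark gives triviality of stabilizers on $P$, but not finiteness of $\Stab_G(\xi)$ for $\xi\in\Gr(w,q)$, which is the group that actually matters in the GIT argument.

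Second, the last step — identifying $\Psi^*\sO_{\bP^N/\!\!/G}(1)$ with $(\det Q)^{\otimes m}\otimes\sM^{-1}$ for a \emph{weakly-positive} line bundle $\sM$ built from $W$ — is asserted rather than proved, and you yourself flag it as the main difficulty. Without this identification the whole chain breaks: bigness of the GIT polarization on the image does not by itself say anything about $\det Q$. This is the exact place where the paper earns its conclusion: the twist there is $(\pi_*\sO_{\bfP}(mq))^*$, which is concretely a $G$-subbundle of $\sym^{mq}\bigl(\bigoplus_{i=1}^w W\bigr)$ and hence (by \autoref{lem:wp_sub}, using characteristic $0$ and reductivity) weakly positive. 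Your $\sM$ has no comparably explicit description, and it is not clear that it is a power of $\det W$ as you suggest. Related to this, the boundedness and constancy of multidegree of the family $\{Z_\xi\}$ needed to land all Chow forms in one $\bP^N$ is asserted but not checked. To repair the argument along these lines you would need to pin down the linearization on the Chow $\bP^N$, establish semistability by an actual weight estimate (or reintroduce a stabilizer condition, weakening the theorem), and compute the descent line bundle; at that point it would be worth comparing directly with Viehweg's treatment and with the paper's route, which avoids all three issues.
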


\begin{lemma}
  \autoref{thm:impr-bign-lemma} implies \autoref{thm:generalized_ampleness_lemma}.
\end{lemma}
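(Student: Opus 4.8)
The plan is to reduce the many-quotient statement to the single-quotient statement by bundling the data $(W, \alpha_0, \dots, \alpha_n)$ into one weakly-positive bundle with one surjection, in such a way that the structure group and the finiteness of the classifying map are preserved. First I would form the direct sum $\widetilde W := \bigoplus_{i=0}^n W$, which is weakly-positive by \autoref{lem:wp_big_properties}\autoref{itm:tensors}. Its natural structure group is the diagonal copy of $G$ inside $\GL(k, (n+1)w)$, but to apply \autoref{thm:impr-bign-lemma} I need a \emph{single} $G$ (not $G^{n+1}$) whose image closure in the relevant projectivization of matrices is normal. The point is that $G$ acts on $\widetilde W$ through the diagonal, i.e. through the representation $g \mapsto \mathrm{diag}(g, \dots, g)$; the closure of the image of this representation in $\bP(\Mat(k,(n+1)w))$ is the image of the closure of $G$ in $\bP(\Mat(k,w))$ under the induced map, hence isomorphic to the latter, hence still normal. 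So $\widetilde W$ carries a reductive structure group with the required normality.

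Next, set $\widetilde Q := \bigoplus_{i=0}^n Q_i$ and let $\widetilde\alpha := \bigoplus \alpha_i : \widetilde W \to \widetilde Q$; this is generically surjective, and after shrinking $Y$ to the dense open set where all $\alpha_i$ are surjective it is genuinely surjective onto a vector bundle of rank $\sum q_i$. Then $\det\widetilde Q \simeq \bigotimes_{i=0}^n \det Q_i$, so \autoref{thm:impr-bign-lemma} applied to $(\widetilde W, \widetilde\alpha)$ yields exactly the bigness asserted in \autoref{thm:generalized_ampleness_lemma} — provided I can check the classifying-map hypothesis. Here the diagonal product Grassmannian point $[\widetilde Q_y] \in \Gr((n+1)w, \sum q_i)$ determined by $\widetilde\alpha$ modulo the diagonal $G$ records precisely the tuple of kernels $(\ker(\alpha_i)_y)_i$ modulo the \emph{simultaneous} $G$-action; this is the same information as the image of $y$ in $\varprod_{i=0}^n \Gr(w, q_i)(k)/G(k)$ in the sense of the theorem, because the diagonal embedding $\varprod_i \Gr(w,q_i)/G \hookrightarrow \Gr((n+1)w, \sum q_i)/G$ is injective. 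Hence the single classifying map for $\widetilde\alpha$ has finite fibers on a dense open set exactly because the original product classifying map does.

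The main obstacle, and the step I would spend the most care on, is the bookkeeping around the \emph{almost projective} setting and the passage to a dense open subset: \autoref{thm:impr-bign-lemma} wants an honest surjection $\widetilde W \twoheadrightarrow \widetilde Q$ onto a vector bundle, but $\bigoplus\alpha_i$ is only generically surjective, so one must either restrict to the open locus $U \subseteq Y$ where every $\alpha_i$ is surjective (and $Y$, being normal and almost projective, has $U$ still normal and almost projective — note bigness of $\det\widetilde Q$ on $U$ pulls back to bigness on $Y$ via \autoref{lem:big-is-the-same} after choosing a common projective closure, or directly since the $\det Q_i$ are honest line bundles on all of $Y$ and bigness is detected on a big open set) or, more carefully, one observes the argument in \autoref{thm:impr-bign-lemma} only ever uses generic surjectivity. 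I would take the first route for cleanliness. The only other subtlety is verifying that ``finite fibers on a dense open set'' transports correctly: since $G(k)$ acts on the product of Grassmannians diagonally in both formulations, the fiber of the $\widetilde\alpha$-classifying map over a point equals the fiber of the product classifying map, so finiteness is literally inherited, with no loss.
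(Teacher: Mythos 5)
Your reduction to the single-quotient statement is the same as the paper's: form $W' := \bigoplus_{i=0}^n W$ with the diagonal $G$-action, take $Q' := \bigoplus Q_i$ and $\alpha' := \bigoplus\alpha_i$, and use the $G$-equivariant diagonal embedding $\varprod_i \Gr(w,q_i) \hookrightarrow \Gr((n+1)w, \sum q_i)$ to transport finiteness of the classifying map. You also add a verification that the paper leaves implicit, namely that the closure of the diagonal image of $G$ in $\bP(\Mat(k,(n+1)w))$ is still normal because the diagonal map $\bP(\Mat(k,w)) \to \bP(\Mat(k,(n+1)w))$ is a closed linear embedding carrying $\wt G$ isomorphically onto that closure; this is correct and worth stating.

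However, the way you handle generic surjectivity has a genuine gap. You propose shrinking $Y$ to the open locus $U$ where every $\alpha_i$ is actually surjective, and you assert that $U$ is again almost projective and that bigness on $U$ pulls back to bigness on $Y$ via \autoref{lem:big-is-the-same}. Neither of these holds when $Y\setminus U$ has components of codimension $1$, which can certainly happen (it is the degeneracy locus of $\bigwedge^{q_i}\alpha_i$). A small open set of an almost projective variety is not almost projective, \autoref{lem:big-is-the-same} only speaks about \emph{big} open subsets, and bigness of a line bundle restricted to a non-big open subset carries no information (e.g.\ $\sO_{\bP^1}|_{\bA^1}$). The paper avoids this by a different maneuver: keep $Y$ unchanged, but replace $Q_i$ by $Q_i^- := \im\alpha_i$. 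Being a torsion-free subsheaf of $Q_i$ on the normal $Y$, $Q_i^-$ is locally free on a \emph{big} open subset $U\subseteq Y$, and $\alpha_i : W|_U \to Q_i^-|_U$ is honestly surjective there. Bigness of $\bigotimes\det(Q_i^-|_U)$ then passes to $\iota_*\bigl(\bigotimes\det(Q_i^-|_U)\bigr) = [\bigotimes]\det Q_i^-$ because $U$ is big open, and that sheaf injects into $\bigotimes\det Q_i$ (a rank-one inclusion of reflexive sheaves), giving bigness of the latter. So the fix is not to shrink the base to where $\alpha_i$ is surjective onto the given $Q_i$, but to shrink the target to the image and the base only to a big open set where the image is a bundle.
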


\begin{proof}
  \textbf{Step 1.}  \emph{We may assume that the $\alpha_i$ are surjective.}  Let
  $Q_i^-=\im\alpha_i\subseteq Q_i$. Then there exists a big open subset $\iota:
  U\into Y$ such that ${Q_i^-}|_{U}$ is locally free of rank $q_i$. If
  $\bigotimes_{i=1}^n \det ({Q_i^-}|_{U})$ is big, then so is $\left[
    \bigotimes_{i=1}^n \right] \det Q_i^-=\iota_* \left( \bigotimes_{i=1}^n \det
    ({Q_i^-}|_{U}) \right)$ and hence so is $\bigotimes_{i=1}^n \det Q_i$. Therefore
  we may replace $Y$ with $U$ and $Q_i$ with ${Q_i^-}|_{U}$.

  \textbf{Step 2.} \emph{It is enough to prove the statement for one quotient
    bundle.} Indeed, let $W'=\bigoplus_{i=0}^n W$ with the diagonal $G$-action,
  $Q'=\bigoplus_{i=0}^n Q_i$, and $\alpha:= \bigoplus_{i=0}^n \alpha_i : W'\to Q'$
  the induced morphism. If all the $\alpha_i$ are surjective, then so is $\alpha$.
  
  Furthermore, there is a natural injective $G$-invariant morphism
  $$
  \xymatrix@R=-.25em {%
    \displaystyle\varprod_{i=0}^n Gr(w,q_i)\ \ar@{^(->}[r] &
    \displaystyle Gr\bigg(rw, \sum_{i=0}^n
      q_i\bigg)\\
    (L_1,\dots,L_r) \ar@{|->}[r] & \displaystyle\bigoplus_{i=0}^n
    L_i. }
  $$
  Since the $G$-action on $\varprod_{i=0}^n Gr(w,q_i)$ is the restiction
  of the $G$-action on $Gr\left(rw, \sum_{i=0}^n q_i\right)$ via this
  embedding it follows that the induced map on the quotients remain
  injective:
  $$
  \xymatrix {%
    \factor{\displaystyle\varprod_{i=0}^n Gr(w,q_i)}G \ \ar@{^(->}[r] &
    \factor{\displaystyle Gr\bigg(rw, \sum_{i=0}^n q_i\bigg)}{G.} }
  $$
  It follows that the classifying map of $\alpha':W'\to Q'$ also has
  finite fibers and then the statement follows because
  $\det Q \simeq \bigotimes_{i=0}^n \det Q_i$. 
\end{proof}

\begin{lemma}
  \label{lem:wp_sub}
  If $V \subseteq W $ is a $G$-invariant sub-vector bundle of the $G$-vector bundle
  $W$ on a normal almost projective variety $X$, and $W$ is weakly positive, then so
  is $V$.
\end{lemma}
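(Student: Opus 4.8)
The plan is to use the reductivity of $G$ to realise $V$ as a direct summand — in particular a quotient — of $W$ as vector bundles, and then to invoke \autoref{lem:wp_big_properties}\autoref{itm:gen_surj}, according to which a generically surjective image of a weakly-positive sheaf is again weakly-positive.

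First I would make precise the notion of a $G$-invariant sub-vector bundle. Since $W$ has structure group $G\subseteq \GL(k,w)$, after choosing a trivialising cover it is described by a cocycle with values in $G$ acting on the standard representation $k^w$, and saying that $V\subseteq W$ is $G$-invariant means exactly that $V$ is glued from a $G$-invariant linear subspace $U\subseteq k^w$ by the very same cocycle. Now we are working in characteristic zero, so the reductive group $G$ is linearly reductive and the representation $k^w$ is completely reducible; hence $U$ admits a $G$-invariant complement $U'\subseteq k^w$, and the projection $\pi\colon k^w\twoheadrightarrow U$ with kernel $U'$ is $G$-equivariant. Because the transition cocycle of $W$ takes values in $G$, it commutes with $\pi$, so $\pi$ glues to a globally defined morphism of vector bundles $W\twoheadrightarrow V$ which is surjective (fibrewise it is $\pi$); equivalently, $W\simeq V\oplus V'$ with $V'$ the sub-bundle glued from $U'$.

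Finally, $X$ is normal and almost projective by hypothesis, so \autoref{lem:wp_big_properties}\autoref{itm:gen_surj} applies directly: the weak positivity of $W$ together with the surjection $W\twoheadrightarrow V$ yields the weak positivity of $V$. The only step requiring any care is the reduction to the representation-theoretic picture in the previous paragraph — checking that ``$G$-invariant sub-bundle'' really produces a sub-representation of the standard representation of $G$, and that linear reductivity furnishes the equivariant splitting — but I do not anticipate any genuine obstacle here; once that is in place the lemma is immediate.
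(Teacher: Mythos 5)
Your proposal is correct and follows essentially the same route as the paper: $V$ corresponds to a $G$-subrepresentation, linear reductivity in characteristic zero gives an equivariant complement and hence a direct-summand decomposition $W\simeq V\oplus V'$, and weak positivity of $V$ then follows from the surjection $W\twoheadrightarrow V$. The paper states this in one line; your write-up merely supplies the cocycle-level justification.
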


\begin{proof}
  $V$ corresponds to a subrepresentation of $G$, and by the characteristic zero and
  reductivity assumptions it follows that $V$ is a direct summand of $W$, so $V$ is
  also weakly positive.
\end{proof}

\begin{remark}
  The above lemma, which is used in the last paragraph of the proof, is the only
  place where the characteristic zero assumption is used in the proof of
  \autoref{thm:generalized_ampleness_lemma}. In particular, the statement holds in
  positive characteristic for a given $W$ if the $G$-subbundles of $W$ are
  weakly-positive whenever $W$ is.  According to \cite[Prop
  3.5]{Kollar_Projectivity_of_complete_moduli} this holds for example if $Y$ is
  projective and $W$ is nef satisfying the assumption $(\Delta)$ of \cite[Prop
  3.6]{Kollar_Projectivity_of_complete_moduli}. The latter is satisfied for example
  if $W= \Sym^d(W')$ for a nef vector bundle $W'$ of rank $w'$ and $G= \GL(k,w')$.
\end{remark}


\addtocounter{theorem}{-3}
\begin{proof}[Proof of \autoref{thm:impr-bign-lemma}]
  We start with the same setup as in
  \cite[3.13]{Kollar_Projectivity_of_complete_moduli}. Let
  $\pi:\bP=\bP(\oplus_{i=1}^w W^*)\to Y$, which can be viewed as the space of
  matrices with columns in $W$, and consider the universal basis map
  $$
  \varsigma: \bigoplus_{j=1}^w \sO_{\bP}(-1) \to \pi^* W,
  $$
  formally given via the identification $H^0(\bP,\sO_{\bP}(1) \otimes \pi^* W )
  \simeq H^0(Y, \bigoplus_{j=1}^w W^* \otimes W )$ by the identity sections of the
  different summands of the form $W^* \otimes W$. Informally, the closed points of
  $\bP$ over $y \in Y$ can be thought of as $w$-tuples $(x_1, \dots, x_w) \in W_y$
  and hence a dense open subset of $\bP_y$ corresponds to the choice of a basis of
  $W_y$ up to scaling by an element of $k^\times$, i.e., to a homogenous
  basis. Similarly, the map $\varsigma$ gives $w$ local sections of $\pi^* W$ which
  over $(x_1, \dots, x_w)$ take the values $x_1, \dots,x_w$, up to scaling by an
  element of $k^{\times}$ where this scaling corresponds to the transition functions
  of $\sO_{\bP}(-1)$.

  As explained in \autoref{rem:identification}, to define the classifying map we need
  to fix a homogenous basis of a fiber over a fixed closed point. Let us fix such a
  point $y_0\in Y$ and a homogenous basis on $W_{y_0}$ and keep these fixed
  throughout the proof. This choice yields an identification of $\bP_{y_0}$ with
  $\bP(\Mat(k,w))$.  Notice that the dense open set of $\bP_{y_0}$ corresponding to
  the different choices of a homogenous basis of $W_{y_0}$ is identified with the
  image of $\GL(k,w)$ in $\bP(\Mat(k,w))$ and the point in $\bP_{y_0}$ representing
  the fixed homogenous basis above is identified with the image of the identity
  matrix in $\bP(\Mat(k,w))$.  Now we want to restrict to a $G$ orbit inside all the
  choices of homogenous bases. Let $\wt{G}$ denote the closure of the image of $G
  \subseteq \GL(k,w)$ in $\bP(\Mat(k,w))$. 
  Via the identification of $\bP_{y_0}$ and $\bP(\Mat(k,w))$, $\wt G$ corresponds to
  a $G$-invariant closed subscheme of $\bP_{y_0}$, which carried around by the
  $G$-action defines a $G$-invariant closed subscheme $\mathbf{P} \subseteq
  \bP$. Note that since $\wt{G}$ is assumed to be normal, so is $\mathbf{P}$ by
  \cite[II 6.5.4]{EGAIV}.  To simplify
  notation let us denote the restriction $\pi|_{\mathbf{P}}$ also by
  $\pi$. Restricting the universal basis map to $\mathbf{P}$ and twisting by
  $\sO_{\mathbf{P}}(1)$ gives
  $$
  \beta:=\varsigma|_{\bfP}\otimes \id_{\sO_{\bfP}(1)}: \bigoplus_{j=1}^w
  \sO_{\mathbf{P}} \to \pi^* W \otimes \sO_{\mathbf{P}}(1).
  $$
  Let $\Upsilon\subset \mathbf{P}$ be the divisor where this map is not surjective,
  i.e., those points that correspond to non-invertible matrices via the above
  identification of $\bP_{y_0}$ and $\bP(\Mat(k,w))$. By construction, $\beta$ gives
  a trivialization of $\pi^* W \otimes \sO_{\mathbf{P}}(1)$ over
  $\mathbf{P}\setminus\Upsilon$. It is important to note the following fact about
  this trivialization: let $p \in \mathbf{P}_{y_0}$ be the closed point that via the
  above identification of $\bP_{y_0}$ and $\bP(\Mat(k,w))$ corresponds to the image
  of the identity matrix in $\bP(\Mat(k,w))$. Then the trivialization of $\pi^* W
  \otimes \sO_{\mathbf{P}}(1)$ given by $\beta$ gives a basis on $(\pi^*W_{y_0})_p$
  which is compatible with our fixed homogenous basis on $W_{y_0}$. Furthermore, for
  any $p'\in (\bfP\setminus \Upsilon)_{y_0}$ the basis on $(\pi^*W_{y_0})_{p'}$ given
  by $\beta$ corresponds to the fixed homogenous basis of $W_{y_0}$ twisted by the
  matrix (which is only given up to scaling by an element of $k^\times$)
  corresponding to the point $p'\in\bfP_{y_0}$ via the identification of $\bP_{y_0}$
  and $\bP(\Mat(k,w))$.  Note that as $G$ is reductive, it is closed in $\GL(k,w)$
  and hence $G(k)$ is transitive on $(\bfP\setminus \Upsilon)_{y_0}$. It follows that
  then the choices of homogenous bases of $W_{y_0}$ given by $\beta$ on
  $(\pi^*W_{y_0})_{p'}$ for $p'\in(\mathbf{P} \setminus \Upsilon )_{y_0}$ form a
  $G(k)$-orbit, 
  and this orbit may be identified with $(\bfP\setminus \Upsilon)_{y_0}$.
  Transporting this identification around $Y$ using the $G$-action we obtain:
  For every $y \in Y(k)$, 
  \begin{equation}
    \label{eq:compatibility_trivializations}
    \parbox{38.5em}{%
      $(\mathbf{P} \setminus \Upsilon )_{y}$ may be identified with
      the $G(k)$-orbit of homogenous bases of $W_{y}$ containing the homogenous basis
      obtained from the fixed homogenous basis of $W_{y_0}$ via the $G$-structure.
    } 
  \end{equation}
  Next consider the composition of $\wt\alpha=\pi^*\alpha\otimes
  \id_{\sO_{\mathbf{P}}(1)}$ and $\beta$:
  \begin{equation*}
    \xymatrix{%
      \gamma : \bigoplus_{j=1}^w \sO_{\mathbf{P}} \ar[r]^-\beta & 
      \pi^* W  \otimes \sO_{\mathbf{P}}(1) 
      \ar[r]^-{\wt\alpha} &  
      \pi^* Q  \otimes \sO_{\mathbf{P}}(1) 
    }
  \end{equation*}
  which is surjective on $\mathbf{P} \setminus \Upsilon$.  Taking
  $q^\text{th}$ wedge products yields
  \begin{equation*}
    \xymatrix{%
      \gamma^q : \bigoplus_{j=1}^{\binom wq} \sO_{\mathbf{P}}  \ar[r]^-{\beta^q}
      &  
      \pi^* (\wedge^q W)   \otimes \sO_{\mathbf{P}}(q) 
      \ar[r]^-{\wt\alpha^q} 
      &  
      \pi^* \det Q  \otimes \sO_{\mathbf{P}}(q) 
    }
  \end{equation*}
  which is still surjective outside $\Upsilon$ and hence gives a morphism
  \begin{equation*}
    \nu : \mathbf{P} \setminus\Upsilon \rightarrow \underbrace{\Gr(w, q) 
      \subseteq \mathbf{P} \left( \bigwedge^{q}   (k^{\oplus w})\right)
      =:\mathbf{P}_{Gr}}_{\textrm{Pl\"ucker embedding}}, 
  \end{equation*}
  \vskip-1em
  such that
  \begin{itemize}
  \item according to \autoref{eq:compatibility_trivializations}, on the $k$-points
    $\nu$ is a lift of the classifying map $Y \to \Gr/G$, where $\Gr:=\Gr(w,q)$ is
    the Grassmannian of rank $q$ quotients of a rank $w$ vectorspace, and
  \item $\nu^* \sO_{\Gr}(1)
    \simeq \left(\pi^*\det Q \otimes \sO_{\mathbf{P}}(q)\right) |_{\mathbf{P}
      \setminus\Upsilon}$, where $\sO_{\Gr}(1)$ is the restriction of
    $\sO_{\mathbf{P}_{Gr}}(1)$ via the Pl\"ucker embedding.
  \end{itemize}
  We will also view $\nu$ as a rational map $\nu: \mathbf{P} \dashrightarrow
  \Gr$.  Let $\sigma: \wt{\mathbf{P}} \to \mathbf{P}$ be the blow up of $(\im
  \gamma^q) \otimes \left( \pi^*\det Q \otimes \sO_{\mathbf{P}}(q)
  \right)^{-1} \subseteq \sO_{\mathbf{P}}$ and set $\wt\pi:=\pi \circ
  \sigma$. It follows that $\wt\nu=\nu\circ \sigma: \wt{\mathbf{P}} \to \Gr$ is
  well-defined everywhere on $\wt{\mathbf{P}}$ and there exists an effective
  Cartier divisor $E$ on $\wt{\mathbf{P}}$ such that
  \begin{equation}
    \label{eq:divisors}
    \sigma^*( \pi^* \det Q \otimes \sO_{\mathbf{P}}(q) ) \simeq \wt\nu^*
    \sO_{\Gr}(1) \otimes \sO_{\wt{\mathbf{P}}}(E). 
  \end{equation}
  Let $Y^\circ\subseteq Y$ be the dense open set where the classifying
  map has finite fibers and let $\mathbf{P}^\circ:=\wt\pi^{-1}(Y^\circ)
  \setminus \sigma^{-1}(\Upsilon)\subset \wt{\mathbf{P}}$. Observe that
  $\mathbf{P}^\circ \simeq\pi^{-1}(Y^0)\setminus \Upsilon$ via $\sigma$.
  
  Next let $T$ be the image of the product map $(\wt\pi \times
  \wt\nu): \wt{\mathbf{P}} \to Y \times \Gr$:
  $$
  T:= 
  {\im \left[\,(\wt\pi \times
      \wt\nu) 
      \,\right]} \subseteq Y \times \Gr,
  $$
  and let $\tau : T \to \Gr$ and $\phi : T \to Y$ be the
  projection. Furthermore, let $\vartheta: \wt{\mathbf{P}} \to T$ denote the
  induced morphism.  We summarize our notation in the following
  diagram. Note that although $Y$ is only almost proper, every scheme
  in the diagram (except $\Gr$ which is proper over $k$) is proper
  over $Y$.
  $$
  \xymatrix{%
    \wt{\mathbf{P}} \ar@/_5ex/[dd]_{\wt\pi} \ar[d]_\sigma \ar[rr]^-{\wt\nu}
    \ar[drr]|!{[dr];[rr]}\hole^{\vartheta}
    & &  \Gr \\
    \mathbf{P} \ar[d]_\pi & \ \mathbf{P}^\circ \ar[ru]^{\nu}
    \ar[r]_(.45){{}^{\vartheta|_{\mathbf{P}^\circ}}} \ar@{_(->}[l]
    \ar@{_(->}[lu] & %
    **[r] T \subseteq {Y\times \Gr}%
    \ar[lld]^{\phi} \ar[u]_\tau \\
    Y }
   $$

  \begin{subclaim}
    The map $\tau|_{\vartheta(\mathbf{P}^\circ)}$ has finite fibers.
  \end{subclaim}
  
  \begin{proof}
    Since $k$ is assumed to be algebraically closed, it is enough to show that
    for every $k$-point $x$ of $\Gr$ there are finitely many
    $k$-points of $\vartheta(\mathbf{P}^\circ)$ mapping onto $x$. Let $(y,x)$
    be such a $k$-point, where $y \in Y(k)$. Choose then $z \in
    \mathbf{P}^\circ(k)$ such that $\vartheta(z)=(y,x)$. Then $\pi(z)=y$ and
    $\nu(z)=x$.  Furthermore, if $\psi$ is the classifying map and
    $\xi$ is the quotient map $\Gr(k) \to \Gr(k)/G(k)$, then
    \begin{equation*}
      \psi(y)= \psi(\pi(z))= \xi(\nu(z)) = \xi(x). 
    \end{equation*}
    Therefore, $y \in \psi^{-1}(\xi(x))$. However, by the finiteness of the
    classifying map there are only finitely many such $y$.
  \end{proof}
  By construction $\vartheta(\mathbf{P}^\circ)$ is dense in $T$ and it is
  constructible by Chevalley's Theorem. Then the dimension of the generic fiber of
  $\tau$ equals the dimension of the generic fiber of
  $\tau|_{\vartheta(\mathbf{P}^\circ)}$ and hence $\tau$ is generically finite.

  Next consider a projective closure $Y\into \oY$ of $Y$ and let
  $\oT\subseteq \oY\times \Gr$ denote the closure of $T$ in $\oY\times
  \Gr$. Let $\ophi:\oT\to\oY$ and $\otau:\oT\to\Gr$ denote the
  projections. Clearly, $\ophi|_T=\phi$, $\otau|_T=\tau$, and $\otau$
  is also generically finite.
  Let $\oH$ be an ample Cartier divisor on $\oY$. Since $\otau^*
  \sO_{\Gr}(1)$ is big, there is an $m$, such that $\otau^*
  \sO_{\Gr}(m)\otimes \phi^* \sO_{\oY}(- \oH)$ has a non-zero
  section. Let $H=\oH|_{\oY}$ and restrict this section to $T$.  It
  follows that the line bundle
  \begin{equation}
    \label{eq:3}
    \vartheta^*\left(\tau^* \sO_{\Gr}(m)\otimes \phi^*\sO_Y(- H)\right) \simeq
    \wt\nu^* \sO_{\Gr}(m)\otimes \wt\pi^* \sO_Y(- H)
  \end{equation}
  also has a non-zero section, and then by \autoref{eq:divisors} and
  (\ref{eq:3}) there is also a non-zero section of
  \begin{equation*}
    \sigma^* \left(  \pi^*(\det Q)^m \otimes \sO_{\mathbf{P}}  
      \left( m q \right) \right)\otimes \wt\pi^* \sO_Y(- H) \simeq 
    \sigma^* \left(  \pi^*(\det Q)^m \otimes \pi^* \sO_Y(- H) \otimes \sO_{\mathbf{P}}  
      \left( m q \right) \right) . 
  \end{equation*}
  Pushing this section forward by $\sigma$ and using the projection formula we obtain
  a section of
  \begin{equation*}
    \left(  \pi^*(\det Q)^m \otimes \pi^* \sO_Y(- H) \otimes \sO_{\mathbf{P}}  
      \left( m q \right) \right)\otimes \sigma_* \sO_{\wt{\mathbf{P}}}
    \simeq
    \underbrace{  \pi^*(\det Q)^m \otimes \pi^* \sO_Y(- H) \otimes \sO_{\mathbf{P}}  
      \left( m q \right) }_{\textrm{$\sigma$ is birational and $\mathbf{P}$ is normal}}. 
  \end{equation*}
Pushing this section down via $\pi$ and rearranging the sheaves on the two sides of
  the arrow we obtain a non-zero morphism
  \begin{equation}
    \label{eq:2}
    (\pi_*\sO_{\mathbf{P}}(mq))^* \otimes \sO_Y(H) \to (\det Q)^m.
  \end{equation}
  Now observe, that by construction
  $$
  \left( \pi_*\sO_{\bP}(mq) \right)^* \simeq \left( \sym^{mq}\left(\bigoplus_{i=1}^w
      W^*\right) \right)^* \simeq \sym^{mq} \left( \bigoplus_{i=1}^w W \right)
  $$ 
  is weakly-positive and $\left( \pi_*\sO_{\mathbf{P}}(mq) \right)^*$ is a
  $G$-invariant subbundle of $\left( \pi_*\sO_{\bP}(mq) \right)^*$ for $m \gg 0$. In
  particular, by \autoref{lem:wp_sub}, $\left( \pi_*\sO_{\mathbf{P}}(mq) \right)^*$
  is weakly positive as well. Then by (\ref{eq:2}) and
  \autoref{lem:wp_big_properties} it follows that $\det Q$ is big.
\end{proof} 
\addtocounter{theorem}{3}


\section{Moduli spaces of stable log-varieties}
\label{sec:moduli}

\begin{definition}
  \label{def:close_under_addition}
  A set $I \subseteq [0,1]$ of coefficients is said to be \emph{closed under
    addition}, if for every integer $s>0$ and every $x_1,\dots, x_s \in I$ such that
  $\sum_{i=1}^s x_i \leq 1$ it holds that $\sum_{i=1}^s x_i \in I$.
\end{definition}

\begin{definition}
  \label{def:a_functor}
  Fix $0 < v \in \bQ$, $0 < n \in \bZ$ and a finite set of coefficients $I \subseteq
  [0,1]$ closed under addition. A functor $\sM : \mathfrak{Sch}_k \to
  \mathfrak{Sets}$ (or to groupoids) is \emph{a moduli (pseudo-)functor of stable
    log-varieties of dimension $n$, volume $v$ and coefficient set $I$}, if for each
  normal $Y$,
  \begin{equation}
    \label{eq:a_functor}
    \sM (Y) = 
    \left\{ \raisebox{2em}{\xymatrix{ (X,D) \ar[d]^f \\ Y }} \left| \parbox{26em}{
          \begin{enumerate}
          \item $f$ is a flat morphism,
          \item $D$ is a Weil-divisor on $X$ avoiding the generic and the codimension
            1 singular points of $X_y$ for all $y \in Y$,
          \item for each $y \in Y$, $(X_y, D_y)$ is a stable log-variety of dimension
            $n$, such that the coefficients of $D_y$ are in $I$, and $(K_{X_y} +
            D_y)^n = v$, and 
          \item $K_{X/Y} +D$ is $\bQ$-Cartier.
          \end{enumerate}} \right.  \right\},
  \end{equation}
  and the line bundle $Y \mapsto \det f_* \sO_X(r(K_{X/Y} + D))$ associated to every
  family as above extends to a functorial line bundle on the entire (pseudo-)functor
  for every divisible enough integer $r>0$.

  Also note that if $\sM$ is regarded as a functor in groupoids, then in
  \autoref{eq:a_functor} instead of equality only equivalence of categories should be
  required.
\end{definition}

\begin{remark}\label{rem:a_functor}
  \begin{enumerate}    
  \item The condition ``$D$ is a Weil-divisor on $X$ avoiding the generic and the
    codimension 1 singular points of $X_y$ for all $y \in Y$'' guarantees that $D_y$
    can be defined sensibly. Indeed, according to this condition, there is a big open
    set of $X_y$, over which $D$ is $\bQ$-Cartier.
  \item The condition ''$K_{X/Y} +D$ is $\bQ$-Cartier`` is superfluous according to a
    recent, yet unpublished result of Koll\'ar, which states that for a flat family
    with stable stable fibers if $y \mapsto (K_{X_y} +D_y)^n$ is constant, then
    $K_{X/Y} +D $ is automatically $\bQ$-Cartier.
  \item $I$ has to be closed under addition, to guarantee properness. Indeed,
    divisors with coefficients $c_1, c_2, \dots, c_s$, respectively, can come
    together in the limit to form a divisor with a coefficient $\sum_{i=1}^s c_i$.
  \item According to \cite[Thm
    1.1]{Hacon_McKernan_Xu_Boundedness_of_moduli_of_varieties_of_general_type}, after
    fixing $n$, $v$ and a DCC set $I \subseteq [0,1]$, there exist automatically
    \begin{enumerate}[(a)]
    \item a finite set $I_0 \subseteq I$ containing all the possible coefficients of
      stable log-varieties of dimension $n$, volume $v$ and coefficient set $I$, and
    \item a uniform $m$ such that $m(K_X +D)$ is Cartier for all stable log-varieties
      $(X,D)$ of dimension $n$, volume $v$ and coefficient set $I$.
    \end{enumerate}
    In particular, $m$ may also be fixed in the above definition if it is chosen to
    be divisible enough after fixing the other three numerical invariants.
  \end{enumerate}
\end{remark}

\begin{proposition}
  \label{prop:proper}
  Let $n>0$ be an integer, $v>0$ a rational number and $I \subseteq [0,1]$ a finite
  coefficient set closed under addition. Then any moduli (pseudo-)functor of stable
  log-varieties of dimension $n$, volume $v$ and coefficient set $I$ is proper. That
  is, if it admits a coarse moduli space which is an algebraic space, then that
  coarse moduli space is proper over $k$.  If in addition the pseudo-functor itself
  is a DM-stack , then it is a proper DM-stack over $k$ (from which the existence of
  the coarse moduli space as above follows
  \cite{Keel-Mori97,Conrad_The_Keel_Mori_theorem_via_stacks}).
\end{proposition}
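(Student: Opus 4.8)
The plan is to establish properness via the valuative criterion applied to a DVR, reducing the general statement about the coarse moduli space to the existence and uniqueness of limits of one-parameter families of stable log-varieties. Concretely, let $R$ be a DVR with fraction field $K$, spectrum $T = \Spec R$, closed point $0$ and generic point $\eta$. Given a family $(X_\eta, D_\eta) \to \eta$ in $\sM(\eta)$, I want to show that, possibly after a finite base change $T' \to T$ (which is harmless for the valuative criterion for properness of algebraic spaces), this family extends to a family $(X, D) \to T'$ in $\sM(T')$, and that any two such extensions agree. Since $T'$ is normal (it is again the spectrum of a DVR), $\sM(T')$ is given by the explicit description in \autoref{eq:a_functor}, so this is a concrete statement about stable families over a DVR.

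The key steps, in order, are as follows. First, the \emph{existence} of the limit: starting from $(X_\eta, D_\eta)$, one takes the log canonical model of a compactification over $T$. One spreads out $(X_\eta, D_\eta)$ to some family over a punctured disk, compactifies it somehow (e.g.\ by taking the closure and resolving), and then runs a relative log MMP to reach the log canonical model over $T$; by the semistable reduction / MMP machinery in characteristic zero (using results from \cite{Kollar_Singularities_of_the_minimal_model_program} and the existence of log canonical models), after a finite base change on $T$ this produces a family whose special fiber $(X_0, D_0)$ is a semi-log-canonical pair with $K_{X_0} + D_0$ ample, i.e.\ a stable log-variety. One must also check that $D$, the closure of $D_\eta$, avoids the generic and codimension-one singular points of the fibers and that the coefficients of $D_0$ land in $I$ — this is exactly where the hypothesis that $I$ is closed under addition enters, since several components of $D_\eta$ of coefficients $c_1, \dots, c_s$ may limit onto a single component of $D_0$ with coefficient $\sum c_i$, and we need $\sum c_i \in I$ (here one also uses $\sum c_i \le 1$, which holds because $(X_0, D_0)$ is slc). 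The volume is preserved because $(K_{X/T} + D)^n$ is constant in a flat family with $K_{X/T}+D$ $\bQ$-Cartier. Second, the \emph{uniqueness} of the limit: two stable limits $(X, D)$ and $(X', D')$ over $T$ with the same generic fiber are isomorphic, because each is \emph{the} log canonical model of a common resolution over $T$, and log canonical models are unique. Third, one translates these two facts into the valuative criterion for the algebraic space: a map $\eta \to \sM$ corresponds to an element of $\sM(\eta)$; existence of the limit gives a map $T' \to \sM$ after finite base change; uniqueness gives that the extension $0 \to \sM$ is unique, which is what the valuative criterion (for properness, allowing finite base change) demands. Finally, the last sentence of the statement — that if the pseudo-functor is itself a DM-stack then it is proper — follows by the same argument applied at the level of the stack (the valuative criterion for DM-stacks again reduces to existence and uniqueness up to finite base change and finite automorphisms of one-parameter degenerations), together with \cite{Keel-Mori97,Conrad_The_Keel_Mori_theorem_via_stacks} for the passage to the coarse space.

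The main obstacle is the existence half: producing the stable limit requires invoking, in a clean form, the full strength of the minimal model program for log canonical pairs over a one-dimensional base, including the existence of log canonical models and the fact that the resulting special fiber is \emph{slc} (not merely lc), together with the behaviour of coefficients under specialization. In fact, in this paper this is not reproved: the properness is quoted from \cite{Kollar_Second_moduli_book} (see the reference to \autoref{prop:proper} being established there in the introduction), so the role of this proposition is to package that input in the form needed later — the cleanest route is therefore to cite \cite{Kollar_Second_moduli_book} for the valuative existence and uniqueness of stable limits and then perform only the routine reduction to the valuative criterion sketched above, being careful that the closed-under-addition hypothesis on $I$ is genuinely used to keep the limit inside the given coefficient set.
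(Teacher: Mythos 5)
Your proposal ultimately lands on the same move the paper makes: the proof in the paper is the single-line citation to \cite[Thm 12.11]{Kollar_Second_moduli_book}, and you correctly identify at the end that the cleanest route is to quote that result and note where the closed-under-addition hypothesis on $I$ enters. The valuative-criterion and stable-reduction sketch you give beforehand is a reasonable account of what underlies the cited theorem, but it is extra exposition rather than part of the paper's actual argument.
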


\begin{proof}
  This is shown in \cite[Thm 12.11]{Kollar_Second_moduli_book}.
\end{proof}

\begin{proposition}
  \label{prop:finite_automorphism}
  If $(X,D)$ is a stable log-variety then $\Aut(X,D)$ is finite.
\end{proposition}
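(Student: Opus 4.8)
The plan is to show that $\Aut(X,D)$ is a linear algebraic group and that its identity component is trivial, the latter by exploiting ampleness of $K_X+D$. First I would observe that $\Aut(X,D)$ is a linear algebraic group with finitely many connected components: $K_X+D$ is an ample $\bQ$-Cartier divisor, and every element of $\Aut(X,D)$ preserves both $K_X$ (functoriality of the dualizing sheaf) and $D$, hence the class $K_X+D$; so $\Aut(X,D)$ is a subgroup of the automorphism group scheme of $X$ polarized by $K_X+D$, which is an affine algebraic group of finite type, and inside it $\Aut(X,D)$ is cut out by the Zariski-closed condition of fixing each component of $D$ with its coefficient. Since a linear algebraic group has finitely many components, it suffices to prove that $G:=\Aut^{\circ}(X,D)$ is trivial.

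Next I would reduce to the normal, log canonical case. Let $\pi:\oX\to X$ be the normalization; it is $\Aut(X,D)$-equivariant, and if $\Gamma$ denotes the reduced conductor and $\oD$ the divisorial preimage of $D$, then $K_{\oX}+\oD+\Gamma=\pi^*(K_X+D)$, the pair $(\oX,\oD+\Gamma)$ is log canonical, $\oX$ is normal and projective, and $K_{\oX}+\oD+\Gamma$ is ample, being the pullback of an ample $\bQ$-divisor along a finite morphism. The induced map $\Aut(X,D)\to\Aut(\oX,\oD+\Gamma)$ is injective, since an automorphism of $X$ whose lift to $\oX$ is the identity fixes $X$ pointwise and acts trivially on $\sO_X\subseteq\pi_*\sO_{\oX}$. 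So I may assume $X$ normal, $(X,\Delta)$ log canonical, $K_X+\Delta$ ample, and, arguing by contradiction, $G=\Aut^\circ(X,\Delta)\neq\{1\}$.

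Since $k$ has characteristic zero, $G$ then contains a subgroup $H$ isomorphic to $\bG_m$ or $\bG_a$, acting faithfully, hence nontrivially, on $X$. I would pass to an $H$-equivariant log resolution $\rho:\wt X\to X$ of $(X,\Delta)$ and set $\wt B:=\rho_*^{-1}\Delta+(\textrm{reduced exceptional locus})$, so that $\wt X$ is smooth, $(\wt X,\wt B)$ is log smooth (in particular log canonical), $K_{\wt X}+\wt B=\rho^*(K_X+\Delta)+(\textrm{effective exceptional divisor})$, and $\wt X$ still carries the $H$-action. Let $C$ be the closure of a general one-dimensional $H$-orbit: it dominates $X$, its normalization is $\bP^1$ with a nontrivial $H$-action having two fixed points if $H=\bG_m$ and one if $H=\bG_a$, and since $\Supp\wt B$ is $H$-invariant, closed and proper, for general $C$ it meets $\Supp\wt B$ only in those finitely many (smooth) fixed points. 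The crux is the inequality $(K_{\wt X}+\wt B)\cdot C\le 0$. For $H=\bG_m$ I would prove it by writing $(K_{\wt X}+\wt B)\cdot C$ as the difference of the $\bG_m$-weights of $\sO_{\wt X}(m(K_{\wt X}+\wt B))$ at the two fixed points of $\bP^1$, and bounding at each fixed point the weight contributed by $\wt B$ using log canonicity of $(\wt X,\wt B)$ — concretely, via the weighted blow-up of $\wt X$ along the fixed locus with weights the transverse tangent weights, whose discrepancy equals the sum of those weights minus one, so that $\ord$ of $\wt B$ along the exceptional divisor is at most that sum. The case $H=\bG_a$ would be treated by an analogous local analysis at the single fixed point of $C$. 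Granting this, since $\rho^*(K_X+\Delta)$ is nef, $K_{\wt X}+\wt B-\rho^*(K_X+\Delta)$ is effective and does not contain $C$, and $\rho_*C\neq 0$, one obtains
\[
0<(K_X+\Delta)\cdot\rho_*C=\rho^*(K_X+\Delta)\cdot C\le(K_{\wt X}+\wt B)\cdot C\le 0,
\]
a contradiction; hence $G=\{1\}$ and $\Aut(X,D)$ is finite.

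The step I expect to be the main obstacle is exactly this inequality $(K_{\wt X}+\wt B)\cdot C\le 0$ — the incompatibility of ampleness of the log canonical class with a nontrivial $\bG_m$- or $\bG_a$-action. The torus case reduces cleanly to a log-canonicity estimate at the source and sink of the general orbit closure, but the additive case is more delicate: the naive argument via an $H$-invariant pluri-log-canonical section produces an inequality of the wrong sign, so one must instead analyze the nilpotent action near the fixed point directly. Alternatively, this last step may be quoted from \cite{Kollar_Second_moduli_book}.
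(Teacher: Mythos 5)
Your proposal takes a genuinely different route from the paper. The paper's proof reduces to the normal irreducible case, sets $U$ equal to the smooth locus of $X\setminus\Supp D$, shows via a log-crepant resolution that $U$ is an open variety of log general type, and then quotes Iitaka's theorem \cite[Thm 11.12]{Iitaka_An_introduction_to_birational_geometry_of_algebraic_varieties} that $\mathrm{SBir}(U)\supseteq\Aut(U)\supseteq\Aut(X,D)$ is finite. You instead first establish that $\Aut(X,D)$ is a linear algebraic group (via the linearized action on $H^0(X,m(K_X+D))$), reduce to the identity component, and try to rule out a $\bG_m$- or $\bG_a$-action by an intersection computation on a general orbit closure on an equivariant log resolution. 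This is a legitimate standard alternative, and the reductions (normalization, passing to the identity component so that disconnectedness of $\oX$ is harmless, one-parameter subgroups in characteristic zero) are all sound. It is also more explicit: it pins the finiteness down to the local log-canonicity-versus-ampleness tension, whereas the paper black-boxes that tension into one classical reference.

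However, as written the proposal has a genuine gap precisely at the ``crux'' inequality $(K_{\wt X}+\wt B)\cdot C\le 0$, and you say so yourself. For $H=\bG_m$ the weighted-blow-up sketch is plausible but not carried out: one must account for fixed points of $C$ that do not lie on $\Supp\wt B$, verify that the discrepancy bound at a toric valuation really controls the weight of the equivariant line bundle $\sO_{\wt X}(m(K_{\wt X}+\wt B))$ at the fixed points, and get the sign of the degree formula right. For $H=\bG_a$ there is no weight formula at all (the orbit closure has a single fixed point, and $\bG_a$-equivariant line bundles on $\bP^1$ are not distinguished by fixed-point data), so the case requires a different argument that you do not supply; the fallback ``quote it from \cite{Kollar_Second_moduli_book}'' effectively cites out the heart of the proof. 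So the framework is reasonable and genuinely different from the paper's, but the key step is asserted rather than proved, and the additive case in particular is an acknowledged hole. If you want to pursue this route, the cleanest fix I know is to avoid orbit-closure intersection numbers altogether and instead argue that a nontrivial connected group of automorphisms would force $K_X+\Delta$ to have numerical dimension strictly less than $\dim X$, contradicting ampleness; alternatively, just follow the paper and invoke Iitaka's finiteness of $\mathrm{SBir}$.
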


\begin{proof}
  Let $\pi : \oX \to X$ be the normalization of $X$ and $\oD$ is defined via
  \begin{equation*}
    K_{\oX} + \oD = \pi^* ( K_X + D ) 
  \end{equation*}
  where $K_X$ and $K_{\oX}$ are chosen compatibly such that $K_X$ avoids the singular
  codimension one points of $X$.  Note that $\oD\geq 0$ by
  \cite[(5.75)]{Kollar_Singularities_of_the_minimal_model_program}. Any automorphism
  of $(X,D)$ extends to an automorphism of $\left(\oX,\oD \right)$, hence we may
  assume that $(X,D)$ is normal. Furthermore, since $X$ has finitely many irreducible
  components, the automorphisms fixing each component form a finite index
  subgroup. Therefore, we may also assume that $X$ is irreducible. Let $U \subseteq
  X$ be the regular locus of $X \setminus \Supp D$. Note that $U$ is
  $\Aut(X,D)$-invariant, hence there is an embedding $\Aut(X,D) \hookrightarrow
  \Aut(U)$. In particular, it is enough to show that $\Aut (U)$ is finite. Next let
  $g :(Y, E) \to (X,D)$ be a log-crepant resolution that is an isomorphism over $U$
  and for which $g^{-1} (X \setminus U)$ is a normal-crossing divisor. Let $F$ be the
  reduced divisor with support equal to $g^{-1} (X \setminus U)$. Then $(Y,E)$ is
  log-canonical, and $E \leq F$. Therefore, $g^* (K_X + D) = K_Y + E \leq K_Y + F$
  and hence $(Y, F)$ is of log general type. However, $U = Y \setminus \Supp F$, and
  hence $U$ itself is of general type. Then by \cite[Thm
  11.12]{Iitaka_An_introduction_to_birational_geometry_of_algebraic_varieties} a
  group (which is called $\mathrm{SBir}(U)$ there) containing $\Aut(U)$ is
  finite. 
\end{proof}

\subsection{A particular functor of stable log-varieties}
\label{sec:particular_functor}

In what follows we describe a particular functor of stable log-varieties introduced
by J\'anos Koll\'ar \cite[(3) of page
21]{Kollar_Moduli_of_varieties_of_general_type}. The main reason we do so is to be
able to give \autoref{def:variation} and prove
\autoref{cor:extending_stable_log_families} and \autoref{cor:finite_cover}. These are
used in the following sections.

In fact, our method will be somewhat non-standard: we define a pseudo-functor
$\sM_{n,m,h}$ which is larger than needed in \autoref{def:the_functor}. We show that
$\sM_{n,m,h}$ is a DM-stack (\autoref{prop:DM_stack}) and if $m$ is divisible enough
(after fixing $n$ and $v$), the locus of stable log-varieties of dimension $n$,
volume $v$ and coefficient set $I$ is proper and closed in $\sM_{n,m,h}$. Hence the
reduced closed substack on this locus is a functor of stable log-varieties as in
\autoref{def:a_functor}. We emphasize that our construction is not a functor that we
propose to use in the long run. For example, we are not describing the values it
takes on Artinian non-reduced schemes. However, it does allow us to make
\autoref{def:variation} and prove \autoref{cor:extending_stable_log_families} and
\autoref{cor:finite_cover}, which is our goal here. Finding a reasonably good
functor(s) is an extremely important, central question which is postponed for future
endeavors.

The issue in general about functors of stable log-varieties is that, as
\autoref{def:a_functor} suggests, it is not clear what their values should be over
non-reduced schemes. The main problem is to understand the nature and behavior of $D$
in those situations. Koll\'ar's solution to this is that instead of trying to figure
out how $D$ should be defined over non-reduced schemes, let us replace $D$ as part of
the data with some other data equivalent to \autoref{eq:a_functor} that has an
obvious extension to non-reduced schemes. This ``other'' data is as follows: instead
of remembering $D$, fix an integer $m>0$ such that $m(K_X + D)$ is Cartier, and
remember instead of $D$ the map $\omega_X^{\otimes m} \to \sO_X(m(K_X +
D))=:\sL$. There are two things we note before proceeding to the precise definition.
\begin{enumerate}
\item A global choice of $m$ as above is possible according to
  \autoref{rem:a_functor}.
\item Fixing $\left(X, \phi: \omega_X^{\otimes m } \to \sL \right)$ is slightly more
  than just fixing $(X,D)$, since composing $\phi$ with an automorphism $\xi$ of $\sL$
  is formally different, but yields the same $D$. In particular, we have to remember
  that different pairs $(X, \phi)$ that only differ by an automorphism $\xi$ of $\sL$
  should be identified eventually.
\end{enumerate}
We define our auxiliary functor $\sM_{n,m,h}$ according to the above considerations.

\begin{definition} 
  \label{def:the_functor}
  Fix an integer $n>0$, a polynomial $h: \bZ \to \bZ$ and an integer $m>0$ divisible
  enough (after fixing $n$ and $h$). We define the auxiliary pseudo-functor
  $\sM_{n,m,h}$ as
  \begin{equation}
    \label{eq:the_functor}
    \stretchleftright[800]{.}
    {\sM_{n,m,h} (Y) = 
      \left\{ \left( \raisebox{2em}{\xymatrix{ X \ar[d] \\ Y }}, \phi:
          \omega_{X/Y}^{\otimes m} \to \sL \right) \left|\hskip-1em
          \parbox{25em}{%
            \begin{enumerate}
            \item
              $f$ is a flat morphism of pure relative dimension $n$,
            \item 
              $\sL$ is a relatively very ample line bundle on $X$ such that $R^i
              f_* (\sL^r)=0$ for every $r>0$, and 
            \item 
              for all $y \in Y$: {
              \begin{enumerate}
              \item 
                $\phi$ is an isomorphism at the generic points and at the
                codimension 1 singular points of $X_y$, and hence it determines a
                divisor $D_y$, such that $\sL_y \simeq \sO_y(m(K_{X_y} + D_y))$,                 
              \item 
                $(X_y, D_y)$ is slc, and
              \item
                $h(r)=\chi(X_y, \sL_y^r)$ for every integer $r>0$.
              \end{enumerate}}
          \end{enumerate}} \right.  \right\}}{/}\equiv,
  \end{equation}
  where 
  \begin{enumerate}[(a)]
  \item as indicated earlier, if $Y$ is normal, $\phi$ corresponds to an actual
    divisor $D$ such that $\sO_X(m(K_{X/Y} + D)) \simeq \sL$. Explicitly, $D$ is the
    closure of $\frac{E}{m}$, where $E$ is the divisor determined by $\phi$ on the
    relatively Gorenstein locus $U$.
  \item The arrows in $\sM_{n,m,h}$ between
    \begin{equation*}
      \left(X \to S, \phi: \omega_{X/S}^{\otimes m} \to \sL \right) \in \sM_{n,m,h}(S), 
    \end{equation*}
    and
    \begin{equation*}
      \left(X' \to T, \phi': \omega_{X'/T}^{\otimes m} \to \sL' \right) \in \sM_{n,m,h}(T), 
    \end{equation*}
    over a fixed $T \to S$ are of the form $(\alpha: X' \to X, \xi : \alpha^* \sL \to
    \sL')$, such that the square
    \begin{equation*}
      \xymatrix@R=1.75em{
        X' \ar[r]^\alpha \ar[d] & X \ar[d] \\
        T \ar[r] & S
      }
    \end{equation*}
    is Cartesian, and $\xi$ is an isomorphism such that the following diagram is
    commutative.
    \begin{equation}
      \label{eq:arrow_cfg}
      \begin{aligned}
        \xymatrix@C=6em@R=1.125em{
          \alpha^* \omega_{X/S}^{\otimes m} \ar@/^2em/[rr]^{\alpha^* \phi} \ar[r] &
          \left( \alpha^* \omega_{X/S}^{\otimes m} \right)^{**} \ar[dd]^{\simeq}_{
            \left.\parbox{11.5em}{\tiny unique extension of the canonical isomorphism
                on the relative Gorenstein locus given by
                \cite[3.6.1]{Conrad_Grothendieck_duality_and_base_change}}\right\}
            \to} \ar[r] & \alpha^*
          \sL   \ar[dd]^{\xi}  \\ \\
          \omega_{X'/T}^{\otimes m} \ar@/_2em/[rr]_{\phi'} \ar[r]
          & \omega_{X'/T}^{[m]} \ar[r] & \sL'  \\
        }.
      \end{aligned}
    \end{equation}
    In other words, $\phi'$ corresponds to $\xi \circ \alpha^* \phi$ via the natural
    identification
    \begin{equation*}
      \Hom \left( \alpha^* \omega_{X/S}^{\otimes m}, \sL'  \right) = \Hom \left(
        \omega_{X'/T}^{\otimes m}, \sL'  \right).   
    \end{equation*}

  \item An arrow as above is an isomorphism if $T \to S$ is the identity and $\alpha$
    is an isomorphisms.

  \item \label{itm:pullback_construction} We fix the following pullback
    construction. It features subtleties similar to that of \autoref{eq:arrow_cfg}
    stemming from the fact that only the hull $\omega_{X/Y}^{[m]}$ of
    $\omega_{X/Y}^{\otimes m}$ is compatible with base-change. So, let us consider
    $(X, \phi):=\left(X \to S, \phi: \omega_{X/S}^{\otimes m} \to \sL \right) \in
    \sM_{n,m,h}(S)$ and a $k$-morphism $T \to S$. Then $(X, \phi)_T:= \left(X_T \to
      T, \phi_{[T]} : \omega_{X_T/T}^{\otimes m} \to \sL_T \right)$, where $\phi_{[T]}$
    is defined via the following commutative diagram.
    \begin{equation*}
      \xymatrix@C=6em{
        \omega_{X_T/T}^{\otimes m} \ar[d] \ar[dr]^-{\phi_{[T]}} \\
        \omega_{X_T/T}^{[m]} \ar[r] & \sL_T \\
        \hskip-.75em\left(\omega_{X/Y}^{\otimes m}\right)_T\hskip-.75em 
        \ar[ur]_-{\phi_T} \ar[u]
      }
    \end{equation*}
    In other words, via the natural identification
    $\Hom\left(\left(\omega_{X/Y}^{\otimes m}\right)_T , \sL_T \right)
    =\Hom\left(\omega_{X_T/T}^{\otimes m} , \sL_T \right)$, $\phi_T$ corresponds to
    $\phi_{[T]}$.
 
  \end{enumerate}
\end{definition}

We leave the proof of the following statement to the reader. We only note that the
main reason it holds is that the presence of the very ample line bundle $\sL$ makes
descent work.

\begin{proposition}
  When viewed as a pseudo-functor (or equivalently as a category fibered in
  groupoids) $\sM_{n,m,h}$ is an \'etale (or even fppf) stack.
\end{proposition}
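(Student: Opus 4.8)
The plan is to verify the two stack axioms for the fppf topology (the \'etale case then following for free, as \'etale covers are fppf covers): for objects $x,y$ of $\sM_{n,m,h}$ over a scheme $T$, the presheaf $\underline{\Isom}_T(x,y)$ on the big fppf site of $T$ is a sheaf, and every fppf descent datum for $\sM_{n,m,h}$ is effective. Both reduce to classical fppf descent --- for morphisms of schemes, for quasi-coherent sheaves, and for closed subschemes --- the one extra ingredient being that the relatively very ample line bundle $\sL$, together with the vanishing of the higher direct images of its positive tensor powers, realizes every family as a closed subscheme of a projective bundle; this is exactly what makes effective descent for the \emph{scheme} $X$ work.

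For the first axiom, write $x=(X\to T,\ \phi\colon\omega_{X/T}^{\otimes m}\to\sL)$ and $y=(X'\to T,\ \phi'\colon\omega_{X'/T}^{\otimes m}\to\sL')$. For $S\to T$, an isomorphism $x_S\xrightarrow{\sim}y_S$ is a pair $(\alpha,\xi)$ consisting of an $S$-isomorphism $\alpha\colon X'_S\xrightarrow{\sim}X_S$ and an isomorphism $\xi\colon\alpha^*\sL_S\xrightarrow{\sim}\sL'_S$ rendering the diagram \autoref{eq:arrow_cfg} commutative. Since $X$ and $X'$ are proper, hence separated, over $T$, the $S$-isomorphisms $X'_S\xrightarrow{\sim}X_S$ form a sheaf in $S$ by descent for morphisms of schemes; the isomorphisms $\alpha^*\sL_S\xrightarrow{\sim}\sL'_S$ of invertible sheaves form a sheaf as well; and commutativity of \autoref{eq:arrow_cfg} is an fppf-local condition on $S$. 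Hence $\underline{\Isom}_T(x,y)$ is an fppf sheaf.

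For the second axiom we may take the descent datum relative to a single faithfully flat morphism $T'\to T$ of finite presentation; set $T''=T'\times_TT'$ and $T'''=T'\times_TT'\times_TT'$. The datum consists of $(X'\to T',\ \phi'\colon\omega_{X'/T'}^{\otimes m}\to\sL')\in\sM_{n,m,h}(T')$ together with an isomorphism $\psi$ between its two pullbacks to $T''$ satisfying the cocycle condition on $T'''$. Since $\sL'$ is relatively very ample and the higher direct images of its positive tensor powers vanish, $\sE':=f'_*\sL'$ is locally free of rank $h(1)$ and the induced closed immersion $X'\hookrightarrow\bP_{T'}(\sE')$ identifies $\sL'$ with $\sO(1)|_{X'}$. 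As an isomorphism of polarized families, $\psi$ is compatible with the adjunction $(f')^*f'_*\sL'\to\sL'$, hence induces a descent datum on $\sE'$; by fppf descent for quasi-coherent sheaves, $\sE'$ descends to a locally free sheaf $\sE$ on $T$ with $\bP_{T'}(\sE')\simeq\bP_T(\sE)\times_TT'$, and then the closed subscheme $X'\subseteq\bP_{T'}(\sE')$, being part of the datum, descends to a closed subscheme $X\subseteq\bP_T(\sE)$. Put $\sL:=\sO_{\bP_T(\sE)}(1)|_X$ and let $f\colon X\to T$ be the structure morphism. Flatness, properness, pure relative dimension $n$, relative very ampleness of $\sL$, and the cohomological vanishing all descend along $T'\to T$, while the fiberwise conditions of \autoref{def:the_functor} --- that $\phi$ be an isomorphism at the generic and codimension-one singular points of each fiber, that each $(X_y,D_y)$ be slc, and that $h(r)=\chi(X_y,\sL_y^r)$ --- hold for $X\to T$ because every geometric fiber of $X\to T$ is a geometric fiber of $X'\to T'$. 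Finally, since $T'\to T$ is flat, the formation of $\omega_{X/T}$ commutes with the base change $T'\to T$, so $(\omega_{X/T}^{\otimes m})_{T'}\simeq\omega_{X'/T'}^{\otimes m}$ and no reflexive-hull correction intervenes here; viewing $\phi'$ as a morphism of quasi-coherent sheaves on $X'=X_{T'}$, the cocycle condition on $\psi$ --- which is precisely the commutativity of \autoref{eq:arrow_cfg} for the canonical identifications --- says exactly that the two pullbacks of $\phi'$ to $X_{T''}$ agree, so $\phi'$ descends to a morphism $\phi\colon\omega_{X/T}^{\otimes m}\to\sL$. The resulting $(X\to T,\phi)$ then lies in $\sM_{n,m,h}(T)$ and pulls back to the given descent datum.

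The step carrying the real content --- and the one deserving the most care --- is the effective descent of the pair $(X,\sL)$: fppf descent need not be effective for schemes in general, and it is only the presence of the relatively very ample $\sL$, via the embedding into $\bP_T(\sE)$, that forces $X$ to descend as a scheme rather than merely as an algebraic space. Everything else is routine bookkeeping: the numerical and slc conditions transfer because fppf covers are surjective on points, and the descent of $\phi$ is unproblematic because over the flat base change $T'\to T$ the relative dualizing sheaf --- and hence its $m$-th tensor power --- is already compatible with base change, so the subtleties built into the pullback construction of \autoref{def:the_functor} do not arise.
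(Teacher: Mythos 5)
Your proof is correct and carries out exactly the one-line hint the paper gives (``the presence of the very ample line bundle $\sL$ makes descent work''): the paper itself leaves the verification to the reader, and your argument — sheafifying $\underline{\Isom}$ via separatedness and descent for morphisms/line bundles, then proving effectivity by descending $\sE'=f'_*\sL'$ and the closed immersion $X'\hookrightarrow\bP_{T'}(\sE')$, and finally descending $\phi'$ using that $\omega_{X/T}$ commutes with the flat base change $T'\to T$ — is the intended one. No gaps.
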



\begin{proposition}
  \label{prop:representable_and_unramified}
  Let $\left(f : X \to Y, \phi: \omega_{X/Y}^{\otimes m} \to \sL \right)$ and
  $\left(f' : X' \to Y, \phi': \omega_{X'/Y}^{\otimes m} \to \sL' \right)$ be two
  objects in $\sM_{n, m,h} (Y)$. Then the isomorphism functor of these two families
  $\sfIsom_Y((X,\phi),(X', \phi'))$ is representable by a quasi-projective scheme
  over $Y$, which is denoted by $\Isom_Y((X,\phi),(X', \phi'))$.  Furthermore, this
  isomorphism scheme, $\Isom_Y((X,\phi),(X', \phi'))$, is unramified over $Y$.
\end{proposition}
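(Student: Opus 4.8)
The plan is to realise $\Isom_{Y}\bigl((X,\phi),(X',\phi')\bigr)$ as a locally closed subscheme of the classical Isom scheme of the two underlying polarized families, and then to deduce unramifiedness from the finiteness of the automorphism group schemes of the fibres together with Cartier's theorem in characteristic zero.

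\textbf{Representability.} First I would exploit the hypotheses of \autoref{def:the_functor}: $f$ and $f'$ are flat projective of pure relative dimension $n$, the sheaves $\sL,\sL'$ are relatively very ample, $R^{i}f_{*}(\sL^{\otimes r})=R^{i}f'_{*}\bigl((\sL')^{\otimes r}\bigr)=0$ for all $i,r>0$, and $\chi(X_{y},\sL_{y}^{\otimes r})=h(r)=\chi(X'_{y},(\sL'_{y})^{\otimes r})$ for every $y\in Y$. Hence $f_{*}\sL$ and $f'_{*}\sL'$ are locally free of rank $h(1)$, compatibly with base change, and $\sL,\sL'$ give closed immersions $X\hookrightarrow\bP_{Y}(f_{*}\sL)$, $X'\hookrightarrow\bP_{Y}(f'_{*}\sL')$ restricting $\sO(1)$ to $\sL$ and $\sL'$. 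A pair $\bigl(\alpha\colon X'_{T}\xrightarrow{\ \sim\ }X_{T}\ \text{over}\ T,\ \xi\colon\alpha^{*}\sL_{T}\xrightarrow{\ \sim\ }\sL'_{T}\bigr)$ is then the same datum as an isomorphism of vector bundles $(f_{T})_{*}\sL_{T}\xrightarrow{\ \sim\ }(f'_{T})_{*}\sL'_{T}$ whose induced isomorphism of projective bundles carries $X_{T}$ onto $X'_{T}$; since $\underline{\Isom}_{Y}(f_{*}\sL,f'_{*}\sL')$ is an affine $Y$-scheme and the condition ``carries $X$ onto $X'$'' is locally closed, the functor of such polarized isomorphisms is representable by a quasi-projective $Y$-scheme $I_{1}$, carrying a universal pair $(\alpha^{\mathrm{u}},\xi^{\mathrm{u}})$. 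This is the standard existence theorem for the Isom scheme of polarized projective families (cf.\ \cite{Kollar_Second_moduli_book}, \cite[\S1]{Viehweg_Quasi_projective_moduli}).

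\textbf{Imposing the $\phi$-compatibility.} Next I would cut out inside $I_{1}$ the locus on which the square \autoref{eq:arrow_cfg} commutes. By the description of the arrows in \autoref{def:the_functor}, a $T$-point of $\sfIsom_{Y}\bigl((X,\phi),(X',\phi')\bigr)$ is exactly a $T$-point $(\alpha,\xi)$ of $I_{1}$ for which the two homomorphisms $\omega_{X'_{T}/T}^{\otimes m}\to\sL'_{T}$ induced by $\phi'$ and by $\xi\circ\alpha^{*}\phi$ agree, under the natural identification $\Hom\bigl(\alpha^{*}\omega_{X_{T}/T}^{\otimes m},\sL'_{T}\bigr)=\Hom\bigl(\omega_{X'_{T}/T}^{\otimes m},\sL'_{T}\bigr)$ fixed in \autoref{def:the_functor}. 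On the relative Gorenstein locus $U'_{I_{1}}\subseteq X'_{I_{1}}$ of $f'$ — which is big in every fibre and over which $\phi,\phi'$ become homomorphisms of \emph{line} bundles — the equality of these two homomorphisms is cut out by the vanishing of a section of a line bundle, hence by a closed subscheme of $I_{1}$; and since the fibres of $f'$ are $S_{2}$ and only the reflexive hull $\omega^{[m]}$ intervenes in \autoref{def:the_functor}, agreement over $U'_{I_{1}}$ is equivalent to agreement over all of $X'_{I_{1}}$. Thus $\sfIsom_{Y}\bigl((X,\phi),(X',\phi')\bigr)$ is represented by a locally closed subscheme $I\subseteq I_{1}$, which is quasi-projective over $Y$; we set $\Isom_{Y}\bigl((X,\phi),(X',\phi')\bigr):=I$. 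I expect this reduction to be the main obstacle: one has to check that the $\phi$-compatibility is genuinely a closed (rather than merely constructible) condition on $I_{1}$ even though $\omega_{X'/Y}$ need not be flat over $Y$ and the fibres of $f'$ are only $S_{2}$ — which is precisely what forces the reflexive-hull bookkeeping in the definition.

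\textbf{Unramifiedness.} Since $I\to Y$ is of finite type and every residue field of $Y$ has characteristic zero, $I$ is unramified over $Y$ if and only if for every $y\in Y$ the fibre $I_{y}:=I\times_{Y}\Spec k(y)$ is finite and reduced over $k(y)$. Here $I_{y}=\Isom_{k(y)}\bigl((X_{y},\phi_{y}),(X'_{y},\phi'_{y})\bigr)$; if it is empty there is nothing to check, and otherwise it acquires a point over $\overline{k(y)}$, so $I_{y}\times_{k(y)}\overline{k(y)}$ is a torsor under the automorphism group scheme $\Aut_{\overline{k(y)}}(X_{y},\phi_{y})$, which maps isomorphically onto $\Aut\bigl(X_{y,\overline{k(y)}},D_{y,\overline{k(y)}}\bigr)$ (the trivialization $\xi$ being uniquely determined by $\alpha$, because $\phi$ is an isomorphism at the generic and codimension-one singular points of the fibre). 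Automorphisms of a stable log-variety preserve its ample class $K+D$, hence act on the sections of a fixed power, so this group scheme is of finite type; by \autoref{prop:finite_automorphism} it has finitely many $\overline{k(y)}$-points, and being a finite-type group scheme in characteristic zero it is smooth, hence reduced, by Cartier's theorem. A reduced finite-type scheme over an algebraically closed field with finitely many points is finite, so the group scheme is finite étale; a torsor under a finite étale group scheme is finite étale, and finite étaleness descends, so $I_{y}$ is finite étale over $k(y)$. As this holds for every $y$, the morphism $I\to Y$ is unramified.
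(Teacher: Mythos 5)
Your overall strategy -- realize the isomorphism functor inside an ambient polarized Isom scheme, cut out the $\phi$-compatibility, and deduce unramifiedness from finiteness and reducedness of the geometric fibres in characteristic zero -- is the same as the paper's, and your treatment of unramifiedness is essentially identical (the paper phrases the fibre as a ``group scheme'' rather than a torsor, but the content is the same; your observation that $\xi$ is forced by $\alpha$ is a nice supplementary remark). The ambient scheme is built slightly differently: you go through $\underline{\Isom}_Y(f_*\sL,f'_*\sL')$ and the ``carries $X$ onto $X'$'' condition, whereas the paper first takes the polarized components of $\Isom_Y(X,X')$ and then adjoins the line-bundle isomorphism via an $\underline{\Isom}$ scheme over it. Either route is fine.

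The real gap is in the step you yourself flag as the main obstacle. You assert that on the relative Gorenstein locus $U'_{I_1}\subseteq X'_{I_1}$ the condition ``$\psi=\psi'$'' is ``cut out by the vanishing of a section of a line bundle, hence by a closed subscheme of $I_1$.'' But the vanishing locus of the difference section is a closed subscheme of $U'_{I_1}$, not of $I_1$; passing from a fibrewise vanishing condition on a (non-proper, since $U'_{I_1}\to I_1$ is only an open immersion composed with a proper map) family to a closed subscheme of the base is exactly the non-trivial representability statement you need, and it is not supplied by the phrase ``hence by a closed subscheme of $I_1$.'' In general, for a coherent sheaf on a projective $I_1$-scheme the locus where a global section vanishes fibrewise need not a priori carry a natural closed subscheme structure without some flatness-and-base-change input. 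The paper circumvents this entirely by invoking the scheme $\underline{\Hom}_J(\omega_{X_J/J}^{\otimes m},\sL_J)$ from \cite[33]{Kollar_Hulls_and_Husks} (available because $\sL_J$ is a line bundle over a flat projective family), which is separated over $J$; the two sections $s,s'\colon J\to\underline{\Hom}$ induced by $\psi,\psi'$ then have a closed equalizer $s'^{-1}(s(J))$, and this closed subscheme is shown to have the required universal property. Your argument that agreement on the big open Gorenstein locus forces agreement everywhere (reflexivity of $\sHom(\omega^{\otimes m},\sL')$) is correct and matches a remark in the paper's proof, but it addresses a different issue than the one that actually needs the $\underline{\Hom}$ machinery. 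As written, the representability half of your proof is incomplete at precisely this point.
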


\begin{remark}
  Recall that, by definition, $\sfIsom_Y((X,\phi),(X', \phi'))(T)$ is the set of
  $T$-isomorphisms between $(X,\phi)_T$ and $(X', \phi')_T$ for any scheme $T$ over
  $k$.
\end{remark}

\begin{proof}
  First, we show the representability part of the statement.  Let $I:=
  \Isom_Y^*(X,X') \to Y$ be the connected components of the Isom scheme
  $\Isom_Y(X,X')$ parametrizing isomorphisms $\gamma: X_T \to X_T'$ such that
  $\gamma^* \sL_T' \equiv_T \sL_T$ \cite[Exercise
  1.10.2]{Kollar_Rational_curves_on_algebraic_varieties}. It comes equipped with a
  universal isomorphism $\alpha: X_I \to X_I'$. Now, let
  $J:=\underline{\Isom}_I(\alpha^* \sL_I', \sL_I)$ be the open part of
  $\underline{\Hom}_I(\alpha^* \sL_I', \sL_I)$ \cite[33]{Kollar_Hulls_and_Husks}
  parametrizing isomorphisms. This space also comes equipped with a universal
  isomorphism $\xi: \alpha_J^* \sL_J' \to \sL_J$. This space $J$, with the universal
  family $\alpha_J : X_J \to X_J'$ and $\xi: \alpha_J^* \sL_J' \to \sL_J$ is a fine
  moduli space for the functor
  \begin{equation*}
    T \mapsto \{ ( \beta, \zeta) | \beta : X_T \to X_T' \textrm{ and } \zeta :
    \beta^* \sL_T' \to \sL_T \textrm{ are isomorphisms} \}. 
  \end{equation*}
  This is almost the functor $\sfIsom_Y((X,\phi),(X', \phi'))$, except in the latter
  there is an extra condition that the following diagram commutes:
  \begin{equation}
    \label{eq:commutative_condition}
    \begin{aligned}
      \xymatrix@C=4em{ \beta^* \omega_{X_T'/T}^{\otimes m} \ar[d]^\simeq
        \ar[r]^-{\beta^* \phi_{[T]}'} & \beta^* \sL_T' \ar[d]^{\zeta} \\
        \omega_{X_T/T}^{\otimes m} \ar[r]^-{\phi_{[T]}} & \sL_T }
    \end{aligned}    
  \end{equation}
  Note that here we do not have to take hulls. Indeed, $\beta^*
  \omega_{X_T'/T}^{\otimes m}$ itself is isomorphic to $\omega_{X_T/T}^{\otimes m}$
  via the $m$-th tensor power of the unique extension of the canonical map of
  \cite[Thm 3.6.1]{Conrad_Grothendieck_duality_and_base_change} from the relative
  Gorenstein locus, since $\beta$ is an isomorphism and hence $\beta^*
  \omega_{X_T'/T}$ is reflexive.

  Hence we are left to show that the condition of the commutativity of
  \autoref{eq:commutative_condition} is a closed condition. That is, there is a
  closed subscheme $S \subseteq J$, such that the condition of
  \autoref{eq:commutative_condition} holds if and only if the induced map $T \to J$
  factors through $S$.

  Set $\psi:= \phi_{[J]}$ and let $\psi'$ be the composition
  \begin{equation*}
    \xymatrix@C=4em{
      \omega_{X_J/J}^{\otimes m} \simeq \alpha^* \omega_{X_J'/J}^{\otimes m}
      \ar[r]^-{\alpha^* \phi_{[J]}'} & \alpha^* \sL_J' \ar[r]^-{\xi}  & \sL_J 
    }.
  \end{equation*}
  Consider $M:=\underline{\Hom} \left(\omega_{X_J/J}^{\otimes m}, \sL_J \right) $
  \cite[33]{Kollar_Hulls_and_Husks}. The homomorphisms $\psi$ and $\psi'$ give two
  sections $s, s': J \to M$.  Let $S:= s'^{-1} (s(J))$.

  In the remainder of the proof we show the above claimed universal property of $S$.
  Take a scheme $T$ over $k$ and a pair of isomorphisms $(\beta, \zeta)$, where
  $\beta$ is a morphism $ X_T \to X_T'$ and $\xi$ is a homomorphism $\beta^* \sL_T' \to
  \sL_T$. Let $\mu : T \to J$ be the moduli map, that is via this map $\beta= \alpha_T$
  and $\zeta= \xi_T$. We have to show that the commutativity of
  \autoref{eq:commutative_condition} holds if and only if $\mu$ factors through the
  closed subscheme $S \subseteq J$.

  First, by the natural identification
  $\underline{\Hom} \left(\omega_{X_T/T}^{\otimes m}, \sL_T \right) =
  \underline{\Hom} \left( \left(\omega_{X_J/J}^{\otimes m} \right)_T, \sL_T \right)$
  the commutativity of \autoref{eq:commutative_condition} is equivalent to $\psi_T =
  \psi_T'$.  Second, by functoriality of $\Mor$, the latter condition is equivalent
  to $s_T=s_T'$ (as sections of $M_T \to T$).  However, the latter is equivalent to
  the factorization of $T \to J$ through $S$, which shows that indeed
  $\Isom_Y((X,\phi),(X', \phi')):=S$ represents the functor $\sfIsom_Y((X,\phi),(X',
  \phi'))$.

  For the addendum, note that $\Isom_Y((X,\phi),(X', \phi'))$ is a group scheme over
  $Y$. Since $\mathrm{char} k=0$, the characteristics of all the geometric points is
  $0$ and hence all the geometric fibers are smooth. This implies that
  $\Isom_Y((X,\phi),(X', \phi'))$ is unramified over $Y$ \cite[Tag
  02G8]{stacks-project}, since its geometric fibers are finite by
  \autoref{prop:finite_automorphism}.
\end{proof}

\begin{lemma}
  \label{lem:slc_open}
  Let $\left(f : X \to Y, \omega_{X/Y}^{\otimes m} \to \sL \right)$ satisfy
  conditions \emph{(1), (2), (3i)} and \emph{(3iii)} in \autoref{eq:the_functor},
  i.e., do not assume that $(X_y,D_y)$ is slc. Further assume that $X_y$ is
  demi-normal for all $y \in Y$ and $Y$ is essentially of finite type over $k$. Then
  the subset $Y^\circ:=\{y\in Y\mid (X_y,D_y) \text{ is slc}\}\subseteq Y$ is open.
\end{lemma}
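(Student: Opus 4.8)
The plan is to show that $Y^{\circ}$ is constructible and stable under generization; since $Y$ is Noetherian, this forces $Y^{\circ}$ to be open. \emph{First the reductions.} Openness is topological, so we may pass to $Y_{\red}$, and then, since the normalization $Y^{\nu}\to Y_{\red}$ is finite and surjective (hence a closed surjection), a subset of $Y$ is open if and only if its preimage in $Y^{\nu}$ is. Working on connected components we thus assume $Y$ is normal and integral. Then $X$ is demi-normal (flatness over a normal base with demi-normal fibres forces $X$ to be reduced, $S_2$, Gorenstein in codimension $1$ and seminormal), so by \autoref{def:the_functor}(a) the data $\phi$ determines an effective $\bQ$-divisor $D$ on $X$ with divisorial restriction $D_y$ for each $y$; moreover condition (3i) of \autoref{eq:the_functor} gives $\sL_y\simeq\sO_{X_y}(m(K_{X_y}+D_y))$, so $m(K_{X_y}+D_y)$ is Cartier and the slc condition on $(X_y,D_y)$ is meaningful.

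\emph{Constructibility of $Y^{\circ}$.} Let $\pi\colon\oX\to X$ be the normalization, $\oC$ its conductor divisor, and $\oD$ the strict transform of $D$, so that $\pi^*(K_X+D)=K_{\oX}+\oD+\oC$ and, by the definition of slc singularities, $(X_y,D_y)$ is slc exactly when $(\oX_y,\oD_y+\oC_y)$ is log canonical. The generic fibre $\oX_{\eta}$ is normal and finite birational over $X_{\eta}$, hence equals its normalization; spreading out, there is a dense open $Y_1\subseteq Y$ over which $\oX\to Y_1$ is flat with normal fibres, the formation of $\pi$ and of $\oC$ commutes with base change, and, shrinking $Y_1$, the reflexive sheaf $\sO_{\oX}(m(K_{\oX/Y}+\oD+\oC))$ is a line bundle whose formation commutes with base change. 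Shrinking $Y_1$ once more — here characteristic $0$ enters — choose a simultaneous log resolution $g\colon Z\to\oX$ over $Y_1$: $g$ is projective birational, $Z$ is smooth over $Y_1$, and $\exc(g)$ together with the strict transform of $\oD+\oC$ is relatively simple normal crossing over $Y_1$. The log discrepancies of the $g$-exceptional divisors with respect to $(\oX_y,\oD_y+\oC_y)$ are then determined by this relative snc structure, so they do not depend on $y\in Y_1$; hence $(\oX_y,\oD_y+\oC_y)$ is log canonical for all $y\in Y_1$ or for none, i.e.\ $Y^{\circ}\cap Y_1$ is $Y_1$ or $\emptyset$. Applying this to the irreducible components of $Y\setminus Y_1$ (again made reduced and normal, the family restricting with the same hypotheses) and inducting on $\dim Y$ exhibits $Y^{\circ}$ as a finite union of locally closed subsets, in particular as a constructible set.

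\emph{Stability under generization.} Suppose $y_1$ generizes $y_0$ and $(X_{y_0},D_{y_0})$ is slc. Base changing along a morphism $\Spec R\to Y$ from a DVR $R$ whose closed and generic points map to $y_0$ and $y_1$ respectively, we reduce to $Y=\Spec R=:T$. Then $X_T$ is $S_2$, and $\sL_T$ agrees with $\sO_{X_T}(m(K_{X_T/T}+D_T))$ on the big relative Gorenstein locus, so the two reflexive sheaves are isomorphic and $m(K_{X_T/T}+D_T)$ is Cartier. Since the central fibre $X_{y_0}$ is a Cartier divisor on $X_T$ and $(X_{y_0},D_T|_{X_{y_0}})=(X_{y_0},D_{y_0})$ is slc, inversion of adjunction for slc pairs \cite{Kollar_Singularities_of_the_minimal_model_program} shows that $(X_T,D_T+X_{y_0})$ is slc in a neighbourhood of $X_{y_0}$; hence $(X_T,D_T)$ is slc there, and so is its generic fibre $(X_{y_1},D_{y_1})$. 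Thus $Y^{\circ}$ is constructible and stable under generization, hence open.

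\emph{Where the difficulty lies.} The argument is structurally routine; the care is all in the base-change bookkeeping. Normalization and the conductor do not commute with arbitrary base change — this already fails for flat families with normal total space but non-normal special fibre — which is why log canonicity can only be compared over a dense open and must then be propagated by Noetherian induction, and one must separately verify on the trait $T$ that $m(K_{X_T/T}+D_T)$, assumed $\bQ$-Cartier only fibrewise, is genuinely Cartier before invoking inversion of adjunction.
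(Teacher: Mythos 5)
Your proof is correct and follows the same two-part strategy the paper uses: show $Y^{\circ}$ is constructible by exhibiting a dense open $Y_1$ over which fiberwise discrepancies are constant and then applying Noetherian induction, and show $Y^{\circ}$ is stable under generization by reducing to a DVR and invoking inversion of adjunction for slc pairs. The only substantive difference is in the constructibility step: you normalize $X$, translate the slc condition on $(X_y,D_y)$ to log canonicity of $(\oX_y,\oD_y+\oC_y)$, and take a simultaneous log resolution of the normalization, whereas the paper works with the non-normal $X$ directly via a semi-smooth log-resolution $\rho\colon X'\to X$. Both routes are valid and involve the same spreading-out bookkeeping; the paper's choice avoids having to track base change of the normalization and conductor, while yours keeps the argument in the more familiar setting of lc pairs on a normal variety. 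Your reduction of the base to normal and integral via $Y^{\nu}\to Y_{\red}$ (using that finite surjective maps reflect open sets) is a slightly cleaner substitute for the paper's passage to a resolution $Y'\to Y$; both achieve the same effect. One small point worth tightening: when you say the discrepancies are "determined by the relative snc structure," the cleaner formulation is that, after shrinking $Y_1$ so that every component of $g^{*}(K_{\oX/Y_1}+\oD+\oC)-K_{Z/Y_1}$ dominates $Y_1$, restriction to a fiber preserves the coefficients, so the discrepancies of $(\oX_y,\oD_y+\oC_y)$ against the log resolution $Z_y\to\oX_y$ are independent of $y\in Y_1$.
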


\begin{proof}
  Let $\tau : Y' \to Y$ be a resolution. As $\tau$ is proper, we may replace the
  original family with the pullback to $Y'$ and so we may assume that $Y$ is
  smooth. Next we show that \emph{the slc locus $\{ y \in Y | (X_y, D_y) \textrm{ is
      slc} \}$ is constructible}. For that it is enough to show that there is a
  non-empty open set $U$ of $Y$ such that either $(X_y, D_y)$ is slc for all $y \in
  U$ or $(X_y, D_y)$ is not slc for all $y \in U$
  and conclude by noetherian induction. To prove the existence of such a $U$, we may
  assume that $Y$ is irreducible. Let $\rho : X' \to X$ be a semi-smooth
  log-resolution and $U \subseteq Y$ an open set for which
  \begin{itemize}
  \item $\rho^{-1} f^{-1} U  \to U$ is flat,
  \item $X_y' \to X_y$ is a semi-smooth log-resolution for all $y \in U$, and
  \item for any exceptional divisor $E$ of $\rho$ that does not dominate $Y$ (i.e.,
    which is $f$-vertical) $f(\rho(E))\cap U=\emptyset$.
  \end{itemize}
  It follows that for $y \in U$, the discrepancies of $(X_y, D_y)$ are 
  independent of $y$. Hence, either every such $(X_y,D_y)$ is slc or all of them are
  not slc.
 
  Next, we prove that \emph{the locus $\{ y \in Y | (X_y, D_y) \textrm{ is slc} \}$
    is closed under generalization}, which will conclude our proof by \cite[Exc
  I.3.18.c]{Hartshorne_Algebraic_geometry}. For, this we should prove that if $Y$ is
  a DVR, essentially of finite type over $k$, and $(X_\xi, D_\xi)$ is slc for the
  closed point $\xi \in Y$, then so is $(X_\eta, D_\eta)$ for the generic point $\eta
  \in Y$. However, this follows immeditaley by inversion of adjunction for slc
  varieties \cite[Cor 2.11]{Patakfalvi_Fibered_stable_varieties}, since that implies
  that $(X, D + X_\xi)$ is slc and then by localizing at $\eta$ we obtain that
  $(X_{\eta}, D_{\eta})$ is slc.
\end{proof}

\begin{proposition}
  \label{prop:DM_stack}
  $\sM_{n,m,h}$ is a DM-stack of finite type over $k$.
\end{proposition}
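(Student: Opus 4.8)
The plan is to present $\sM_{n,m,h}$ as a quotient stack $[H/\PGL_{N+1}]$ with $H$ a scheme of finite type over $k$, and then deduce the Deligne--Mumford property from the fact, already proved in \autoref{prop:representable_and_unramified}, that its diagonal is representable by quasi-projective schemes and unramified. Indeed, for $(X,\phi),(X',\phi')\in\sM_{n,m,h}(Y)$ the fibre product $Y\times_{\sM_{n,m,h}\times_k\sM_{n,m,h}}\sM_{n,m,h}$ is exactly $\Isom_Y((X,\phi),(X',\phi'))$, so \autoref{prop:representable_and_unramified} says that the diagonal of $\sM_{n,m,h}$ is representable, separated, of finite type, and unramified.

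To build the atlas I would set $N+1:=h(1)$. For any $\bigl(f:X\to Y,\phi\bigr)\in\sM_{n,m,h}(Y)$, condition (2) of \autoref{eq:the_functor} forces $R^if_*\sL=0$ for $i>0$ with $\sL$ relatively very ample, so $f_*\sL$ is locally free of rank $N+1$ (its fibrewise $h^0$ is $h(1)$ by (3iii)) and $X$ embeds into $\bP(f_*\sL)$ over $Y$; hence, fppf-locally on $Y$, the datum $(X,\phi)$ becomes a closed subscheme of $\bP^N\times Y$ together with a map $\omega^{\otimes m}_{X/Y}\to\sO(1)|_X$. Accordingly I would begin from the Hilbert scheme $\Hilb_h(\bP^N_k)$, pass to the locally closed subscheme $\Hilb'$ parametrizing subschemes whose fibres are demi-normal of pure dimension $n$ (a standard locally closed condition, over which the relative dualizing sheaf and its tensor powers commute with base change, so that no hull-corrections as in \autoref{eq:arrow_cfg} and \autoref{itm:pullback_construction} are needed), and form the relative $\underline{\Hom}$-scheme $M:=\underline{\Hom}_{\Hilb'}\bigl(\omega^{\otimes m}_{\mathcal{X}/\Hilb'},\sO(1)\bigr)$ (cf.\ \cite[33]{Kollar_Hulls_and_Husks}), which is affine of finite type over $\Hilb'$ and carries a universal $\phi$. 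Inside $M$ I would take the open subscheme $H$ on which, fibrewise: $\sO(1)$ is very ample with vanishing higher cohomology of its positive powers and embeds $\mathcal{X}_y$ linearly normally (so $\sO(1)|_{\mathcal{X}_y}$ is the $\sL_y$ of the definition); $\phi_y$ is an isomorphism at the generic and codimension-one singular points of $\mathcal{X}_y$; and, crucially by \autoref{lem:slc_open}, $(\mathcal{X}_y,D_y)$ is slc. (The requirement $h(r)=\chi(\mathcal{X}_y,\sL_y^r)$ is automatic on $\Hilb_h$.) That each of these loci is open --- or, for the demi-normality stratification, locally closed --- is routine from the standard openness of fibrewise conditions for flat proper families, supplemented where necessary by the resolution-of-base argument already used in the proof of \autoref{lem:slc_open}.

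The universal family over $H$ defines a morphism $u:H\to\sM_{n,m,h}$. For $(f:X\to T,\phi)\in\sM_{n,m,h}(T)$ the fibre product $H\times_{\sM_{n,m,h}}T$ is the functor of linearly normal closed embeddings $X_{T'}\hookrightarrow\bP^N\times T'$ pulling $\sO(1)$ back to $\sL_{T'}$ (the remaining data $\phi'$ and the isomorphism of line bundles being then uniquely determined, and the equivalence relation ``$\equiv$'' of \autoref{def:the_functor} being precisely what disappears upon projectivizing); this is a torsor under $\PGL_{N+1}$. Since the diagonal of $\sM_{n,m,h}$ is representable, $H\times_{\sM_{n,m,h}}T$ is an algebraic space, so $u$ is representable, smooth and surjective. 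As $H$ and $\PGL_{N+1}$ are of finite type over $k$, it follows that $\sM_{n,m,h}$ is an algebraic stack of finite type over $k$, and in fact $\sM_{n,m,h}\simeq[H/\PGL_{N+1}]$. Finally, being an algebraic stack whose diagonal is unramified by \autoref{prop:representable_and_unramified}, $\sM_{n,m,h}$ is Deligne--Mumford (see e.g.\ \cite[Tag 06N3]{stacks-project}); equivalently, in characteristic zero the finiteness of automorphism groups from \autoref{prop:finite_automorphism} lets one refine the smooth atlas $u$ to an \'etale one.

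I expect the main obstacle to be this bookkeeping in the construction of $H$ rather than any one deep step: one must check that on the chosen locally closed locus of the Hilbert scheme the relative dualizing sheaf (hence $\omega^{\otimes m}_{\mathcal{X}/\Hilb'}$) is genuinely compatible with base change, so that $M$ represents the intended functor of pairs $(X\subseteq\bP^N\times T,\phi)$ exactly; that the divisor $D_y$ attached to $\phi_y$ varies well enough that the slc condition defines an open locus (which is the content borrowed from \autoref{lem:slc_open}); and that the various ampleness and ``isomorphism in codimension one'' conditions are open. None of this is hard, but it is where the work lies; the two genuinely substantive inputs, \autoref{prop:representable_and_unramified} and \autoref{lem:slc_open}, are already available.
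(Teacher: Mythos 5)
Your proposal follows the paper's proof in all essentials: the diagonal is handled exactly via \autoref{prop:representable_and_unramified}, the atlas is built from the same tower $\mathfrak{Hilb}^h_{\bP^N}\supseteq\sH_1\supseteq\cdots$ through the $\underline{\Hom}$-scheme and the open slc locus of \autoref{lem:slc_open}, and smoothness of the atlas is deduced from the observation that a lift of a family along it amounts to choosing a frame of $f_*\sL$. The only substantive difference is presentational: you package the last point as ``$H\times_{\sM}T$ is a $G$-torsor,'' while the paper unpacks the same idea as a direct verification of formal smoothness over Artinian thickenings.

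One correction to that packaging, though: the torsor group is $\GL_{N+1}$, not $\PGL_{N+1}$. A point of $H$ over a $T$-point $(X,\phi)$ of $\sM$ is a pair (embedding $\iota\colon X_{T'}\hookrightarrow\bP^N_{T'}$, isomorphism $\xi\colon\iota^*\sO(1)\simeq\sL_{T'}$), i.e.\ a frame of $f_*\sL$; the central $\bG_m\subset\GL_{N+1}$ fixes the projective embedding but rescales $\xi$ and hence changes the induced $\phi'=\xi^{-1}\circ\phi$, so it acts freely. Your claim that ``$\phi'$ and the isomorphism of line bundles are then uniquely determined'' by the bare embedding is exactly where this slips: $\xi$ is determined only up to $\bG_m$. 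Relatedly, the equivalence $\equiv$ of \autoref{def:the_functor} does \emph{not} ``disappear upon projectivizing'': by the commutativity constraint \eqref{eq:arrow_cfg}, an automorphism of $(X,\phi)$ in $\sM$ must preserve $\phi$ on the nose, which rules out nontrivial scalar $\xi$ (otherwise the inertia would contain $\bG_m$ and $\sM$ could not be Deligne--Mumford). None of this affects the conclusion — $\GL_{N+1}$ is just as smooth as $\PGL_{N+1}$, so $u$ is still a smooth surjective atlas — but the presentation should be $\sM_{n,m,h}\simeq[H/\GL_{N+1}]$.
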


\begin{proof}
  For simplicity let us denote $\sM_{n,m,h}$ by $\sM$.  According to
  \cite[4.21]{Deligne_Mumford_The_irreducibility_of_the_space_of_curves_of_given_genus}
  we have to show that $\sM$ has representable and unramified diagonal, and there is
  a smooth surjection onto $\sM$ from a scheme of finite type over $k$. For any stack
  $\sX$ and a morphism from a scheme $T \to \sX \times_k \sX$ corresponding to $s,t
  \in \sX(T)$, the fiber product $\sX \times_{\sX \times_k \sX} T$ can be identified
  with $\Isom_T(s,t)$. Hence the first condition follows from
  \autoref{prop:representable_and_unramified}. For the second condition we are to
  construct a cover $S$ of $\sM$ by a scheme such that $S \to \sM$ is formally
  smooth. The rest of the proof is devoted to this.



  Set $N:= p(1) -1$. Then, $\mathfrak{Hilb}_{\bP^{N}}^h$ contains every $\left(X,
    \phi : \omega_X^{\otimes m} \to \sL \right) \in \sM(k)$, where $X$ is embedded
  into $\bP^N_k$ using $H^0(X,\sL)$. Let $\sH_1 := \mathfrak{Hilb}_{\bP^{N}}^h$ be
  the open subscheme corresponding to $X \subseteq \bP^N$, such that $H^i(X,
  \sO_X(r))=0$ for all integers $i>0$ and $r>0$. According to
  \cite[III.12.2.1]{EGAIV}, there is an open subscheme $\sH_2 \subseteq \sH_1$
  parametrizing the reduced equidimensional and $S_ 2$ varieties. Since small
  deformations of nodes are either nodes or regular points, we see that there is an
  open subscheme $\sH_3 \subseteq \sH_2$ parametrizing the demi-normal varieties
  (where reducedness and equidimensionality is included in demi-normality). Let
  $\sU_3$ be the universal family over $\sH_3$. According to
  \cite[33]{Kollar_Hulls_and_Husks} there is a fine moduli scheme
  $M_4:=\underline{\Hom}_{\sH_3} \left(\omega_{\sU_3/\sH_3}^{\otimes m},
    \sO_{\sU_3}(1) \right)$. Define $\sU_4$ and $\sO_{\sU_4}(1)$ to be the pullback
  of $\sU_3$ and of $\sO_{\sU_3}(1)$ over $M_4$. Then there is a universal
  homomorphism $\gamma: \omega_{\sU_4/M_4}^{\otimes m} \to \sO_{\sU_4}(1)$.
  Let $M_5 \subseteq M_4$ be the open locus where $\gamma$ is an isomorphism 
  at every generic point and singular codimension one point of each fiber is
  open. Let $\sU_5$ and $\sO_{\sU_5}(1)$ the restrictions of $\sU_4$ and
  $\sO_{\sU_4}(1)$ over $M_5$.  According to \autoref{lem:slc_open}, there is an even
  smaller open locus $M_6 \subseteq M_5$ defined by
  \begin{equation*}
    M_6:= \left\{ t \in M_5 \left|  \omega_{(\sU_5)_t}^{\otimes m} \to
        \sO_{(\sU_5)_t}(1) \textrm{ corresponds to an slc pair } \right.  \right\}. 
  \end{equation*}
  Then define $S:=M_6$ and $g : U \to S$ and $\phi: \omega_{U/S}^{\otimes m} \to
  \sO_U(1)$ to be respectively the restrictions of $\sU_5 \to M_5$ and of $\gamma$
  over $M_6$.  From \autoref{def:the_functor} and by cohomology and base-change it
  follows that for each $\left(h: X_T \to T, \phi' : \omega_{X_T/T}^{\otimes m} \to
    \sL_T \right) \in \sM(T)$ such that $T$ is Noetherian,
  \begin{enumerate}
  \item the sheaf $h_* \sL_T$ is locally free, and
  \item giving a map $\nu : T \to S$ and an isomorphism $(\alpha, \xi)$ between $(h:
    X_T \to T, \phi' : \omega_{X_T/T}^{\otimes m} \to \sL_T)$
    and $\left(U_T \to T, \phi_{[T]} : \omega_{U_T/T}^{\otimes m} \to \sO_{U_T}(1)
    \right)$ is equivalent to fixing a set of free generators $s_0, \dots, s_n \in
    h_* \sL_T$.
  \end{enumerate}
  Indeed, for the second statement, fixing such a generator set is equivalent to
  giving a closed embedding $\iota: X_T \to \bP^N_T$ with Hilbert polynomial $h$
  together with an isomorphism $\zeta : \sL_T \to \iota^*
  \sO_{\bP^N_T}(1)$. Furthermore, the latter is equivalent to a map
  $\nu_{\mathrm{pre}} : T \to \sH_3$ together with isomorphisms $\alpha: X_T \to
  (\sU_3)_T$ and $\xi : \sL_T \to \alpha^* \sO_{(\sU_3)_T}(1)$.  Then the composition
  \begin{equation*}
    \xymatrix{
      \alpha^* \omega_{(\sU_3)_T/T}^{\otimes m} \ar[r]^-{\simeq} &
      \omega_{X_T/T}^{\otimes m} \ar[r]^-{\phi'} & \sL_T \ar[r]^-{\xi} & \iota^*
      \sO_{(\sU_3)_T}(1)  
    }
  \end{equation*}
  yields a lifting of $\nu_{\mathrm{pre}}$ to a morphism $\nu : T \to S$, such that
  $(\alpha, \xi^{-1})$ is an isomorphism between $(X_T, \phi')$ and $\left(U_T,
    \phi_{[T]} \right)$.

  Now, we show that the map $S \to \sM$ induced by the universal family over $S$ is
  smooth. It is of finite type by construction, so we have to show that it is
  formally smooth. Let $\delta : (A', \mathfrak m') \twoheadrightarrow (A,\mathfrak
  m)$ be a surjection of Artinian local rings over $k$ such that $\mathfrak m (\ker
  \delta) =0$.  Set $T:= \Spec A$ and $T':=\Spec A'$. According to
  \cite[IV.17.14.2]{EGAIV}, we need to show that if there is a 2-commutative diagram
  of solid arrows as follows, then one can find a dashed arrow keeping the diagram
  2-commutative.
  \begin{equation*}
    \xymatrix{
      S \ar[d] & \ar[l] T \ar[d] \\
      \sM & \ar[l] T' \ar@{-->}[lu]
    }
  \end{equation*}
  In other words, given a family $\left( h : X_{T'} \to T', \phi' :
    \omega_{X_{T'}/T'}^{\otimes m} \to \sL \right) \in \sM(T')$, with an isomorphism
  $(\beta, \zeta)$ between $\left(X_T, \phi_T' \right)$ and $(U_T, \phi_T)$. We are
  supposed to prove that $(\beta, \zeta)$ extends over $T'$. However, as explained
  above, $(\beta, \zeta)$ corresponds to free generators of $\left(h_T\right)* \sL_T$,
  which can be lifted over $T'$ since $T \to T'$ is an infinitesimal extension of
  Artinian local schemes.
\end{proof}

\begin{lemma}
  Let $\left(f : X \to Y, \omega_{X/Y}^{\otimes m} \to \sL \right) \in \sM_{n,m,h}(T)$
  for some $T$ essentially of finite type over $k$ and $I \subseteq [0,1]$ a finite
  coefficient set closed under addition. Then the locus
  \begin{equation}
    \label{eq:coefficients_in_I}
    \{t \in T | (X_t, D_t) \textrm{ has coefficients in } I \}
  \end{equation}
  is closed (here $D_t$ is the divisor corresponding to $\phi_t$).  Furthermore, if
  $m$ is divisible enough (after fixing $n$, $v$ and $I$), then the above locus is
  proper over $k$.
\end{lemma}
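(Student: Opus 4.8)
\emph{Closedness of the locus.} Write $T_I$ for the locus in \autoref{eq:coefficients_in_I} and let $U\subseteq X$ be the relative Gorenstein locus of $f$. Since each fibre $X_t$ is demi-normal, hence Gorenstein in codimension $1$, $U_t$ is a big open subset of $X_t$, so the coefficients of $D_t$ may be computed on $U_t$. On $U$ the homomorphism $\phi$ is a section of the line bundle $\sL|_U\otimes\omega_{U/T}^{\otimes -m}$ whose zero divisor is $E:=m\,D|_U$; as $\phi_t$ is an isomorphism at every generic point of $X_t$, this section is a nonzerodivisor on every fibre, so $E\to T$ is flat. First I would show $T_I$ is constructible: by Noetherian induction one may assume $T$ is integral and, after replacing $T$ by a dense open, spread out $E$ so that the prime components of its fibres and their multiplicities (which become integral multiples of $1/m$ after dividing by $m$) are locally constant; over such an open $T_I$ is either everything or empty.

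\emph{Stability under specialization.} By constructibility it then suffices to show that $T_I$ is stable under specialization, which reduces to the case $T=\Spec R$ for a DVR $R$ essentially of finite type over $k$, with generic point $\eta\in T_I$; I must show the closed point $0$ lies in $T_I$. Over $R$ the divisor $E$ contains no fibre of $U/R$, hence has no vertical component, so $E=\sum_i a_i\,\overline{\Delta_i}$, where the $\overline{\Delta_i}$ are the closures of the prime components $\Delta_{i,\eta}$ of $E_\eta$ — each flat over $R$ — and $a_i/m=\coeff_{\Delta_{i,\eta}}D_\eta\in I$. Restricting the Cartier divisor $E$ to the closed fibre and using additivity of the cycle-theoretic fibre, $[E_0]=\sum_i a_i\,[(\overline{\Delta_i})_0]$, so for every prime divisor $W$ of $U_0$,
\begin{equation*}
  \coeff_W D_0 \;=\; \frac1m\coeff_W[E_0] \;=\; \sum_i \bigl(\coeff_{\Delta_{i,\eta}}D_\eta\bigr)\, m_{i,W},
  \qquad m_{i,W}:=\coeff_W[(\overline{\Delta_i})_0]\in\bZ_{\geq0}.
\end{equation*}
Thus each coefficient of $D_0$ is a sum of $\sum_i m_{i,W}$ elements of $I$; since $(X_0,D_0)$ is slc this sum is $\leq 1$, so by the hypothesis that $I$ is closed under addition it belongs to $I$. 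Hence $0\in T_I$, and being constructible and stable under specialization, $T_I$ is closed.

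\emph{Properness.} Applied to an atlas of $\sM_{n,m,h}$ and descended, the above shows that ``coefficients in $I$'' cuts out a closed substack $\sM'\subseteq\sM_{n,m,h}$, and the assertion is that $\sM'$ is proper over $k$ once $m$ is divisible enough (after fixing $n$, $v$, $I$); note that $\sM_{n,m,h}$ already forces each fibre to be slc with $\sL_t\simeq\sO_{X_t}(m(K_{X_t}+D_t))$ very ample, so the objects of $\sM'$ are families of stable log-varieties of dimension $n$, of the volume $v$ determined by $h$ and $m$, and with coefficients in $I$. Being a closed substack of the DM-stack $\sM_{n,m,h}$ of finite type over $k$ (\autoref{prop:DM_stack}), $\sM'$ is again a DM-stack of finite type over $k$, so it remains to verify the valuative criterion of properness. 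Given a DVR $R$ and an object of $\sM'(\Frac R)$, the pair $(X_\eta,D_\eta)$ is a stable log-variety of dimension $n$, volume $v$ and coefficient set $I$, so by the stable reduction theorem for stable log-varieties \cite[Thm 12.11]{Kollar_Second_moduli_book} (cf.\ \autoref{prop:proper}) it extends, after a finite base change $R\to R'$, to a family of stable log-varieties $(X',D')\to\Spec R'$ with the same invariants. When $m$ is divisible enough the boundedness results quoted in \autoref{rem:a_functor} ensure that $m(K_{X'/R'}+D')$ is Cartier and $\sO_{X'}(m(K_{X'/R'}+D'))$ relatively very ample with the required vanishing of higher direct images and with Hilbert polynomial $h$, so this family lies in $\sM_{n,m,h}(R')$; by the closedness proved above its coefficients remain in $I$, so it lies in $\sM'(R')$. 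This yields existence in the valuative criterion, separatedness being inherited from $\sM_{n,m,h}$; hence $\sM'$ is proper over $k$.

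\emph{Main difficulty.} The crux is the specialization step, where one must work with the \emph{divisorial} restriction $D_t$ rather than the scheme-theoretic one — these genuinely differ, cf.\ \autoref{ex:Hassett} — and what makes it go through is that on the relative Gorenstein locus $E=mD$ is an honest flat Cartier divisor, forcing the coefficients of a limit to be non-negative integral combinations of the coefficients upstairs; matching this with the assumption that $I$ is closed under addition is precisely what is needed. For the properness part the only non-formal input is the Hacon–McKernan–Xu boundedness, which is what ``$m$ divisible enough'' refers to.
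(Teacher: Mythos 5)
Your proof is correct and takes essentially the same approach as the paper's: constructibility plus stability under specialization over a DVR (via the Hartshorne criterion), where coefficients of the special fiber are non-negative integer combinations of those of the generic fiber (bounded by $1$ by the slc hypothesis), so closure of $I$ under addition applies, and properness coming from Koll\'ar's stable reduction together with Hacon--McKernan--Xu boundedness. You have filled in the paper's terse hints in the expected way, in particular making precise the ``sums formed from coefficients'' step via the flat Cartier divisor $mD$ on the relative Gorenstein locus and its cycle-theoretic specialization.
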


\begin{proof}
  For the first statement, according to \cite[Exc
  II.3.18.c]{Hartshorne_Algebraic_geometry} we are suppsoed to prove that the above
  locus is constructible and closed under specialization. Both of these follow from
  the fact that if $T$ is normal, and $D_T$ is the divisor corresponding to $\phi_T$,
  then there is a dense open set $U \subseteq T$ such that the coefficients of $D_T$
  and of $D_t$ agree for all $t \in U$. For the ``closed under specialization'' part
  one should also add that if $T$ is a DVR with generic point $\eta$ and special
  point $\varepsilon$, then the coefficient set of $D_\eta$ agrees with the
  coefficient set of $D$, and the coefficients of $D_\varepsilon$ are sums formed
  from coefficients of $D$. Since $I$ is closed under addition, if $D_\eta$ has
  coefficients in $I$, so does $D_\varepsilon$.

  The properness statement follows from \cite[Thm 12.11]{Kollar_Second_moduli_book}
  and \cite[Thm
  1.1]{Hacon_McKernan_Xu_Boundedness_of_moduli_of_varieties_of_general_type}.
\end{proof}

\begin{notation}
  \label{notation:cutting_out_coefficients}
  Fix an integer $n>0$, a rational number $v>0$ and a finite coefficient set $I
  \subseteq [0,1]$ closed under addition. After this choose an $m$ that is divisible
  enough. For stable log-varieties $(X, D)$ over $k$ for which $\dim X =n$,
  $(K_X+D)^n = v$ and the coefficient set is in $I$, there are finitely many
  possibilities for the Hilbert polynomial $h(r)= \chi(X,rm(K_X+D))$ by \cite[Thm
  1.1]{Hacon_McKernan_Xu_Boundedness_of_moduli_of_varieties_of_general_type}. Let
  $h_1, \dots, h_s$ be these values. For each integer $1 \leq i \leq s$, let $\sM_i$
  denote the reduced structure on the locus \autoref{eq:coefficients_in_I} of
  $\sM_{n,m,h_i}$ and let $\sM_{n,v,I}:= \amalg_{i=1}^s \sM_{i}$ (where $\amalg$
  denotes disjoint union).
\end{notation}

\begin{proposition}
  $\sM_{n,v,I}$ is a pseudo-functor for stable log-varieties of dimension $n$, volume
  $v$ and coefficient set $I$.
\end{proposition}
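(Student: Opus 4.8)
The plan is to check directly the two conditions that \autoref{def:a_functor} demands of a moduli pseudo-functor of stable log-varieties: that over every normal base $Y$ the groupoid $\sM_{n,v,I}(Y)$ is equivalent to the one displayed on the right of \eqref{eq:a_functor}, and that the assignment $Y\mapsto\det f_*\sO_X(r(K_{X/Y}+D))$ extends to a functorial line bundle on the whole pseudo-functor for every divisible enough $r$. Throughout I would use the setup of \autoref{notation:cutting_out_coefficients}, namely $\sM_{n,v,I}=\amalg_{i=1}^s\sM_i$ with $\sM_i$ the reduced structure on the locus \eqref{eq:coefficients_in_I} inside the DM-stack $\sM_{n,m,h_i}$ (\autoref{prop:DM_stack}), and I would fix once and for all, via \autoref{rem:a_functor}(4), an $m$ so divisible that for every stable log-variety $(X,D)$ of dimension $n$, volume $v$ and coefficient set $I$ the divisor $m(K_X+D)$ is Cartier and very ample and $H^j(X,\sO_X(rm(K_X+D)))=0$ for all $j>0$ and all $r\geq1$.

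For the first condition, fix a normal $Y$. In one direction, starting from $(f:X\to Y,\phi:\omega_{X/Y}^{\otimes m}\to\sL)\in\sM_i(Y)$, I would invoke part (a) of \autoref{def:the_functor} to get the Weil-divisor $D$ (the closure of $\tfrac1m E$, for $E$ the divisor cut out by $\phi$ over the relative Gorenstein locus) with $\sO_X(m(K_{X/Y}+D))\simeq\sL$; this gives conditions (1), (2) and (4) of \eqref{eq:a_functor} at once, and for (3) I would note that each $X_y$ is proper and demi-normal, that $\sL_y\simeq\sO_{X_y}(m(K_{X_y}+D_y))$ being very ample forces $K_{X_y}+D_y$ ample, that $(X_y,D_y)$ is slc by (3ii) of \eqref{eq:the_functor}, that the coefficients lie in $I$ because we are on the locus \eqref{eq:coefficients_in_I}, and that $(K_{X_y}+D_y)^n$, being the top coefficient of the fixed polynomial $h_i$, equals $v$ — so $(X_y,D_y)$ is a stable log-variety in the sense of \autoref{def:stable_log_variety}. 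Conversely, given $(X,D)\to Y$ as on the right of \eqref{eq:a_functor}, I would put $\sL:=\sO_X(m(K_{X/Y}+D))$ (invertible since $m(K_{X/Y}+D)$ is Cartier) and take $\phi$ to be the natural composite $\omega_{X/Y}^{\otimes m}\to\omega_{X/Y}^{[m]}=\sO_X(mK_{X/Y})\hookrightarrow\sL$, the last arrow coming from $mD\geq0$; by the choice of $m$ together with cohomology and base change, $\sL$ is relatively very ample with $R^jf_*(\sL^{\otimes r})=0$ for $j,r>0$, $\phi$ is an isomorphism at the generic and codimension $1$ singular points of every fiber (these lie in the relative Gorenstein locus and off $\Supp D$), so conditions (1)--(3) of \eqref{eq:the_functor} hold and $(X,\phi)$ lies in $\sM_i(Y)$ — $Y$ being reduced it factors through the reduced substack — for the index $i$ with $h_i$ equal to the (locally constant, by flatness of $\sL^{\otimes r}$) fiberwise Hilbert polynomial. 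This establishes essential surjectivity onto the families of \eqref{eq:a_functor}, cut into pieces by the value of $h_i$. On morphisms, an isomorphism $\alpha:X'\to X$ over $Y$ with $\alpha^*D=D'$ determines, by functoriality of the relative dualizing sheaf (\autoref{def:relative_canonical}) and of divisorial pullback (\autoref{notation:restriction_divisor}), a canonical isomorphism $\xi:\alpha^*\sL\to\sL'$ making \eqref{eq:arrow_cfg} commute, whence fullness; and such a $\xi$ is unique, hence the functor is faithful, because $\alpha^*\phi$ is generically surjective onto the torsion-free sheaf $\sL'$. The passage to $/\!\equiv$ in \eqref{eq:the_functor} is immaterial, as $(X,\phi)$ and $(X,\xi_0\circ\phi)$ are already isomorphic in $\sM_{n,m,h_i}$ via $(\id_X,\xi_0)$.

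For the second condition I would take $r=mr'$ with $r'\geq1$ and observe that on an arbitrary base $Y$ — reduced or not — and for any $(f:X\to Y,\phi)\in\sM_{n,m,h_i}(Y)$, condition (2) of \eqref{eq:the_functor} makes $f_*(\sL^{\otimes r'})$ locally free with formation compatible with base change, so $\det f_*(\sL^{\otimes r'})$ is a line bundle on $Y$, functorial in $Y$ through the pullback construction of \autoref{def:the_functor}, invariant under $\equiv$ and under isomorphisms $(\alpha,\xi)$ (which identify $f'_*(\sL'^{\otimes r'})$ with $f_*(\sL^{\otimes r'})$); hence it descends to $\sM_{n,m,h_i}$, restricts to $\sM_i$, and the $\amalg_i$ of these is a functorial line bundle on $\sM_{n,v,I}$ agreeing over normal bases with $\det f_*\sO_X(r(K_{X/Y}+D))$, since there $\sL^{\otimes r'}=\sO_X(r(K_{X/Y}+D))$.

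I expect the main obstacle to be organizational rather than conceptual: carrying out the reflexive-power and dualizing-sheaf-hull bookkeeping needed to see that the comparison functor is full and faithful and that $\det f_*(\sL^{\otimes r'})$ genuinely descends — all of it dictated by the diagram \eqref{eq:arrow_cfg} and the pullback construction in \autoref{def:the_functor} — together with making sure the single boundedness input (\autoref{rem:a_functor}(4)) is strong enough to pin down one $m$ serving all three purposes at once, namely Cartier index, relative very ampleness, and vanishing of higher cohomology in every positive multiple on every relevant fiber.
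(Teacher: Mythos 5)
Your proposal is correct and takes essentially the same route as the paper: the paper's own proof is a terse observation that, over a reduced (hence normal) base $T$, the groupoid $\sM_i(T)$ is the sub-groupoid of $\sM_{n,m,h_i}(T)$ whose objects have fiber coefficients in $I$, that ``by construction'' the disjoint union of these is equivalent to the groupoid displayed in \eqref{eq:a_functor}, and that $\det f_*\sL^j$ supplies the required functorial polarization. You simply unpack what the paper asserts in one breath — the two-directional comparison functor (extracting $D$ from $\phi$ and rebuilding $\phi$ from $D$), the fullness/faithfulness of that comparison, the fact that the Hilbert polynomial splits the groupoid into the $\amalg_i$ pieces, and the descent of the determinant line bundle to $\sM_{n,v,I}$ — so this is a more detailed rendering of the same argument rather than an alternative one.
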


\begin{proof}
  Given a normal variety $T$, $\sM_{n,v,I}(T)= \amalg_{i=1}^s \sM_i(T)$.  Since in
  \autoref{notation:cutting_out_coefficients}, $\sM_i$ were defined by taking reduced
  structures, for reduced schemes $T$, there are no infinitesmial conditions on
  $\sM_i(T)$. That is it is equivalent to the sub-groupoid of $\sM_{n,m,h_i}(T)$
  consisting of $\left( X \to T, \phi : \omega_{X/T}^{\otimes m} \to \sL \right)$,
  such that the coefficients of $(X_t, D_t)$ is in $I$. Then it follows by
  construction that the disjoint union of these is equivalent to the groupoid given
  in \autoref{eq:a_functor} and that the line bundle $\det f_* \sL^j$ associated to
  $\left( X \to T, \phi : \omega_{X/T}^{\otimes m} \to \sL \right) \in
  \sM_{n,v,I}(T)$ yields a polarization for every integer $j>0$.
\end{proof}

\begin{remark}
  \label{rem:m_ambiguity}
  $\sM_{n,v,I}$ a-priori depends on the choice of $m$, which will not matter for our
  applications. However, one can show by exhibiting isomorphic groupoid
  representations that in fact the normalization of any DM-stack $\sM$ which is a
  pseudo-functor of stable log-varieties of dimenion $n$, volume $v$ and coefficient
  set $I$ is isomorphic to the normalization of $\sM_{n,v,I}$.
\end{remark}

\begin{definition}
  \label{def:variation} 
  Given a family $f : (X,D) \to Y$ of stable log-varieties over an irreducible normal
  variety, such that the dimension $\dim X_y=n$ and the volume $(K_{X_y} + D_y)^n$ of
  the fibers are fixed. Let $I$ be the set of all possible sums, at most $1$, formed
  from the coefficients of $D$. Then, there is an associated moduli map $\mu: Y \to
  \sM_{n,v,I}$. The \emph{variation $\var f$ of $f$} is defined as the dimension of
  the image of $\mu$.

  Note that this does not depend on the choice of $m$ or $I$ (see
  \autoref{rem:m_ambiguity}), since it is $\dim Y - d$, where $d$ is the general
  dimension of the isomorphism equivalence classes of the fibers $(X_y,D_y)$. This
  general dimension exists, because it can also be expressed as the general fiber
  dimension of $\Isom_Y((X, \phi),(X,\phi))$, where $(X,\phi) \in \sM_{n,m,h_i}(Y)$
  corresponds to $(X,D)$.
  
  Further note that it follows from the above discussion that using any
  pseudo-functor of stable log-varieties of dimension $n$, volume $v$ and coefficient
  set $I$ instead of $\sM_{n,v,I}$ leads to the same definition of variation.
\end{definition}


\begin{remark}
  \label{rem:variation}
  \autoref{cor:pullback_from_max_var} gives another alternative definition of
  variation: it is the smallest number $d$ such that there exists a diagram as in
  \autoref{cor:pullback_from_max_var} with $d = \dim Y'$.
\end{remark}

\begin{corollary}
  \label{cor:extending_stable_log_families}
  Given $f : (X,D) \to Y$ a family of stable log-varieties over a normal variety $Y$,
  and a compactification $\oY \supseteq Y$, there is a generically finite proper
  morphism $\tau : \oY' \to \oY$ from a normal variety, and a family $f :
  \left(\oX,\oD \right) \to \oY'$ of stable log-varieties, such that $\left(\oX_{Y'},
    \oD_{Y'}\right) \simeq (X_{Y'}, D_{Y'})$, where $Y':= \tau^{-1} Y$.
\end{corollary}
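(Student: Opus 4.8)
The plan is to recast the family as a moduli map to the proper stack $\sM_{n,v,I}$, extend that map over $\oY$ along a generically finite cover using properness, and then read off the desired family $(\oX,\oD)\to\oY'$ from the extended map. Since $Y$ is normal it is the disjoint union of its irreducible components, and we may treat each component (together with its closure in $\oY$) separately; so assume $Y$, and hence $\oY$, irreducible. Put $n:=\dim X_y$ and $v:=(K_{X_y}+D_y)^n$, let $I$ be the finite set of all sums $\le 1$ of coefficients of $D$ (it is closed under addition), and fix $m$ divisible enough after $n,v,I$. Then $f$, together with the tautological homomorphism $\omega_{X/Y}^{\otimes m}\to\sO_X(m(K_{X/Y}+D))$, is an object of $\sM_{n,m,h_i}(Y)$ for the appropriate Hilbert polynomial $h_i$ lying in the locus with coefficients in $I$; hence it defines the moduli map $\mu:Y\to\sM:=\sM_{n,v,I}$ of \autoref{def:variation}. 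By \autoref{prop:DM_stack} and the lemma that the locus \autoref{eq:coefficients_in_I} is closed and, for $m$ divisible enough, proper over $k$ (cf.\ also \autoref{prop:proper}), $\sM$ is a proper Deligne--Mumford stack over $k$.

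Next I would extend $\mu$ over $\oY$ after a generically finite base change. Form the scheme-theoretic closure $\bar\Gamma$ of the graph $\Gamma_\mu\subseteq\oY\times_k\sM$ of $\mu$ (a locally closed substack, as $\sM$ is separated). The first projection $p:\bar\Gamma\to\oY$ is proper, being a closed substack of $\oY\times_k\sM$ which is proper over $\oY$ because $\sM$ is proper over $k$; and $p$ restricts to an isomorphism over $Y$, so $\bar\Gamma$ is irreducible with a dense open substack isomorphic to $Y$, hence has trivial generic stabilizer and is an algebraic space over a dense open set. By Chow's lemma for Deligne--Mumford stacks together with normalization one then obtains a normal variety $\oY'$ with a proper, generically finite morphism onto $\bar\Gamma$; setting $\tau:\oY'\to\oY$ to be its composite with $p$, $Y':=\tau^{-1}(Y)$, and $\bar\mu:\oY'\to\sM$ to be its composite with the second projection, one has, by construction of the graph, $\bar\mu|_{Y'}=\mu\circ(\tau|_{Y'})$.

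Finally, because $\oY'$ is a normal variety, $\bar\mu:\oY'\to\sM_{n,v,I}$ corresponds to an object of $\sM_{n,v,I}(\oY')$, i.e.\ (by the proposition identifying $\sM_{n,v,I}$ with a pseudo-functor of stable log-varieties, together with \autoref{def:a_functor}) to a family $f':(\oX,\oD)\to\oY'$ of stable log-varieties of dimension $n$, volume $v$ and coefficient set $I$. Restricting to $Y'$, both $(\oX_{Y'},\oD_{Y'})$ and the pullback $(X_{Y'},D_{Y'})$ of $(X,D)$ along $\tau|_{Y'}$ are classified by the same moduli map $\mu\circ(\tau|_{Y'})$, hence are isomorphic as objects of $\sM_{n,v,I}(Y')$; that is, $(\oX_{Y'},\oD_{Y'})\simeq(X_{Y'},D_{Y'})$, as required.

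I expect the middle step --- extending $\mu$ to the \emph{proper} Deligne--Mumford stack $\sM$ along a generically finite cover of $\oY$ whose source is an honest normal variety --- to be the main obstacle. Already over a DVR the extension exists only after a finite base change, because $\sM$ has nontrivial finite automorphisms (the valuative criterion for properness of a DM-stack produces only a finite extension of the local ring, not a section); globalizing this to a single $\tau:\oY'\to\oY$ is what the graph-closure construction accomplishes, and replacing the resulting proper DM-stack $\bar\Gamma$ by a normal variety without losing the morphism to $\sM$ is exactly where the generic triviality of the stabilizer of $\bar\Gamma$ and Chow's lemma enter.
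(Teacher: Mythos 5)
Your proof is correct, and it takes a genuinely different route from the paper's, although both ultimately turn on the same two pillars: properness of the stack $\sM_{n,v,I}$ and the existence of a finite, surjective, generically \'etale cover of a (separated, finite-type) DM-stack by a scheme, i.e.\ \cite[Thm 16.6]{Laumon_Moret_Bailly_Champs_algebrique}. The paper first applies that covering theorem to $\sM=\sM_{n,v,I}$ itself, obtaining a proper scheme $S\to\sM$, then forms a dominating component $Y'$ of the normalization of $Y\times_\sM S$, and finally compactifies $Y'$ to a projective $\oY'$ through which both $Y'\to\oY$ and $Y'\to S$ extend; the family over $\oY'$ is then pulled back along $\oY'\to S\to\sM$. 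You instead close the graph of $\mu:Y\to\sM$ inside $\oY\times_k\sM$, observe that the result $\bar\Gamma$ is a proper DM-stack over $\oY$ with a dense open substack isomorphic to the scheme $Y$, and only then pass to a scheme cover and normalize. The two are interchangeable here: yours is arguably more canonical (no auxiliary choice of compactification to extend over), while the paper's applies the covering theorem once and for all to $\sM$ rather than to an object $\bar\Gamma$ that depends on $Y$ and $\oY$. One small imprecision: the tool you want for producing the proper, surjective, generically finite scheme cover of $\bar\Gamma$ is exactly \cite[Thm 16.6]{Laumon_Moret_Bailly_Champs_algebrique} (which yields a \emph{finite}, surjective, generically \'etale cover by a scheme, from which you take and normalize a dominating component), rather than a bare ``Chow's lemma for DM-stacks'' --- the latter, in its usual formulations, need not give a generically finite map, and pinning down the right statement matters since it is, as you rightly flag, where the whole argument lives. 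With that citation fixed your argument goes through; note also that the identification $(\oX_{Y'},\oD_{Y'})\simeq(X_{Y'},D_{Y'})$ at the end is a $2$-isomorphism of objects of $\sM(Y')$ furnished by $2$-Yoneda, not a literal equality of maps, but this is the standard reading and causes no trouble.
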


\begin{proof}
  Let $n$ be the dimension and $v$ the volume of the fibers of $f$. Let $I \subseteq
  [0,1]$ be a finite coefficient set closed under addition that contains the
  coefficients of $D$.  Denote for simplicity $\sM_{n,v,I}$ by $\sM$. According to
  \cite[Thm 16.6]{Laumon_Moret_Bailly_Champs_algebrique}, there is a finite,
  generically \'etale surjective map $S \to \sM$, and $f : (X,D) \to Y$ induces
  another one $Y \to \sM$. Let $Y'$ be a component of the normalization of $Y
  \times_{\sM} S$ dominating $Y$. Note that since $\sM$ is a DM-stack, $Y$ is a
  scheme and $Y' \to Y$ is finite and surjective. Hence, we may compactify $Y'$ to
  obtain a normal projective variety $\oY'$, such that the maps $Y' \to S$ and $Y'
  \to Y$ extend to morphisms $\oY' \to S$ and $\oY' \to \oY$ (note that both $S$ and
  $\oY'$ are proper over $k$). Hence, we have a $2$-commutative diagram
  \begin{equation*}
    \xymatrix{
      Y' \ar@{^(->}[r] \ar[d] & \oY' \ar[d]^{\tau} \ar[r] & S \ar[d] \\
      Y \ar@{^(->}[r] \ar@/_2pc/[rr]^{\ } & \oY &  \sM
    },
  \end{equation*}
  which shows that the induced family on $\oY'$ has the property as required, that
  is, by pulling back to $Y'$ it becomes isomorphic to the pullback of $(X,D)$ to
  $Y'$.
\end{proof}

\begin{corollary}
  \label{cor:finite_cover}
  If $\sM$ is a moduli (pseudo-)functor of stable log-varieties of dimension $n$,
  volume $v$ and coefficient set $I$ admitting a coarse moduli space ${\sf M}$ which
  is an algebraic space, then there is a finite cover $S \to {\sf M}$ from a normal
  scheme $S$ induced by a family ${\sf f}\in\sM(S)$.
\end{corollary}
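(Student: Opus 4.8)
The plan is to obtain $S$ from a finite scheme atlas of the auxiliary proper DM-stack $\sM_{n,v,I}$ of \autoref{notation:cutting_out_coefficients} by normalizing, and then to check that the family produced this way induces a finite surjection onto ${\sf M}$. Concretely, let $n$, $v$, $I$ be the invariants of $\sM$ and consider $\sM_{n,v,I}$, which is a proper DM-stack of finite type over $k$ by \autoref{prop:DM_stack} and the construction of \autoref{notation:cutting_out_coefficients}. By \cite[Thm 16.6]{Laumon_Moret_Bailly_Champs_algebrique} there is a scheme $S_0$ with a finite, generically \'etale, surjective morphism $S_0\to\sM_{n,v,I}$, corresponding to an object ${\sf f}_0\in\sM_{n,v,I}(S_0)$; being finite over a proper stack, $S_0$ is proper over $k$. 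Let $\tau:S\to S_0$ be the normalization, which is finite and surjective since $S_0$ is of finite type over $k$. Then $S$ is a normal scheme, proper over $k$, and $\tau^*{\sf f}_0$ is a family ${\sf f}:({\sf X},{\sf D})\to S$ of stable log-varieties. As $S$ is normal, \autoref{def:a_functor} identifies such families with the objects of $\sM(S)$ (any two moduli functors of stable log-varieties of the same dimension, volume and coefficient set agree on normal bases), so ${\sf f}\in\sM(S)$.

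By the universal property of the coarse moduli space ${\sf M}$ of $\sM$, the family ${\sf f}$ induces a morphism ${\sf g}:S\to{\sf M}$; I claim it is finite and surjective. First, the composite $S\xrightarrow{\tau}S_0\to\sM_{n,v,I}$ is the moduli map attached to ${\sf f}\in\sM_{n,v,I}(S)$ and is a finite morphism, hence has finite fibres. Now ${\sf M}(k)$ is canonically the set of isomorphism classes of stable log-varieties of dimension $n$, volume $v$ with coefficients in $I$ — the same set that indexes the geometric points of $\sM_{n,v,I}$ — and ${\sf g}(s)$ is the class of the fibre $({\sf X}_s,{\sf D}_s)$; since $S_0\to\sM_{n,v,I}$ and $\tau$ are surjective, every such class occurs, so ${\sf g}$ hits every closed point of ${\sf M}$. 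As both $S$ and ${\sf M}$ are proper over $k$ (the latter by \autoref{prop:proper}), ${\sf g}$ is proper, hence surjective. Finally, for a closed point of ${\sf M}$ given by a class $[(X,D)]$ the fibre ${\sf g}^{-1}([(X,D)])=\{s\in S\mid ({\sf X}_s,{\sf D}_s)\cong(X,D)\}$ equals the fibre of $S\to\sM_{n,v,I}$ over the corresponding geometric point, hence is finite; so ${\sf g}$ is proper and quasi-finite, therefore finite, which gives the desired finite cover.

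The step that deserves the most care is the identification used in the last part: that the morphism $S\to{\sf M}$ coming from the universal property of ${\sf M}$ has the same geometric fibres as the morphism $S\to\sM_{n,v,I}$ coming from regarding ${\sf f}$ as a family over the normal scheme $S$. This amounts to the compatibility of the canonical bijection between closed points of ${\sf M}$ and of $\sM_{n,v,I}$ with reading off the isomorphism class of a fibre, which in turn is exactly the content of \autoref{def:a_functor} over reduced point bases. Granting that compatibility, the surjectivity and finiteness bookkeeping above is routine.
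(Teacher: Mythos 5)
Your proof is correct and follows essentially the same route as the paper's: both obtain a finite scheme cover of the proper DM-stack $\sM_{n,v,I}$ from the Laumon--Moret--Bailly theorem together with \autoref{prop:DM_stack}, and both conclude finiteness of the induced map to ${\sf M}$ from quasi-finiteness (finite isomorphism classes of fibres) plus properness (\autoref{prop:proper}). You simply spell out details the paper's one-paragraph proof leaves implicit, notably the normalization step (needed since the statement requires $S$ normal, but not mentioned in the published proof) and the explicit identification of the fibres of $S\to{\sf M}$ with those of $S\to\sM_{n,v,I}$.
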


\begin{proof}
  Since for every moduli (pseudo-)functor $\sM$ of stable log-varieties of dimension
  $n$, volume $v$ and coefficient set $I$, $\sM(k)$ is the same (as a set or as a
  groupoid), and furthermore ${\sf M}$ is proper over $k$ according to
  \autoref{prop:proper}, it is enough to show that there is a proper $k$-scheme $S$,
  such that $S$ supports a family ${\sf f}\in \sM(S)$ for which
  \begin{enumerate}
  \item the isomorphism equivalence classes of the fibers of ${\sf f}$ are finite, and
  \item every isomorphism class in $\sM(k)$ appears as a fiber of ${\sf f}$.
  \end{enumerate}
  However, the existence of this follows by \cite[Thm
  16.6]{Laumon_Moret_Bailly_Champs_algebrique} and \autoref{prop:DM_stack}.
\end{proof}

\begin{corollary}
  \label{cor:pullback_from_max_var}
  Given a family $ f : (X, D) \to Y$ of stable log-varieties over a normal variety,
  there is diagram
 \begin{equation*}
    \xymatrix{
      (X',D') \ar[d]^{f'} & (X'',D'') \ar[l] \ar[r] \ar[d] & (X,D) \ar[d]^f \\
      Y' & \ar[l] Y'' \ar[r] & Y } 
  \end{equation*}
 with Cartesian squares, such that 
 \begin{enumerate}
 \item $Y'$ and $Y''$ are normal,
 \item  $\var f = \dim Y'$,
 \item $Y'' \to Y$ is finite, surjective, and
 \item $f' : (X',D') \to Y'$ is a family of stable log-varieties for which the
   induced moduli map is finite. In particular, the fiber isomorphism classes of $f'
   : (X',D') \to Y'$ are finite.
\end{enumerate}
  
\end{corollary}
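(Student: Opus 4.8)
The plan is to factor the moduli map of $f$ through the auxiliary Deligne--Mumford stack $\sM:=\sM_{n,v,I}$ of \autoref{notation:cutting_out_coefficients} and then rigidify it by pulling back a finite scheme cover of $\sM$. Here $v:=(K_{X_y}+D_y)^n$ and $I\subseteq[0,1]$ is the finite, closed-under-addition set of all sums $\le 1$ of coefficients of $D$. Let $\mu:Y\to\sM$ be the moduli map of $f$; by \autoref{def:variation}, $\var f=\dim\mu(Y)$. By \autoref{prop:DM_stack}, $\sM$ is a DM-stack of finite type, so by \cite[Thm 16.6]{Laumon_Moret_Bailly_Champs_algebrique} there is a finite surjective morphism $S\to\sM$ from a scheme; replacing $S$ by its normalization we may assume $S$ is normal, and $S\to\sM$ remains finite. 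Since $\sM$ is a moduli pseudo-functor of stable log-varieties and $S$ is normal, this morphism corresponds to a family $(\sX_S,\sD_S)\to S$ of stable log-varieties.

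Exactly as in the proof of \autoref{cor:extending_stable_log_families}, form $Y\times_\sM S$, which is finite and surjective over $Y$, and let $Y''$ be a component of its normalization that dominates $Y$. Then $Y''$ is integral and normal, $Y''\to Y$ is finite and surjective, and the square
\begin{equation*}
  \xymatrix{
    Y'' \ar[r] \ar[d] & S \ar[d] \\
    Y \ar[r]^-{\mu} & \sM
  }
\end{equation*}
is $2$-commutative. Set $(X'',D''):=(X_{Y''},D_{Y''})$; by the square, $(X'',D'')$ is also the pullback of $(\sX_S,\sD_S)$ along $Y''\to S$. Let $\oZ\subseteq S$ be the reduced closure of the image of $Y''\to S$, an integral closed subscheme, and let $Y'\to\oZ$ be its normalization, so that $Y'$ is normal and $Y'\to S$ is finite. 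Define $f':(X',D')\to Y'$ as the pullback of $(\sX_S,\sD_S)$ along $Y'\to S$; since $Y'$ is normal, $f'$ is again a family of stable log-varieties. Because $Y''$ is integral and normal and $Y''\to\oZ$ is dominant, this morphism factors through the normalization $Y'\to\oZ$, yielding $Y''\to Y'$. Pulling $f'$ back along $Y''\to Y'$ gives the pullback of $(\sX_S,\sD_S)$ along $Y''\to Y'\to S$, which is $(X'',D'')$; together with $(X'',D'')=(X_{Y''},D_{Y''})$ this shows that both squares of the asserted diagram are Cartesian. This establishes the diagram as well as (1) and (3).

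For (2): since $Y''\to Y$ is surjective, the image of $Y''$ in $\sM$ coincides with $\mu(Y)$, so $\var f=\dim\mu(Y)$ equals $\dim$ of the image of $Y''$ in $\sM$; as $S\to\sM$ is finite, this equals $\dim$ of the image of $Y''$ in $S$, which is $\dim\oZ=\dim Y'$. Hence $\dim Y'=\var f$. For (4): the moduli map of $f'$ is the composite $Y'\to S\to\sM$, which is finite since $Y'\to S$ and $S\to\sM$ are, hence in particular quasi-finite; since a $k$-point of $\sM$ is an isomorphism class of stable log-varieties, two fibers $(X'_{y_1},D'_{y_1})$ and $(X'_{y_2},D'_{y_2})$ map to the same point of $\sM$ exactly when they are isomorphic, so quasi-finiteness of the moduli map gives that the fiber isomorphism equivalence classes of $f'$ are finite.

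The whole argument is a repackaging of the DM-stack formalism already exploited in \autoref{cor:extending_stable_log_families} and \autoref{cor:finite_cover}, so no single step is a genuine obstacle. The two points needing a little care are the factorization $Y''\to Y'$ through the normalization of $\oZ$ (which uses only that $Y''$ is integral, normal, and dominates $\oZ$, all immediate from the construction) and the passage from quasi-finiteness of the moduli map of $f'$ to finiteness of its fiber isomorphism classes, which relies solely on the description of the $k$-points of a DM-stack as isomorphism classes.
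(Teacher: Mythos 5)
Your proof is correct and follows essentially the same path as the paper's: form the moduli map $Y\to\sM_{n,v,I}$, pull back along a finite scheme cover $S\to\sM_{n,v,I}$ (the paper cites \autoref{cor:finite_cover}, which itself rests on the same Laumon--Moret-Bailly result you invoke directly), take $Y''$ as a normalized dominating component of $Y\times_{\sM}S$, and let $Y'$ be the normalization of the image of $Y''$ in $S$. You supply somewhat more detail than the paper (the Cartesianness check, the factorization through the normalization, and the passage from quasi-finiteness to finite fiber isomorphism classes), but the argument is the same.
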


\begin{proof}
  Set $n := \dim X_y$ and $v:=(K_{X_y} + D_y)^n$. Let $I$ be the set of all possible
  sums, at most $1$, formed from the coefficients of $D$. Then there is an induced
  moduli map $\nu : Y \to \sM_{n,v,I}$. Let $S \to \sM_{n,v,I}$ be the finite cover
  given by \autoref{cor:finite_cover}. The map $Y \times_{\sM_{n,v,I}} S \to Y$ is
  finite and surjective.  Define $Y''$ to be the normalization of an irreducible
  component of $Y \times_{\sM_{n,v,I}} S$ that dominates $Y$ and define $Y'$ to be
  the normalization of the image of $Y''$ in $S$. That is, we obtain a
  $2$-commutative diagram
  \begin{equation*}
    \xymatrix{
      Y'' \ar[r] \ar[d] & Y' \ar[d] \\
      Y \ar[r]^{\nu} & \sM_{n,v,I}
    }.
  \end{equation*}
  This yields families over $Y'$ and $Y''$ as required by the statement.
\end{proof}



\section{Determinants of pushforwards}
\label{sec:det}


\noindent
The main results of this section are the following theorem and its corollary. For the
definition of stable families see \autoref{def:stable_lof_family} and for the
definition of variation see \autoref{def:variation} and \autoref{rem:variation}. We
also use \autoref{notation:product} in the next statement.

\begin{theorem}
  \label{thm:big_higher_dim_base}\label{prop:big_upstairs}
  If $f : (X, D) \to Y$ is a family of stable log-varieties of maximal variation over
  a smooth projective variety, then
  \begin{enumerate}
  \item\label{ketto}%
    there exists an $r>0$ such that $K_{X^{(r)}/Y} + D_{X^{(r)}}$ is big on at least
    one component of $X^{(r)}$, or equivalently $$\left(K_{X^{(r)}/Y} + D_{X^{(r)}}
    \right)^{\dim X^{(r)}} >0,$$ and
  \item\label{egy}%
    for every divisible enough $q>0$, $\det f_* \sO_X(q (K_{X/Y} + \Delta))$ is big.
  \end{enumerate}
\end{theorem}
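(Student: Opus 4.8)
The plan is to prove (1) first and deduce (2) from it. The equivalence of the two formulations of (1) is immediate: $K_{X^{(r)}/Y}+D_{X^{(r)}}=\sum_i p_i^*(K_{X/Y}+D)$ is nef (by \autoref{lem:pushforward_nef} and nefness of sums of pullbacks of a nef divisor), so on each top-dimensional component bigness is detected by positivity of the top self-intersection. For $(1)\Rightarrow(2)$: for $q$ divisible enough, $\det f_*\sO_X(q(K_{X/Y}+D))$ embeds into $\bigotimes^{N} f_*\sO_X(q(K_{X/Y}+D))$ with $N=\rk f_*\sO_X(q(K_{X/Y}+D))$, which by \autoref{lem:pushforward_tensor_product_isomorphism} is, up to reflexive hull, $(f^{(N)})_*\sO_{X^{(N)}}(q(K_{X^{(N)}/Y}+D_{X^{(N)}}))$; this pushforward is weakly positive by Fujino's semipositivity theorems, and (1) applied to $X^{(N)}$ — after an elementary fibre-product comparison relating bigness of $K_{X^{(r)}/Y}+D_{X^{(r)}}$ to that of $K_{X^{(N)}/Y}+D_{X^{(N)}}$ — together with \autoref{lem:wp_big_properties} forces $\det f_*\sO_X(q(K_{X/Y}+D))$ to be big.

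For (1), fix $r$ divisible enough so that $\sL:=\sO_X(r(K_{X/Y}+D))$ is a line bundle that is very ample and projectively normal on every fibre, and write $D=\sum_c cD_c$ for the decomposition into reduced divisors of distinct coefficient (\autoref{def:D_c}). The first step is the higher-dimensional analogue of Hassett's \autoref{eq:line_bundle_Hassett}, namely that the line bundle \autoref{eq:line_bundle_us},
\[
\det f_*\sL\otimes\bigotimes_c\det\bigl(f|_{D_c}\bigr)_*\bigl(\sL|_{D_c}\bigr),
\]
is big (this is \autoref{prop:big_downstairs}). This is where \autoref{thm:generalized_ampleness_lemma} enters: take for $W$ a sufficiently high symmetric power of $f_*\sL\oplus\bigoplus_c(f|_{D_c})_*(\sL|_{D_c})$ with structure group $G=\GL$ — weakly positive by Fujino's semipositivity and with normal projectivized image by \autoref{rem:normal} — and for the quotients $Q_i$ the pushforwards $f_*(\sL^{j})$ and $(f|_{D_c})_*(\sL^{j}|_{D_c})$ over a suitable range of $j$; the classifying map records, fibrewise, the subspaces cutting out $X_y$ and the $D_c\cap X_y$ in projective space, so that maximal variation (which makes the fibre isomorphism classes finite over $Y$) translates into finiteness of the classifying map on a dense open set, and \autoref{thm:generalized_ampleness_lemma} applies.

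The core of the argument is then Viehweg's fibre-product trick. Over a positive-dimensional base one cannot peel off a single factor, so instead one embeds the whole big line bundle above into a tensor product of copies of $f_*\sL$ and of the $(f|_{D_c})_*(\sL|_{D_c})$ (as in \autoref{eq:embeddint_E_i}), which by iterating \autoref{lem:pushforward_tensor_product_isomorphism} is identified, up to reflexive hull, with a pushforward $g_*\sO_Z(r(K_{Z/Y}+\Delta_Z))$ from a fibre-product-type subvariety $Z\subseteq X^{(l)}$ of a multiple of $(K_{X^{(l)}/Y}+D_{X^{(l)}})|_Z$ — and here one must keep careful track of the divisorial versus scheme-theoretic restriction, the phenomenon of \autoref{ex:Hassett}, by working with divisorial pullbacks throughout. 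Since $g_*\sO_Z(r(K_{Z/Y}+\Delta_Z))$ is weakly positive (Fujino) and contains the big sub-line-bundle produced in the previous step, Viehweg's argument — using \autoref{lem:wp_big_properties} and, where necessary, further fibre powers of $Z$ — yields that $K_{Z/Y}+\Delta_Z$, equivalently $(K_{X^{(l)}/Y}+D_{X^{(l)}})|_Z$, is big on some component of $Z$.

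Finally one propagates bigness from $Z$ to $X^{(l)}$: the relevant component of $Z$ is cut out in $X^{(l)}$ as a complete intersection of divisors of the form $p_{j}^*D_{c_j}$, and passing to a $\bQ$-factorial dlt-blowup as in \autoref{lem:long} so that $K_{X^{(l)}/Y}+D_{X^{(l)}}-\varepsilon\sum_j p_j^*D_{c_j}$ is nef for $0\le\varepsilon\ll1$ (\autoref{lem:pushforward_nef}), one expands $(K_{X^{(l)}/Y}+D_{X^{(l)}})^{\dim X^{(l)}}$ and strips off the terms $\varepsilon p_j^*D_{c_j}$ one at a time; every remainder term is $\ge0$ by nefness and the surviving term is a positive multiple of $\bigl((K_{X^{(l)}/Y}+D_{X^{(l)}})|_Z\bigr)^{\dim Z}>0$, so $(K_{X^{(l)}/Y}+D_{X^{(l)}})^{\dim X^{(l)}}>0$, which is (1) with $r=l$. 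I expect the main obstacle to be the middle step: identifying the tensor product of the various pushforwards with a single pushforward from a fibre product (precisely because of the \autoref{ex:Hassett} subtlety), and extracting bigness of $K_{Z/Y}+\Delta_Z$ on a component from weak positivity plus a big sub-line-bundle, which is the heart of Viehweg's method; verifying that maximal variation gives finiteness of the classifying map in the first step is the other genuinely substantive point.
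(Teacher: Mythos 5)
Your blueprint coincides with the paper's: first prove \autoref{prop:big_downstairs} via \autoref{thm:generalized_ampleness_lemma}, then embed the resulting big line bundle into a tensor power identified with a pushforward from a fiber-product subvariety $Z\subseteq X^{(r)}$ to get $\bigl((K_{X^{(r)}/Y}+D_{X^{(r)}})|_Z\bigr)^{\dim Z}>0$, then strip off divisors through a filtration to reach $X^{(r)}$, and finally deduce part~\autoref{egy}. Two details, however, are misremembered in ways that would need repair if taken literally. First, in the Ampleness Lemma step the paper takes $W=\Sym^d(f_*\sL_1)|_U$, \emph{not} a symmetric power of $f_*\sL\oplus\bigoplus_c(f|_{D_c})_*\sL|_{D_c}$: the quotients $Q_0$ and $Q_i$ are the pushforwards of $\sL_d$ from $X$ and from $D_i$, and the maps $\alpha_i:W\to Q_i$ are restriction of degree-$d$ forms; a direct-sum source would make the $\alpha_i$ unclear and would require separately establishing weak positivity and normality of the projectivized structure group image for the larger bundle. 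Second, \autoref{lem:long} does \emph{not} yield nefness of $K_{X^{(r)}/Y}+D_{X^{(r)}}-\varepsilon\sum_j p_j^*D_{c_j}$ after a dlt blowup; it yields only pseudo-effectivity of the \emph{restriction} $(K_{X/Y}+D-\varepsilon E)|_{\wt Z}$ to the normalization of the relevant subvariety, and the stripping argument of the paper is set up exactly so that this weaker conclusion suffices. Your description of the $(1)\Rightarrow(2)$ step is also a bit garbled — the paper does not start from $\det\hookrightarrow\bigotimes^N$ but rather from the inclusion $(f^{(r)})^*\sH\hookrightarrow\sO_{X^{(r)}}(q(K_{X^{(r)}/Y}+D_{X^{(r)}}))$ furnished by part~(1) via \autoref{lem:big_on_one_component}, pushes forward to get a big sub-line-bundle $\sB$ of $\bigotimes^r f_*\sO_X(aq(K_{X/Y}+D))$, and then uses weak positivity of the quotient and \autoref{itm:wp_big} of \autoref{lem:wp_big_properties} — but the idea (big sub-line-bundle plus weak positivity forces a big determinant) is the same.
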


\begin{remark}
  \label{rem:fiber_power_necessary}
  The $r$-th fiber power in point \autoref{ketto} of
  \autoref{thm:big_higher_dim_base} cannot be dropped. This is because there exist
  families $f : X \to Y$ of maximal variation that are not varying maximally on any
  of the components of $X$. Note the following about such a family:
  \begin{enumerate}
  \item $K_{X/Y}$ cannot be big on any component $X_i$ of $X$. Indeed, since the
    variation of $f|_{X_i}$ is not maximal, after passing to a generically finite
    cover of $X_i$, $K_{X/Y}|_{X_i}$ is a pull back from a lower dimensional variety.
  \item On the other hand, $X^{(r)} \to Y$ will have a component of maximal variation
    for $r\gg 0$. In particular, $K_{X^{(r)}/Y}$ does have a chance to be big on at
    least one component.
  \end{enumerate}
  To construct a family as above, start with two non-isotrivial smooth families $g_i
  : Z_i \to C_i$ ($i=1,2$) of curves of different genera, both at least two \cite[Sec
  V.14]{Barth_Peters_Van_de_Ven_Compact_complex_surfaces}. Take a multisection on
  each of these. By taking a base-change via the multisections, we may assume that in
  fact each $g_i$ is endowed with a section $s_i : C_i \to Z_i$. Now define $f_1:=
  g_1 \times \id_{C_2} : X_1:= Z_1 \times C_2 \to Y:= C_1 \times C_2$ and $f_2:=
  \id_{C_1} \times g_2 : X_2:= C_1 \times Z_2 \to Y$. The section $s_i$ of $g_i$
  induce sections of $f_i$ as well. Let $D_i$ be the images of these. Then, according
  to \cite[Thm 5.13]{Kollar_Singularities_of_the_minimal_model_program}, $(X_1, D_1)$
  and $(X_2, D_2)$ glues along $D_1$ and $D_2$ to form a stable family $f : X \to Y$
  as desired. Also notice that in this example $f^{(2)}:X^{(2)}\to Y$ has a component
  of maximal variation.
\end{remark}

\begin{corollary}
  \label{cor:projective}
  Any algebraic space that is the coarse moduli space of a functor of stable
  log-varieties with fixed volume, dimension and coefficient set (as in
  \autoref{def:a_functor}) is a projective variety over $k$.
\end{corollary}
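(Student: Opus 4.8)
The plan is to reduce projectivity to the existence of an ample line bundle via the Nakai--Moishezon criterion and then to supply that ampleness from the bigness statement in Theorem \ref{thm:big_higher_dim_base}. By Proposition \ref{prop:proper}, any such coarse moduli space $\mathsf{M}$ is a proper algebraic space over $k$, so it suffices to produce an ample line bundle on $\mathsf{M}$. The natural candidate is the polarizing line bundle $\lambda_r$ obtained from $Y \mapsto \det f_* \sO_X(r(K_{X/Y}+D))$, which by Definition \ref{def:a_functor} descends to a functorial line bundle on the whole functor, hence to a line bundle on $\mathsf{M}$, for every divisible enough $r>0$.

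The first step is to pass to a scheme cover: by Corollary \ref{cor:finite_cover} there is a finite surjective map $\pi: S \to \mathsf{M}$ from a normal scheme $S$ induced by a family $\mathsf{f}: (\mathcal{X},\mathcal{D}) \to S$. Since $\pi$ is finite and surjective, $\lambda_r$ is ample on $\mathsf{M}$ if and only if $\pi^* \lambda_r = \det \mathsf{f}_* \sO_{\mathcal{X}}(r(K_{\mathcal{X}/S}+\mathcal{D}))$ is ample on $S$; and on the proper scheme $S$ one can test ampleness by the Nakai--Moishezon criterion, i.e.\ by checking positivity of the top self-intersection on every closed integral subvariety $V \subseteq S$. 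So I would fix such a $V$, restrict the family to (a resolution of) $V$, and aim to show $\left(\lambda_r|_V\right)^{\dim V} > 0$ for $r$ divisible enough. Normalizing and resolving $V$, and invoking Corollary \ref{cor:finite_cover} / the construction of $\mathsf{M}$ to see that the family restricted to $V$ still has finite fiber-isomorphism classes (because $S \to \mathsf{M}$ was induced by a family whose moduli map is essentially injective on the relevant locus), one reduces to the following: for a family of stable log-varieties $f: (X,D) \to Y$ over a smooth projective variety $Y$ whose moduli map has finite fibers — equivalently, of maximal variation after base change — one has $\bigl(\det f_* \sO_X(q(K_{X/Y}+D))\bigr)^{\dim Y} > 0$, i.e.\ this determinant line bundle is big. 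But that is exactly part \eqref{egy} of Theorem \ref{thm:big_higher_dim_base}.

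Thus the skeleton is: (i) properness from Proposition \ref{prop:proper}; (ii) reduce to exhibiting an ample line bundle; (iii) pull back to the scheme cover $S$ from Corollary \ref{cor:finite_cover} and apply Nakai--Moishezon there; (iv) on each integral $V \subseteq S$, base change and take a maximal-variation modification so that Theorem \ref{thm:big_higher_dim_base}\eqref{egy} applies, giving strict positivity of the top intersection of $\lambda_r|_V$. The main obstacle I anticipate is step (iv): one must check that the restriction of the family to an arbitrary subvariety $V$ of the cover still lands in the maximal-variation situation — the moduli map on $V$ need not be generically finite a priori, since $V$ could be contained in a positive-dimensional fiber-isomorphism locus. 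The remedy is standard: replace $V$ by the image of its moduli map, which has the same dimension as the generic fiber-isomorphism class dimension subtracted off (Definition \ref{def:variation}), and use that $\lambda_r$ is pulled back from that image together with the fact that over the image the family has maximal variation; the top self-intersection of a pullback of a big line bundle from a lower-dimensional base vanishes, which is consistent with Nakai--Moishezon only if one is careful to argue on the image rather than on $V$ itself, so one actually tests Nakai--Moishezon by showing $\lambda_r$ is the pullback of an ample (indeed big and nef, or big via Theorem \ref{thm:big_higher_dim_base}) class under the finite map to moduli. Making this last reduction precise — essentially that $\lambda_r$ on $\mathsf{M}$ is big because moduli maps of maximal-variation families are generically finite onto their image and the determinant is functorial — is where the real content lies, and everything else is formal.
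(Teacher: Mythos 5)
Your skeleton (properness from \autoref{prop:proper}, reduce to an ample line bundle, pass to the finite scheme cover $S\to\mathsf M$ of \autoref{cor:finite_cover}, apply Nakai--Moishezon on $S$) matches the paper's. But there is a genuine gap in step (iv), and the obstacle you anticipate is not the real one. Since $S\to\mathsf M$ is finite, the moduli map restricted to any closed integral $V\subseteq S$ automatically has finite fibers, so the restricted family always has maximal variation after normalizing $V$; the maximal-variation worry you raise does not arise. The actual problem is uniformity of the exponent. Nakai--Moishezon asks that a \emph{single} line bundle $\lambda_r$ have positive top self-intersection on \emph{every} closed integral $V\subseteq S$. \autoref{thm:big_higher_dim_base}\eqref{egy} only asserts, for each fixed maximal-variation family, that $\det f_*\sO_X(q(K_{X/Y}+D))$ is big for all ``divisible enough'' $q$, and this threshold a priori depends on the family, hence on $V$. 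Citing \autoref{thm:big_higher_dim_base}\eqref{egy} separately on each $\wt V$ therefore produces, for each $V$, some $q(V)$, with no common multiple guaranteed to exist over the uncountably many $V$'s.

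The paper resolves this by proving the stronger \autoref{prop:big_higher_dim_base}: for a maximal-variation family over $Z$ there is \emph{one} integer $q$ and \emph{one} proper closed $S\subsetneq Z$ such that for all $a>0$ and all irreducible closed $T\subseteq Z$ not contained in $S$, $c_1\bigl(\det (f_T)_*\sO_{X_T}(aq(K_{X_T/T}+D_T))\bigr)^{\dim T}>0$. This gives the needed uniformity off the bad locus, and the subvarieties contained in $S$ are then handled by induction on dimension (pass to the normalization $\wt S$, which again supports a finite-fiber family, and iterate, taking a common multiple of the exponents produced at each stage). Your proposal contains neither the statement with a uniform $q$ and a controlled bad locus, nor the dimension induction; without these, ``for $r$ divisible enough'' cannot be turned into a single ample $\lambda_r$. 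The final suggestion in your proposal --- that $\lambda_r$ is a pullback of something big from moduli --- is also circular, since the ampleness (or even bigness) of the polarization on $\mathsf M$ is precisely what is being proved.
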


The rest of the section contains the proofs of \autoref{thm:big_higher_dim_base} and
\autoref{cor:projective}. The first major step is \autoref{prop:big_downstairs},
which needs a significant amount of notation to be introduced.

\begin{definition}\label{def:D_c}
  For a $\bQ$-Weil divisor $D$ on a demi-normal variety and for a $c \in \bQ$ we
  define the $c$-coefficient part of $D$ to be the reduced effective divisor
  \begin{equation*}
    D_c:=\sum_{\coeff_E D =c} E ,
  \end{equation*}
  where the sum runs over all prime divisors. Clearly
  $$
  D = \sum_{c\in\bQ} cD_c.
  $$
  Notice that $D_c$ is invariant under any automorphism of the pair $(X,D)$, that is,
  under any automorphism of $X$ that leaves $D$ invariant. In fact, an automorphism
  of $X$ is an automorphism of the pair $(X,D)$ if and only if it leaves $D_c$
  invariant for every $c\in\bQ$.
\end{definition}

\begin{definition}
  \label{def:D_c-comp-with-base-change}
  Let $f : (X, D) \to Y$ be a family of stable log-varieties.  We will say that the
  \emph{coefficients of $D$ are compatible with base-change} if for each $c \in \bQ$
  and $y \in Y$,
  \begin{equation*}
    D_c |_{X_y} = (D_y)_c.
  \end{equation*}
  Note that this condition is automatically satisfied if all the coefficients are
  greater than $\frac{1}{2}$.
\end{definition}

\begin{notation}
  \label{not:main-setup}

  Let $f:(X,D)\to Y$ be a family of stable log-varieties over a smooth projective
  variety. For a fixed $m\in \bZ$ that is divisible by the Cartier index of $K_{X/Y}+
  D$, and an arbitrary $d\in\bZ$ set $\sL_d:= \sO_X(dm(K_{X/Y} + D))$.
  
  Observe that there exists a dense big open subset $U\subseteq Y$ over which all the
  possible unions of the components of $D$ (with the reduced structure) are flat.
  Our goal is to apply \autoref{thm:generalized_ampleness_lemma} for $f_U:X_U\to U$
  (we allow shrinking $U$ after fixing $d$ and $m$, while keeping $U$ a big open
  set).

  Next we will group the components of $D$ according to their coefficients. Recall
  the definition of $D_c$ from \autoref{def:D_c} where $c \in \bQ$ and observe that
  there is an open set $V\subseteq U$ over which
  \begin{enumerate}[label=({\sf\Alph*})]
  \item\label{item:1} $D_c$ is compatible with base-change as in
    \autoref{def:D_c-comp-with-base-change} for all $c\in\bQ$, and
  \item\label{item:2} 
    the scheme theoretic fiber of $D_c$ over $v \in V$ is reduced and therefore is
    equal to its divisorial restriction (see the definition of the latter in
    \autoref{notation:restriction_divisor}).    
  \end{enumerate}

  To simplify notation we will make the following definitions: Let $\{c_1, \dots,
  c_n\}:=\{c\in\bQ \mid D_c\neq\emptyset\}$ be the set of coefficients appearing in
  $D$ and let $D_i:=D_{c_i}$, for $i=1,\dots,n$.

  Next we choose an $m\in\bZ$ satisfying the following conditions for every integer
  $i,j,d>0$:
  \begin{enumerate}[resume,label=({\sf\Alph*})]
  \item\label{item:3} 
    $m(K_{X/Y} + D)$ is Cartier,
  \item\label{item:4} 
    $\sL_d= \sO_X(dm(K_{X/Y} + D))$ is $f$-very ample,
  \item\label{item:5} 
    $R^jf_*  \sL_d =0$, 
  \item\label{item:6} 
    $ \left. \left( R^j\left(f|_{D_i} \right)_* \sL_d|_{D_i} \right) \right|_V=0$, and
  \item\label{item:7} 
    $\left(f_*\sL_1 \right)|_V \rightarrow \left(\left(f|_{D_i}\right)_*\sL_1|_{D_i}
    \right)|_V$ is surjective.
  \end{enumerate}
  These conditions imply that 

  \begin{enumerate}[resume,label=({\sf\Alph*})]
  \item\label{item:8} 
    $\bN\ni N:=h^0(\sL_1|_{X_y})-1$ is independent of $y\in Y$, and in fact
  \item\label{item:9} 
    $f_* \sL_d$ and $\left. \left( \left(f|_{D_i} \right)_* \sL_d|_{D_i} \right)
    \right|_V$ are locally free and compatible with base-change.
  \end{enumerate}
  By possibly increasing $m$ we may also assume that
  \begin{enumerate}[resume,label=({\sf\Alph*})]
  \item the multiplication maps
    \begin{equation*}
      \hspace{2em} \sym^d(f_*\sL_1) \rightarrow
      \left(f_*\sL_d\right) \hspace{1em} \textrm{and} \hspace{1em}
      \sym^d(f_*\sL_1)|_V \rightarrow
      \left(\left(f|_{D_i}\right)_*\sL_d|_{D_i}\right)|_V  
    \end{equation*}
     are surjective.
    \label{item:10}
  \end{enumerate}
  For the surjectivity of the map
  $\sym^d(f_*\sL_1)|_V \rightarrow
  \left(\left(f|_{D_i}\right)_*\sL_d|_{D_i}\right)|_V$ 
  we write it as the composition of the restriction map
  $\sym^d \left(f_*\sL_1 \right)|_V \rightarrow \sym^d
  \left(\left(f|_{D_i}\right)_*\sL_1|_{D_i} \right)|_V$
  and the multiplication map
  $\sym^d \left(\left(f|_{D_i}\right)_*\sL_1|_{D_i} \right)|_V \rightarrow
  \left(\left(f|_{D_i}\right)_*\sL_d|_{D_i}\right)|_V$.
  The former is surjective by the choice of $m$ and condition \ref{item:7} while the
  surjectivity of the latter follows by the finite generation of the relative section
  ring, after an adequate increase of $m$.

  We fix an $m$ satisfying the above requirements for the rest of the section and use 
  the global sections of $\sL_1|_{X_y}$ to embed $X_y$ (and hence $D_i|_{X_y}$ as
  well) into the fixed projective space $\mathbb P^N_{k}$ for every closed point
  $y\in V$. The ideal sheaves corresponding to these embeddings will be denoted by
  $\sI_{X_y}$ and $\sI_{D_i|_{X_y}}$ respectively. As the embedding of $X_y$ is
  well-defined only up to the action of $\GL(N+1,k)$, the corresponding ideal
  sheaf is also well-defined only up to this action. Furthermore, in what follows we
  deal with only such properties of $X_y$, $D_i|_{X_y}$, $\sI_{X_y}$ and
  $\sI_{D_i|_{X_y}}$ that are invariant under the $\GL(N+1,k)$ action.

  So, finally, we choose a $d>0$ such that
    
  \begin{enumerate}[resume,label=({\sf\Alph*})]
  \item 
    for all $y \in V$, $X_y$ as well as $D_i|_{X_y}$ are defined by degree $d$
    equations.
  \end{enumerate}

  From now on we keep $d$ fixed with the above chosen value and we supress it from
  the notation. We make the following definitions:
  \begin{enumerate}[resume,label=({\sf\Alph*})]
  \item\label{item:12} 
    $W := \sym^d(f_* \sL_1)|_U$, and
  \item\label{item:13} 
    $Q_0:= (f_* \sL_d)|_U$.
  \end{enumerate}
  Further note that $\left( f|_{D_i } \right)_* \sL_d|_{D_i}$ is torsion-free, since
  $f|_{D_i }$ is surjective on all components and $D_i$ is reduced. Hence by possibly
  shrinking $U$, but keeping it still a big open set, we may assume that
  \begin{enumerate}[resume,label=({\sf\Alph*})]
  \item\label{item:14}     
    $Q_i:= \left. \left( \left( f|_{D_i } \right)_* \sL_d|_{D_i} \right) \right|_U$
    is locally free for all for $i>0$.    
  \end{enumerate}
  Our setup ensures that we have natural homomorphisms $\alpha_i:W \to Q_i$ which are 
  surjective over $V$ and we may make the following identifications for all closed
  points $y\in V$ up to the above explained $\GL(N+1,k)$ action:
  $$
  \xymatrix@R0em{%
    Q_0\otimes k(y) \ar@{<->}[r] & **[r] H^0 \left(\bP^N,
      \sO_{\bP^N\vphantom{|_{X_y}}}(d)
    \right)  \\
    \ker \bigg[ W \otimes k(y) \to Q_0 \otimes k(y)\bigg] \ar@{<->}[r] & **[r] H^0
    \left(\bP^N, \sI_{X_y\vphantom{|_{X_y}}}(d) \right) \\
    \ker \bigg[ W \otimes k(y) \to Q_i \otimes k(y)\bigg] \ar@{<->}[r] & **[r] H^0
    \left(\bP^N, \sI_{{D_i}|_{X_y}}(d) \right) \text{, for $i>0$.}}
  $$
  We will use this setup and notation for the rest of the present section. 
\end{notation}

\begin{lemma}
  \label{lem:pushforward_nef}
  Let $f : (X, D) \to Y$ be a family of stable log-varieties over a normal proper
  variety $Y$, and let $m>0$ be an integer such that
  \begin{enumerate}
  \item $m(K_{X/Y} + D)$ is Cartier,
  \item $m(K_{X/Y} + D)$ is relatively basepoint-free with respect to $f$, and
  \item $R^if_* \sO_X(m(K_{X/Y} + D))=0$ for all $i>0$.
  \end{enumerate}
  Then $f_* \sO_X(m(K_{X/Y} + D))$ is a nef locally free sheaf.  Further note, that
  the above conditions and hence the statement hold for every divisible enough
  $m$. In particular, it applies for the $m$ chosen in \autoref{not:main-setup}, and
  hence $f_*\sL_d$ is weakly positive for all $d>0$.
\end{lemma}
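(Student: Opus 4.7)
The proof splits cleanly into (a) local freeness, (b) nefness, and (c) deriving the addendum from the main statement.

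For \textbf{local freeness}, I would invoke cohomology and base change. Since $f$ is flat and proper and $R^{i}f_{*}\sO_{X}(m(K_{X/Y}+D))=0$ for all $i>0$ by hypothesis, the formation of $f_{*}\sO_{X}(m(K_{X/Y}+D))$ commutes with base change, and for every $y\in Y$
\[
\dim_{k(y)} f_{*}\sO_{X}(m(K_{X/Y}+D))\otimes k(y) \;=\; h^{0}\!\bigl(X_{y},\sO_{X_{y}}(m(K_{X_{y}}+D_{y}))\bigr) \;=\; \chi\!\bigl(X_{y},\sO_{X_{y}}(m(K_{X_{y}}+D_{y}))\bigr),
\]
which is locally constant in $y$ by flatness of $f$. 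Grauert's theorem then forces $f_{*}\sO_{X}(m(K_{X/Y}+D))$ to be locally free.

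For \textbf{nefness}, the plan is to reduce to the case of a one-dimensional base and then quote a known semi-positivity theorem. A locally free sheaf on a proper normal variety is nef if and only if its pullback along every morphism from a smooth projective curve is nef, so fix any $g\colon C\to Y$ with $C$ a smooth projective curve. By the base-change property established above,
\[
g^{*}f_{*}\sO_{X}(m(K_{X/Y}+D)) \;\simeq\; (f_{C})_{*}\sO_{X_{C}}(m(K_{X_{C}/C}+D_{C})),
\]
and $f_{C}\colon(X_{C},D_{C})\to C$ is again a family of stable log-varieties. The bundle on the right is then nef by Fujino's semi-positivity theorem for stable log-families over smooth projective curves (\cite{Fujino_Semi_positivity_theorems_for_moduli_problems}), which is precisely the input this paper is built on; this is the step I would expect to carry the real content of the lemma, everything else being essentially formal.

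For the \textbf{addendum}, observe that in \autoref{not:main-setup} the integer $m$ was chosen so that \ref{item:3}, \ref{item:4}, \ref{item:5} hold simultaneously for every positive integer $d$: $dm(K_{X/Y}+D)$ is Cartier, $\sL_{d}=\sO_{X}(dm(K_{X/Y}+D))$ is $f$-very ample (hence in particular relatively basepoint-free), and $R^{j}f_{*}\sL_{d}=0$ for $j>0$. Applying the main conclusion of the lemma with $dm$ in place of $m$ yields that $f_{*}\sL_{d}$ is a nef locally free sheaf on the smooth projective variety $Y$ for each $d>0$. Since nef locally free sheaves on normal projective varieties are weakly positive in the sense of \autoref{def:wp_big} (cf.\ the remark following \autoref{thm:generalized_ampleness_lemma} and \cite[Prop.~2.9.e]{Viehweg_Quasi_projective_moduli}), the weak positivity of $f_{*}\sL_{d}$ follows. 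The main obstacle is genuinely the curve-case semi-positivity input; the rest is a packaging of base change and reduction arguments.
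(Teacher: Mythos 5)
Your proof follows essentially the same route as the paper's: base-change compatibility (and local freeness) from the vanishing hypothesis, reduction of nefness to a smooth projective curve base, and then Fujino's semi-positivity theorem, with the addendum deduced exactly as you describe. The one point the paper makes explicit that you gloss over is that \cite[Theorem 1.13]{Fujino_Semi_positivity_theorems_for_moduli_problems} requires the \emph{total space} $(X_C,D_C)$ over the curve to be semi-log canonical, not merely the fibers; this is supplied by the slc version of inversion of adjunction (\cite[Cor 2.11]{Patakfalvi_Fibered_stable_varieties}), after which your application of Fujino's theorem is legitimate.
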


\begin{proof}
  The assumptions guarantee that $f_* \sO_X(m(K_{X/Y} + D))$ is compatible with
  base-change. As being nef is decided on curves, we may assume that $Y$ is a smooth
  curve. Note that then by the slc version of inversion of adjunction (e.g.,
  \cite[Cor 2.11]{Patakfalvi_Fibered_stable_varieties}) $(X,D)$ itself is slc. Hence,
  \cite[Theorem 1.13]{Fujino_Semi_positivity_theorems_for_moduli_problems} applies
  and yields the statement.
\end{proof}

\begin{proposition}
  \label{prop:big_downstairs} 
  In the situation of \autoref{not:main-setup}, assume that the variation is
  maximal. Then for all $d \gg 0$,
  \begin{equation*}
    \det f_* \sL_d \otimes \left( \otimes_{i=1}^n  \det \left(\left( f|_{D_i}
        \right)_* \sL_d|_{D_i} \right) \right)
  \end{equation*}
  is big.
\end{proposition}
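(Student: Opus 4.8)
The plan is to deduce the statement from the generalized Ampleness Lemma (\autoref{thm:generalized_ampleness_lemma}), applied to the data assembled in \autoref{not:main-setup} with the almost projective base $U$ in place of $Y$. Concretely, I would take $W=\sym^d(f_*\sL_1)|_U$ with structure group $G=\GL(N+1,k)$ acting through the representation $\sym^d$ of $\GL(N+1,k)=\GL\bigl(H^0(\bP^N,\sO(1))\bigr)$ on $H^0(\bP^N,\sO(d))$, together with the homomorphisms $\alpha_i\colon W\to Q_i$ for $i=0,\dots,n$, where $Q_0=(f_*\sL_d)|_U$ and $Q_i=\bigl((f|_{D_i})_*\sL_d|_{D_i}\bigr)|_U$ for $i>0$. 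The conclusion of that theorem is exactly that $\bigotimes_{i=0}^n\det Q_i$ is big on $U$; since $U$ is a big open subset of the projective variety $Y$ and $\bigotimes_{i=0}^n\det Q_i$ is the restriction of $\det f_*\sL_d\otimes\bigl(\otimes_{i=1}^n\det((f|_{D_i})_*\sL_d|_{D_i})\bigr)$, \autoref{lem:big-is-the-same} then upgrades this to bigness of the latter line bundle on $Y$, which is the assertion of the proposition (here $d$ is taken as in \autoref{not:main-setup}, so ``$d\gg 0$'' holds).

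So the work is in checking the hypotheses of \autoref{thm:generalized_ampleness_lemma}. The base $U$ is normal and almost projective. The bundle $f_*\sL_1$ is weakly positive by \autoref{lem:pushforward_nef} and is locally free by \ref{item:9}, so $W$ is weakly positive by \autoref{lem:wp_big_properties} together with the fact that weak positivity is preserved under restriction to a big open subset. The normality of the structure group is the case treated in \autoref{rem:normal}: $W=\sym^d V$ for $V=f_*\sL_1|_U$ of rank $N+1$ and $G=\GL(N+1,k)$ acting via $\sym^d$. Finally, the $\alpha_i$ are surjective over the dense open $V\subseteq U$ by \ref{item:10}, hence generically surjective.

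The crux is the finiteness of the induced classifying map $U(k)\to\prod_{i=0}^n\Gr(w,q_i)(k)/G(k)$ on a dense open set. Over $V$, using the identifications listed at the end of \autoref{not:main-setup}, this map sends $y$ to the class of the tuple of kernel subspaces $\bigl(H^0(\bP^N,\sI_{X_y}(d)),\,H^0(\bP^N,\sI_{D_1|_{X_y}}(d)),\dots,H^0(\bP^N,\sI_{D_n|_{X_y}}(d))\bigr)$ modulo the diagonal $G$-action. Suppose $y_1,y_2\in V$ map to the same class, so there is $g\in\GL(N+1,k)$ carrying each kernel subspace for $y_1$ to the corresponding one for $y_2$. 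By the last numerical condition imposed in \autoref{not:main-setup}, the ideals of $X_y$ and of each $D_i|_{X_y}$ are generated in degree $d$; hence the linear automorphism $g$ of $\bP^N$ carries $X_{y_1}$ isomorphically onto $X_{y_2}$ and $D_i|_{X_{y_1}}$ onto $D_i|_{X_{y_2}}$ for every $i$. Since the $c_i$ are pairwise distinct and, by \ref{item:1}, $D_{y_j}=\sum_i c_i\,(D_i|_{X_{y_j}})$, this isomorphism identifies the pairs $(X_{y_1},D_{y_1})$ and $(X_{y_2},D_{y_2})$. Thus the fiber of the classifying map through a point $y\in V$ is contained in the isomorphism equivalence class of $(X_y,D_y)$ inside $Y$. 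Because the variation of $f$ is maximal, that equivalence class is zero-dimensional, hence finite, for $y$ in a dense open subset (by the description of $\var f$ in \autoref{def:variation} together with upper semicontinuity of fiber dimension), so the classifying map has finite fibers on a dense open subset of $U$. With all hypotheses verified, \autoref{thm:generalized_ampleness_lemma} applies and the proof is complete.

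I expect the last step — extracting a genuine isomorphism of polarized pairs from a mere $\GL(N+1,k)$-relation among the degree-$d$ pieces of the ideal sheaves, and then invoking maximal variation in the precise form needed — to be the only point requiring real care; the remaining items (weak positivity of $W$, normality of $G$, and the transfer of bigness between $U$ and $Y$) are routine bookkeeping with the machinery already in place.
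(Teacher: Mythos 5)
Your proposal is correct and follows essentially the same route as the paper's own proof: weak positivity of $W=\sym^d(f_*\sL_1)$ via \autoref{lem:pushforward_nef}, normality of the closure of $G=\GL(N+1,k)$ via \autoref{rem:normal}, and finiteness of the classifying map over a dense open set of $V$ by the observation that a single $\GL(N+1,k)$-transformation carrying the degree-$d$ pieces of the ideals of $(X_y, D_i|_{X_y})$ to those of $(X_z, D_i|_{X_z})$ forces $(X_y,D_y)\simeq(X_z,D_z)$, which combined with maximal variation (generic finiteness of the moduli map) gives the required finiteness; then \autoref{thm:generalized_ampleness_lemma} applies. The only cosmetic difference is that you make the transfer from $U$ to $Y$ explicit via \autoref{lem:big-is-the-same}, which the paper leaves implicit.
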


\begin{proof}
  Note that $f_* \sL_1$ is weakly positive by \autoref{lem:pushforward_nef} and hence
  so is $W=\sym^df_* \sL_1$. This will allow us to use
  \autoref{thm:generalized_ampleness_lemma} in the situation of
  \autoref{not:main-setup} by setting $G:=\GL(N+1, k)$ (see \autoref{rem:normal})
  with the natural action on $W$ if we prove that the restriction over $V$ of the
  classifying map of the morphisms $\alpha_i$ for $i=0,\dots,n$ have finite fibers.

  Translating this required finiteness to geometric terms means that fixing a general
  $y \in V(k)$ and the fiber $X_y$, there are only finitely many other general $z \in
  V(k)$, such that for the fiber $X_z$ the degree $d$ forms in the ideals of $X_y$
  and $D_{i}|_{X_y}$ can be taken by a single $\phi \in \GL(N+1,k)$ to the degree
  $d$ forms in the ideals of $X_z$ and $D_{i}|_{X_z}$. However, if such a $\phi$
  exists, then $(X_y, D_y) \simeq (X_z, D_z)$ meaning that $y$ and $z$ lie in the
  same fiber of the associated moduli map $\mu: Y \to {\sM}_{m,v,I}$ (see
  \autoref{sec:particular_functor}). The maximal variation assumption implies that
  $\mu$ is generically finite, so there is an open $Y^0 \subseteq Y$, over which
  $\mu$ has finite fibers, which is exactly what we need.  By shrinking $V$, we may
  assume that $V \subseteq Y^0$ and applying
  \autoref{thm:generalized_ampleness_lemma} yields the statement.
\end{proof}

\begin{lemma}
  \label{lem:flat}
  Let $f : (X,D) \to Y$ be a family of stable log-varieties over a smooth variety.
  Then $D_c|_{T}$ is flat for all $c\in\bQ$, where $T$ is the locus over which $D_c$
  is Cartier. Note that $T|_{X_y}$ is a big open set for every $y \in Y$.
\end{lemma}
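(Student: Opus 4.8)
The plan is to produce an explicit big open subvariety of $X$ sitting inside $T$ and meeting every fibre in a big open set, and then to check flatness of $D_c|_T\to Y$ pointwise on fibres via the local criterion for flatness, the key point being that the defining property ``$D$ avoids the generic points of the fibres'' forbids $D_c|_T$ from acquiring a fibre component upon restriction to a fibre.

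First I would set $U:=X_{\reg}\cup(X\setminus\Supp D_c)$. On the regular locus every Weil divisor is Cartier, and away from $\Supp D_c$ the divisor $D_c$ is the zero divisor; hence $D_c$ is Cartier on $U$, so $U\subseteq T$. To see that $U$, and hence $T$, meets each fibre $X_y$ in a big open set, note that $X_y$ is demi-normal, so being big means having a complement of codimension $\geq 2$, and $X_y\setminus U\subseteq\Sing X\cap\Supp D_c\cap X_y$. If $\eta$ were a codimension one point of $X_y$ lying in this set then $\eta\in\Supp D$, so $X_y$ would be regular at $\eta$ by condition (1) of \autoref{def:stable_lof_family}; as $f$ is flat and $Y$ is smooth, $X$ would then be regular at $\eta$ (for a flat local homomorphism of Noetherian local rings with regular base and regular closed fibre the total ring is regular), contradicting $\eta\in\Sing X$. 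Hence $X_y\setminus U$ contains no codimension one point, which gives the big open set claim.

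For flatness, since the non-flat locus of $D_c|_T\to Y$ is closed and $D_c|_T$ is of finite type over $k$, it suffices to prove flatness at a closed point $z\in D_c|_T$; set $s:=f(z)$, a closed point of $Y$, and localize, writing $A:=\sO_{Y,s}$, which is regular, and $B:=\sO_{T,z}$, so that $A\to B$ is flat and $D_c|_T$ is cut out near $z$ by a non-zerodivisor $g\in B$ (being an effective Cartier divisor on $T$). By the local criterion for flatness it is enough that $\Tor^A_1(k(s),B/gB)=0$; tensoring $0\to B\xrightarrow{g}B\to B/gB\to0$ with $k(s)$ over $A$ and using the $A$-flatness of $B$ identifies this group with the annihilator of the image $\bar g$ of $g$ in $\sO_{X_s,z}=B\otimes_A k(s)$. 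So it remains to show $\bar g$ is a non-zerodivisor on $\sO_{X_s,z}$. Since $X_s$ is demi-normal it is reduced, so $\sO_{X_s,z}$ has no embedded prime, and it is enough that $\bar g$ not vanish on any irreducible component of $X_s$ through $z$. But the support of $D_c|_T\cap X_s$ near $z$ is contained in $\Supp D_c\subseteq\Supp D$, while the generic point of every component of $X_s$ is a generic point of the fibre, hence avoids $\Supp D$ by condition (1) of \autoref{def:stable_lof_family}; so no such component can lie in $V(\bar g)$, $\bar g$ is a non-zerodivisor, and $D_c|_T\to Y$ is flat.

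The main obstacle is the flatness part, whose core is precisely this last observation: flatness of an effective Cartier divisor over a base is equivalent, via the $\Tor_1$ computation above, to the restricted divisor containing no fibre component, and this is ensured here exactly by the fact that $D$, hence $D_c$, avoids the generic points of the fibres. The reduction to closed points is what makes that property applicable, since it is phrased for closed fibres, and the ``regular base plus regular fibre implies regular total ring'' fact is the only other nontrivial input, used for the big open set claim.
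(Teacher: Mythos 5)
Your proof is correct and follows essentially the same route as the paper's: the paper tensors the exact sequence $0 \to \sO_T(-D_c) \to \sO_T \to \sO_{D_c|_T} \to 0$ with the fiber and invokes the local criterion of flatness, observing that the restricted map stays injective because $\sO_T(-D_c)|_{T_y}$ is a line bundle on the reduced $S_2$ scheme $T_y$ and the map is already an isomorphism at the generic points; you carry out the identical computation via $\Tor_1^A$ at a closed point, reaching the same conclusion that the local defining equation remains a non-zerodivisor on the reduced fiber because $\Supp D_c \subseteq \Supp D$ misses its generic points. Your verification of the parenthetical remark that $T$ meets each fiber in a big open set, via $U = X_{\reg}\cup(X\setminus\Supp D_c)$ together with the fact that a flat local extension of a regular ring with regular closed fiber is regular, fills in a detail the paper leaves unaddressed and is sound.
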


\begin{proof}
  As $\sO_{D_c|_T}$ is the cokernel of $\varepsilon: \sO_T(-D_c) \to \sO_T$, it is
  enough to prove that $\varepsilon_y: \sO_T(-D_c) \otimes \sO_{T_y} \to \sO_{T_y}$
  is injective for every $y \in Y$ \cite[Tag 00MD]{stacks-project}. However, as
  $\sO_T(-D_c) \otimes \sO_{T_y}$ is a line bundle on $T_y$, and hence $S_2$, and the
  map $\varepsilon_y$ is an isomorphism, in particular injective, at every generic
  point of $T_y$, it is in fact injective everywhere.
\end{proof}

\begin{lemma}
  \label{lem:equidimensional}
  Let $f : (X,D) \to Y$ be a family of stable log-varieties over a smooth variety.
  Then $D_c \to Y$ is an equidimensional morphism for all $c\in\bQ$.
\end{lemma}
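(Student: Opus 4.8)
The plan is to pin down the fibres of $g:=f|_{D_c}\colon D_c\to Y$ dimension by dimension: the lower bound on fibre dimension holds for any divisor all of whose components dominate $Y$, while the upper bound is exactly where the hypothesis that $f$ be a family of stable log-varieties -- specifically, that $D$ avoid the generic points of the fibres -- is used. Since equidimensionality may be checked over each connected component of $Y$ and $Y$ is smooth, I may assume $Y$ is irreducible; write $n:=\dim X_y$, which is constant by flatness and properness, and note that $X_y$ is pure of dimension $n$ since $(X_y,D_y)$ is a stable log-variety. If $D_c=\emptyset$ there is nothing to prove; otherwise I would first record the elementary geometry of $X$ and $D_c$. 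Because $f$ is flat with $n$-dimensional fibres and $X$ is equidimensional (being the total space of a pair), $\dim X=\dim Y+n$, so every irreducible component $E$ of $D_c$ is a prime divisor with $\dim E=\dim Y+n-1$. Moreover every such $E$ dominates $Y$: if not, then $\dim\overline{f(E)}\le\dim Y-1$, so for general $w\in\overline{f(E)}$ the fibre $E\cap X_w$ would have dimension $\dim E-\dim\overline{f(E)}\ge n$, hence would be a union of irreducible components of $X_w$, forcing $E$ to contain a generic point of a fibre of $f$ and contradicting condition (1) of \autoref{def:stable_lof_family}. As $f|_E$ is projective, dominant, and between irreducible varieties, it is surjective, hence so is $g$.

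Next I would bound $\dim g^{-1}(y)$ from both sides. For the upper bound: since $\Supp D_c\subseteq\Supp D$, condition (1) of \autoref{def:stable_lof_family} says that $g^{-1}(y)=D_c\cap X_y$ contains no generic point of $X_y$, and as $X_y$ is pure of dimension $n$ this forces $\dim g^{-1}(y)\le n-1$ for every $y\in Y$. For the lower bound, fix a component $E$ of $D_c$: it is irreducible and dominant over $Y$, with generic fibre of dimension $\dim E-\dim Y=n-1$, so by the upper semicontinuity of fibre dimension (\cite[IV.13.1.3]{EGAIV}; see also \cite[Ex.~II.3.22]{Hartshorne_Algebraic_geometry}) every irreducible component of every fibre of $E\to Y$ has dimension $\ge n-1$.

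Combining the two bounds, $E\to Y$ is equidimensional of relative dimension $n-1$ for each of the finitely many components $E$ of $D_c$; since $D_c=\bigcup_E E$ and each $E$ surjects onto $Y$, every fibre of $g$ is then pure of dimension $n-1$, so $g=f|_{D_c}\colon D_c\to Y$ is equidimensional, as claimed. I do not expect a serious obstacle; the one step that is not formal dimension-counting is the upper bound $\dim g^{-1}(y)\le n-1$, and it is worth emphasizing that this genuinely uses the hypotheses: for an arbitrary effective divisor on $X$ the dimension of its fibres over $Y$ can jump up (as for an exceptional divisor of a blow-up), and it is precisely the requirement that $D$ avoid the generic points of the fibres, built into the definition of a family of stable log-varieties, that rules this out.
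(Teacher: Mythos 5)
Your proof is correct and follows essentially the same approach as the paper: an upper bound $\dim g^{-1}(y)\le n-1$ coming from the requirement that $D$ avoids the generic points of the fibers, combined with the lower bound $\ge n-1$ from semicontinuity of fiber dimension applied to components of $D_c$ dominating $Y$. You simply spell out in more detail (e.g.\ that every component of $D_c$ dominates $Y$) what the paper's proof leaves implicit.
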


\begin{proof}
  By assumption $D_c$ has codimension $1$ in $X$ and it does not contain any
  irreducible components of any fiber. It follows that the general fiber of $D_c$
  over $Y$ has codimension $1$ in the corresponding fiber of $X$ and that this is the
  maximum dimension any of its fibers may achieve. Since the dimension of the fibers
  is semi-continuous this implies that all fibers of $D_c$ have the same dimension.
\end{proof}

\begin{lemma}
  \label{lem:not_too_ugly}
  Let $f : (X,D) \to Y$ be a family of stable log-varieties over a smooth
  variety. Let $Z$ be the fiber product over $Y$ of some copies of $X$ and of the
  $D_i=D_{c_i}$'s. Then
  \begin{enumerate}
  \item every irreducible component of $Z$ dominates $Y$,\label{item:ugly1}
  \item there is a big open set of $Y$ over which $Z$ is flat and
    reduced,\label{item:ugly2}
  \item $Z$ is equidimensional over $X$,  and\label{item:ugly3}
  \item $X$ is regular at every generic point of $Z$.\label{item:ugly4}
  \end{enumerate}
\end{lemma}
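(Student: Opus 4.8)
Write $Z=X^{(a)}_Y\times_Y D_{i_1}\times_Y\cdots\times_Y D_{i_b}$, put $N:=a+b$, and view $Z$ as the closed subscheme $\bigcap_{j=1}^{b}p_{a+j}^{-1}(D_{i_j})$ of $X^{(N)}:=X^{(N)}_Y$, where $p_s\colon X^{(N)}\to X$ is the $s$-th projection; we may assume $Y$ is irreducible. First I would record the one-factor facts the argument bootstraps from. Since $f$ is flat with equidimensional fibres, $X^{(N)}\to Y$ is flat with all fibres of pure dimension $Nn$. By \autoref{lem:equidimensional} every $D_c\to Y$ is equidimensional with all fibres of dimension $n-1$; consequently every component of $D_c$ dominates $Y$, since a component mapped into a proper closed $Y'\subsetneq Y$ would have dimension at most $(\dim Y-1)+(n-1)<\dim X-1$, impossible for a component of the codimension-one subvariety $D_c$. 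Moreover, by the running convention that the support of $D$ — hence of each $D_c$ — does not meet the conductor, each generic point of a component of $D_c$ is a codimension-one point of $X$ at which $X$ is regular.

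For \autoref{item:ugly1} and \autoref{item:ugly3} I would localise near where the $D_{i_j}$ become Cartier. By \autoref{lem:flat} each $D_{i_j}$ is $\bQ$-Cartier on a big open $T_{i_j}\subseteq X$ whose trace on every fibre is big, so $\Omega:=\bigcap_{j=1}^{b}p_{a+j}^{-1}(T_{i_j})$ is a big open subset of $X^{(N)}$ and $Z\cap\Omega$ is the intersection of $b$ effective Cartier divisors in $\Omega$. By Krull's principal ideal theorem every component of $Z\cap\Omega$ has codimension at most $b$ in $X^{(N)}$, hence dimension at least $\dim Y+an+b(n-1)$; since every fibre of $Z\cap\Omega\to Y$ is a product of the fibres of the factors and so has dimension exactly $an+b(n-1)$, each component of $Z\cap\Omega$ has dimension at most $\dim Y+an+b(n-1)$, forcing equality and domination of $Y$. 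Next I would show $Z\cap\Omega$ is dense in $Z$: fibrewise $(D_{i_j})_y$ is of pure codimension one in $X_y$ while $X_y\setminus T_{i_j}$ has codimension $\ge 2$, so $(D_{i_j})_y\cap T_{i_j}$ is dense in $(D_{i_j})_y$ and hence $Z_y\cap\Omega$ is dense in $Z_y$; combined with generic flatness of $Z\to Y$ — over a dense open of $Y$ all associated points of $Z$ lie in the generic fibre — this gives $\overline{Z\cap\Omega}=Z$ and hence \autoref{item:ugly1}. Then \autoref{item:ugly3} follows: by these dimension equalities the projection of $Z$ to a chosen copy of $X$ (for a $D_{i_j}$-factor, composed with $D_{i_j}\hookrightarrow X$) is surjective on components with all fibres of one dimension, so $Z$ is equidimensional over $X$.

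For \autoref{item:ugly2} I would build a big open $U\subseteq Y$ as follows: each $D_c\to Y$ is flat over every codimension-one point of $Y$ — after localising $Y$ there the base is a discrete valuation ring and $D_c$, being reduced with all associated points dominating $Y$, is torsion-free, hence flat, over it — so $D_c\to Y$ is flat over a big open of $Y$; intersecting these over the finitely many $c$, and with a big open over which every scheme-theoretic fibre of every $D_c$ is reduced (e.g.\ $V$ of \autoref{not:main-setup}, cf.\ \autoref{item:2}), gives $U$. Then $Z|_U\to U$ is a fibre product of flat morphisms, hence flat, with fibres that are products over the algebraically closed field $k$ of the reduced schemes $X_y$ and $(D_{i_j})_y$, hence reduced; a scheme flat over a reduced base with reduced fibres is reduced, so $Z|_U$ is reduced, proving \autoref{item:ugly2}. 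Finally, for \autoref{item:ugly4}, under the chosen projection of $Z$ to a factor a generic point of $Z$ maps to the generic point of $X$ if that factor is a copy of $X$ (where $X$, a variety, is regular) and to the generic point of a component of some $D_{i_j}$ otherwise (where $X$ is regular by the first paragraph), the needed dominance of these projections on each component of $Z$ having been established with \autoref{item:ugly3}.

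The step I expect to fight hardest is the density assertion $\overline{Z\cap\Omega}=Z$ — equivalently, ruling out components of the scheme $Z$ concentrated over the small locus where some $D_{i_j}$ fails to be Cartier or over the codimension-$\ge 2$ bad part of $Y$. The fibrewise purity of the $(D_{i_j})_y$, together with the description of the associated points of $Z$ over its flat locus, should close it; if some pathology survives, the fallback is to run the argument with $Z$ replaced by $\overline{Z\cap\Omega}$, which visibly satisfies \autoref{item:ugly1}--\autoref{item:ugly4} and agrees with $Z$ over the big open set relevant to the applications.
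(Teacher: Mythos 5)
Your proof takes a genuinely different route from the paper's for parts~\autoref{item:ugly1} and~\autoref{item:ugly3}, and that route is essentially sound: the paper bounds the \emph{non-flat locus} of $Z\to Y$ (it has codimension $\geq 1$ in each fibre, by \autoref{lem:flat}) and derives a contradiction with equidimensionality if some component of $Z$ failed to dominate $Y$, whereas you work on the complementary \emph{Cartier locus} $\Omega$, use Krull's principal ideal theorem for a lower bound on the dimension of components of $Z\cap\Omega$, and an upper bound from fibre dimensions. Both arguments rest on \autoref{lem:flat}, \autoref{lem:equidimensional}, and the standard fact that the generic point of an irreducible component $S$ of $Z$ lies over the generic point of $f^{(r)}(S)$, so that $S_\zeta$ contains a whole component of $Z_\zeta$; you should say this explicitly, since it is exactly what closes your density step $\overline{Z\cap\Omega}=Z$ (it rules out a component of $Z$ sitting entirely inside the small closed set $X^{(N)}\setminus\Omega$, which a generic-flatness appeal alone does not). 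Your approach buys a cleaner quantitative statement (each component of $Z$ has dimension exactly $\dim Y + an + b(n-1)$), which makes~\autoref{item:ugly3} transparent; the paper's approach is shorter and avoids the density discussion.

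However, your argument for part~\autoref{item:ugly2} has a genuine gap. You conclude reducedness of $Z|_U$ from ``flat over a reduced base with reduced fibres,'' and for the reduced-fibre hypothesis you invoke a big open set over which every scheme-theoretic fibre of every $D_c$ is reduced, citing $V$ of \autoref{not:main-setup}. But $V$ there is only a dense open subset of $U$, not a big one, and this is not an oversight of the paper: by the example in \autoref{ex:Hassett}, the scheme-theoretic fibre of $D_c$ over a codimension-one point of $Y$ can genuinely acquire an embedded point. (A reduced scheme flat over a DVR need not have a reduced special fibre, so flatness alone does not rescue you.) The paper sidesteps this entirely: over the big open $U$ where $Z\to Y$ is flat, all associated points of $Z|_U$ map to the generic point $\eta$ of $Y$ because $Y$ is integral; since $Z_\eta$ is reduced (being a fibre product of reduced schemes over the perfect field $k(Y)$), $Z|_U$ has no embedded points and is reduced at its generic points, hence is reduced. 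You should replace your reduced-fibre argument by this associated-points argument. Note also that your fallback of replacing $Z$ by $\overline{Z\cap\Omega}$ does not save the lemma as stated, since~\autoref{item:ugly2} is precisely an assertion about the scheme $Z$, and the intersection-theoretic bookkeeping in \autoref{not_multiproduct} needs to know that non-reducedness of $Z$ itself is confined to codimension $\geq 2$ over $Y$.
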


\begin{proof}
  First notice that \autoref{item:ugly3} follows directly from
  \autoref{lem:equidimensional}.

  Next recall that we have already noted in \autoref{not:main-setup} that there
  exists a big open set $U \subseteq Y$, over which $X$ and all the possible unions
  of the components of $D$ are flat, and hence so is $Z$. It follows that all the
  embedded points of $Z$ over $U$ map to the generic point $\eta$ of $Y$. However
  $Z_{\eta}$ is reduced, so $Z$ is not only flat, but also reduced over $U$. This
  proves \autoref{item:ugly2}.

  On the other hand, $Z$ can definitely have multiple irreducible or even connected
  components. Assume that there exists an irreducible component $S$ that does not
  dominate $Y$. Then $S$ is contained in the non-flat locus of $Z$. However,
  according to \autoref{lem:flat}, the non-flat locus of $D_i$ has codimension at
  least one in each fiber of $D_i \to Y$ for all $i$'s. Therefore, the non-flat locus
  of $Z$ also has codimension at least one in each fiber. Hence, the existence of $S$
  would contradict \autoref{item:ugly3} (and ultimately
  \autoref{lem:equidimensional}). This proves \autoref{item:ugly1}.

  By \autoref{item:ugly1} the generic points of $Z$ are dominating the generic points
  of $D_i$.  At these points the corresponding fibers of $X$ are regular and so
  \autoref{item:ugly4} follows.
\end{proof}

\autoref{not_multiproduct} is used in the proof of
\autoref{prop:big_upstairs}.\ref{ketto}, which is presented right after it.

\begin{notation}
  \label{not_multiproduct}
  Assume that we are in the situation of \autoref{not:main-setup}, in particular,
  recall the definition $D_i=D_{c_i}$.  To simplify the notation we also set
  $D_0:=X$.  For a fixed positive natural number $r\in \bN_+$ consider a partition of
  $r$: i.e., a set of natural numbers $r_i\in\bN$ for $i=0,\dots,n$ such that
  $\sum_{i=0}^nr_i=r$. We will denote a partition by $[r_0,r_1,\dots,r_n]$.
  For $[r_0,r_1,\dots,r_n]$ we define the following \emph{mixed product} (we omit $Y$
  from the notation for sanity):
  $$
  D_{\kdot}^{(r_0,r_1,\dots,r_n)} := \left( \mtimes{i=0}nY D_i^{(r_i)} \right)_{\red}
  = \left( D_0^{(r_0)}\times_Y\dots\times_Y D_n^{(r_n)} \right)_{\red}.
  $$
  Observe that $D_\kdot^{(r_0,r_1,\dots,r_n)}$ is naturally a closed subscheme of
  $X^{(r)}_Y$.

  Let us assume now that $r_j>0$ for some $j$. Then
  $[r_0+1,r_1,\dots,r_j-1,\dots,r_n]$ is another partition of the same $r$ and
  $$
  D_{\kdot}^{(r_0,r_1,\dots,r_n)} \subset
  D_{\kdot}^{(r_0+1,r_1,\dots,r_j-1,\dots,r_n)}
  $$
  is a reduced effective Weil divisor no component of which is contained in the
  singular locus of $D_{\kdot}^{(r_0+1,r_1,\dots,r_j-1,\dots,r_n)}$ according to
  \autoref{lem:not_too_ugly}. In particular, for a sequence of partitions,
  \begin{multline*}
    [r_0,r_1,r_2,\dots,r_n], [r_0+1,r_1-1,r_2,\dots,r_n], \dots,
    [r_0+r_1,0,r_2,\dots,r_n],\\ [r_0+r_1+1,0,r_2-1,\dots,r_n],\dots,
    [r_0+r_1+r_2,0,0,\dots,r_n],\dots\\
    [r_0+\dots+r_{n-1},0,\dots,0,r_n], [r_0+\dots+r_{n-1}+1,0,\dots,0,r_n-1], \dots,
    [r,0,\dots,0,0]
  \end{multline*}
  we obtain a filtration of $X^{(r)}$ where each consecutive embedding is a reduced
  effective Weil divisor in the subsequent member of the filtration and furthermore
  no component of the former is contained in the singular locus of the latter:
  \begin{multline*}
    D_{\kdot}^{(r_0,r_1,r_2,\dots,r_n)}\subset
    D_{\kdot}^{(r_0+1,r_1-1,r_2,\dots,r_n)}\subset \dots \subset
    D_{\kdot}^{(r_0+r_1,0,r_2,\dots,r_n)} \subset \\ \subset
    D_{\kdot}^{(r_0+r_1+1,0,r_2-1,\dots,r_n)} \subset \dots \subset
    D_{\kdot}^{(r_0+r_1+r_2,0,0,\dots,r_n)}\subset \dots \\ \subset
    D_{\kdot}^{(r_0+\dots+r_{n-1},0,\dots,0,r_n)}\subset
    D_{\kdot}^{(r_0+\dots+r_{n-1}+1,0,\dots,0,r_n-1)}\subset \dots
    \subset D_{\kdot}^{(r,0,\dots,0,0)} = X^{(r)}.
  \end{multline*}
  In fact, using \autoref{lem:not_too_ugly}, one can see that for every (not
  necessarily subsequent) pair $D' \subseteq D''$ of schemes appearing in the above
  filtration, $D''$ is regular at the generic points of $D'$. Indeed, according to
  \autoref{lem:not_too_ugly} every generic point $\xi$ of $D'$ is over the generic
  point $\eta$ of $Y$. Hence it is enough to see that $D_\eta''$ is regular at
  $\xi$. Observe, that $D_\eta''$ is a product over $\Spec k(\eta)$, and not over a
  positive dimensional scheme as $D''$ is. Hence it is enough to see that all the
  components of $D_\eta''$ are regular at the appropriate projection of $\xi$.
  However, this follows immediately from our definition of stable families
  (\autoref{def:stable_lof_family}), that is, by the assumption that $D_i$ avoid the
  codimension one singular points of the fibers and hence in particular of
  $X_{\eta}$.
\end{notation}


\begin{proof}[Proof of \autoref{prop:big_upstairs}.\ref{ketto}]
  %
  We will use the setup established in \autoref{not:main-setup} and
  \ref{not_multiproduct}.  As before, $f_* \sL_d $ is a nef vector bundle by
  \autoref{lem:pushforward_nef}. Therefore, by the surjective natural map $f^* f_*
  \sL_d \to \sL_d$, $K_{X/Y} + D$ is nef as well. Clearly the same holds for
  $K_{X^{(j)}/Y} + D_{X^{(j)}}$ for any integer $j>0$.


  Now, let $r_0:= \rk f_* \sL_d$ and for $i>0$ let $r_i:= \rk \left( f|_{D_i}
  \right)_* \sL_d|_{D_i} $.  Furthermore, set $r:=\sum_{i=0}^n r_i$, $Z:=
  D^{(r_0,r_1,\dots,r_n)}_{\kdot}$ and $\eta :\wt{Z}\to Z$ the normalization of
  $Z$. Note that $Z$ can be reducible and a priori even non-reduced, but it is a
  closed subscheme of $X^{(r)}$, its irreducible components dominate $Y$ and
  non-reducedness on $Z$ may happen only in large codimension by
  \autoref{lem:not_too_ugly}.

  Consider the natural injection below, which can be defined first over the big open
  set $U \subseteq Y$ of \autoref{not:main-setup}, and then extended reflexively to $Y$,
  \begin{equation}
    \label{eq:det_to_tensor}
    \begin{aligned}
      \xymatrix@C=4em{%
        \displaystyle%
        \iota_d: \sA_d:=\det \left( f_* \sL_d \right) \otimes \left(
          \bigotimes_{i=1}^n \det \left( \left( f|_{D_i} \right)_* \sL_d|_{D_i}
          \right) \right)
        \ar@{^(->}[r] &  \hskip10em
      }
      \\
      \xymatrix@C=4em{%
        \hskip4em
        \ar@{^(->}[r] & \displaystyle%
        \bigotimes^{r_0}_{1} f_* \sL_d \otimes \left[ \bigotimes_{i=1}^n \right]
        \left(
          \bigotimes_{j=1}^{r_i} 
          \left( f|_{D_i} \right)_* \sL_d|_{D_i} \right)
        \simeq
        \underbrace{\left( \left( \left. f^{(r)} \right|_Z \right)_*
            \left. \sL_d^{(r)}\right|_Z \right)^{**}.}_{ \text{iterated use of
            \autoref{lem:pushforward_tensor_product_isomorphism}}}%
        }
   \end{aligned}
    \end{equation}

  By a slight abuse of notation we will denote the composition of restriction from
  $X^{(r)}$ to $Z$ and the pull-back via the normalization morphism $\eta: \wt Z\to
  Z$ by restriction to $\wt Z$. In other words we make the following definition:
  $$
  (\_ )|_{\wt Z} := \eta^* \circ (\_)|_Z
  $$
  So, for instance, $\left( \left. f^{(r)} \right|_{\wt{Z}} \right)^*$ denotes the
  pulling back by the composition $\xymatrix@!C=3ex{\wt{Z} \ar[r]^-\eta & Z\,
    \ar@^{(->}[r] & X^{(r)} \ar[r]^-f & Y}$.

  As in its definition above, if we restrict $\iota_d$ to $U$, then the reflexive
  hulls are unecessary on the right hand side of \autoref{eq:det_to_tensor}. Then by
  adjointness we obtain a non-zero homomorphism
  \begin{equation*}    
    \left.\left( \left. f^{(r)} \right|_{Z} \right)^* \sA_d\right|_{U} \to
    \left. \sL_d^{(r)}\right|_{\left(\left. f^{(r)}\right|_Z\right)^{-1}U }.
  \end{equation*}
Pulling this further back over $\widetilde{Z}$ yields a non-zero homomorphism
  \begin{equation}
    \label{eq:4}
    \left.\left( \left. f^{(r)} \right|_{\wt{Z}} \right)^* \sA_d\right|_{U} \to
    \left. \sL_d^{(r)}\right|_{\left( \left. f^{(r)} \right|_{\wt{Z}}\right)^{-1}U}.  
  \end{equation}
  Since $Z \to Y$ and hence also $\wt{Z} \to Y$ is an equidimensional morphism,
  $\left( \left. f^{(r)} \right|_{\wt{Z}} \right)^{-1}U$ is also a big open set in
  $\wt Z$ and hence \autoref{eq:4} induces a non-zero homomorphism
  \begin{equation}
    \label{eq:non_zero_map_on_X_r}
    \left( \left. f^{(r)} \right|_{\wt{Z}} \right)^* 
    \sA_d \to
    \left. \sL_d^{(r)} \right|_{\wt{Z}}. 
  \end{equation}


  The non-zero map \autoref{eq:non_zero_map_on_X_r} induces another non-zero map
  \begin{equation*}
    \left. \sL_d^{(r)} \right|_{\wt{Z}}     \otimes\left( \left. f^{(r)}
      \right|_{\wt{Z}} \right)^* \sA_d  \to \left. \left( \sL_d^{(r)}
      \right)^{\otimes 2} \right|_{\wt{Z}},  
  \end{equation*}
  where on the left hand side we have a relatively ample and nef line bundle tensored
  with the pullback of a big line bundle. Hence the line bundle on the left hand side
  is big on every component of $\wt{Z}$. Therefore the line bundle on the right hand
  side is big on at least one component.  Let $L^{(r)}$ denote a Cartier divisor
  corresponding to $\sL_d^{(r)}$. Then by the nefness of $L^{(r)}$ it follows that
  \begin{equation*}
    0< \left. L^{(r)} \right|_{\wt{Z}}^{\dim {\wt{Z}}},
  \end{equation*}
  and then also
  \begin{equation}
    \label{eq:positive}
    0< \left. L^{(r)} \right|_{Z}^{\dim {Z}}.
  \end{equation}

  Next we will define a filtration starting with $X^{(r)}$ and ending with $Z$ where
  each consecutive member is a reduced divisor in the previous member.  Recall that
  $r=\sum_{i=0}^nr_i$ and observe that
  for any integer $r_0\leq t< r$ there is a unique $0\leq j< n$ such that
  $$
  \sum_{i=0}^jr_i \leq t < \sum_{i=0}^{j+1}r_i.
  $$
  and hence 
  $$
  0\leq t_{j+1}:= t- \sum_{i=0}^jr_i < r_{j+1}.
  $$
  Now recall \autoref{not_multiproduct} and let us define $Z_r:=X^{(r)}$ and for any
  $t$, $r_0\leq t<r$,
  $$
  Z_t:= D_{\kdot}^{\text{\small $(\sum_{i=0}^jr_i+t_{j+1},
      \overbrace{0,\dots,0}^{\text{$j$ times}}, r_{j+1}-t_{j+1}, r_{j+2},
      \dots,r_n)$}}.
  $$
  Notice that $Z_{r_0}=Z$ and that for all $t$, $r_0\leq t< r$, $Z_t\subset Z_{t+1}$
  is a reduced effective divisor without components contained in the singular locus
  of $Z_{t+1}$ (see \autoref{not_multiproduct} for explanation). Note that set
  theoretically $Z_t$ is the intersection of $Z_{t+1}$ with $p_t^* D_{j+1}$. We claim
  that this is in fact true also divisorially. Indeed, $Z_t$ is reduced and by
  \autoref{lem:not_too_ugly} it is equidimensional. So, it is enough to check that
  $Z_t$ and the divisorial restriction $p_t^* D_{j+1}$ agrees at all codimension one
  points $\xi$ of $Z_{t+1}$.  If $p_t^* D_{j+1}$ contains $\xi$ in its support, then
  $D_{j+1}$ contains $p_t(\xi)$, hence $p_t(\xi)$ has to be a codimension $1$ regular
  point of $X$ lying over the generic point $\eta$ of $Y$.  Note $\mult_{\xi} p_t^*
  D_{j+1} = \mult_{p_t(\xi)} D_{j+1}=1$, and that $Z_{j+1}$ contains exactly the same
  codimension one points of $Z_{t+1}$, which concludes our claim that
  \begin{equation}
    \label{eq:restriction}
    Z_t = p_t^* D_{j+1}|_{Z_{t+1}}. 
  \end{equation}


  Our goal is to show that
  \begin{equation*}
    0< \left(L^{(r)} \right)^{\dim X^{(r)}} \left(= \left(L^{(r)}
      \right)^{\dim Z_r} \right). 
  \end{equation*}
  For any rational number $1 \gg \varepsilon>0$ we have
  \begin{multline*}
    \left(L^{(r)} \right)^{\dim Z_r}
    = \left(L^{(r)} \right)^{\dim Z_r} + \sum_{t=r_0}^{r-1} \varepsilon^{r-j} \left(
      \left. L^{(r)} \right|_{Z_t}^{\dim Z_t} - \left. L^{(r)} \right|_{Z_t}^{\dim
        Z_t} \right) =
    \\ = \left. L^{(r)} \right|_{Z}^{\dim Z} + \sum_{t=r_0}^{r-1} \varepsilon^{r-j-1}
    \left( \left. L^{(r)} \right|_{Z_{t+1}}^{\dim Z_{t+1}} - \varepsilon
      \left. L^{(r)} \right|_{Z_t}^{\dim Z_t} \right).
  \end{multline*}
  Thus, according to \autoref{eq:positive}, it is enough to prove that for each
  integer $r_0 \leq t < r$,
  \begin{equation}
    \label{eq:inequality_goal}
    0 \leq   \left. L^{(r)} \right|_{Z_{t+1}}^{\dim Z_{t+1}} -
    \varepsilon \left. L^{(r)} \right|_{Z_t}^{\dim Z_t}   .
  \end{equation}
  In the rest of the proof we fix an integer $r_0 \leq t < r$, and prove
  \autoref{eq:inequality_goal} for that value of $t$. Let ${\wt Z_{t+1}}$ be the
  normalization of $Z_{t+1}$, and let $S$ be the strict transform of $Z_t$ in ${\wt
    Z_{t+1}}$. Denote by $\rho$ the composition ${\wt Z_{t+1}} \to Z_{t+1} \to
  X^{(r)}$. According to the discussion in \autoref{not_multiproduct}, ${\wt Z_{t+1}}
  \to Z_{t+1}$ is an isomorphism at the generic point of $Z_t$. Hence it is enough to
  prove that
  \begin{equation*}
    0 \leq  \left( \rho^* L^{(r)} \right)^{\dim {\wt Z_{t+1}}} - \varepsilon
    \left( \left. \rho^* L^{(r)} \right|_S \right)^{\dim {\wt Z_{t+1}}-1}  =
    \left( \rho^* L^{(r)} \right)^{\dim {\wt Z_{t+1}}-1} \cdot \left( \rho^*
      L^{(r)} - \varepsilon S \right).
  \end{equation*}
  Note that the right most expression is the intersection of several Cartier divisors
  with a Weil $\bQ$-divisor, and hence it is well-defined.  Furthermore, since
  $\rho^* L^{(r)}$ is nef, to prove the above inequality it is enough to prove that
  the $\bQ$-divisor $\left( \rho^* L^{(r)} - \varepsilon S \right)$ is
  pseudo-effective on every component of $\wt{Z}_{t+1}$. This follows if we apply
  \autoref{lem:long} by setting $Z:=Z_{t+1}$, $\wt{Z}:=\wt{Z}_{t+1}$, $E:=p_t^*
  D_{j+1}$ and by using \autoref{eq:restriction} (and its implication that $S= p_t^*
  D_{j+1}|_{\wt{Z}_{t+1}}$).
\end{proof}

Recall that a $\bQ$-Weil divisor $D$ is called $\bQ$-effective if $mD$ is linearly
equivalent to an effective divisor for some integer $m>0$.

\begin{lemma}\label{lem:long}\ 
  \begin{enumerate}
  \item Let $f : (X,D) \to Y$ be an equidimensional, surjective, projective morphism
    from a semi-log canonical pair onto a smooth projective variety, such that
    $K_{X/Y} + D$ is $f$-ample and all irreducible components of $X$ dominate $Y$.
  \item Let $Z$ be a closed subscheme of $X$, which is equidimensional over $Y$,
    reduced, and all its irreducible components dominate $Y$.
  \item Let $E$ be a reduced effective divisor on $X$ with support in $\Supp D$, in
    particular, no component of $E$ is contained in the singular locus of $X$.
    Assume that $E$ does not contain any component of $Z$ and that both $Z$ and $X$
    are regular at the generic points of $Z$ and at the codimension one points of $Z$
    that are contained in $E$.
  \item Let $\wt{Z} \to Z$ be the normalization.
  \end{enumerate}
  Then $(K_{X/Y} + D - \varepsilon E)|_{\wt Z}$ is pseudo-effective for every
  $\varepsilon\in\bQ$, $0 < \varepsilon \ll 1$, meaning that for any fixed ample
  divisor $A$ on ${\wt Z}$, $(K_{X/Y} + D - \varepsilon E)|_{\wt Z} + \delta A$ is
  $\bQ$-effective on every component of ${\wt Z}$ for every $\delta\in\bQ, 0 < \delta
  \ll 1$.
\end{lemma}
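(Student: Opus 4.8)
The plan is to restrict $K_{X/Y}+D$ to $\wt Z$ by adjunction, bound the result from below by the relative log canonical divisor of a \emph{family of stable log-varieties} with a slightly smaller boundary, and then conclude by Fujino's semi-positivity. First I would reduce to the case that $Z$, hence $\wt Z$, is irreducible: pseudo-effectivity is tested on each component of $\wt Z$, every component dominates $Y$, and replacing $Z$ by one of its components and $\wt Z$ by the corresponding component of the normalization preserves hypotheses (2) and (3). Write $\nu\colon\wt Z\to X$ for the composite with the normalization map and $g=f\circ\nu\colon\wt Z\to Y$; then $g$ is projective, surjective and equidimensional, $\wt Z$ is normal, and $Y$ is smooth projective.

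The heart of the argument is adjunction along $\nu$. Using the regularity assumptions in (3) (that $X$ and $Z$ are regular at the generic points of $Z$ and at the codimension one points of $Z$ lying on $E$, that $E$ avoids $\Supp$ of the singular locus and contains no component of $Z$) --- and, in the intended application, peeling off the divisors $Z_t\subset Z_{t+1}$ of \autoref{not_multiproduct} one at a time --- one obtains an effective $\bQ$-divisor $\Gamma$ on $\wt Z$, the different of $D$, with $K_{\wt Z/Y}+\Gamma=\nu^{*}(K_{X/Y}+D)$ (so this class is $\bQ$-Cartier and, since $K_{X/Y}+D$ is $f$-ample, $g$-ample) and $(\wt Z,\Gamma)$ slc. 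Since $\Supp E\subseteq\Supp D$, the different dominates the pullback of $D$, which in turn dominates $c\,\nu^{*}E$, where $c>0$ is the least coefficient of $D$ along a component of $E$. Hence, setting $\varepsilon'=\varepsilon/c$,
\[
 (K_{X/Y}+D-\varepsilon E)\big|_{\wt Z}=\nu^{*}(K_{X/Y}+D)-\varepsilon\,\nu^{*}E
 =\bigl(K_{\wt Z/Y}+(1-\varepsilon')\Gamma\bigr)+\varepsilon\bigl(\tfrac1c\Gamma-\nu^{*}E\bigr),
\]
and the last summand is effective for $0<\varepsilon\le c$.

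It therefore suffices to show $K_{\wt Z/Y}+(1-\varepsilon')\Gamma$ is pseudo-effective for $0<\varepsilon'\ll1$. For such $\varepsilon'$ the boundary $(1-\varepsilon')\Gamma$ has coefficients in $[0,1)$, $(\wt Z,(1-\varepsilon')\Gamma)$ is slc, and $K_{\wt Z/Y}+(1-\varepsilon')\Gamma=\nu^{*}(K_{X/Y}+D)-\varepsilon'\Gamma$ is still $g$-ample; that is, $g\colon(\wt Z,(1-\varepsilon')\Gamma)\to Y$ is essentially a family of stable log-varieties over the projective base $Y$. To arrange $\bQ$-Cartierness of the individual divisors and $\bQ$-factoriality I would first replace $\wt Z$ by a $\bQ$-factorial dlt modification of $(\wt Z,\Gamma)$, which is crepant and so disturbs none of the above, and descend the final conclusion by birational pushforward of pseudo-effective Weil classes. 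Then Fujino's semi-positivity/weak positivity theorems (\cite{Fujino_Semi_positivity_theorems_for_moduli_problems,Fujino_Notes_on_the_weak_positivity_theorems}; cf.\ \autoref{lem:pushforward_nef}) apply: $g_{*}\sO_{\wt Z}\bigl(m(K_{\wt Z/Y}+(1-\varepsilon')\Gamma)\bigr)$ is weakly positive on $Y$ for $m$ divisible enough, and since $K_{\wt Z/Y}+(1-\varepsilon')\Gamma$ is $g$-semiample the evaluation morphism from the pullback of this sheaf is surjective, so $K_{\wt Z/Y}+(1-\varepsilon')\Gamma$ is weakly positive --- in particular pseudo-effective --- on $\wt Z$. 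Combined with the displayed decomposition this gives the lemma.

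The main obstacle I anticipate is exactly the link between the adjunction step and the semi-positivity step: verifying that the different $\Gamma$ is genuinely effective with $(\wt Z,\Gamma)$ slc (this is where hypotheses (2)--(3) and the concrete fibre-product structure of $Z$ from \autoref{not_multiproduct} must be used, since $Z$ is not a divisor in $X$), and that the slightly lowered pair $(\wt Z,(1-\varepsilon')\Gamma)$, after passage to the $\bQ$-factorial dlt model, falls precisely within the hypotheses of the available semi-positivity theorems (in particular that the relevant log canonical class is $\bQ$-Cartier and semiample over $Y$ and that all fibres are slc). Everything else is formal manipulation with pseudo-effective classes.
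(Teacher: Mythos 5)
There is a genuine gap at the very first substantive step: your proof hinges on a crepant adjunction
\[
\nu^{*}(K_{X/Y}+D)\;=\;K_{\wt Z/Y}+\Gamma,\qquad \Gamma\geq 0,\qquad (\wt Z,\Gamma)\ \text{slc},
\]
but nothing in the hypotheses puts $Z$ in the right position for this. An identity of this shape (with an \emph{effective} different and the resulting pair slc) is available when $Z$ is obtained by repeated divisorial adjunction along components of $\lfloor D\rfloor$, i.e.\ when $Z$ is an lc center of $(X,D)$. In the lemma as stated $Z$ is merely a reduced, equidimensional closed subscheme, and — decisively — in the intended application $Z_{t}$ is cut out of $Z_{t+1}$ by the reduced divisor $p_t^{*}D_{j+1}$, whose coefficient in the boundary $D_{X^{(r)}}$ is $c_{j+1}$, which is typically $<1$. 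So the ``peeling off'' you invoke is not crepant adjunction: there is no pair structure $(\wt Z,\Gamma)$ with $K_{\wt Z/Y}+\Gamma=\nu^{*}(K_{X/Y}+D)$, let alone one with $\Gamma\geq 0$ and slc fibres to which you could apply Fujino. The auxiliary claim $\Gamma\geq c\,\nu^{*}E$ rests on the same nonexistent different and is likewise unjustified.

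The paper's proof avoids exactly this pitfall by never trying to build a log-canonical structure on $\wt Z$. It keeps the positivity input on the ambient side: normalize $(X,D)$, pass to a $\bQ$-factorial dlt model $\tau\colon (T,\Theta)\to(X,D)$ of the \emph{ambient} pair, set $\Gamma=\tau^{-1}_{*}E$, and observe that $\bigl(T,\Theta-\tfrac1q\Gamma\bigr)$ is still dlt for $q\gg 0$; then Fujino's weak positivity applies to $g_{*}\sO_T(q\tau^{*}L-\Gamma)$ on $Y$. The sections so produced are transported to $\wt Z$ via the inclusion $\tau_{*}\sO_T(q\tau^{*}L-\Gamma)\subseteq \sO_X(qL-E)$ and the multiplication/restriction maps, and the regularity hypotheses on $Z$, together with the fact that the ideal $\tau_{*}\sO_T(-\Gamma)$ is supported on $\Supp E$ (which contains no component of $Z$), guarantee that the resulting composite is generically surjective on $\wt Z$. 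In short: positivity is established upstairs and then restricted, rather than established on $\wt Z$ after an adjunction that is not available. If you want to salvage your approach you would have to replace the adjunction step by this restriction-of-sections argument, at which point you are essentially reproducing the paper's proof.
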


\begin{remark}
  In the above statement $E|_{\wt Z}$ is defined by considering the (big) open locus
  in ${Z}$, where ${E}$ is Cartier, pulling back to $\wt Z$ and taking the closure
  there using that the complement has codimension at least $2$.
\end{remark}

\begin{proof}\addtocounter{theorem}{-1}
  \textbf{Reduction step:} Let $\pi: (\oX, \oD) \to (X,D)$ be the normalization and
  $\oZ$ and $\oE$ the strict transforms (by the regularity assumptions $\pi$ is an
  isomorphism at all generic points of $\oZ$ and $\oE$ so these strict transforms are
  meaningful). Since $\widetilde{Z} \to Z$ factors through $\oZ \to Z$, this setup shows
  that we may assume that $(X, D)$ is log canonical.

  \textbf{Summary of assumptions after the reduction step:}
  \begin{enumerate}
  \item $f : (X,D) \to Y$ is an equidimensional, surjective, projective morphism from
    a log canonical pair onto a smooth projective variety, such that $K_{X/Y} + D$ is
    $f$-ample,
  \item $Z$ is equidimensional over $Y$, reduced, and all its irreducible components
    dominate $Y$,
  \item $\Supp E \subseteq \Supp D$,
  \item no irreducible component of $Z$ is contained in the support of $E$, and 
  \item regularity assumptions: $X$ is regular at the generic points of $Z$ and both
    $E$ and $Z$ are regular at the codimension one points of ${Z}$ that are contained
    in $E$.
  \end{enumerate}

  \textbf{The argument.}  Set $L:=K_{X/Y} + D$, $\sL:= \sO_X(L)$ and $S:= E|_{\wt Z}$
  and let $\rho$ be the composition ${\wt Z} \to Z \to X$.
  Note that to establish that $\rho^* L - \varepsilon S$ is pseudo-effective one may
  use an arbitrary Cartier divisor $A$ on ${\wt Z}$, and show that $\rho^* L -
  \varepsilon S + \delta A$ is $\bQ$-effective on every component for every $ 0 <
  \delta \ll 1$.  Indeed, choosing an ample $A'$, it follows that $tA' -A$ is
  effective on every component for some $t \gg 0$, and hence then
  \begin{equation*}
    \rho^* L - \varepsilon S + \delta tA' =   \rho^* L - \varepsilon S +
    \delta A +  \delta (tA' -A) 
  \end{equation*}
  is also $\bQ$-effective on every component as well.  Here we will choose $A$ to be
  the pullback of an appropriate ample line bundle on $Y$.

  Let us take a $\bQ$-factorial dlt model $\tau: (T, \Theta) \to ( X, D )$ such that
  $K_T+\Theta=\tau^*(K_X+D)$ (cf.\
  \cite[3.1]{Kollar_Kovacs_Log_canonical_singularities_are_Du_Bois}) and define $g:=
  f \circ \tau$. Note that $\tau$ is an isomorphism both at the generic points of $Z$
  and at the codimension one points of $Z$ that are contained in $E$, since $X$ is
  regular at all these points. Set $\Gamma:= \tau^{-1}_* E$.  Consider
  \begin{equation*}
    q \tau^* L -  \Gamma  =  q \left( K_{T/Y} + \Theta -
      \frac{1}{q}\Gamma \right). 
  \end{equation*}
  for a divisible enough integer $q>0$.  There are two important facts about the
  above divisor. On one hand,
  \begin{equation}
    \label{eq:containment}
    \tau_* \sO_T ( q \tau^* L - \Gamma ) \subseteq \sO_X( q L -   E),
  \end{equation} 
  on the other hand, the above divisor is the $q^\text{th}$ multiple of the relative
  log-canonical divisor of a dlt pair. Hence according to \cite[Thm
  1.1]{Fujino_Notes_on_the_weak_positivity_theorems}, for every divisible enough $q$,
  \begin{equation*}
    g_* \sO_T ( q \tau^* L - \Gamma ) 
  \end{equation*}
  is weakly positive.  Therefore after fixing an ample line bundle $H$ on $Y$, for
  each $a>0$, there is a $b>0$, such that
  \begin{equation*}
    \sym^{[ab]}  ( g_* \sO_T ( q  \tau^* L  - \Gamma   ) ) \otimes H^b 
  \end{equation*}
  is generically globally generated.

  Let $U$ be the open set where both $g_* \sO_T ( q \tau^* L - \Gamma )$ and $f_*
  \sO_X( qL- E )$ are locally free. Over $U$ consider the composition of the
  following homomorphisms, where the left most one is the push-forward of the
  embedding in \autoref{eq:containment}:
  \begin{equation}
\label{eq:composition_wp}
    f^* \sym^{ab} ( g_* \sO_T  ( q  \tau^* L  -   \Gamma
    )) \to     f^* \sym^{ab} ( f_* \sO_X( qL-E ))
 \to  f^*  f_* \sO_X( ab (qL-E) )
    \to \sO_X( ab(qL-E)). 
  \end{equation}
  Let us pause for a moment and recall that $qL- E$ is not necessarily Cartier in
  general. However, it is Cartier over a big open set of $f^* U$, so the natural map
  $\sym^{ab} ( f_* \sO_X( qL-E )) \to f_* \sO_X( ab(qL-E) )$, which yields the middle
  arrow above, can still be constructed over that big open set and then extended
  uniquely, since $X$ is normal.

  Setting $h := f \circ \rho$, still over $U$, we obtain the following natural
  morphism by pulling back the composition of \autoref{eq:composition_wp} via $\rho$.
  \begin{equation*}
    h^* \sym^{ab} ( g_* \sO_T ( q  \tau^* L  -   \Gamma )  
    ) \to 
     \sO_{\wt Z} ( ab(q\rho^*L- S)). 
  \end{equation*}
  Again, note that $qL- E$ is not necessarily Cartier over $Z$. However, by our
  regularity assumption it is Cartier over a big open set $U_Z$ of $Z$. So the above
  map is constructed first over $\rho^{-1}(U_Z \cap f^{-1} U)$ and then extended
  uniquely using that $\widetilde{Z}$ is normal.

  So, since ${\wt Z} \to Y$ is equidimensional, $h^{-1} U$ is a big open set of $\wt
  Z$. In particular, we obtain a homomorphism
  \begin{equation}
    \label{eq:difficult_map}
    h^{[*]}  \sym^{[ab]} ( g_* \sO_T ( q  \tau^* L  - \Gamma   ) ) \otimes h^* H^b \to
    \sO_{\wt Z} ( ab (q  \rho^*  L -  S )) \otimes h^* H^b.  
  \end{equation}
  Now choose $q$ divisible enough so that $\tau_* \sO_T(q \tau^* L - \Gamma) \simeq
  \sO_X(qL) \otimes \tau_* \sO_T(- \Gamma)$ is $f$-globally generated (recall that
  $L$ is $f$-ample). Note that the ideal $\tau_* \sO_T(- \Gamma)$ is supported on
  $\Supp E$ and $\Supp E $ does not contain any component of $Z$ by
  assumption. Hence, it follows that the natural map
  \begin{equation*}
    h^* g_* \sO_T ( q  \tau^* L  - \Gamma   ) )  \to \sO_{\widetilde{Z}} (q  \rho^*
    L -  S ) 
  \end{equation*}
  is surjective at all generic points of $\widetilde{Z}$ and then the same holds for
  the map in \autoref{eq:difficult_map}.  Furthermore, the sheaf on the left hand
  side in \autoref{eq:difficult_map} is globally generated at every generic point of
  ${\wt Z}$. This gives us the desired sections of $\sO_{\wt Z} ( ab (q \rho^* L - S
  )) \otimes h^* H^b$ and concludes the proof.
\end{proof}\addtocounter{theorem}{1}

\noindent
We will need the following analog of \autoref{lem:equivalent-props-of-big} for
reducible schemes.

\begin{lemma}
  \label{lem:big_on_one_component}
  If $X$ is a projective scheme of pure dimension $n$ over $k$ and $L$ a nef Cartier
  divisor which is big on at least one component (that is, $L^n>0$), then for every
  Cartier divisor $D$ that does not contain any component of $X$, $L - \varepsilon D$
  is $\bQ$-effective for every rational number $0 < \varepsilon \ll 1$ (however the
  corresponding effective divisor may be zero on every irreducible component but
  one).
\end{lemma}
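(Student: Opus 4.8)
The plan is to locate a component of $X$ on which $L$ is big, construct on it an effective divisor in the class of a multiple of $L-\varepsilon D$ that in addition vanishes along the intersection with the other components, and then extend this section by zero to all of $X$. First I would reduce to the case where $X$ is reduced, which is harmless since $L^n$, the nefness of $L$, and $\bQ$-linear equivalence are unchanged upon replacing $X$ by $X_\red$. Writing $X_1,\dots,X_s$ for the irreducible components (each of dimension $n$), nefness gives $(L|_{X_j})^n\ge 0$ and $L^n=\sum_j (L|_{X_j})^n$, so some component, say $X_1$, has $(L|_{X_1})^n>0$; a nef divisor with positive top self-intersection on a projective variety is big, so $L|_{X_1}$ is big, and by \autoref{lem:equivalent-props-of-big} there are $m_0>0$, an ample divisor $A$ and an effective divisor $E$ on $X_1$ with $m_0 L|_{X_1}\sim A+E$. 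Note that $D|_{X_1}$ is a genuine Cartier divisor, since $D$ does not contain $X_1$.

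Next I would set up the extension mechanism. Put $X':=\bigcup_{j\ge 2}X_j$ (with reduced structure), so $X=X_1\cup X'$ and $Z:=X_1\cap X'$ has codimension $\ge 1$ in $X_1$ (distinct components). Since $X$ is reduced, $\sI_{X_1}\cdot\sI_{X'}=0$ (a product vanishing on $X_1\cup X'=X$ is zero), so $\sI_{X'}$ is supported on $X_1$ and is, as a sheaf on $X$, a nonzero ideal sheaf $\sJ\subseteq\sO_{X_1}$ cutting out $Z$. Fixing a very ample $H$ on $X_1$ and using that $\sJ\otimes\sO_{X_1}(kH)$ is globally generated for $k\gg 0$, I get a nonzero—hence, $X_1$ being integral, injective—map $\sO_{X_1}(-W)\into\sJ$ with $W:=kH$ an effective divisor, and so $\sO_{X_1}(-aW)\into\sJ$ for every $a>0$. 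The key observation is that a nonzero section of a line bundle on $X_1$ which factors through $\sJ$—equivalently, vanishes along $Z$—comes from $\sI_{X'}\otimes\sO_X(\,\cdot\,)$ and hence extends by zero to a global section over $X$ that is identically zero on every other component.

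Then I would run the bigness computation on $X_1$. For rational $0<\varepsilon\ll 1$, choosing $c>0$ with $cA-D|_{X_1}$ globally generated, the divisor $A-\tfrac1{c+1}D|_{X_1}$ is ample, so $A-m_0\varepsilon D|_{X_1}$—a positive combination of $A$ and $A-\tfrac1{c+1}D|_{X_1}$ once $m_0\varepsilon(c+1)<1$—is ample; hence for $m_1\gg 0$ the line bundle $\sO_{X_1}\bigl(m_1(A-m_0\varepsilon D|_{X_1})-W\bigr)$ is globally generated and thus effective. Combining with $m_0L|_{X_1}\sim A+E$ shows that $m(L-\varepsilon D)|_{X_1}-W$ is effective, where $m:=m_0m_1$ is taken large and divisible enough that also $m\varepsilon\in\bZ$ and $m(L-\varepsilon D)$ is Cartier on $X$. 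Pushing a nonzero section through $\sO_{X_1}(-W)\into\sJ=\sI_{X'}$ and then $\sI_{X'}\into\sO_X$ produces a nonzero global section $s$ of $\sO_X\bigl(m(L-\varepsilon D)\bigr)$ vanishing on $X'$, whose restriction to $X_1$ has an honest effective zero divisor $\Delta\sim m(L-\varepsilon D)|_{X_1}$. Viewed as an effective Weil divisor on $X$ supported on $X_1$, this $\Delta$ witnesses that $L-\varepsilon D$ is $\bQ$-effective—zero on every other component—exactly as the parenthetical in the statement anticipates.

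The main obstacle, and the reason for the whole $W$-device, is precisely this final extension: on a reducible scheme there is no reason for a section of a line bundle over the single good component $X_1$ to lift to $X$, and in fact the lift we ultimately obtain is necessarily a zero-divisor section (it vanishes on $X'$); forcing vanishing along $Z=X_1\cap X'$ is what makes the lift exist. Everything else—the bigness of $L|_{X_1}$, absorbing $-\varepsilon D|_{X_1}$ by openness of ampleness, and the numerical bookkeeping—is routine, except that one should check the one general fact actually invoked, namely that a nef divisor with positive top self-intersection on a possibly non-normal projective variety is big; this follows by pulling back to the normalization, where it is standard, and comparing $h^0$'s via the conductor sequence.
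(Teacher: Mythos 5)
Your proof is correct for reduced $X$, but it takes a genuinely different and much longer route than the paper. The paper's argument is a three-line dimension count: it applies $H^0$ to the exact sequence
\[
0 \to \sL^a(-D) \to \sL^a \to \sL^a|_D \to 0
\]
directly on $X$, using asymptotic Riemann--Roch for nef divisors (valid on \emph{any} complete scheme of dimension $n$) to get $h^0(X,\sL^a)=\frac{a^n}{n!}L^n+O(a^{n-1})$, while $h^0(D,\sL^a|_D)=O(a^{n-1})$ because $\dim D=n-1$; hence $H^0(X,\sL^a(-D))\neq 0$ for $a\gg 0$. No component is singled out, no reduction to $X_{\red}$, no normalization, no ideal-sheaf gymnastics. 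What your approach buys is that it makes the parenthetical of the statement \emph{visible}: you actually construct the section on a good component $X_1$ with prescribed vanishing along $X_1\cap X'$ and extend it by zero, so one sees concretely that the ``effective divisor'' is supported on a single component. The verification that $\sI_{X'}$ injects into $\sO_{X_1}$ as the ideal of $X_1\cap X'$, that sections of $\sJ\otimes\sN|_{X_1}=\sI_{X'}\otimes\sN$ extend by zero, and the numerical bookkeeping with $m_0L|_{X_1}\sim A+E$, $W=kH$, and the ampleness of $A-m_0\varepsilon D|_{X_1}$ are all fine.

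The one genuine gap is the opening reduction to $X_{\red}$. You assert this is ``harmless since $L^n$, the nefness of $L$, and $\bQ$-linear equivalence are unchanged,'' but neither claim quite holds, and more importantly the implication you actually need does not follow from them: a nonzero section of $\sO_{X_{\red}}\bigl(m(L-\varepsilon D)\bigr)$ need not lift to a nonzero section of $\sO_X\bigl(m(L-\varepsilon D)\bigr)$, because restriction along $X_{\red}\into X$ is generally not surjective on global sections. (Also, $L^n$ is not literally unchanged --- $[X]=\sum m_i[X_i]$ with multiplicities $m_i\geq 1$, and passing to $X_{\red}$ drops them to $1$ --- although the needed one-way implication $L^n_X>0\Rightarrow L^n_{X_{\red}}>0$ does hold for nef $L$.) Your argument after this point uses reducedness in an essential way, in the identity $\sI_{X_1}\cap\sI_{X'}=0$. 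The paper's cohomological proof sidesteps all of this. In the paper's actual application the scheme $X^{(r)}$ is reduced (flat over a normal base with reduced fibers), so this gap does not affect \autoref{prop:big_higher_dim_base}, but the lemma as stated allows non-reduced $X$ and the reduction step would need a real justification or should be dropped in favor of an argument on $X$ itself.
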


\begin{proof}
  Let $\sL:= \sO_X(L)$. Consider the exact sequence,
  \begin{equation*}
    \xymatrix{
      0 \ar[r] & \sL^a (-D) \ar[r] & \sL^a \ar[r] & \sL^a|_D \ar[r] & 0
    } 
  \end{equation*}
  Since $L$ is nef, by the asymptotic Riemann-Roch Theorem \cite[Corollary
  1.4.41]{Lazarsfeld_Positivity_in_algebraic_geometry_I}, $h^0(L^a) = \frac{a^n}{n!}
  L^n + O(a^{n-1})$.  Furthermore, $h^0(\sL^a|_D) = O(a^{n-1})$. Hence, for every
  $a\gg 0$ $H^0(\sL^a(-D)) \neq 0$.
\end{proof}

\noindent
\autoref{thm:big_higher_dim_base}.\ref{egy} is an immediate consequence of the
following statement.

\begin{proposition}
  \label{prop:big_higher_dim_base}
  If $f : (X, D) \to Y$ is a family of stable log-varieties of maximal variation over
  a normal proper variety, then there exists an integer $q>0$ and a proper closed
  subvariety $S \subseteq Y$, such that for every integer $a>0$, and closed
  irreducible subvariety $T \subseteq Y$ not contained in $S$, $\det f_* \sO_X(aq
  (K_{X/Y} + \Delta))|_{\wt{T}}$ is big, where $\wt{T}$ is the normalization of $T$.
\end{proposition}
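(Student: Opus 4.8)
The plan is to deduce the statement from the fibre‑power bigness of \autoref{prop:big_upstairs}.\ref{ketto} by a base change followed by Viehweg's fibre‑product trick, taking care that the resulting $q$ does not depend on $T$. Let $m$ be the Cartier index of $K_{X/Y}+D$. I would first choose $q>0$ divisible by $m$ and so large that for every $b>0$ divisible by $q$ the sheaf $\sO_X(b(K_{X/Y}+D))$ is $f$‑very ample with $R^if_*\sO_X(b(K_{X/Y}+D))=0$ for $i>0$, the sheaf $f_*\sO_X(b(K_{X/Y}+D))$ is locally free (hence compatible with arbitrary base change), the relative log‑canonical algebra is generated in degrees $\le q$, and \autoref{lem:pushforward_nef} applies; by boundedness of the fibres (\cite[Thm 1.1]{Hacon_McKernan_Xu_Boundedness_of_moduli_of_varieties_of_general_type}, cf.\ \autoref{rem:a_functor}) these thresholds — together with the one entering the application of \autoref{lem:big_on_one_component} below — may be chosen uniformly. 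For $S$ I would take the locus of $Y$ over which the classifying map of $f$ fails to be quasi‑finite, equivalently over which $\Isom_Y((X,\phi),(X,\phi))\to Y$ has positive‑dimensional fibres, where $(X,\phi)$ is the object of \autoref{sec:particular_functor} corresponding to $(X,D)$; since the variation of $f$ is maximal this is a proper closed subvariety, and for every irreducible $T\not\subseteq S$ the restriction of $f$ over $T$ again has maximal variation (the Isom scheme base‑changes, and a general point of $T$ lies over $Y\setminus S$).

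Fix such a $T$. First I would pass to a smooth projective model: replacing $T$ by its normalization $\widetilde T$, then by a projective birational modification, then by a resolution, I get a birational morphism $\mu':T'\to\widetilde T$ with $T'$ smooth projective, and the base‑changed family $f':(X',D')\to T'$ is again a family of stable log‑varieties of maximal variation. Applying \autoref{prop:big_upstairs}.\ref{ketto} to $f'$ yields $r>0$ with $\bigl(K_{X'^{(r)}/T'}+D'_{X'^{(r)}}\bigr)^{\dim X'^{(r)}}>0$, and this $\bQ$‑divisor is moreover nef, since by \autoref{lem:pushforward_nef} and \autoref{lem:wp_big_properties} the sheaf $f'^{(r)}_*\sO_{X'^{(r)}}\bigl(b(K_{X'^{(r)}/T'}+D'_{X'^{(r)}})\bigr)$ is weakly positive, hence nef, and surjects after pull‑back onto the relative log‑canonical line bundle. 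Hence the Cartier divisor $L:=q\bigl(K_{X'^{(r)}/T'}+D'_{X'^{(r)}}\bigr)$, with $\sO_{X'^{(r)}}(L)=\bigl(\sO_{X'}(q(K_{X'/T'}+D'))\bigr)^{(r)}$, is nef and big on at least one component of $X'^{(r)}$.

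Now I would run Viehweg's trick downstairs. Write $g:=f'^{(r)}$ and apply \autoref{lem:big_on_one_component} to $L$, taking the divisor $D$ there to be a general member of a very ample system pulled back from $T'$ — no component of $X'^{(r)}$ lies in it by \autoref{lem:not_too_ugly}. Because the threshold in (the proof of) that lemma is governed by the leading coefficient of $\chi(\sO_{X'^{(r)}}(aL))$, which is a positive rational bounded below in terms of the volume, against the $O(a^{\dim X'^{(r)}-1})$ growth of $h^0$ along $D$, which is controlled by the bounded family of fibres and by $\reldim g=r\cdot\dim X_y$ with $r$ itself uniformly bounded (it is assembled from the rank quantities of \autoref{prop:big_downstairs}), one may — having chosen $q$ large enough at the outset — ensure that for every $a>0$ the divisor $aL-g^*\sA$ is effective for a suitable ample $\sA$ on $T'$. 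A corresponding section gives an inclusion $\sA\hookrightarrow g_*\sO_{X'^{(r)}}(aL)$, and iterating \autoref{lem:pushforward_tensor_product_isomorphism} identifies $g_*\sO_{X'^{(r)}}(aL)$ with $\bigl(f'_*\sO_{X'}(aq(K_{X'/T'}+D'))\bigr)^{\otimes r}$, a weakly positive sheaf whose quotient by the ample subsheaf $\sA$ is again weakly positive (\autoref{lem:wp_big_properties}). Taking determinants, a positive tensor power of $\det f'_*\sO_{X'}(aq(K_{X'/T'}+D'))$ equals an ample line bundle tensored with a weakly positive sheaf, hence is big (\autoref{lem:wp_big_properties}), so $\det f'_*\sO_{X'}(aq(K_{X'/T'}+D'))$ is big for every $a>0$. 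Finally, since the higher direct images vanish and base change holds, this sheaf is $\mu'^*\bigl(\det f_*\sO_X(aq(K_{X/Y}+D))|_{\widetilde T}\bigr)$, and as $\mu'$ is birational with $\widetilde T$ normal (so $\mu'_*\sO_{T'}=\sO_{\widetilde T}$) and proper, bigness descends from $T'$ to $\widetilde T$. This gives the statement.

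The step I expect to be the main obstacle is precisely this conversion of fibre‑power bigness into bigness of the determinant, carried out uniformly in $T$: \autoref{prop:big_upstairs}.\ref{ketto} and \autoref{lem:big_on_one_component} a priori supply exponents (and a fibre‑power index $r$) depending on the individual family $f'$, and it is the boundedness of the moduli functor — uniformity of the Hilbert polynomials of the fibres and of their fibre powers, and of the ranks controlling $r$ — that must be invoked to absorb all of this into a single $q$. By contrast the reduction to a smooth projective base and the descent of bigness along $\mu'$ are routine once that uniformity is in hand.
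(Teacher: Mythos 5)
Your proposal is genuinely different from the paper's proof, and the difference is exactly at the place you flag as ``the main obstacle'': uniformity in $T$. Unfortunately the sketch you give for resolving that obstacle does not hold together, and the paper avoids the obstacle entirely by a different choice of $S$.

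In your argument you take $S$ to be the non-quasi-finiteness locus of the classifying map and then, for each $T\not\subseteq S$, re-run the entire machinery on the restricted family $f':(X',D')\to T'$: a fresh application of \autoref{prop:big_upstairs}.\ref{ketto} to produce an $r$, a fresh application of \autoref{lem:big_on_one_component} to produce an effective $aL-g^*\sA$, and then Viehweg's trick. Both $r$ and the threshold in \autoref{lem:big_on_one_component} then a priori depend on $T$. You propose to remove this dependence by boundedness, claiming the leading coefficient of $\chi(\sO_{X'^{(r)}}(aL))$ is ``bounded below in terms of the volume'' and the error term is ``controlled by the bounded family of fibres.'' Neither assertion is justified: the leading coefficient is $L^{\dim X'^{(r)}}/(\dim X'^{(r)})!$ and the positive quantity $L^{\dim X'^{(r)}}$ mixes fibre and base directions (it is \emph{not} the volume of the fibres), while the $O(a^{\dim X'^{(r)}-1})$ term is governed by the degree of the pulled-back ample $\sA$ against $L$, which has no bound uniform in $T$ and depends on a choice of $\sA$ you make afresh for each $T$. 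Moreover, the index $r$ produced by \autoref{not:main-setup} depends on an $m$ and a $d$ chosen to satisfy a long list of conditions over a big open set of the base; redoing that over each $T'$ and arguing that everything can be taken uniformly requires a substantial argument you have not supplied. So the proposal, as written, has a genuine gap at its central step.

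The paper sidesteps all of this by running the argument only \emph{once}, on (a resolution of) the full base $Y$, and letting $S$ be defined \emph{a posteriori}. Concretely: after resolving $Y$, it applies \autoref{prop:big_upstairs}.\ref{ketto} once to obtain a fixed $r$ and a fixed $q$ for which $q(K_{X^{(r)}/Y}+D_{X^{(r)}})-(f^{(r)})^*H$ is effective, i.e.\ a single non-zero section
\[
  \big(f^{(r)}\big)^{*}\sH \longrightarrow \sO_{X^{(r)}}\bigl(q(K_{X^{(r)}/Y}+D_{X^{(r)}})\bigr),
\]
and then defines $S$ to be the locus over which \emph{this particular section} vanishes. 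For any $T\not\subseteq S$ one does not reapply the bigness machinery at all; one simply restricts the fixed section to $\wt T$, which is automatically non-zero, and this already gives the injection $\sigma^*\sH^a\hookrightarrow\bigotimes^r (f_{\wt T})_*\sO_{X_{\wt T}}(aq(K_{X_{\wt T}/\wt T}+D_{\wt T}))$. The uniformity in $T$ and in $a$ is therefore built in from the start, and the remaining step (saturate, observe the quotient is weakly positive since the ambient sheaf is nef by \autoref{lem:pushforward_nef}, take determinants) is essentially what you wrote. I would encourage you to rework your argument with the paper's choice of $S$: the payoff is that you never have to bound anything as $T$ varies, because you never produce anything new over $T$ --- you only restrict what you already built over $Y$.
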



\begin{proof}
  First, note that since $q$ can be chosen to be divisible enough, $f_* \sO_X(aq
  (K_{X/Y} + \Delta))$ commutes with base-change, and hence we may replace $Y$ by any
  of its resolution. That is, we may assume that $Y$ is smooth and projective.  We
  may also replace $\wt{T}$ by a resolution of $T$ in the statement.

  Let $H$ be any ample effective Cartier divisor on $Y$, and let $\sH:=\sO_Y(H)$ be
  the associated line bundle. Let $r>0$ be the integer given by
  \autoref{thm:big_higher_dim_base}.\ref{ketto}.  Since every component of $X^{(r)}$
  dominates $Y$, according to \autoref{lem:big_on_one_component}, $q(K_{X^{(r)}/Y} +
  D_{X^{(r)}}) - \left(f^{(r)} \right)^*H $ is linearly equivalent to an effective
  divisor for some multiple $q$ of $dm$. Equivalently, there is a non-zero map
  \begin{equation}
    \label{eq:embedding_proving_det}
    \big(f^{(r)}\big)^* \sH \to \sO_{X^{(r)}} \left(q \left(K_{X^{(r)}/Y} + D_{X^{(r)}}
      \right) \right). 
  \end{equation}
  Let $S \subseteq Y$ be the (proper) closed set over which
  \autoref{eq:embedding_proving_det} is zero. For any integer $a>0$ consider the
  following non-zero map induced by the $a^\text{th}$ tensor power of
  \autoref{eq:embedding_proving_det}.
  \begin{equation}
    \sH^a \simeq f^{(r)}_* \big(f^{(r)}\big)^* \sH^a \to f^{(r)}_*
    \sO_{X^{(r)}} \left(aq \left(K_{X^{(r)}/Y} + D_{X^{(r)}} \right)
    \right) \simeq \underbrace{\bigotimes^r f_* \sO_X (aq (K_{X/Y} + D ) )
    }_{\textrm{\autoref{lem:pushforward_tensor_product_isomorphism}}}\hskip-.5em
  \end{equation}
  This is necessarily an embedding, because $Y$ is integral. Let $\sigma: \wt{T} \to
  Y$ be the resolution of an irreducible closed subset $T$ of $Y$ that is not
  contained in $S$. Then, the induced map
  \begin{equation*}
    \sigma^*\sH^a 
    \to \bigotimes^r \sigma^* f_* \sO_{X} \left(aq \left(K_{X/Y} +
        D \right) \right)  \simeq \bigotimes^r \left( f_{\wt{T}} \right)_*
    \sO_{X_{\wt{T}}} \left(aq 
      \left(K_{X_{\wt{T}}/{\wt{T}}} +         D_{\wt{T}} \right) \right)   
  \end{equation*}
  is not zero and therefore it is actually an embedding. Let $\sB$ denote the
  saturation of $\sigma^* \sH^a$ in $\bigotimes^r \left( f_{\wt{T}}
  \right)_*\sO_{X_{\wt{T}}/{\wt{T}}} (aq (K_{X_{\wt{T}}}/{\wt{T}} + D_{\wt{T}} )
  )$. Then $\sB$ is big since $\sH$ is ample and it induces another exact sequence
  \begin{equation*}
    \xymatrix{
      0 \ar[r] &  \sB \ar[r] & \bigotimes^r \left( f_{\wt{T}} \right)_*
      \sO_{X_{\wt{T}}} \left(aq         \left(K_{X_{\wt{T}}/{\wt{T}}} + D_{\wt{T}}
        \right) \right) \ar[r] & \sG \ar[r] & 0  , 
    }
  \end{equation*}
  where $\sG$ is locally free in codimension one. Since according to
  \autoref{lem:pushforward_nef}, $\left( f_{\wt{T}} \right)_* \sO_{X_{\wt{T}}} (aq
  (K_{X_{\wt{T}}/{\wt{T}}} + D_{\wt{T}} ) )$ is nef, $\sG$ is weakly-positive
  according to \cite[prop 2.9.e]{Viehweg_Quasi_projective_moduli} and point
  \autoref{itm:gen_surj} of \autoref{lem:wp_big_properties}. Note that we cannot
  infer that $\sG$ is nef, since $\sG$ does not have to be locally free. However, we
  can infer that $\det \sG$ is weakly-positive as well by \autoref{itm:tensors} of
  \autoref{lem:wp_big_properties} and then for some $N>0$,
  \begin{equation*}
    \det \left( \bigotimes^r \left( f_{\wt{T}} \right)_* \sO_{X_{\wt{T}}} \left(aq
        \left(K_{X_{\wt{T}}/{\wt{T}}} +  
          D_{\wt{T}} \right) \right)  \right) \simeq  \left(\det \left( f_{\wt{T}}
      \right)_* \sO_X       \left(aq \left(K_{X_{\wt{T}}/{\wt{T}}} + D_{\wt{T}}
        \right) \right) \right)^N \simeq \sB \otimes \det  \sG  
  \end{equation*}
  is big by \autoref{itm:wp_big} of \autoref{lem:wp_big_properties}. This concludes
  the proof.
\end{proof}

\begin{proof}[Proof of \autoref{cor:projective}]
  Let $M$ be the algebraic space in the statement, and $\sM$ the (pseudo-)functor
  that it coarsely represents. First note that by finiteness of the automorphism
  groups (\autoref{prop:finite_automorphism}), an appropriate power of the functorial
  polarization required in \autoref{def:a_functor} descends to $M$. Since $M$ is
  proper by \autoref{prop:proper}, according to the Nakai-Moishezon criterion we only
  need to show that the highest self-intersection of this polarization on every
  proper irreducible subspace of $M$ is positive. However, by
  \autoref{cor:finite_cover} it is enough to show this, instead of $M$, for a proper,
  normal scheme $Z$, that supports a family $f : (X_Z,D_Z) \to Z$ with the property
  that each fiber of $f$ is isomorphic to only finitely many others.

  Let us state our goal precisely at this point: we are supposed to exhibit an $r>0$
  such that for any closed irreducible subvariety $V \subseteq Z$,
  \begin{equation*}
    c_1 \left( \det \left(f_V \right)_* \sO_{X_V} \left( r\left( K_{X_V/V} + D_V \right)
      \right)  \right)^{\dim V} >0.  
  \end{equation*}
  In fact we, are proving something slightly stronger. We claim that \emph{there
    exist an integer $q>0$, such that for every integer $a>0$ and closed irreducible
    subvariety $V \subseteq Z$},
  \begin{equation*}
    c_1\left( \det \left(f_V \right)_* \sO_{X_V} \left( aq\left( K_{X_V/V} +
            D_V \right) 
      \right) \right)^{\dim V} >0.  
  \end{equation*}
  We prove this statement by induction. For $\dim Z=0$ it is vacuous, so 
  we may assume that $\dim Z>0$.
  By \autoref{prop:big_higher_dim_base} there exist a $q_Z>0$ and a closed subset $S
  \subseteq Z$ that does not contain any component of $Z$, such that for every $a>0$
  and every irreducible closed subset $T \subseteq Z$ not contained in $S$, if we set
  $\sN_{aq}:=\det f_* \sO_{X_Z} \left( aq\left( K_{X_Z/Z} + D_Z \right) \right) $,
  then $c_1\left(\sN_{aq_Z}|_T \right)^{\dim T}>0$.  Let $\wt{S}$ denote the
  normalization of $S$. Then by induction, since $\dim S< \dim Z$, there exists a
  $q_{\wt{S}}>0$, such that for every $a>0$ and every irreducible closed subset $V
  \subseteq \wt{S}$, $c_1\big( \sN_{aq_{\wt{S}}}|_V \big)^{\dim V}>0$. Taking $q :=
  \max\{q_Z, q_{\wt{S}} \}$ concludes the proofs of the claim and of
  \autoref{cor:projective}. 
\end{proof}

\begin{remark}
\label{rem:labeling}
If one allows labeling of the components as well, which was excluded up to this point
from \autoref{def:a_functor} for simplicity, then \autoref{thm:big_higher_dim_base}
still yields projectivity as in \autoref{cor:projective} for the unlabeled case. This
follows from the fact that each stable log-variety admits at most finitely many
labelings. Hence, forgetting the labeling of a labeled family with finite isomorphism
equivalence classes yields a non-labeled family with finite isomorphism equivalence
classes. In particular, the proof of \autoref{cor:projective} implies that the
polarization by $\det f_* \sO_X(dm(K_{X/Y} + D))$ yields an ample line bundle on the
base of the labeled family as well.
\end{remark}


\section{Pushforwards without determinants}
\label{sec:pushforward}

The main goal of this section is to prove the following result.

\begin{theorem}
  \label{thm:pushforward}
  If $f : (X,D) \to Y$ is a family of stable log-varieties of maximal variation over
  a normal, projective variety $Y$ with klt general fiber, then $f_*
  \sO_X(q(K_{X/Y}+D))$ is big for every divisible enough integer $q>0$.
\end{theorem}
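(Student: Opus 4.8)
The plan is to deduce \autoref{thm:pushforward} from the bigness of $\det f_*\sO_X(q(K_{X/Y}+D))$ already established in \autoref{thm:big_higher_dim_base} by running Viehweg's fiber--product and covering machinery, with the klt hypothesis entering precisely to keep the auxiliary covers mild. First I would reduce to the case $Y$ smooth and projective: since $q$ can be taken divisible enough, $qL:=q(K_{X/Y}+D)$ is Cartier and $f$-very ample and $f_*\sO_X(qL)$ commutes with arbitrary base change (cf.\ \autoref{lem:pushforward_nef} and the remark after \autoref{notation:restriction_divisor}), so bigness of this sheaf may be checked after pulling back along a resolution $Y'\to Y$. Writing $\sE:=f_*\sO_X(qL)$ and $e:=\rk\sE$, \autoref{lem:pushforward_nef} gives that $\sE$ is nef and locally free, hence weakly positive; \autoref{thm:big_higher_dim_base}.\ref{egy} gives that $\det\sE$ is big; and \autoref{thm:big_higher_dim_base}.\ref{ketto} provides an $r>0$ for which $K_{X^{(r)}/Y}+D_{X^{(r)}}$ is nef and big on a component of $X^{(r)}$. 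Finally, since the general fiber is klt and the fibers are connected, inversion of adjunction (e.g.\ \cite[Cor 2.11]{Patakfalvi_Fibered_stable_varieties}) shows $(X,D)$ is klt over a dense open set $Y^\circ\subseteq Y$.

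The Viehweg step is the heart. Iterating \autoref{lem:pushforward_tensor_product_isomorphism} identifies $\bigotimes^s\sE$, for any $s>0$, with the reflexive pushforward $\bigl((f^{(s)})_*\sO_{X^{(s)}}(q(K_{X^{(s)}/Y}+D_{X^{(s)}}))\bigr)^{**}$ from the $s$-fold fiber power, and in characteristic zero $\det\sE$ sits in $\bigotimes^e\sE$ as a direct summand. One then feeds the big line bundle $\det\sE$ into these pushforwards: after passing to the normalized degree-$N$ cyclic cover $\rho:Z\to X$ branched along a general member of $|NL+f^*H_0|$ (with $N$ divisible and $H_0$ very ample on $Y$), the bigness of $\det\sE$ is converted, via adjunction on the fiber powers, into a generically surjective map from a big twist of a fiber power of $\sE$ onto enough of the relevant symmetric-power sheaves. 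Here the klt hypothesis is essential: because $(X,D)$ is klt over $Y^\circ$ and the branch divisor is general, $(Z,\rho^{-1}_*D+\tfrac1N\rho^*B)$ is klt over $Y^\circ$, so $Z$ has rational singularities there and the pushforward pluricanonical sheaves of $Z$ are again reflexive, base-change compatible on a big open set, and weakly positive by Fujino's semipositivity (as in \autoref{lem:pushforward_nef}). Without klt the cover can acquire non-rational cone singularities — exactly the pathology behind \autoref{ex:Hassett} — and the statement genuinely fails.

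Putting this together with the formal properties of weak positivity and bigness in \autoref{lem:wp_big_properties} (generically surjective images of big sheaves are big; $\sF[\otimes]\sG$ is big when $\sF$ is weakly positive and $\sG$ is big of rank one), and the standard passage from a big fiber-power pushforward back to the original sheaf using weak positivity of $\sE$, yields that $\sE=f_*\sO_X(qL)$ is big; the surjection $f^*\sE\to\sO_X(qL)$ over a big open set then upgrades this to bigness of $K_{X/Y}+D$ on $X$ itself (\autoref{cor:big_total_space}). The main obstacle is the middle step: making Viehweg's covering and fiber-power construction work in the logarithmic, relatively-slc setting — controlling the singularities of the fiber powers $X^{(s)}$ (whose fibers are fiber products of slc pairs, hence possibly badly non-normal) and of the cyclic cover well enough that every pushforward in sight is reflexive and compatible with base change on a big open set, and that the weak positivity of $\sE$ transports through the cover. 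The klt assumption on the general fiber is what tames the cover; the remaining manipulations are reflexive-sheaf bookkeeping of the same flavour as in \autoref{sec:det}.
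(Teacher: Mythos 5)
Your proposal takes a genuinely different route from the paper, and the place you yourself flag as the obstacle is exactly where the key technical content is missing, so let me be precise about what is absent.

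The paper does not pass to cyclic covers at all. Instead, after reducing to $Y$ smooth, it proceeds as follows. By \autoref{thm:big_higher_dim_base} the determinant $\det f_*\sO_X(qL)$ is big. The embedding $\det\hookrightarrow\bigotimes^m f_*\sO_X(qL)\simeq f^{(m)}_*\sO_{X^{(m)}}(q(K_{X^{(m)}/Y}+D_{X^{(m)}}))$ (Lemma \autoref{lem:pushforward_tensor_product_isomorphism}) produces, on the $m$-fold fiber power, an effective divisor $\Gamma$ with $(f^{(m)})^*lA+\Gamma\sim q(K_{X^{(m)}/Y}+D_{X^{(m)}})$, where $\sO_Y(lA)\simeq\det f_*\sO_X(qL)$. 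The crucial new device is a \emph{uniform} lower bound $c$ on $\lct\!\left(\sO_{X_y}(q(K_{X_y}+D_y));X_y,D_y\right)$ over the klt locus of $Y$ (Lemma \autoref{lem:lct_semicontinuous_line_bundle}, itself built on the semicontinuity Lemma \autoref{lem:lct_semicontinuous_divisor}), combined with invariance of lct under fiber powers (Corollary \autoref{cor:lct_product}). Choosing $l=\lceil 1/c\rceil$ guarantees that $\bigl(X^{(m)}_y,\tfrac1l\Gamma_y+(D_{X^{(m)}})_y\bigr)$ is klt for every $y$ in the klt locus. One then applies Proposition \autoref{prop:nef} — a weak-positivity statement proved by multiplier ideals and Nadel vanishing, whose essence goes back to Viehweg's Lemma~5.18 via Proposition \autoref{prop:tricky} — to $2q(K_{X^{(m)}/Y}+D_{X^{(m)}})-(f^{(m)})^*A$ with boundary $\tfrac1l\Gamma+D_{X^{(m)}}$, and unwinds.

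Your sketch, by contrast, relies on (i) Fujino's semipositivity as packaged in \autoref{lem:pushforward_nef}, and (ii) a cyclic cover $\rho:Z\to X$ branched along a general member of $|NL+f^*H_0|$. Two concrete problems. First, Fujino's theorem as used in \autoref{lem:pushforward_nef} handles the relative log canonical sheaf of the family itself; it gives no weak positivity for pushforwards twisted by an auxiliary effective divisor (the $\Gamma$ above), which is precisely what is needed to exploit the section of $\det$. The paper introduces \autoref{prop:nef} — the multiplier-ideal analogue — exactly to supply this. Second, and more fundamentally, your sketch never connects the bigness of $\det f_*\sO_X(qL)$ to a divisor on a fiber power whose coefficients you can control; the phrase ``the bigness of $\det\sE$ is converted, via adjunction on the fiber powers, into a generically surjective map \dots'' is the entire missing argument. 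Without the uniform lct bound (and its propagation to fiber powers via \autoref{cor:lct_product}) there is no mechanism to guarantee that whatever auxiliary boundary your cyclic cover produces remains klt \emph{uniformly over the base}, rather than only at a general fiber — and without that uniformity the semipositivity input fails. So while the cyclic-cover route may be salvageable in principle, the proposal as written is missing the two decisive ideas of the paper's proof: the uniform lct estimate and the multiplier-ideal weak positivity theorem for boundaries containing the auxiliary divisor.
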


\begin{remark}
  One might wonder if this could be true without assuming that the general fiber is
  klt.  We will show below that that assumption is in fact necessary.
\end{remark}

\begin{corollary}
  \label{cor:big_total_space}
  If $f : (X,D) \to Y$ is a family of stable log-varieties of maximal variation over
  a normal, projective variety $Y$ with klt general fiber, then $K_{X/Y}+D$ is big.
\end{corollary}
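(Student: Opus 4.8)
The plan is to deduce bigness of $K_{X/Y}+D$ from \autoref{thm:pushforward} together with the relative ampleness of $K_{X/Y}+D$. First I would fix a divisible enough integer $q>0$ so that $\sL:=\sO_X(q(K_{X/Y}+D))$ is a well-defined line bundle, is $f$-very ample, and satisfies $R^if_*\sL=0$ for $i>0$; moreover, by \autoref{thm:pushforward}, $f_*\sL$ is big as a coherent sheaf on $Y$ in the sense of \autoref{def:wp_big}. Recall that bigness of $f_*\sL$ means that for some $a>0$ the reflexive power $\Sym^{[a]}(f_*\sL)\otimes\sH^{-1}$ is generically globally generated, where $\sH$ is a fixed ample line bundle on $Y$.

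The key step is to transfer this positivity upstairs. Since $\sL$ is $f$-very ample, the natural evaluation map $f^*f_*\sL\to\sL$ is surjective, and hence the induced map $f^*\Sym^a(f_*\sL)\to\sL^{\otimes a}$ is surjective over the locus where $f_*\sL$ is locally free, which is a big open set of $Y$; pulling back the (big) open set where $\Sym^{[a]}(f_*\sL)\otimes\sH^{-1}$ is globally generated, we obtain an effective divisor in the linear system of $aq(K_{X/Y}+D)-f^*H$ on a big open subset of $X$, and then extend it to all of $X$ by normality (here $H$ is the divisor associated to $\sH$). Thus $\sO_X(aq(K_{X/Y}+D))\otimes f^*\sH^{-1}$ has a nonzero section, i.e. $aq(K_{X/Y}+D)\sim f^*H + E$ for some effective $E$. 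Now write $N=\dim X$, $n=\dim X_y$, $\dim Y = N-n$. Pick an ample divisor $H$ on $Y$ that is effective and very ample; then $f^*H$ is the pullback of an ample class, and restricting to a general fiber, $K_{X_y}+D_y$ is ample, so $(K_{X/Y}+D)^n\cdot f^*H^{N-n} = (K_{X_y}+D_y)^n\cdot(\text{positive}) > 0$ after choosing $H$ to be a general complete-intersection curve class downstairs; combined with $K_{X/Y}+D$ being nef (which follows from \autoref{lem:pushforward_nef} via the surjection $f^*f_*\sL\to\sL$) and $aq(K_{X/Y}+D)-f^*H$ being $\bQ$-effective, a standard argument — exactly the one used in the proof of \autoref{lem:big_on_one_component} or \autoref{prop:big_higher_dim_base}, applying the asymptotic Riemann–Roch theorem \cite[Cor.~1.4.41]{Lazarsfeld_Positivity_in_algebraic_geometry_I} — gives $h^0(X,\sO_X(maq(K_{X/Y}+D)))\geq c\,m^{N}$ for $m\gg 0$, which is bigness of $K_{X/Y}+D$.

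Alternatively, and perhaps more cleanly, I would argue: $aq(K_{X/Y}+D)\sim_{\bQ} f^*H+E$ with $H$ ample and $E$ effective shows $aq(K_{X/Y}+D)$ is the sum of a pulled-back ample and an effective divisor; since $K_{X/Y}+D$ is also $f$-ample, for small $\varepsilon>0$ the class $f^*H+\varepsilon(K_{X/Y}+D)$ is ample on $X$ (being ample on fibers and pulled-back-ample relatively — by the standard criterion that a relatively ample class plus the pullback of an ample class is ample), hence $(1+\varepsilon aq)(K_{X/Y}+D)\sim_{\bQ}(f^*H+\varepsilon(K_{X/Y}+D)) + E$ is big, so $K_{X/Y}+D$ is big. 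The main obstacle is purely bookkeeping: making sure the passage from "bigness of the sheaf $f_*\sL$" to "an effective member of $|aq(K_{X/Y}+D)-f^*H|$ on $X$" is legitimate despite $K_{X/Y}+D$ being only $\bQ$-Cartier and despite $f_*\sL$ being locally free only on a big open set — this is handled exactly as in \autoref{prop:big_higher_dim_base}, working over the big open locus and extending by $S_2$-ness (normality) of $X$, so no genuinely new difficulty arises.
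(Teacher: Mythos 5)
Your proposal is correct and takes essentially the same route as the paper: the paper isolates your argument into a standalone lemma ("if $\sL$ is $f$-big and $f_*\sL$ is a big sheaf then $\sL$ is big"), proved by tensoring the generically surjective map $\bigoplus\sA\hookrightarrow\Sym^a(f_*\sL)$ with $\sL$, pulling back, and multiplying to land in $\sL^{a+1}$ — which is exactly the content of your "alternatively" paragraph ($f^*H+\varepsilon(K_{X/Y}+D)$ ample plus $aq(K_{X/Y}+D)-f^*H$ effective). Your first, intersection-theoretic route also works but is unnecessary given the cleaner second formulation.
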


This corollary follows from \autoref{thm:pushforward} by a rather general argument
which we present in the following lemma.

\begin{lemma}
  Let $f:X\to Y$ be a surjective morphism between normal proper varieties and assume
  also that $Y$ is projective. Let $\sL$ be an $f$-big line bundle on $X$ such that
  $f_*\sL$ is a big vector bundle. Then $\sL$ itself is big.
\end{lemma}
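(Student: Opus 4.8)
The plan is to exploit the fact that "$f_*\sL$ big" gives us many sections of $\Sym^{[a]}(f_*\sL)\otimes\sH^{-1}$, and then to convert these into sections of a large power of $\sL$ twisted by the pullback of an anti-ample divisor on $Y$, using that $\sL$ is relatively big over $Y$. Concretely: fix an ample line bundle $\sH$ on $Y$. Since $f_*\sL$ is big in the sense of \autoref{def:wp_big}, there is an $a>0$ with $\Sym^{[a]}(f_*\sL)\otimes f^*\sH^{-1}$ (after pulling $\sH^{-1}$ up to $X$, or rather keeping it on $Y$) generically globally generated; dually this yields a generically surjective map $\sH\to\Sym^{[a]}(f_*\sL)$, hence a nonzero map $f^*\sH\to f^*\Sym^{[a]}(f_*\sL)$. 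Now there is a natural multiplication map $f^*\Sym^{a}(f_*\sL)\to\sL^{a}$ (defined first over the big open set of $Y$ where $f_*\sL$ is locally free and the multiplication is defined, then extended since the sheaves are reflexive), which is generically surjective because $\sL$ is relatively globally generated after replacing $\sL$ by a suitable power — or more carefully, because $f^*f_*\sL\to\sL$ is generically surjective over $Y$ and hence the symmetric-power version dominates $\sL^a$ at the generic point of each fiber. Composing gives a nonzero map $f^*\sH\to\sL^{a}$, i.e.\ a nonzero section of $\sL^{a}\otimes f^*\sH^{-1}$; equivalently $aL - f^*H$ is $\bQ$-effective, where $L$ is a Cartier divisor with $\sO_X(L)=\sL$ and $H$ is ample with $\sO_Y(H)=\sH$.

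From here the argument is the standard "relatively big plus pulled-back big is big" principle. Write $aL \equiv f^*H + E$ with $E\geq 0$. For $b\gg 0$ we have $h^0(Y,\sO_Y(bH))\geq c\,b^{\dim Y}$, and pulling back along $f$ gives $h^0(X, f^*\sO_Y(bH))\geq c\,b^{\dim Y}$ as well (injectivity of pullback on global sections, since $f$ is surjective with $Y$ normal). Combined with $f^*\sO_Y(bH)\hookrightarrow \sO_X(abL)$ coming from the $b$-th power of the section above, this already shows $h^0(X,\sO_X(abL))\geq c\,b^{\dim Y}$, but that is not yet enough — we need growth like $m^{\dim X}$. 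To get the extra factor coming from the relative dimension one uses $f$-bigness of $\sL$: over the generic point of $Y$, $\sL_\eta$ is big on the generic fiber $X_\eta$, so there is a fixed effective divisor $F$ on $X$, with $F$ not dominating $Y$ or chosen so that $h^0(X_\eta, \sO_{X_\eta}(m\sL_\eta - F_\eta))$ grows like $m^{\dim X - \dim Y}$. The cleanest packaging: $f_*\sO_X(m\sL)$ has generic rank growing like $m^{\dim X-\dim Y}$ (by $f$-bigness and asymptotic Riemann–Roch on the generic fiber), so $h^0(X,\sO_X(mL))\geq h^0(Y, (f_*\sO_X(mL))\otimes\sO_Y(\lfloor cm\rfloor H))$ after twisting appropriately, and since $f_*\sO_X(m\sL)\supseteq \Sym^{[m']}(f_*\sL)$-type subsheaves whose determinant... actually the slick route is:

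Combine the two inputs directly via Viehweg-style weak positivity. Since $f_*\sL$ is big, for suitable $a$ there is a generically surjective $\sH\to\Sym^{[a]}(f_*\sL)$; applying $f^*$ and composing with the generically surjective multiplication $f^*\Sym^{[a]}(f_*\sL)\to\sO_X(aL)\otimes(\text{correction supported off }X_\eta)$ — more precisely, because $\sL$ is $f$-big rather than $f$-very ample, we instead use that $f_*\sO_X(N\sL)$ is big for $N$ large (it contains $\Sym^{[\lfloor N/a\rfloor]}$ of the big sheaf $f_*\sL$ as a subsheaf up to a bounded twist, hence is big by \autoref{itm:tensors} and \autoref{itm:gen_surj} of \autoref{lem:wp_big_properties} applied to the generically surjective multiplication map $\Sym^{[k]}(f_*\sL)\to f_*\sO_X(k\cdot\text{something})$... ), and $f$-bigness lets one run the proof of \autoref{lem:equivalent-props-of-big}-type argument on $X$: $NL = f^*(\text{big on }Y) + (\text{effective, }f\text{-big})$, and adding a big divisor on the base to an $f$-big divisor gives a big divisor on the total space because the section spaces multiply, $h^0(X,\sO_X(mNL))\gtrsim h^0(Y,\sO_Y(m\cdot\text{big}))\cdot h^0(X_\eta,\sO_{X_\eta}(m\cdot\text{big}_\eta))\gtrsim m^{\dim Y}\cdot m^{\dim X-\dim Y}=m^{\dim X}$, making $L$ big.

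\medskip
\noindent\textbf{Main obstacle.} The delicate point is the last inequality: turning "many sections on the base" times "many sections on the generic fiber" into "many sections on $X$". Over an open set of $Y$ this is just $h^0(X_U,\sO) \ge \sum_{y} h^0(X_y,\sO)$-type semicontinuity, but one must be careful that the sections of $\sO_{X_\eta}(m\sL_\eta)$ spread out over an open set of $Y$ and that their product with the pulled-back sections of a power of the ample $\sH$ on $Y$ remain linearly independent — this is where $f_*\sO_X(m\sL)$ being not just big but such that $\det$ or $\Sym$ of it is big matters, and one invokes \autoref{lem:wp_big_properties} (in particular \autoref{itm:wp_big}, bigness of $\sF[\otimes]\sG$) rather than hand-counting. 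I expect the proof in the paper packages exactly this: $f_*\sL$ big $\Rightarrow$ $f_*\sO_X(mL)$ big for $m$ divisible $\Rightarrow$ (using $f$-bigness of $\sL$ to see the rank grows like $m^{\dim X-\dim Y}$ and a generically surjective map to $\sO_X(mL)$) the section count on $X$ grows like $m^{\dim X}$, i.e.\ $\sL$ is big.
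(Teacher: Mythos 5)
Your overall strategy coincides with the paper's: use bigness of $f_*\sL$ to produce a generically isomorphic inclusion $\bigoplus\sA\hookrightarrow\Sym^{a}(f_*\sL)$ for some ample $\sA$ on $Y$, pull back along $f$, and compose with the natural multiplication/evaluation maps into a power of $\sL$. Both arguments hinge on this chain being nonzero, not surjective, so your worry about $f^*f_*\sL\to\sL$ not being generically surjective is immaterial. Up to this point you and the paper are doing the same thing.

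The genuine gap is in how you finish. You land on a nonzero map $f^*\sH\to\sO_X(aL)$, i.e.\ $aL\sim f^*H+E$ with $E\geq 0$, and then try to combine this with $f$-bigness of $\sL$ to deduce $L$ big by counting sections on the base times sections on the generic fiber. As you yourself note, this multiplication of section counts does not go through naively: over a fixed big open set of $Y$ one cannot simply multiply $h^0(Y,\sO_Y(mH))$ by $h^0(X_\eta,\sO_{X_\eta}(m L_\eta))$, because the twist needed to make $f_*\sO_X(mL)$ globally generated grows with $m$, and higher cohomology of the varying sheaf $f_*\sO_X(mL)$ is not under control. The plan as written does not close this.

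The paper sidesteps the whole issue with a small but decisive rearrangement of the same ingredients. It first invokes $f$-bigness of $\sL$ and projectivity of $Y$ once, cleanly, to fix an ample $\sA$ on $Y$ for which $f^*\sA\otimes\sL$ is already big on $X$ (the standard fact that an $f$-big divisor becomes big after adding the pullback of a sufficiently ample divisor from a projective base). Then it runs your chain of maps but tensored throughout by $\sL$:
\begin{equation*}
\bigoplus\bigl(f^*\sA\otimes\sL\bigr)\hookrightarrow f^*\Sym^{a}(f_*\sL)\otimes\sL
\longrightarrow f^*f_*(\sL^{a})\otimes\sL\longrightarrow \sL^{a+1}.
\end{equation*}
Now the source is a direct sum of copies of the \emph{big} line bundle $f^*\sA\otimes\sL$, so the nonzero composition yields a nonzero map from a big line bundle into $\sL^{a+1}$, which means $\sL^{a+1}$ is big plus effective, hence big. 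This one-line conclusion replaces the section-counting you were struggling to make rigorous; the entire content of ``$f$-big $+$ sufficiently ample pullback $=$ big'' is front-loaded into the choice of $\sA$, and the extra $\sL$ in the tensor is what carries it through to the end.
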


\begin{proof}
  Choose an ample line bundle $\sA$ on $Y$ such that $f^*\sA\otimes \sL$ is big. Then 
  by 
  \autoref{def:wp_big} there is a generically isomorphic inclusion for some integer
  $a>0$:
  \begin{equation*}
    \bigoplus \sA \hookrightarrow \Sym^{a}(f_* \sL)
  \end{equation*}
  This induces the following non-zero composition of homomorphisms, which concludes
  the proof:
  \begin{equation*}
    \bigoplus \underbrace{ f^*\sA \otimes \sL}_{\textrm{big}} \hookrightarrow
    f^* \Sym^{a} ( f_* \sL) \otimes \sL \to f^*  f_* (\sL^a) \otimes \sL \to
    \sL^{a+1}. \qedhere 
  \end{equation*}
\end{proof}

\begin{proof}[Proof of \autoref{cor:big_total_space}]
  Take $\sL=\sO_X(q(K_{X/Y}+D))$ for a divisible enough $q>0$.
\end{proof}

Next we show that the klt assumption in \autoref{thm:pushforward} is necessary. 

\begin{example}
  \label{ex:klt-is-needed}
  Let $f:X\to Y$ be an arbitrary non-isotrivial smooth projective family of curves
  over a smooth projective curve. Assume that it admits a section $\sigma: Y\to X$
  (this can be easily achieved after a base change) and let $D=\im\sigma\subset
  X$. This is one of the simplest examples of a family of stable
  log-varieties. Notice that the fibers are log canonical, but not klt.
  By adjunction $K_D=(K_X+D)|_D$ and as $f|_D:D\to Y$ is an isomorphism, it follows
  that $\sO_X(K_{X/Y}+D)|_D\simeq \sO_D$.  The following claim implies that $f_*
  \sO_X(r(K_{X/Y}+D))$ cannot be big for for any integer $r>0$.

\begin{subclaim}
  Let $f:X\to Y$ be a flat morphism, $\sL$ a torsion-free sheaf on $X$, and $\sE$ a
  locally free sheaf on $Y$. Further let $D\subset X$ be the image of a section
  $\sigma: Y\to X$ and assume that $Y$ is irreducible, that $\sL|_D\subseteq \sO_D$,
  and that there exists a homomorphism $\varrho: f^*\sE\to \sL$ such that $\varrho|_D
  \neq 0$.  Then $\sE$ cannot be big.
\end{subclaim}

\begin{proof}
 Since $f|_D$ is an isomorphism, if $\sE$
  were big, so would be $(f^*\sE)|_D$ and then $\varrho|_D$ would imply that $\sO_D$
  is big. This is a contradiction which proves the statement.
\end{proof}
\end{example}

A variant of \autoref{ex:klt-is-needed} shows that even assuming that $D=0$ would not
be enough to get the statement of \autoref{thm:pushforward} without the klt
assumption:

\begin{example}
  \label{ex:klt-is-needed-two}
  Let $f:X\to Y$ be an arbitrary non-isotrivial smooth projective family of curves
  over a smooth projective curve. Assume that it admits two disjoint sections
  $\sigma_i: Y\to X$ for $i=1,2$ and let $D_i=\im\sigma\subset X$. Next glue $X$ to
  itself by identifying $D_1$ and $D_2$ via the isomorphism
  $\sigma_1\circ\sigma_2^{-1}$ and call the resulting variety $X'$. Then the induced
  $f':X'\to Y$ is a family of stable varieties. The same computation as above shows
  that $f'_* \sO_{X'}(rK_{X'/Y})$ cannot be big for any $r>0$ for this example as
  well. For computing the canonical class of non-normal varieties see
  \cite[5.7]{Kollar_Singularities_of_the_minimal_model_program}.
\end{example}

A variant of the above examples can be found in \cite[Thm.\
3.0]{Keel_Basepoint_freeness_for_nef_and_big_line_bundle_in_positive_characteristics},
for which not only $K_{X/Y} +D$ is numerically trivial on a curve $C$ contained in
$D$ (and hence other ones can be constructed where the same happens over the double
locus), but $K_{X/Y} +D|_C$ is not even semi-ample.

One might complain that in \autoref{ex:klt-is-needed-two} the fibers are not
normal. One can construct a similar example of a family of stable varieties where the
general fiber is log canonical (and hence normal) that shows that the klt assumption
is necessary, but this is a little bit more complicated.

\begin{example}
  \label{ex:klt-is-needed-three}
  Let $Z$ be a projective cone over a genus $1$ curve $C$. Assume that $Z \subseteq
  \bP^3$ is embedded compatibly with this cone structure, that is, via this
  embedding, $Z \cap \bP^2 = C$ for some fixed $\bP^2 \subseteq \bP^3$. Fix also
  coordinates $x_0,\dots, x_3$ such that $x_1, x_2, x_3$ are coordinates for $\bP^2$
  and the cone point is $P:=[1,0,0,0]$. Choose two general polynomials $f(x_1, x_2,
  x_3)$ and $g(x_0,x_1,x_2,x_3)$. Consider the pencil of hypersurfaces in $Z$ defined
  by these two equations. This yields a hypersurface $\sD \subseteq Z \times \bP^1$
  with $\sD_0= V(f) \cap Z$ a general conic hypersurface section of $Z$ and
  $\sD_\infty = V(g) \cap Z$ a general hypersurface section of $Z$. Since $g$ was
  chosen generally, $P \not\in \sD_\infty$. On the other hand, $P \in \sD_0$, and
  hence $P \not\in \sD_t$ for $t \neq 0$. Furthermore, since in codimension $1$
  hypersurface sections of $Z$ disjoint from $P$ acquire only nodes 
  $\sD_t$ is either smooth or has only nodes for $t \neq 0$. Hence, for $d \gg 0$ the
  family $(Z \times \bP^1, \sD) \to \bP^1$ is a family of stable log-varieties
  outside $t=0$. For $t=0$ we run stable reduction. Since the stable limit is unique,
  we may figure out the stable limit without going through the meticulous process by
  hand: it is enough to exhibit one family that is isomorphic in a neighborhood of
  $0$ to the original family after a base-change and which does have a stable limit.
  The pencil $\sD$ around $t=0$ is described by the equation $f(x_1, x_2, x_3) + t
  g(x_0,x_1,x_2,x_3)$. Extract a $d$-th root from $t$ and denote the new family also
  by $(Z \times \Spec k[t], \sD)$ (i.e., we keep the same notation for the
  boundary). Then $\sD$ around $t=0$ is described by the equation
  \begin{equation*}
    F_1(t,x_0,x_1,x_2,x_3):=f(x_1, x_2, x_3) + t^d g(x_0,x_1,x_2,x_3). 
  \end{equation*}
  Now set
  \begin{equation*}
    F_2(t,x_0,x_1,x_2,x_3):=f(x_1, x_2, x_3) + t^d g(x_0/t,x_1,x_2,x_3),
  \end{equation*}
  and let $\sD'$ be the hypersurface of $Z \times \Spec k[t]$ defined by $F_2$. Then
  in a punctured neighborhood of $t=0$, $(Z \times \Spec k[t], \sD) $ is isomorphic
  to $(Z \times \Spec k[t], \sD' )$, via the map
  \begin{equation*}
    x_i \mapsto x_i (i \neq 0) \qquad t \mapsto t \qquad x_0 \mapsto t \cdot x_0 .
  \end{equation*}
  Here the key is that Z, being a cone, is invariant under scaling by $x_0$. Note
  that since $g$ is general, $x_0^d$ has a non-zero coefficient, say $c$. Then it is
  easy to see that $F_2(0, x_1,x_2, x_3) = f(x_1,x_2,x_3) + c x_0^d$. That is,
  $\sD_0'$ is a $d$-th cyclic cover of $V(f) \cap C \subseteq \bP^2$ in $Z$. Since
  $f$ is general, $V(f) \cap C$ is smooth (i.e., a union of reduced points), and
  hence $\sD_0'$ is also smooth. Furthermore, $\sD_0'$ avoids $P$. It follows that
  $(Z, \sD_0')$ is log canonical, whence stable and therefore it has to be the
  central fiber of the stable reduction.

  Summarizing, after the stable reduction, we obtain a family $(\sZ, \sD) \to Y$ of
  stable log-pairs over a smooth projective curve (we denote the divisor by $\sD$
  here as well for simplicity), such that $\sZ_y \simeq Z$ and $\sD_y$ avoids the
  cone point in $\sZ_y$ for each $y \in Y$. Note that $\sZ$ cannot be isomorphic to
  $Y \times Z$ anymore (not even after a proper base-change), since then $\sD_y$
  would give a proper family of moving divisors in $Z$ that does not contain
  $P$. This is impossible, since a proper family covers a proper image, which would
  have to be the entire $Z$.

  In any case, after possibly a finite base-change, we are able to take the cyclic
  cover of $\sZ$ of degree $d$ ramified along $\sD$.  For $d \gg 0$ the obtained
  family $X \to Y$ is stable of maximal variation over the projective curve $Y$. It
  has elliptic singularities along a curve $B$ that covers $d$ times the singularity
  locus of $\sZ \to Y$. Hence, $B \to Y$ is proper and has $d$ preimages over each
  point. In particular it is \'etale (though $B$ might be reducible). If we blow-up
  $B$, and resolve the other singular points as well (which are necessarily
  disjoint from $B$, since they originate from the nodal fibers of $\sD \to Y$), we
  obtain a resolution $\pi:V \to X$. Let $E$ be the (reduced) preimage of $B$. Then
  we have that $K_{V/Y} + E + F \equiv \pi^* K_{X/Y}$, where $F$ is exceptional and
  disjoint from $E$. In particular then
  \begin{equation*}
    K_{E/Y} \equiv (K_{V/Y} + E) |_E \equiv (K_{V/Y} + E + F)|_E \equiv \pi^* K_{X/Y}|_E
    \equiv \left(\pi|_E \right)^* \left( K_{X/Y}|_B \right). 
  \end{equation*}
  Hence it is enough to show that $K_{E/Y} \equiv 0$ (since then we have found a
  horizontal curve over which $K_{X/Y}$ is numerically trivial). Since $B \to Y$ is
  \'etale, it is enough to show that $K_{E/B} \equiv 0$. However $E \to B$ is a
  smooth family of isomorphic genus one curves. In particular, after a finite
  base-change we may also assume that it has a section, in which case we do know that
  its relative canonical sheaf is numerically trivial. However, then it is
  numerically trivial even without the base-change. 
  It follows that $K_{X/Y}|_B$ is numerically trivial and the same argument as above
  shows that then it cannot be big. 
\end{example}

Recall that if $(X,D)$ is a klt pair and $\Gamma$ a $\bQ$-Cartier divisor, then the
log canonical threshold is defined as
\begin{equation*}
  \sup \{ t | (X, D+ t \Gamma) \textrm{ is log canonical } \}.
\end{equation*}

\begin{lemma}
  \label{lem:lct_semicontinuous_divisor}
  The log canonical threshold is lower semi-continuous in projective, flat families
  with $\bQ$-Cartier relative log canonical bundle. That is, if $f : (X, D) \to S$ is
  a projective, flat morphism with $S$ normal and essentially of finite type over $k$
  such that $K_{X/S} +D$ is $\bQ$-Cartier, $(X_s, D_s)$ is klt for all $s \in S$ and
  $\Gamma \geq 0$ is a $\bQ$-Cartier divisor on $X$ not containing any fibers, then
  $\lct(\Gamma_s; X_s, D7_s)$ is lower semi-continuous.

  Furthermore, if $S$ is regular, then for every $s \in S$ there is a neighborhood
  $U$ of $s$, such that 
  \begin{equation*}
    \lct(\Gamma|_{f^{-1} U}; f^{-1} U , D|_{f^{-1} U}) \geq
    \lct(\Gamma_s; X_s, D_s).
  \end{equation*}
\end{lemma}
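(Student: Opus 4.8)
The plan is to reduce both statements to a single computation on a log resolution that behaves well in families. First I would take a fiberwise log resolution: since $S$ is excellent and we are in characteristic zero, there is (after shrinking $S$, which is harmless for the semi-continuity statement by noetherian induction) a projective birational morphism $g : X' \to X$ such that $g$ is an isomorphism over the generic point of $S$, the pair $(X', \widetilde\Gamma + \widetilde D + \Exc(g))$ has simple normal crossing support, and this resolution restricts to a log resolution of $(X_s, D_s + \Gamma_s)$ for $s$ in a dense open subset of $S$; by noetherian induction one can in fact stratify $S$ into locally closed pieces over each of which $g$ restricts to a fiberwise log resolution. Writing $K_{X'} + \widetilde D = g^*(K_X + D) + \sum_i a_i E_i$ and $\widetilde\Gamma = \sum_i m_i E_i + (\text{strict transform terms})$ with $m_i \geq 0$, the usual formula gives $\lct(\Gamma; X, D) = \min_i \frac{a_i + 1}{m_i}$ where the minimum runs over those $E_i$ with $m_i > 0$, together with the contributions of the strict transform of $\Gamma$ (where the numerator is $1$). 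On each stratum this same formula computes $\lct(\Gamma_s; X_s, D_s)$ with the \emph{same} coefficients $a_i, m_i$, because the log resolution is compatible with restriction to the fiber and $K_{X'_s} + \widetilde D_s = g_s^*(K_{X_s} + D_s) + \sum_i a_i E_{i,s}$ holds on each fiber (here one uses that $\Gamma$ contains no fibers, so its restriction is a genuine divisor, and that $K_{X/S} + D$ being $\bQ$-Cartier makes pullback behave).

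Given this, lower semi-continuity follows formally: the function $s \mapsto \lct(\Gamma_s; X_s, D_s)$ is constant on each stratum of a finite stratification, and on the generic point of $S$ (or of any irreducible closed subset) it equals the generic value, which one checks is $\leq$ the value at special points. More precisely, I would argue that if $Z \subseteq S$ is irreducible with generic point $\zeta$, then after base change to $Z$ and passing to the open dense locus where $g$ is a fiberwise log resolution, the lct is constant $= \lct(\Gamma_\zeta; X_\zeta, D_\zeta)$, and at the remaining (lower-dimensional) points the discrepancies of the \emph{relative} resolution can only drop — any divisor $E_i$ that becomes more special contributes a smaller $\frac{a_i+1}{m_i}$ — so the lct does not increase. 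This is the standard argument (cf.\ the inversion-of-adjunction style reasoning used elsewhere in the paper, e.g.\ in the proof of \autoref{lem:slc_open}), and it is precisely lower semi-continuity: $\{s : \lct(\Gamma_s; X_s, D_s) > c\}$ is open for every $c$.

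For the "furthermore" part, assume $S$ regular and fix $s \in S$. Set $c := \lct(\Gamma_s; X_s, D_s)$. Then $(X_s, D_s + c\Gamma_s)$ is log canonical. By inversion of adjunction for log canonical pairs — applied exactly as in the proof of \autoref{lem:slc_open}, using \cite[Cor 2.11]{Patakfalvi_Fibered_stable_varieties}, valid here because $X_s$ is a Cartier divisor on the regular-over-$S$ total space and $(X_s, D_s + c\Gamma_s)$ is lc — the pair $(X, D + c\Gamma + X_s)$ is log canonical in a neighborhood of $X_s$. Shrinking $S$ to an affine neighborhood $U$ of $s$ and removing the closed point $X_s$ from the boundary (log canonicity is preserved under subtracting a reduced divisor from the boundary), we conclude that $(f^{-1}U, D|_{f^{-1}U} + c\,\Gamma|_{f^{-1}U})$ is log canonical, i.e.\ $\lct(\Gamma|_{f^{-1}U}; f^{-1}U, D|_{f^{-1}U}) \geq c$, as desired.

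The main obstacle I expect is the first step: producing a log resolution of $(X,D+\Gamma)$ that genuinely restricts to a log resolution on each fiber (at least generically over each stratum), together with the compatibility $K_{X'_s} + \widetilde D_s = g_s^*(K_{X_s}+D_s) + \sum a_i E_{i,s}$ with the \emph{same} discrepancies. This is where the hypotheses that $K_{X/S}+D$ is $\bQ$-Cartier, that $\Gamma$ is $\bQ$-Cartier avoiding the fibers, and that all $(X_s,D_s)$ are klt get used; the klt hypothesis in particular guarantees the fibers are normal so that adjunction formulas on the fibers make sense. Everything after that — extracting the combinatorial lct formula, noetherian induction for constructibility, and the inversion-of-adjunction argument for the regular-base statement — is routine and parallels arguments already present in the paper.
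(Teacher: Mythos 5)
Your overall structure (log-resolution argument for constancy/constructibility, inversion of adjunction for the ``furthermore'' part) parallels the paper's, but two steps as written do not go through.

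First, your proof of the ``furthermore'' statement applies inversion of adjunction \emph{once}, treating $X_s$ as a Cartier divisor on $X$. This only makes sense when $\dim S = 1$. For a point $s$ of higher codimension, $X_s$ is not a divisor (it has codimension $\dim S$ in $X$), so the pair $(X, D + c\Gamma + X_s)$ is not even well-formed, and there is no adjunction to invert. The paper handles this by iterating: it writes $s$ locally as an intersection of very ample hyperplane sections $H_1,\dots,H_d$ (possible since $S$ is regular), sets $A_i := f^{-1}H_i$, and climbs one divisor at a time from $(X_s, D_s + t\Gamma_s)$ lc up to $(X, D + t\Gamma)$ lc near $X_s$, applying inversion of adjunction at each stage. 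You need this induction; the one-step version fails.

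Second, your argument for semi-continuity itself is not correct as stated. You claim constancy of the lct on strata (fine: this gives constructibility) and then argue that ``discrepancies of the relative resolution can only drop'' at special points, so the lct ``does not increase.'' This is not justified --- the fiberwise log resolution you construct degenerates at special strata, and it is not at all obvious how to compare the discrepancy data across strata. (Note also the direction error in the preceding sentence: for \emph{lower} semi-continuity you need the generic value to be $\geq$ the special value, not $\leq$.) The paper sidesteps this: it proves the ``furthermore'' statement first, which for a DVR base immediately gives $\lct$ at the closed point $\leq$ $\lct$ of the total space $=$ $\lct$ at the generic point. Combined with constructibility, this yields semi-continuity. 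Your constructibility step is a reasonable variant of the paper's dense-open-constancy step, but without the ``furthermore'' part (or some equally precise substitute) the specialization comparison is missing.
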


\begin{proof}
  Let us first show the second statement, which is an application of inversion of
  adjunction. Let $A = f^{-1} H$ for some very ample reduced effective divisor
  $H$. Then
  \begin{multline*}
    (A, D|_A + t\Gamma|_A) \textrm{ is lc } 
    \Rightarrow (X, D + t\Gamma + A) \textrm{ is lc in a neighborhood of } A
    \Rightarrow
    \\ \Rightarrow (X, D+ t \Gamma ) \textrm{ is lc in a neighborhood of } A.
  \end{multline*}
  Applying this inductively gives the second statement, since for regular schemes
  every point can be (locally) displayed as the intersection of hyperplanes.

  Next, let us prove that $s \mapsto \lct(\Gamma_s; X_s, D_s)$ is constant on a dense
  open set $U$ and that $U$ can be chosen such that $\lct(\Gamma|_{f^{-1}U}; f^{-1}
  U, D|_{f^{-1}U} )$ agrees with this constant value. For this we may assume that $S$
  is smooth. Take a resolution $\pi : Y \to X$ of $(X,D+ \Gamma)$. By replacing $X$
  with a dense Zariski open set we may assume that all exceptional divisors of $\pi$
  are horizontal and that $f \circ \pi$ is smooth. However, then the discrepancies of
  $(X_s, D_s+ t \Gamma_s)$ agree for all $s \in S$ and $t \in \bQ$ and furthermore,
  this is the same set as the discrepancies of $(X, D+t \Gamma)$.  This concludes our
  claim.

  The above two claims show that we have semi-continuity over smooth curves, and also
  that the function is constructible. These together show that the function is
  semi-continuous in general.
\end{proof}

\begin{definition}
  We define the \emph{log canonical threshold of a line bundle} $\sL$ on a projective
  pair $(X,\Delta)$ as the minimum of the log canonical thresholds of the effective
  divisors in $\bP\left(H^0(X,\sL)^*\right)$, the complete linear system of $\sL$:
  \begin{equation*}
    \lct(\sL;X, \Delta):= \min \left\{ \lct(\Gamma;X, \Delta) \,\big\vert\, \Gamma \in
      \bP\left(H^0(X,\sL)^*\right) \right\}. 
  \end{equation*}
  By the above lemma this minimum exists.
\end{definition}

\begin{lemma}
  \label{lem:lct_semicontinuous_line_bundle}
  The log canonical threshold of a line bundle is bounded in projective, flat
  families. That is, let $f : (X,D) \to T$ be a projective flat morphism with $T$
  normal and essentially of finite type over $k$ and $\sL$ a line bundle on $X$.
  Assume that $(X_t, D_t)$ is klt for all $t \in T$ and $K_{X/T} + D$ is
  $\bQ$-Cartier.  Then there exists a real number $c$, such that $\lct(\sL_t; X_t,
  D_t)>c$ for all $t \in T$.
\end{lemma}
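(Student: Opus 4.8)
The plan is to deduce this from \autoref{lem:lct_semicontinuous_divisor} by spreading out the complete linear system of $\sL$ over a projective bundle, turning the divisors in $|\sL_t|$ into the fibers of a single $\bQ$-Cartier divisor over that bundle and then invoking the semicontinuity already proved there.

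First I would reduce to a convenient base. By Noetherian induction it suffices to produce, over a dense open subset of each irreducible component of $T$, a constant below which none of the thresholds $\lct(\sL_t;X_t,D_t)$ falls, and then combine this with the constants obtained by induction on the (lower-dimensional) complements. Replacing a component by its normalization — a finite surjective morphism, which alters the fibers only by residue-field extensions and hence does not change their log canonical thresholds — I may assume $T$ is integral and normal. Since $t\mapsto h^0(X_t,\sL_t)$ is upper semi-continuous and constructible, it is constant on a dense open $U\subseteq T$; writing $X_U:=f^{-1}(U)$, $f_U:=f|_{X_U}$, $\sL_U:=\sL|_{X_U}$, the sheaf $(f_U)_*\sL_U$ is then locally free over $U$ and its formation commutes with base change. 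If it is zero there is nothing to prove over $U$, so assume it has positive rank.

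Next I would form the relative complete linear system $\pi:P:=\bP_U\!\bigl(\bigl((f_U)_*\sL_U\bigr)^{\vee}\bigr)\to U$, so that $P_t$ is canonically $\bP(H^0(X_t,\sL_t)^*)=|\sL_t|$ for every $t\in U$; then $P$ is normal (a projective bundle over a normal scheme) and essentially of finite type over $k$. On $X_U\times_U P$ there is the universal divisor $\sG$ of this linear system: an effective Cartier divisor whose restriction to the fiber $(X_U\times_U P)_p=X_{\pi(p)}$ is the effective divisor cut out by the nonzero section corresponding to $p$, so $\sG$ contains no fiber of $X_U\times_U P\to P$ — here I use that $X_t$, being klt, is normal and irreducible. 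Thus $g:(X_U\times_U P,\,D_U\times_U P)\to P$ is flat and projective with klt fibers $(X_t,D_t)$, its relative log canonical divisor is the pullback of the $\bQ$-Cartier divisor $(K_{X/T}+D)|_{X_U}$, and $\sG\ge 0$ is a $\bQ$-Cartier divisor meeting no fiber; hence \autoref{lem:lct_semicontinuous_divisor} applies and shows that $p\mapsto\lct(\sG_p;X_{\pi(p)},D_{\pi(p)})$ is lower semi-continuous on $P$.

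To conclude I would use that a real-valued lower semi-continuous function on a Noetherian scheme attains its minimum: if $m$ is the infimum, the closed sets $\{\lct\le m+\tfrac1n\}$ form a descending chain, which stabilizes, and their intersection is a nonempty closed set on which $m$ is achieved. This minimum $c_0$ is strictly positive, since $\lct(\sG_p;X_{\pi(p)},D_{\pi(p)})>0$ for all $p$ as each $(X_t,D_t)$ is klt. Then, straight from the definition of the log canonical threshold of a line bundle,
\[
  \lct(\sL_t;X_t,D_t)=\min_{p\in P_t}\lct(\sG_p;X_t,D_t)\ge c_0>\tfrac{c_0}{2}\qquad(t\in U),
\]
which is the bound needed to run the induction. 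I expect the main difficulty to be organizational rather than conceptual: one must perform the preliminary reductions (normalization, and the shrinking that makes $(f_U)_*\sL_U$ locally free and compatible with base change) carefully enough that all hypotheses of \autoref{lem:lct_semicontinuous_divisor} — normality of the base $P$, $\bQ$-Cartierness of the relative log canonical class, and the disjointness of $\sG$ from the fibers — are genuinely satisfied for the family $g$.
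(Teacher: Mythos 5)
Your proposal is correct and follows the same strategy as the paper: form the relative projective bundle of linear systems over an open locus where $f_*\sL$ is locally free and compatible with base change, take the universal divisor, invoke the semicontinuity from the divisor version, and finish by Noetherian induction on the complement. The paper leaves implicit the step you spell out — that a real-valued lower semi-continuous function on a Noetherian scheme attains its (positive) minimum — so your write-up is if anything slightly more complete, modulo the unstated hypothesis (used in both arguments) that the fibers are connected, hence irreducible, so that the universal divisor contains no fiber.
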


\begin{proof}
  First assume that $f_* \sL$ commutes with base-change (and it is consequently
  locally free) and let $\bP:= \Proj_T ((f_* \sL)^*)$. Notice that the points of
  $\bP_t$ for $t\in T$ may be identified with elements of the linear systems
  $\bP\left(H^0(X,\sL)^*\right)$.  Further let $\Gamma$ be the universal divisor on
  $X \times_T \bP$ corresponding to $\sL$, that is, $(x, [D]) \in \Gamma$ iff $x \in
  D$. Now, applying \autoref{lem:lct_semicontinuous_divisor} to $X \times_T \bP \to
  \bP$ and $\Gamma$ yields the statement.

  In the general case, we work by induction on the dimension of $T$. We can find a
  dense open set over which $f_* \sL$ commutes with base change. So, there is a lower
  bound as above over this open set, and there is another lower bound on the
  complement. Combining the two gives a lower bound over the entire $T$.
\end{proof}

The essence of the argument of the proof of the following proposition was taken from
\cite[Lemma 5.18]{Viehweg_Quasi_projective_moduli}, though the context is slightly
different.

\begin{proposition}
  \label{prop:tricky}
  Let $f : (X,D) \to Y$ be a flat morphism such that $D$ does not contain any fibers,
  $(X_y,D_y)$ is klt for a fixed $y \in Y$, $K_X+D$ is $\bQ$-Cartier and $Y$ is
  smooth. Let $\Gamma$ be a $\bQ$-Cartier effective divisor on $X$ that contains no
  fibers, let $\Delta$ be a normal crossing divisor (with arbitrary
  $\bQ$-coefficients) on $Y$, let $\tau : Z \to X$ be a log-resolution of
  singularities of $(X, D+ \Gamma + f^* \Delta)$ and finally let $t$ be a real number
  such that $t < \lct(\Gamma|_{X_y}; X_y, D_y)$. Then in a neighborhood of $X_y$
  \begin{equation*}
      \tau_* \sO_Z\left( \left\lceil K_{Z/X} - \tau^*  ( D + t  \Gamma  +  f^*
          \Delta ) \right\rceil \right) \simeq  \sO_X( \lceil -  f^*
      \Delta \rceil ) 
  \end{equation*}
\end{proposition}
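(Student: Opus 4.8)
The plan is to reduce everything to a local computation near $X_y$ and then apply the standard local vanishing / Nadel-type statement for resolutions. First I would note that since the assertion is about an isomorphism of sheaves in a neighborhood of the fiber $X_y$, and since $Y$ is smooth, after shrinking $Y$ around the image point $f(X_y)$ I may assume $\Delta$ has support not passing through $f(X_y)$ except for components I want to track; more importantly, near $X_y$ the divisor $f^*\Delta$ is a normal crossing divisor pulled back from the base, so $\lceil -f^*\Delta\rceil$ is just a Cartier divisor and $\tau^* f^*\Delta = (f\circ\tau)^*\Delta$. The inclusion
\begin{equation*}
  \tau_* \sO_Z\left( \left\lceil K_{Z/X} - \tau^*( D + t\Gamma + f^*\Delta)\right\rceil \right) \subseteq \sO_X(\lceil -f^*\Delta\rceil)
\end{equation*}
always holds in a neighborhood of $X_y$ once we know $(X_y, D_y)$ is klt and $t < \lct(\Gamma|_{X_y}; X_y, D_y)$: indeed this says precisely that the multiplier-type ideal $\tau_* \sO_Z(\lceil K_{Z/X} - \tau^*(D + t\Gamma)\rceil)$, which is contained in $\sO_X$, contains no vanishing at points of $X_y$ — i.e. equals $\sO_X$ near $X_y$ — and then one tensors with the line bundle $\sO_X(\lceil -f^*\Delta\rceil)$, being careful that the rounding of $K_{Z/X} - \tau^*(D+t\Gamma+f^*\Delta)$ differs from that of $K_{Z/X}-\tau^*(D+t\Gamma)$ by exactly $\lceil -(f\circ\tau)^*\Delta\rceil$ because $(f\circ\tau)^*\Delta$ has no common components with the other divisors on a log resolution and its coefficients can be split off integrally (here the normal crossing hypothesis on $\Delta$ and the fact that it comes from the base are what make the rounding additive).

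So the real content is the reverse inclusion, equivalently: the pair $(X, D + t\Gamma)$ is klt (or at least log canonical with the right discrepancies) in a neighborhood of $X_y$. This is where I would invoke inversion of adjunction together with the semicontinuity results just proved. Concretely: by hypothesis $t < \lct(\Gamma|_{X_y}; X_y, D_y)$, so $(X_y, D_y + t\Gamma_y)$ is klt. Since $X_y$ is a Cartier divisor on $X$ (a fiber over a smooth point of a curve-germ, or more generally one slices $Y$ by hyperplanes through $f(X_y)$ as in the proof of \autoref{lem:lct_semicontinuous_divisor}), inversion of adjunction — exactly in the form used there, $(A, D|_A + t\Gamma|_A)$ lc $\Rightarrow (X, D + t\Gamma + A)$ lc near $A$ $\Rightarrow (X, D+t\Gamma)$ lc near $A$ — upgrades kltness on the fiber to kltness of $(X, D+t\Gamma)$ in a neighborhood of $X_y$. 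Iterating over a flag of hyperplane sections of $Y$ through $f(X_y)$ handles $\dim Y > 1$. Hence near $X_y$ the pair $(X, D+t\Gamma)$ is klt, which is precisely the statement that on the log resolution $\tau: Z\to X$ one has $\lceil K_{Z/X} - \tau^*(D+t\Gamma)\rceil \geq 0$ over a neighborhood of $X_y$, giving $\tau_*\sO_Z(\lceil K_{Z/X}-\tau^*(D+t\Gamma)\rceil) \supseteq \sO_X$ there, and combined with the always-true inclusion above yields equality.

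Finally I would assemble the two halves. Set $G := \lceil K_{Z/X} - \tau^*(D + t\Gamma + f^*\Delta)\rceil$ and $G_0 := \lceil K_{Z/X} - \tau^*(D+t\Gamma)\rceil$. Because $\tau$ is a log resolution of $(X, D + \Gamma + f^*\Delta)$, the divisor $(f\circ\tau)^*\Delta$ is part of the simple normal crossing configuration and shares no components with $K_{Z/X}$ or the strict transforms of $D, \Gamma$; writing $(f\circ\tau)^*\Delta = \sum a_i E_i$ with the $E_i$ distinct from the other divisors, rounding is additive on the disjoint index sets, so $G = G_0 + \lceil -(f\circ\tau)^*\Delta\rceil = G_0 - \lfloor (f\circ\tau)^*\Delta\rfloor$, and pushing forward, $\tau_*\sO_Z(G) = \tau_*\sO_Z(G_0) \otimes \sO_X(\lceil -f^*\Delta\rceil)$ near $X_y$ by the projection formula (valid since $\sO_X(\lceil -f^*\Delta\rceil)$ is a line bundle, $Y$ smooth). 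By the previous paragraph $\tau_*\sO_Z(G_0) \simeq \sO_X$ in a neighborhood of $X_y$, and the claim follows.

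The step I expect to be the main obstacle is the careful bookkeeping of the rounding in the presence of $f^*\Delta$: one must make sure that on the chosen log resolution the base divisor's pullback really does decouple integrally from the rest (this uses that $\Delta$ is SNC on the \emph{smooth} base and that we may refine the resolution so that $(f\circ\tau)^*\Delta$, the strict transform of $D+\Gamma$, and $\Exc(\tau)$ together form a simple normal crossing divisor with no shared components contributing to the round-up), and secondly that "in a neighborhood of $X_y$" is used consistently — the kltness of $(X, D+t\Gamma)$ is only local near $X_y$, not global, so all sheaf isomorphisms must be asserted only over that neighborhood. The inversion-of-adjunction input itself is routine given what is already in the excerpt.
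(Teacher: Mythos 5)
Your reduction to kltness of $(X, D + t\Gamma)$ near $X_y$ via inversion of adjunction is correct and matches what the paper does via \autoref{lem:lct_semicontinuous_divisor}, but the step you flag as the "main obstacle" is a genuine gap, and it is in fact the bulk of the paper's proof. The claim that rounding decouples --- that $\lceil K_{Z/X} - \tau^*(D + t\Gamma + f^*\Delta)\rceil = \lceil K_{Z/X} - \tau^*(D+t\Gamma)\rceil + \lceil -\tau^* f^*\Delta\rceil$ --- is false, because $(f\circ\tau)^*\Delta$ and $K_{Z/X}$ \emph{do} share components: both are supported on the $\tau$-exceptional divisors (those $E_i$ over $f^{-1}\Supp\Delta$ appear in the total transform $\tau^* f^*\Delta$ with fractional multiplicity $b_i$, and of course in $K_{Z/X}$ with multiplicity $a_i$). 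Making the resolution SNC does not prevent this; it is about transverse intersections, not disjoint supports. For the same reason $\lceil -\tau^* f^*\Delta\rceil \neq \tau^*\lceil -f^*\Delta\rceil$ in general, so the projection-formula step is not available either. Concretely, the coefficient of $E_i$ in $G$ is $\lceil a_i - b_i\rceil$, which can be negative (you only know $a_i > -1$; $b_i$ can be arbitrarily large), whereas in $G_0$ it is $\lceil a_i\rceil \geq 0$. So $G$ may have genuine poles along exceptional divisors and $\tau_*\sO_Z(G) = \sO_X$ is not automatic.

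What the paper actually does at this point: after the easy reduction to $\lfloor\Delta\rfloor = 0$ and $\lceil -f^*\Delta\rceil = 0$ (this is the \emph{only} place the integral part of $\Delta$ is split off by a projection formula, and it works there precisely because the pullback of an integral divisor has no rounding ambiguity), one has to prove that the induced map $\varsigma: \tau_*\sO_Z(G) \to \sO_X$, which exists because the nonnegative-coefficient exceptional parts push down to nothing, is actually \emph{surjective} near $X_y$. This is proved by induction on the number of components of $\Delta$: one takes a component $E$, sets $H = f^* E$, and shows surjectivity by a commutative diagram comparing $X$ with $H$, using local vanishing (the relative Nadel/Kawamata--Viehweg vanishing $R^1\tau_*\sO_Z(\lceil K_{Z/X} - \tau^*(D + t\Gamma + H + f^*(\Delta - (\coeff_E\Delta)E))\rceil + \wt H) = 0$, \cite[9.4.15]{Lazarsfeld_Positivity_in_algebraic_geometry_II}) to make the restriction map $\alpha$ surjective, a comparison of round-ups to get a map $\beta$ into $\tau_*\sO_Z(G)$, and the induction hypothesis applied to the slice $H$ for the map $\gamma$. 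None of this appears in your sketch, and it cannot be replaced by a clean decomposition of divisors, because no such decomposition holds.
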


\begin{proof}
  Since the statement is local near $y\in Y$, we may replace $Y$ with any arbitrarily
  small neighborhood of $y$, which we will do multiple times as needed without
  explicitly saying so. 

  In order to prove the desired isomorphism it is enough to find a map
  \begin{equation}
    \label{eq:lct-map}
    \xymatrix{%
      \tau_* \sO_Z\left( \left\lceil K_{Z/X} - \tau^*  ( D + t  \Gamma  +  f^* \Delta )
        \right\rceil \right)     \ar[r]^-\varsigma & \sO_X( \lceil -  f^* \Delta \rceil )
    }
  \end{equation}
  which is surjective in a neighborhood of $X_y$. Indeed, $\tau_* \sO_Z\left(
    \left\lceil K_{Z/X} - \tau^* ( D + t \Gamma + f^* \Delta ) \right\rceil \right)$
  is torsion-free of rank $1$, so if $\varsigma$ is surjective, then it is
  generically an isomorphism and hence $\ker\varsigma$ would be a torsion sheaf and
  hence zero. Therefore, it is enough to prove the existence of a map as in
  \autoref{eq:lct-map}.

  Next we will prove that such a map exists.  By
  \autoref{lem:lct_semicontinuous_divisor} and the klt assumption
  \begin{equation*}
    K_{Z} + \tau^{-1}_*( D + t  \Gamma) =
    \tau^*  (K_X+ D + t  \Gamma)  + \sum a_iE_i
  \end{equation*}
  where $a_i>-1$ and $E_i$ are pairwise distinct irreducible $\tau$-exceptional 
  divisors. (In order to get equality we choose canonical divisors on $X$ and $Z$ in
  a coherent manner). 
  Let us write $\tau^*f^*\Delta$ in a similar fashion:
  \begin{equation*}
    \tau^*f^*\Delta =     \tau^{-1}_*f^*\Delta + \sum b_iE_i
  \end{equation*}
  with some appropriate $b_i\in\bQ$. Putting these together we get
  \begin{equation*}
    K_{Z/X} - \tau^*  ( D + t  \Gamma  +  f^*\Delta)
    = - \tau^{-1}_*( D + t  \Gamma   +  f^*\Delta) + \sum (a_i-b_i)E_i,
  \end{equation*}
  which, using the facts that $D$ and $\Gamma$ has no $f$-vertical components and
  that $\lfloor D + t\Gamma\rfloor=0$ by the klt property, yields that
  \begin{equation}
    \label{eq:klt-roundup}
    \begin{aligned}
      \left\lceil K_{Z/X} - \tau^* ( D + t \Gamma + f^*\Delta) \right\rceil =
      \makebox[.5\columnwidth]{} \\ = \left\lceil - \tau^{-1}_*( D + t \Gamma +
        f^*\Delta) + \sum (a_i-b_i)E_i \right\rceil = \makebox[.15\columnwidth]{} \\
      = \left\lceil -f^*\Delta \right\rceil + \sum \left\lceil a_i-b_i\right\rceil
      E_i .
    \end{aligned}
  \end{equation}

  Since the $E_i$ are $\tau$-exceptional, after pushing forward via $\tau$, the
  components with non-negative coefficient $\left\lceil a_i-b_i\right\rceil$
  disappear and hence we obtain a map in a neighborhood of $X_y$ as requested in
  \autoref{eq:lct-map}:
  \begin{equation*}
    \xymatrix{%
      \tau_* \sO_Z\left( \left\lceil K_{Z/X} - \tau^*  ( D + t  \Gamma  +  f^* \Delta
          )         \right\rceil \right)     \ar[r]^-\varsigma & \sO_X( \lceil -  f^*
      \Delta      \rceil ).  
    }
  \end{equation*}

  In the rest of the proof we will show that this map is surjective in a neighborhood
  of $X_y$.
 
  Notice that the integral part of $\Delta$ makes no difference by the projection
  formula, so we may replace $\Delta$ with $\{ \Delta\}$, that is, we may assume that
  $\lfloor \Delta \rfloor=0$.  Furthermore, by the klt assumption on $(X_y,D_y)$ it
  follows that $X_y$ is reduced and hence we may assume that the pre-image of any
  component of $\Delta$ is reduced, which implies that any estimate or rounding of
  the coefficients of $\Delta$ remain true for $f^*\Delta$.  In particular, we may
  assume that $ \lceil - f^* \Delta \rceil=0$

  We will use induction on the number of components of $\Delta$.  If $\Delta=0$, then
  the statement follows from \autoref{eq:klt-roundup}, since in this case
  $b_i=0$. Next, let $E$ be an arbitrary component of $\Delta$. Define $H:=f^* E$,
  and let $\widetilde{H}$ be the strict transform of $H$ in $Z$. Note that
  $\tau|_{\wt H}:\wt H\to H$ is a log resolution of $(H, D|_H+ \Gamma|_H +
  f^*((\Delta-(\coeff_E \Delta) E)|_E))$.

  Observe, that in order to prove that $\varsigma$ of \autoref{eq:lct-map} is
  surjective, using Nakayama's lemma, it is enough to prove that it is surjective
  after composing with the natural surjective map $\sO_X\to\sO_H$. We will denote
  this composition by $\delta$.

  Consider the following commutative diagram. After the diagram we explain why the
  indicated maps exists and why $\alpha$ and $\gamma$ are
  surjective. 
  \begin{equation}\label{eq:the-diagram}\hskip-3em
    \begin{aligned}
      \xymatrix{%
        q\tau_* \sO_Z \left( \left\lceil K_{Z/X} - \tau^* (D + t \Gamma + H + f^*
            (\Delta - (\coeff_E \Delta) E) ) + \widetilde{H} \right\rceil \right)
        \ar@{->>}[dd]_\alpha \ar@{}@<3em>[d]^(.2){}="a1" &
        \ar@{}@<-11.5em>[d]^(.975){}="a2" \ar
        ^(.5)\beta "a1";"a2" \\ &
        **[l]\tau_* \sO_Z \left( \left\lceil K_{Z/X} - \tau^* (D + t \Gamma + f^*
            \Delta ) \right\rceil \right) \hskip-4em \ar@<-3.25em>[dd]^\delta
        \\
        \tau_* \sO_{\widetilde{H}} \left( \left\lceil K_{\widetilde{H}/H} - \tau^*
            \left( D|_H + t \Gamma|_H + f^* ((\Delta - (\coeff_E \Delta) E)|_E)
            \right) \right\rceil \right)
        \ar@{}@<10.75em>[d]^(.2){}="b1" & \ar@{}@<-3.75em>[d]^(.975){}="b2"
        \ar@{->>}_-\gamma "b1";"b2" \\ &
        \hskip-6em \sO_H }
    \end{aligned}
  \end{equation}
  Using adjunction on $X$ and $Z$ respectively we have that $K_H=(K_X+H)|_H$ and
  $K_{\wt H}=(K_Z+\wt H)|_{\wt H}$ and hence $K_{\wt H/H}=(K_{Z/X}+\wt
  H-\tau^*H)|_{\wt H}$, so $\alpha$ is simply the $\tau_*$ of the restriction map
  from $Z$ to $\wt H$.  By \cite[Theorem
  9.4.15]{Lazarsfeld_Positivity_in_algebraic_geometry_II},
  \begin{equation*}
    R^1 \tau_* \sO_Z \left( \left\lceil K_{Z/X}  - \tau^* ( D +  t \Gamma   +  H  +
        f^* (\Delta - (\coeff_E \Delta) E)) \right\rceil \right) = 0,
  \end{equation*}
  which implies that $\alpha$ is surjective.
  The map $\gamma$ is the equivalent of $\varsigma$ for $H$ and hence it is
  surjective by the inductive hypothesis. 
  To construct  $\beta$ we will show that 
  \begin{equation}
    \label{eq:two_divisors_ceiled}
    \left\lceil K_{Z/X} - \tau^* (D  + t \Gamma  + H + f^* (\Delta - (\coeff_E
      \Delta) E) )      + \widetilde{H}      \right\rceil    \leq \left \lceil
      K_{Z/X} - \tau^* (D +   t    \Gamma + f^* \Delta) \vphantom{\widetilde{H}
      }\right\rceil.\hspace{-.75em} 
  \end{equation}
  This also proves that $\beta$ is injective, but we do not need that fact. 

  To show \autoref{eq:two_divisors_ceiled} first note that the coefficient of
  $\widetilde{H}$ in the divisor on the left is zero (even before the round-up) and
  it is zero after the round-up on the right side, because of our assumption that
  $\lfloor \Delta \rfloor=0$.  To compare the coefficients of the other prime
  divisors, note that the difference in \autoref{eq:two_divisors_ceiled} between the
  divisor on the left and on the right side (before the round-up) is
  \begin{equation}
    \label{eq:F}
    \begin{aligned}
      F:=\widetilde{H} - \tau^* H + \tau^* f^* ( (\coeff_E \Delta) E) =
      \makebox[.4\columnwidth]{}  
      \\ =
      \widetilde{H} - \tau^* (H - f^* ( (\coeff_E \Delta) E) ) = 
      \makebox[.2\columnwidth]{}  \\ = \widetilde{H} -
      \tau^* (1 - \coeff_E \Delta) H .
    \end{aligned}
  \end{equation}
  Since $0 \leq \coeff_E \Delta < 1$ by our initial simplification, \autoref{eq:F}
  implies that $\coeff_G F \leq 0$ for every prime divisor $\wt{H} \neq G \subseteq
  \Supp \tau^* H$. This shows that \autoref{eq:two_divisors_ceiled} is satisfied over
  each divisor, except possibly over $\wt{H}$. To see what happens over $\wt{H}$ let
  us compute the coefficients over it on the two sides of
  \autoref{eq:two_divisors_ceiled} (before the round-up): On the left hand side the
  terms containing $H$ are $ \wt{H} - \tau^* (H + f^*(\Delta-(\coeff_E \Delta) E))$,
  however the coefficients of these over $\wt{H}$ cancel out. On the right hand side
  the coefficient of $\wt{H}$ is $- \coeff_{\wt{H}} \tau^* f^* \Delta = - \coeff_E
  \Delta$, which is at most $0$ but greater than $-1$. Hence, after rounding up the
  coefficients of $\wt{H}$ on both sides end up being $0$. This proves the existence
  (and injectivity) of $\beta$ and then \autoref{eq:the-diagram} shows that $\delta$
  is surjective and we had already observed that this implies the surjectivity of
  $\varsigma$ by Nakayama's lemma completing the proof.
\end{proof}

\begin{proposition}
  Let $(V, D_V)$ and $(Y, D_Y)$ be two klt pairs and $\sL$ and $\sN$ two line bundles
  on $V$ and $Y$ respectively. Then
  \begin{equation*}
    \lct \left(p_V^* \sL \otimes p_Y^* \sN ; V \times Y, p_V^* D_V  + p_Y^* D_Y \right) =
    \min \{ \lct(\sL;V, D_V), \lct(\sN;Y, D_Y) \} 
  \end{equation*}
\end{proposition}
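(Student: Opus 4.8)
Throughout write $\ell:=\min\{\lct(\sL;V,D_V),\lct(\sN;Y,D_Y)\}$ and $\Delta:=p_V^*D_V+p_Y^*D_Y$. The assertion is an equality of numbers, and I will prove $\le$ and $\ge$ separately.

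\textbf{The inequality $\le$.} By the definition above, the minima defining $\lct(\sL;V,D_V)$ and $\lct(\sN;Y,D_Y)$ are attained, say by $\Gamma_V\in|\sL|$ and $\Gamma_Y\in|\sN|$. Set $\Gamma_0:=p_V^*\Gamma_V+p_Y^*\Gamma_Y$; this is an effective divisor in the complete linear system of $p_V^*\sL\otimes p_Y^*\sN$ (its defining section being the image under K\"unneth of the tensor of the two defining sections). Since the left-hand side of the Proposition is, by definition, $\le\lct(\Gamma_0;V\times Y,\Delta)$, it suffices to show $\lct(\Gamma_0;V\times Y,\Delta)=\min\{\lct(\Gamma_V;V,D_V),\lct(\Gamma_Y;Y,D_Y)\}$. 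For $t\in\bQ_{\ge0}$ the pair $(V\times Y,\Delta+t\Gamma_0)$ is the box sum $\bigl(V\times Y,\;p_V^*(D_V+t\Gamma_V)+p_Y^*(D_Y+t\Gamma_Y)\bigr)$, and such a box sum is lc if and only if both $(V,D_V+t\Gamma_V)$ and $(Y,D_Y+t\Gamma_Y)$ are lc: one direction is ``a product of lc pairs is lc'' (take log resolutions of the two factors, their product resolves the product, and discrepancies add), the other follows by restricting to $V\times\{y\}$ and $\{v\}\times Y$ for general $y,v$ and using adjunction along a general fibre. Hence the threshold is the smaller of the two, which gives $\le$.

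\textbf{The inequality $\ge$.} It is enough to prove that for every $\Gamma$ in the complete linear system of $p_V^*\sL\otimes p_Y^*\sN$ and every rational $0<t<\ell$ the pair $(V\times Y,\Delta+t\Gamma)$ is lc; this yields $\lct(\Gamma;V\times Y,\Delta)\ge\ell$ for all $\Gamma$, and hence $\min_\Gamma\lct(\Gamma;V\times Y,\Delta)\ge\ell$. I argue by induction on $\dim V$. If $\dim V=0$, then $V\times Y=Y$, $\Delta=D_Y$, and $\lct(\sL;V,D_V)=+\infty$, so $\ell=\lct(\sN;Y,D_Y)$ and the claim is the definition; similarly if $\dim Y=0$. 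Assume $\dim V\ge1$ and fix a point $(v_0,y_0)$; it suffices to show the pair is lc near it. Choose a sufficiently ample general divisor $H\subseteq V$ through $v_0$ so that: $H$ is normal and $(H,D_V|_H)$ is klt (a general very ample section through a point of a klt pair is klt); the restriction map $H^0(V,\sL)\to H^0(H,\sL|_H)$ is surjective (Serre vanishing); $H\not\subseteq\Supp D_V$ and $p_V^*H\not\subseteq\Supp\Gamma$; and $H$ is transverse both to a fixed log resolution of the universal divisor over $|\sL|$ and to a log resolution of $(V,D_V+\Gamma)$. The last two conditions force $\lct(\Gamma'|_H;H,D_V|_H)\ge\lct(\Gamma';V,D_V)$ for \emph{every} $\Gamma'\in|\sL|$: for $c<\lct(\Gamma';V,D_V)$ the pair $(V,D_V+c\Gamma')$ is lc, adding $H$ is crepant near $H$ by transversality, and adjunction along the Cartier divisor $H$ gives $(H,D_V|_H+c\Gamma'|_H)$ lc. Combined with the surjectivity of sections this yields $\lct(\sL|_H;H,D_V|_H)\ge\lct(\sL;V,D_V)$, hence $\min\{\lct(\sL|_H;H,D_V|_H),\lct(\sN;Y,D_Y)\}\ge\ell$.

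Now $S:=p_V^*H=H\times Y$ is a normal prime Cartier divisor in $V\times Y$ through $(v_0,y_0)$, and, $S$ being Cartier with $S\not\subseteq\Supp(\Delta+t\Gamma)$, the different is the naive restriction: $\Diff_S(\Delta+t\Gamma)=p_H^*(D_V|_H)+p_Y^*D_Y+t\,\Gamma|_S$, with $\Gamma|_S$ in the complete linear system of $p_H^*(\sL|_H)\otimes p_Y^*\sN$. Since $t<\ell\le\min\{\lct(\sL|_H;H,D_V|_H),\lct(\sN;Y,D_Y)\}$, the inductive hypothesis applied to $(H,D_V|_H)$ and $(Y,D_Y)$ shows $\bigl(S,\Diff_S(\Delta+t\Gamma)\bigr)$ is lc. By inversion of adjunction (applicable because $K_{V\times Y}+S+\Delta+t\Gamma$ is $\bQ$-Cartier and $S$ is normal), $(V\times Y,S+\Delta+t\Gamma)$ is lc in a neighbourhood of $S$; subtracting the effective divisor $S$ (which only raises discrepancies) gives that $(V\times Y,\Delta+t\Gamma)$ is lc near $S$, hence near $(v_0,y_0)$. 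This closes the induction, and combining the two inequalities finishes the proof.

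\emph{Expected main obstacle.} The delicate step is the uniform inequality $\lct(\sL|_H;H,D_V|_H)\ge\lct(\sL;V,D_V)$: restriction to a general hyperplane section must not lower the log canonical threshold of the line bundle, and this has to hold \emph{simultaneously} for all members of $|\sL|$ (not just a general one), which is the reason for passing to a log resolution of the universal divisor over $|\sL|$ before choosing $H$; the accompanying technical point is that a general very ample section through a point of a klt pair stays klt.
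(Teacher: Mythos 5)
Your route --- induction on $\dim V$ via a hyperplane cut through a fixed point $v_0$, followed by inversion of adjunction --- is genuinely different from the paper's proof, which never restricts to a subvariety of $V$ at all: it resolves $(Y,D_Y)$ to $\widetilde Y$, arranges $\pi^*\Gamma=\Gamma'+\widetilde f^*\Delta$ with $\Delta$ snc on $\widetilde Y$ and $\Gamma'$ containing no fibre (Viehweg), and applies \autoref{prop:tricky} to show the pushforward of the relevant multiplier ideal along $V\times\widetilde Y\to V\times Y$ is trivial for $t<\ell$, which forces klt. Your $\le$ direction (box products of resolutions, discrepancies add) is correct.

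The $\ge$ direction has a genuine gap, exactly at the step you flagged. The parenthetical lemma ``a general very ample section through a point of a klt pair is klt'' is false. Take $V=\bP^2$, $D_V=\tfrac45 C$ with $C$ a cuspidal cubic, cusp at $v_0$: since $\lct(C;\bP^2)=5/6>4/5$, the pair $(V,D_V)$ is klt, but for \emph{any} smooth very ample curve $H$ through $v_0$ with generic tangent direction, the local intersection multiplicity is $\mult_{v_0}(C\cdot H)=\mult_{v_0}C=2$, so $D_V|_H$ has coefficient $8/5>1$ at $v_0$ and $(H,D_V|_H)$ is not even lc; raising the degree of $H$ changes nothing because the multiplicity $2$ is intrinsic to $C$ at $v_0$. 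Since your inductive hypothesis is the Proposition applied to the (supposedly klt) pair $(H,D_V|_H)$, the induction cannot run with $H$ forced through $v_0$, and it must be, for inversion of adjunction to say anything near $(v_0,y_0)$.

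The auxiliary per-member inequality $\lct(\Gamma'|_H;H,D_V|_H)\ge\lct(\Gamma';V,D_V)$ for \emph{every} $\Gamma'\in|\sL|$ with $H$ fixed general is also false: with $V=\bP^2$, $D_V=0$, $\sL=\sO(2)$, $H$ a general line, and $\Gamma'$ a smooth conic tangent to $H$ at $P$, one has $\lct(\Gamma';\bP^2,0)=1$ but $\Gamma'|_H=2P$ and $\lct(2P;\bP^1,0)=1/2$. Transversality of $H\times|\sL|$ to a log resolution of the universal divisor does not repair this: the restriction of that resolution to $H\times\{[\Gamma']\}$ is a log resolution of $(H,D_V|_H+\Gamma'|_H)$ only for $[\Gamma']$ in a dense open subset, not for the special members at which the minimum defining $\lct(\sL|_H;H,D_V|_H)$ may be achieved, so the discrepancy bookkeeping does not transfer. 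The aggregated inequality $\lct(\sL|_H;H,D_V|_H)\ge\lct(\sL;V,D_V)$ that you actually need is not contradicted by this example, but the per-member statement you derive it from is wrong, so it remains unproven. As written, the induction does not close.
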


\begin{proof}
  It is obvious that 
  \begin{equation*}
    \lct(p_V^* \sL \otimes p_Y^* \sN ; V \times Y, p_V^* D_V +
    p_Y^* D_Y) \leq \min \{ \lct(\sL;V, D_V), \lct(\sN;Y, D_Y) \}.
  \end{equation*}
  We have to prove the opposite inequality.  To do that, choose $\Gamma \in |p_V^*
  \sL \otimes p_Y^* \sN |$, $x \in V \times Y$ and $t< \min \{ \lct(\sL;V, D_V),
  \lct(\sN;Y, D_Y) \}$. Let $\rho: \widetilde{Y}, \to (Y,D_Y)$ be a log-resolution,
  $D_{\widetilde{Y}}:= \tau^* D_Y$, $\pi : X:= V \times \widetilde{Y} \to V \times Y$
  the natural morphism, $\widetilde{f} : X \to \widetilde{Y}$ the projection and
  $\tau : Z \to X$ a log resolution of $\left(X, \pi^* \Gamma+ p_V^* D_V +
    \widetilde{f}^* D_{\widetilde{Y}} \right)$. Note that, according to \cite[Claim
  5.20]{Viehweg_Quasi_projective_moduli}, $\widetilde{Y}$ can be chosen such that
  $\pi^* \Gamma= \Gamma' + \widetilde{f}^* \Delta$ where $\Delta$ is simple normal
  crossing on $\widetilde{Y}$ and $\Gamma'$ contains no fibers.

  According to \autoref{prop:tricky}, there is an isomorphism
  \begin{multline*}
    \pi_* \tau_* \sO_Z\left( \left\lceil K_{Z/V \times Y} - \tau^* \pi^* (p_V^* D_V
        + p_Y^* D_Y+ t  \Gamma   )\right\rceil \right)  \\
    \simeq \pi_* \tau_* \sO_Z\left( \left\lceil K_{Z/X} - \tau^* \left( p_V^* D_V + t
        \Gamma' - \widetilde{f}^* \left(K_{\widetilde{Y}/Y} + D_{\widetilde{Y}} - t
        \Delta \right) \right)\right\rceil \right)
    \\ \simeq
    \pi_* \sO_X \left( \left\lceil \widetilde{f}^* (K_{\widetilde{Y}/Y} +
        D_{\widetilde{Y}} - t \Delta) \right\rceil \right) .
  \end{multline*}
  Note that by the choice of $t$, $\left\lceil \widetilde{f}^*
    (K_{\widetilde{Y}/Y} + D_{\widetilde{Y}} - t \Delta) \right\rceil \geq 0$ and
  then
  \begin{equation*}
    \pi_* \sO_X \left( \left\lceil  \widetilde{f}^* (K_{\widetilde{Y}/Y}  +
        D_{\widetilde{Y}}- t \Delta) \right\rceil \right)  \simeq \sO_{V \times Y} . 
  \end{equation*}
  This finishes the proof. 
\end{proof}

For the next statement recall \autoref{notation:product}.

\begin{corollary}
  \label{cor:lct_product}
  If $(X,D)$ is a projective klt pair, $\sL$ a line bundle on $X$, then for all
  integers $m>0$,
  \begin{equation*}
    \lct \left(  \sL^{(m)};X^{(m)}, D_{X^{(m)}}
    \right)  = \lct( L; X,D).
  \end{equation*}
\end{corollary}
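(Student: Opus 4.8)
The plan is to induct on $m$, using the preceding proposition as the two‑factor building block; the essential geometric input has already been done there (via the vanishing argument of \autoref{prop:tricky}), so what remains is a purely formal iteration. For $m=1$ there is nothing to prove. So suppose $m\ge 2$ and that the identity $\lct\bigl(\sL^{(m-1)};X^{(m-1)},D_{X^{(m-1)}}\bigr)=\lct(L;X,D)$ has already been established; here and below $X^{(m)}$, $D_{X^{(m)}}$ and $\sL^{(m)}$ are formed over $\Spec k$ (cf.\ \autoref{notation:product}).

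First I would record the compatibility of these constructions with the splitting $X^{(m)}\cong X^{(m-1)}\times X$ into the first $m-1$ factors and the last one. Writing $p'\colon X^{(m)}\to X^{(m-1)}$ and $p''\colon X^{(m)}\to X$ for the two projections, one reads off directly from the definitions that
\begin{equation*}
  \sL^{(m)}=\bigotimes_{i=1}^m p_i^*\sL\ \cong\ (p')^*\sL^{(m-1)}\otimes (p'')^*\sL,
  \qquad
  D_{X^{(m)}}=\sum_{i=1}^m p_i^*D=(p')^*D_{X^{(m-1)}}+(p'')^*D .
\end{equation*}
I would also note that $(X^{(m-1)},D_{X^{(m-1)}})$ is again a projective klt pair: projectivity is clear, and the klt property of a product of klt pairs follows by taking the product of log resolutions, since in characteristic zero a product of smooth varieties is smooth, a box‑sum of simple normal crossing divisors is again simple normal crossing, and the discrepancy of an exceptional divisor of the form $E\times(\text{the other factor})$ equals the discrepancy of $E$ on the corresponding factor (because both the canonical classes and the boundaries pull back factor‑wise).

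With these observations in hand, I would apply the preceding proposition to the two projective klt pairs $(X^{(m-1)},D_{X^{(m-1)}})$ and $(X,D)$ with the line bundles $\sL^{(m-1)}$ and $\sL$, obtaining
\begin{equation*}
  \lct\!\left(\sL^{(m)};X^{(m)},D_{X^{(m)}}\right)
  =\min\!\left\{\,\lct\!\left(\sL^{(m-1)};X^{(m-1)},D_{X^{(m-1)}}\right),\ \lct(\sL;X,D)\,\right\}.
\end{equation*}
By the inductive hypothesis the first entry of the minimum equals $\lct(L;X,D)$, so the minimum is $\lct(L;X,D)$, which completes the induction and the proof. I do not expect any genuine obstacle here: the analytic heart — the behaviour of the log canonical threshold under products — is already supplied by the preceding proposition, and the only points needing (routine) care are the two bookkeeping items above, namely the identification of $\sL^{(m)}$ and $D_{X^{(m)}}$ under the product splitting and the stability of the klt property under products.
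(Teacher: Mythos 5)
Your proof is correct and is exactly the intended argument: the corollary is stated without proof in the paper precisely because it is the immediate consequence, by induction on $m$, of the preceding two-factor product proposition, together with the routine observations that $X^{(m)}\cong X^{(m-1)}\times X$ splits $\sL^{(m)}$ and $D_{X^{(m)}}$ factor-wise and that the product of projective klt pairs is again a projective klt pair. Your treatment of the klt-stability-under-products point (product of log resolutions, box-sum of snc divisors is snc, factor-wise discrepancies) is the standard and correct justification.
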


In the next statement multiplier ideals are used. Recall that the \emph{multiplier
  ideal} of a pair $(X, D)$ of a normal variety and an effective
$\bQ$-divisor is $\sJ(X,D):= \tau_* \sO_Z( \lceil K_{X/X} - \tau^* D \rceil )
\subseteq \sO_X$.

\begin{proposition}
  \label{prop:nef}
  Let $f : X \to Y$ be a surjective morphism between projective, normal varieties
  with equidimensional, reduced $S_2$ fibers, $L$ a Cartier divisor and $\Delta \geq
  0$ an effective divisor on $X$ such that $\Delta$ containins no general fibers,
  $(X_y, \Delta_y)$ is klt for general $y \in Y$ and $L - K_{X/Y} - \Delta$ is a nef
  and f-ample $\bQ$-Cartier divisor. Assume further that $K_Y$ is Cartier. Then $f_*
  \sO_X(L)$ is weakly-positive (in the weak sense).
\end{proposition}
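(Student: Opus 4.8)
The plan is to run Viehweg's fiber-product argument for weak positivity of pushforwards, with \autoref{prop:tricky} supplying the multiplier-ideal input and \autoref{lem:lct_semicontinuous_line_bundle} together with \autoref{cor:lct_product} providing the numerical uniformity over the base. First I would reduce to the case in which $Y$ is smooth and projective: weak positivity may be tested after passing to big open subsets, it is inherited by generically surjective images by \autoref{lem:wp_big_properties}, and for a resolution $\sigma\colon Y'\to Y$ the sheaf $f_*\sO_X(L)$ is recovered, up to reflexive hull, as a generically surjective image built from the family pulled back over $Y'$; a compactification of $Y$ is dealt with similarly. Writing $P:=L-K_{X/Y}-\Delta$, which is a nef and $f$-ample $\bQ$-Cartier divisor, and fixing an ample line bundle $\sH$ on $Y$, the goal becomes: for every $a>0$ exhibit $b>0$ with $\Sym^{[ab]}\!\left(f_*\sO_X(L)\right)\otimes\sH^b$ generically globally generated.

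I would then pass to fiber powers. For $m\in\bN_+$ set $X_m:=X^{(m)}_Y$ and $f_m:=f^{(m)}$, and on $X_m$ consider $L_m:=\sum_i p_i^*L$, $\Delta_m:=\sum_i p_i^*\Delta$ and $P_m:=\sum_i p_i^*P$, so that $L_m=K_{X_m/Y}+\Delta_m+P_m$ on the relative Gorenstein locus, $P_m$ is nef and $f_m$-ample, and the general fiber $(X_{m,y},\Delta_{m,y})$ is klt by \autoref{cor:lct_product}. Iterating \autoref{lem:pushforward_tensor_product_isomorphism} identifies $(f_m)_*\sO_{X_m}(L_m)$ with $\left[\bigotimes^m\right]f_*\sO_X(L)$. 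Since a torsion-free sheaf on a normal variety is weakly positive as soon as one of its reflexive tensor powers is, and since $\Sym^{[ab]}(\sE)\otimes\sH^b$ is generically globally generated once $\left[\bigotimes^{ab}\right]\sE\otimes\sH$ is (using that in characteristic zero $\Sym^{[ab]}$ is a quotient and that $\sH^{b-1}$ is globally generated), it is enough to prove that $(f_m)_*\sO_{X_m}(L_m)\otimes\sH$ is generically globally generated for every sufficiently large $m$.

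To produce the sections on $X_m$ I would apply \autoref{prop:tricky}. By \autoref{lem:lct_semicontinuous_line_bundle}, applied to a fixed power of $\sO_X(L)\otimes f^*\sH$ whose general members cut out the linear systems one wants to use, there is a constant $c>0$ bounding below the log canonical threshold on every klt fiber of $f$, hence by \autoref{cor:lct_product} on every klt fiber of each $f_m$. For $m$ large one may then pick a rational number $t$ with $0<t<c$ and a general divisor $\Gamma$ in the relevant linear system on $X_m$, containing no general fiber, with $t<\lct(\Gamma_y;X_{m,y},\Delta_{m,y})$ for general $y$ and with a small twist of $P_m-t\Gamma$ by an ample divisor pulled back from $Y$ still nef and big over $Y$. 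Taking a log-resolution $\tau\colon Z\to X_m$ of $(X_m,\Delta_m+\Gamma+f_m^*\Delta_Y)$ for a suitable normal crossing $\bQ$-divisor $\Delta_Y$ on $Y$ chosen to absorb the $\sH$-twist, \autoref{prop:tricky} identifies $\tau_*\sO_Z\!\left(\left\lceil K_{Z/X_m}-\tau^*(\Delta_m+t\Gamma+f_m^*\Delta_Y)\right\rceil\right)$ with $\sO_{X_m}(\lceil-f_m^*\Delta_Y\rceil)$ near a general fiber. Combining this with relative Kawamata-Viehweg vanishing for the nef and relatively big class obtained from $P_m-t\Gamma$, equivalently with the semipositivity and vanishing results of Fujino, converts the positivity of $P_m$ into the vanishing of higher direct images and hence into global sections of the appropriate twist of $\sO_{X_m}(L_m)$; pushing these forward by $f_m$ and twisting by $\sH$ gives the required generic global generation.

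The hard part will be the simultaneous choice of $m$, of the threshold $t$, and of the auxiliary ample twists on $X_m$ and on $Y$: one needs $\Gamma$ to have log canonical threshold strictly above $t$ along every fiber uniformly in $y$ (this is exactly what \autoref{lem:lct_semicontinuous_line_bundle} and \autoref{cor:lct_product} are for), while $P_m-t\Gamma$, after the corrections, must remain nef and big over $Y$ so that vanishing applies, and the resulting sections must be numerous enough to generate a reflexive symmetric power generically rather than only the sheaf itself --- which is precisely why passing to a high fiber power cannot be avoided. A secondary, more routine, technical point is tracking the base-change behaviour of $f_*\sO_X(L)$ through the reduction to smooth projective $Y$ and through the normalizations implicit in the fiber powers, since $f$ is not assumed flat; this is handled by consistently replacing sheaves with their reflexive hulls on big open sets and using generic surjectivity together with \autoref{lem:wp_big_properties}.
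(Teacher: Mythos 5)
Your plan misidentifies the mechanism needed here and, as written, has a gap you cannot close. The key move in the paper's proof of this proposition is \emph{not} a Viehweg-style log-resolution argument with a threshold $t$ and auxiliary divisor $\Gamma$; it is a Koll\'ar-style iterated hyperplane-slicing argument with Nadel vanishing. Concretely: one fixes a general very ample $A$ on $Y$, forms the multiplier ideal $\sJ\bigl(X^{(m)},\Delta_{X^{(m)}}\bigr)$ of the \emph{pair itself} (no auxiliary $\Gamma$, no $t<\lct$), and observes that
\[
L_{X^{(m)}} + \bigl(f^{(m)}\bigr)^*(K_Y+A+N) - \bigl(K_{X^{(m)}}+\Delta_{X^{(m)}}\bigr)
= \underbrace{(L-K_{X/Y}-\Delta)_{X^{(m)}}}_{\text{nef, $f^{(m)}$-ample}} + \bigl(f^{(m)}\bigr)^*(\underbrace{A+N}_{\text{ample}})
\]
is ample, so Nadel vanishing kills $H^1$ of $\sJ\bigl(X^{(m)},\Delta_{X^{(m)}}\bigr)\otimes\sL^{(m)}\bigl(\bigl(f^{(m)}\bigr)^*(K_Y+A+N)\bigr)$ and hence the restriction map from $X^{(m)}$ to the hyperplane slice $X^{(m)}_A$ is surjective on $H^0$. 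Iterating this $n=\dim Y$ times and dropping the multiplier ideal at the last step (because the general fiber pair is klt, so there its multiplier ideal is trivial) produces sections that generate $\sO_Y\bigl(K_Y+A+\sum A_i\bigr)\otimes\bigl[\bigotimes^m\bigr]f_*\sL$ at a general point for every $m$, from which weak positivity follows via the symmetrization map.

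Your version substitutes for this a vanishing of \emph{higher direct images} $R^if_*$, and then asserts this ``hence'' yields global sections; that inference is not valid. Vanishing of $R^if_*\sF$ says nothing on its own about $H^0$, and what you actually need is surjectivity of restriction maps on $H^0$ \emph{on the total space $X^{(m)}$}, which the paper gets from absolute Nadel vanishing on $X^{(m)}$, not from relative vanishing. You also do not identify the iterated slicing by hyperplanes on $Y$, which is what converts the vanishing into generic global generation of a pushforward.

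A secondary problem is that \autoref{prop:tricky}, \autoref{lem:lct_semicontinuous_line_bundle} and \autoref{cor:lct_product} are the tools for the \emph{next} step of the paper's argument --- the proof of \autoref{thm:pushforward} --- where one first produces, from the bigness of $\det f_*\sO_X(q(K_{X/Y}+D))$, an effective divisor $\Gamma$ on $X^{(m)}$ and then uses uniform $\lct$ bounds so that $\bigl(X^{(m)}_y,\frac1l\Gamma_y+(D_{X^{(m)}})_y\bigr)$ is klt, in order to feed the present \autoref{prop:nef} as a black box. In \autoref{prop:nef} itself there is no $\Gamma$ to choose and no threshold $t$; introducing them creates exactly the unresolved tension you flag at the end of your write-up (choosing $\Gamma$, $t$, and twists compatibly), which is a sign the strategy is off. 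Replace the log-resolution/$\lct$ scaffolding with the direct Nadel-vanishing-plus-hyperplane-induction argument on $X^{(m)}$ and the proposition goes through cleanly.
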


\begin{proof}
  Set $\sL:= \sO_X(L)$.  Let $A$ be a general very ample effective divisor on $Y$ and
  $m>0$ an integer.  In this proof a subscript of $A$ will denote a base change to
  $A$.  
  \begin{subclaim}
    \label{claim:H0-is-surjective}
    For any nef Cartier divisor $N$ on $Y$ the natural restriction map,
    
    \begin{multline*}
      H^0 \left( X^{(m)}, \sJ \left(X^{(m)}, \Delta_{X^{(m)}} \right) \otimes
        \sL^{(m)} \left( \left(f^{(m)} \right)^* ( K_Y + 2 A + N ) \right) \right)
      \longrightarrow %
      \\ \longrightarrow H^0 \left( X^{(m)}_A, \sJ \left(X^{(m)}_A,
          \Delta_{X^{(m)}_A} \right) \otimes \left.\sL^{(m)} \left( \left(f^{(m)}
            \right)^* ( K_Y + 2 A + N )\right)\right|_{X^{(m)}_A} \right)
    \end{multline*}
    is surjective.
  \end{subclaim}
  \begin{proof}
    Note that in the statement we are already using the fact that $\sJ
    \left(X^{(m)}_A, \Delta_{X^{(m)}_A} \right) \simeq \sO_{X^{(m)}_A} \otimes \sJ
    \left(X^{(m)}, \Delta_{X^{(m)}} \right)$, which follows from the general choice
    of $A$.  For the above homomorphism to be surjective, it is enough to prove that
  \begin{equation}
    \label{eq:H1-vanish}
    H^1  \left( X^{(m)}, \sJ \left(X^{(m)}, \Delta_{X^{(m)}} \right) \otimes
      \sL^{(m)} \left( \left(f^{(m)} \right)^* 
        ( K_Y + A + N )\right) \right) =0
  \end{equation}
  However, 
  \begin{equation*}
      L_{X^{(m)}}  + \left( f^{(m)} \right)^*    ( K_Y + A + N ) 
    - \left(K_{X^{(m)}} + \Delta_{X^{(m)}}\right) 
    =  \underbrace{L - K_{X/Y} - \Delta}_{\txt{\tiny \txt{nef and\\ relatively ample}}} +
    \left( f^{(m)} \right)^*( \underbrace{A +N}_{\txt{\tiny ample}}) 
  \end{equation*}
  is ample, hence \autoref{eq:H1-vanish} holds by Nadel-vanishing. This proves the
  claim.
  \end{proof}

  Note that the assumptions of the proposition remain valid for $f|_{X_A} : X_A \to
  A$ and $\Delta|_{X_A}$. Hence we may use \autoref{claim:H0-is-surjective}
  iteratively. By the klt assumption on the general fiber, we may further leave out
  the multiplier ideal in the last term. Thus we obtain a surjective homomorphism
  \begin{equation*}
    H^0 \left( X^{(m)}, \sJ \left(X^{(m)}, \Delta_{X^{(m)}} \right) \otimes 
      \sL^{(m)} \left( \left(f^{(m)} \right)^{*}  \left( K_Y + A + \sum_{i=1}^n A_i
        \right)\right) \right)   
    \longrightarrow
    %
    H^0 \left( X^{(m)}_y, \sL^{(m)}_y \right)
  \end{equation*}
  where $A_1, \dots, A_n \in |A|$ are general, $ y \in \bigcap_{i=1}^n A_i$ is
  arbitrary and $n:=\dim Y$. Since the left hand side of this homomorphism is a
  subspace of
  \begin{equation*}
    H^0\left( Y, \sO_Y\left( K_Y + A + \sum_{i=1}^n A_i  \right) \otimes f^{(m)}_*
      \sL^{(m)} \right)  
  \end{equation*}
  and the right hand side can be identified with $f^{(m)}_*\sL^{(m)}\otimes k(y)$
  (recall $y \in Y$ is general), we obtain that
  \begin{equation*}
    H^0\left( Y, \sO_Y\left( K_Y + A + \sum_{i=1}^n A_i  \right) \otimes f^{(m)}_*
      \sL^{(m)} \right)  %
    \to  f^{(m)}_* \sL^{(m)} \otimes k(y) 
  \end{equation*}
  is surjective. Therefore, 
  \vskip-2.5em
  \begin{equation*}
    \xymatrix@C=-1.75em@R=.5em{%
    & \ar[d]\textrm{\tiny \autoref{lem:pushforward_tensor_product_isomorphism}}\\
    \sO_Y\left( K_Y + A + \sum_{i=1}^n A_i  \right) \otimes f^{(m)}_* \sL^{(m)}
    & {\simeq}  &
    \sO_Y\left( K_Y + A + \sum_{i=1}^n A_i  \right) \otimes
    \left[ \bigotimes_{j=1}^m \right] f_*
    \sL \\
    }
  \end{equation*}
  is generically globally generated for all $m>0$. However, then so is $\sO_Y\left(
    K_Y + A + \sum_{i=1}^n A_i \right) \otimes \Sym^{[a]} (f_* \sL)$, since there is
  a generically surjective homomorphism from the former to the latter.  This yields
  weak positivity (in the weak sense).
\end{proof}

\begin{proof}[Proof of \autoref{thm:pushforward}]
  By resolving $Y$ and then pulling back $X$ to the resolution we may assume that $Y$
  is smooth.  According to \autoref{thm:big_higher_dim_base}, for all divisible
  enough $q>0$, $\det f_* \sO_X(q(K_{X/Y}+D))$ is big. Fix such a $q$.  According to
  \autoref{lem:lct_semicontinuous_line_bundle} there is a real number $c>0$ such that
  \begin{equation*}
    c< \lct \left(\sO_{X_y} \left( q \left(K_{X_y}+D_y \right) \right); X_y, D_y \right) 
  \end{equation*}
  for every $y \in U$, where $U$ is the open locus over which the fibers $(X_y, D_y)$
  are klt. Fix also such a $c$ and let $l:= \left\lceil \frac{1}{c} \right\rceil$.
  By replacing $Y$ with a finite cover, we may assume that $\det f_*
  \sO_X(q(K_{X/Y}+D))=\sO_Y( l A)$ for some Cartier divisor $A$. Define $m:=\rk f_*
  \sO_X(q(K_{X/Y}+D)) $ and consider the natural homomorphism, \vskip-2em
  \begin{equation}
    \label{eq:pushforward:embedding}
    \begin{aligned}
      \xymatrix@R=-1em{%
        && \\
        \sO_Y(lA)=\det f_* \sO_X(q(K_{X/Y}+D)) 
      \ar@{}[r]^(0.75){}="b1"
      \ar@{}[r]^(1.1){}="b2"
      \ar@{^(->} "b1" ; "b2" 
      & & \protect{
        \phantom{\overbrace{a}^{\textrm{\autoref{lem:pushforward_tensor_product_isomorphism}}}}}
      \\ 
      \ar@{}[r]^(-0.3){}="a1"
      \ar@{}[r]^(0.1){}="a2"
      \ar@{^(->} "a1" ; "a2" 
      & & **[l]\bigotimes_{i=1}^m f_* \sO_X(q(K_{X/Y} + D)) \simeq
      \underbrace{f^{(m)}_* \sO_X \left( q \left( K_{X^{(m)}/Y} + D_{X^{(m)}} \right)
        \right)}_{\textrm{\autoref{lem:pushforward_tensor_product_isomorphism}}} }\\
    \end{aligned}
  \end{equation}
  \vskip-1em\noindent which implies that
  \begin{equation}
    \label{eq:Gamma}
    \left(f^{(m)}\right)^*     lA  + \Gamma \sim  q \left( K_{X^{(m)}/Y} +
      D_{X^{(m)}} \right) 
  \end{equation}
  for some appropriate effective divisor $\Gamma$ on $X^{(m)}$. Note that since
  \autoref{eq:pushforward:embedding} has a local splitting, $\Gamma_y \neq 0$ for any
  $y \in Y$. In particular, $\Gamma$ does not contain any $X^{(m)}_y$ for any $y \in
  U$, since fibers over $U$ are irreducible.

  By \autoref{eq:Gamma} we obtain that
  \begin{equation}
    \label{eq:Q_linearly_equiv}
    \frac{1}{l}\Gamma + \frac{q(2l-1)}{l}\left( K_{X^{(m)}/Y} + D_{X^{(m)}} \right)
    \sim_{\bQ}  2q\left( K_{X^{(m)}/Y} + D_{X^{(m)}} \right) - \left(f^{(m)}\right)^*  A . 
  \end{equation}
  Note that for each $y \in U$,
  \begin{multline*}
    \lct \left( \frac{1}{l}\Gamma_y; X^{(m)}_y, \left(D_{X^{(m)}}\right)_y
    \right) \leq \\ \leq 
    l \cdot \lct \left( \sO_{X_y^{(m)}}\left(q\left( K_{X_y^{(m)}} +
          \left(D_{X^{(m)}}\right)_y \right) \right); X^{(m)}_y,
      \left(D_{X^{(m)}}\right)_y \right)
    = \\ =
    \underbrace{l \cdot \lct \left( \sO_{X_y} \left( q \left(K_{X_y} + D_y \right) 
        \right) ; X_y, D_y \right) }_{\textrm{\autoref{cor:lct_product}}}
    > \left\lceil \frac{1}{c} \right\rceil c \geq 1
  \end{multline*}
  Therefore, $\left(X^{(m)}_y, \frac{1}{l}\Gamma_y + \left(D_{X^{(m)}}\right)_y
  \right)$ is klt for all $y \in U$. Then by \autoref{eq:Q_linearly_equiv} and
  \autoref{lem:pushforward_nef} we may apply \autoref{prop:nef} to show that
  \begin{multline*}
    f^{(m)}_* \sO_{X^{(m)}} \left( 2q\left(K_{X^{(m)}/Y} + D_{X^{(m)}} \right) -
      \left(f^{(m)}\right)^*      A  \right)
    \simeq \\ \simeq f^{(m)}_* \sO_{X^{(m)}} \left( 2q \left(K_{X^{(m)}/Y} +
        D_{X^{(m)}} \right) \right) \otimes \sO_Y(- A )
    \simeq \\ \simeq \underbrace{\sO_Y(- A ) \otimes \bigotimes_{i=1}^m f_* \sO_{X}
      \left( 2q \left(K_{X/Y} + D \right)
      \right)}_{\textrm{\autoref{lem:pushforward_tensor_product_isomorphism}}}
  \end{multline*}
  is weakly-positive. Therefore there exists an integer $b>0$ such that
  \begin{multline*}
    \sO_Y(bA) \otimes \Sym^{2b} \left( \sO_Y(- A ) \otimes \bigotimes_{i=1}^m f_*
      \sO_{X} \left( 2q \left(K_{X/Y} + D \right) \right) \right)
    \simeq \\ \simeq
    \sO_Y(-bA) \otimes \Sym^{2b} \left( \bigotimes_{i=1}^m f_* \sO_{X} \left( 2q
        \left(K_{X/Y} + D \right) \right) \right)
    \twoheadrightarrow \\ \twoheadrightarrow \sO_Y(-bA) \otimes \Sym^{2bm} \left( \
      f_* \sO_{X} \left( 2q \left(K_{X/Y} + D \right) \right) \right)
  \end{multline*}
  is generically globally generated. Hence $f_* \sO_{X} \left( 2q \left(K_{X/Y} +
      D\right) \right)$ is big.
\end{proof}


\section{Subadditivity of log-Kodaira dimension}
\label{sec:subadditivity}

\newcommand{\lfs}{log canonical fiber space\xspace}
\newcommand{\lfss}{log canonical fiber spaces\xspace}

In this section we are considering the question of subadditivity of log-Kodaira
dimension. Since, at this point, there are multiple non-equivalent statements of this
conjecture in the literature, we state a couple of them. All of these are
straightforward consequences of \autoref{prop:subadditivity}.

\begin{definition}
  \label{def:fiber_space}
  A \emph{\lfs} is a surjective morphism $f : (X, D) \to Y $ such that 
  \begin{enumerate}
  \item both $X$ and $Y$ are irreducible, normal and projective,
  \item $K_X +D $ is $\bQ$-Cartier and
  \item $(X_\eta, D_\eta)$ has log canonical singularities.
  \end{enumerate}
\end{definition}



Next we define the notion of variation for \lfss. 
Unfortunately, at this time we have to put a restriction on the \lfss on which the
definition works.  The main issue is that in \autoref{def:variation}, variation is
defined only for families of stable log-varieties. For general \lfss as in
\autoref{def:fiber_space} the reasonable expectation is that we would define
variation as the variation of the relative log canonical model of $(X, D)$
(restricted to the open locus where it is a stable family). However, for log
canonical singularities, the existence of a log canonical model is not known even in
the log-general type case. Hence, in order to make this definition, we assume that a
relative log canonical model exists. This is known for example if the general fiber
is klt.

\begin{definition}
  \label{def:variation_fiber_space}
  Let $f : (X, D) \to Y $ be a \lfs such that 
  $K_{X_\eta} + D_{\eta}$ is big and $(X_\eta,D_\eta)$ admits a log canonical model,
  where $\eta$ is the generic point of $Y$. Then let $\var f$ to be the variation of
  the log canonical model of $(X_\eta, D_\eta)$ as defined in
  \autoref{def:variation}.
\end{definition}

\begin{remark}
  If $(X_\eta,D_\eta)$ is klt and $K_{X_\eta} + D_{\eta}$ is big, then
  $(X_\eta,D_\eta)$ admits a log canonical model by \cite[Thm
  1.2]{Birkar_Cascini_Hacon_McKernan_Existence_of_minimal_models}) and hence in this
  case $\var f$ is defined. 
\end{remark}


\begin{theorem}
  \label{thm:subadditivity}
  If $f : (X, D) \to Y $ is a \lfs with $K_{X_\eta} + D_{\eta}$ big, where $\eta$ is
  the generic point of $Y$, then subadditivity of log-Kodaira dimension holds. That
  is,
  \begin{equation*}
    \kappa(K_X + D) \geq  \kappa(Y)  +
    \kappa(K_{X_\eta} + D_{\eta} ). 
  \end{equation*}
  Furthermore, if $(X_\eta,D_\eta)$ is klt, then
  \begin{equation*}
    \kappa(K_X + D) \geq  \max\{\kappa(Y), \var f \}  +
    \kappa(K_{X_\eta} + D_{\eta} ). 
  \end{equation*}
\end{theorem}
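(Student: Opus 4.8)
The plan is to reduce \autoref{thm:subadditivity} to a positivity statement about $K_{X'/Y'} + D'$ coming from the moduli theory developed above, and then run the standard Viehweg-type argument that converts relative positivity into a subadditivity inequality. Concretely, I would first pass to the relative log canonical model: replace $(X,D)$ by a birational model $(X',D')$ over $Y$ (after possibly shrinking to the open locus where $f$ is ``nice'' and then compactifying) so that over a dense open set $Y^\circ \subseteq Y$ the morphism $f' : (X',D') \to Y'$ is a family of stable log-varieties in the sense of \autoref{def:stable_lof_family}, while keeping $\kappa(K_{X'} + D') \le \kappa(K_X+D)$ (in fact equal, by birational invariance of the log Kodaira dimension for log canonical pairs, together with the fact that passing to the relative log canonical model does not decrease $\kappa$). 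The existence of this relative log canonical model is exactly the point where the big (and, in the second part, klt) hypothesis on $(X_\eta,D_\eta)$ is used, via \cite{Birkar_Cascini_Hacon_McKernan_Existence_of_minimal_models}; this is what makes \autoref{def:variation_fiber_space} legitimate.

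Next I would carry out the comparison with the moduli space. After a generically finite base change $\tau : \overline{Y'} \to \overline{Y}$ as produced by \autoref{cor:extending_stable_log_families} and \autoref{cor:pullback_from_max_var}, one obtains a family $f' : (\overline{X'}, \overline{D'}) \to \overline{Y'}$ of stable log-varieties whose moduli map $\mu : \overline{Y'} \to \sM_{n,v,I}$ has image of dimension $\var f$, and which factors (generically) through a maximal variation family over the normalization $Y'_{\max}$ of the image of $\mu$. By \autoref{cor:big_total_space} (equivalently \autoref{thm:pushforward} via the lemma right after it), the relative log canonical divisor of the maximal variation family is big on the total space; pulling this back, one gets that $K_{\overline{X'}/\overline{Y'}} + \overline{D'}$ is ``big in the moduli directions'', i.e.\ it dominates the pullback of a big divisor on $Y'_{\max}$, hence
\begin{equation*}
  \kappa\bigl(K_{\overline{X'}/\overline{Y'}} + \overline{D'}\bigr) \geq \var f + \kappa\bigl(K_{X_\eta} + D_\eta\bigr),
\end{equation*}
where the second summand accounts for the bigness of the log canonical divisor on the fiber itself. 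This already handles the $\var f$ part of the second inequality. For the $\kappa(Y)$ part I would instead use the weaker, classical input: $K_{\overline{X'}/\overline{Y'}} + \overline{D'}$ is at least weakly positive along $Y'$ by the semipositivity results of Fujino (\cite{Fujino_Semi_positivity_theorems_for_moduli_problems}, cf.\ \autoref{lem:pushforward_nef}), so a general-fiber-product trick gives $\kappa(K_{\overline{X'}/\overline{Y'}} + \overline{D'} + (f')^*(\text{big on }Y')) $ large, yielding $\kappa(K_{\overline{X'}} + \overline{D'}) \geq \kappa(K_{\overline{Y'}}) + \kappa(K_{X_\eta}+D_\eta)$; since $\tau$ is generically finite and the log Kodaira dimension of a log smooth model of $Y$ is a birational invariant, $\kappa(K_{\overline{Y'}}) = \kappa(Y)$ up to adjusting by the (effective, by $D \ge f^*E$-type considerations) log ramification, which only helps. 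Taking the maximum of the two estimates gives the stated bound.

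Finally I would descend back: the inequalities above were obtained after the generically finite cover $\tau$ and after replacing $(X,D)$ by the relative log canonical model $(X',D')$; both operations are controlled. A generically finite cover can only increase $\kappa$ of the log canonical divisor (log Kodaira dimension does not drop under generically finite base change when one accounts for ramification, and here the relevant boundary divisors only grow), so $\kappa(K_X + D) = \kappa(K_{X'} + D') \ge \kappa(K_{\overline{X'}} + \overline{D'}) - (\text{correction from compactification}) \ge \max\{\kappa(Y), \var f\} + \kappa(K_{X_\eta}+D_\eta)$, and the first (non-klt) inequality is the special case where one drops the $\var f$ term and uses only Fujino's semipositivity, so the klt hypothesis is not needed there. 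The main obstacle, and the step requiring real care, is the passage to and base-change of the relative log canonical model together with the bookkeeping of how the boundary $D$ and the log ramification divisor of $\tau$ interact with $\kappa$; getting the correction terms to have the right sign (i.e.\ to be effective, so they only help) is the delicate point, and it is precisely here that one must invoke \cite{Birkar_Cascini_Hacon_McKernan_Existence_of_minimal_models} in the klt case and argue more carefully by hand (passing to a log resolution and a log smooth model) in the log canonical case.
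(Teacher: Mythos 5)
The overall shape of your argument --- pass to a model where the relative log canonical model is a family of stable log-varieties, compare it with a maximal-variation moduli family, invoke \autoref{cor:big_total_space}, and descend --- does match the proof of \autoref{prop:subadditivity}, which is what the paper reduces \autoref{thm:subadditivity} to. But there is a genuine gap in the way you propose to split the two contributions. You want to get the $\kappa(Y)$ part of the bound ``from Fujino's semipositivity alone,'' separately from the $\var f$ part. That does not work: weak positivity of $f_*\sO_X(r(K_{X/Y}+D))$ by itself only yields the needed estimate when $\kappa(Y)=\dim Y$, which is exactly the restricted setting covered by the Fujino theorem cited in the introduction. In general you cannot conclude $\kappa(K_{X/Y}+D+f^*M)\geq n+\kappa(M)$ without simultaneously using the bigness of the relative log canonical divisor coming from \autoref{cor:big_total_space}. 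The paper's Step~7 makes this explicit: it runs \emph{one} linear-system argument, combining the map $\phi_V$ to $X''$ (the maximal-variation family, where bigness is available) with the pullback linear system $\phi_W$ of $|qM|$, and estimates the fiber dimension of the product map $\phi_{V+W}$ by $\min\{\dim Y-\var f_{\can},\dim Y-\kappa(M)\}$. The ``$n$'' in the resulting bound $n+\max\{\var f_{\can},\kappa(M)\}$ comes from the smallness of this intersection, which hinges on the fact that $\phi_V^{-1}(x)$ meets each fiber of $f_{\can}'$ in at most one point; without $\phi_V$, i.e.\ without the bigness input, the fiber of $\phi_W$ alone has dimension $n+\dim Y-\kappa(M)$ and you do not recover the $n$.

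A second, smaller discrepancy: you propose handling the non-klt case by a separate Fujino-only argument, but the paper's Step~0 uses a cleaner mechanism --- decrease the coefficients of $D$ slightly to force the generic fiber to be klt; this keeps $K_{X_\eta}+D_\eta$ big, can only decrease the left-hand side of the inequality, and makes \autoref{cor:big_total_space} applicable. So the klt input is not in fact ``unused'' in the first inequality; it enters implicitly via the perturbation. Finally, your descent and compactification bookkeeping is left quite open: the sign of the ``correction from compactification'' is exactly where the argument can break, and the paper's Steps~1, 5, 6 and 8 --- allowing a vertical divisor $B$ supported over a small locus, performing a local stable reduction, identifying the two families via the $\Isom$ scheme over a big open subset of the base, and pushing forward along the Stein factorization using \autoref{lem:dualizing_embedding} --- are precisely what guarantees the correction terms have the right sign.
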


\begin{theorem}
  \label{thm:to_subadditivity_for_q_proj_varieties}
  If $f : (X, D) \to (Y,E) $ is a surjective map of log-smooth and log canonical
  projective pairs, such that $D \geq f^*E$ and $K_{X_\eta} + D_{\eta}$ is big, where
  $\eta$ is the generic point of $Y$, then
  \begin{equation*}
    \kappa(K_X + D) \geq \kappa\left( K_{X_{\eta}} + D_{\eta} \right) + \kappa (K_Y + E).
  \end{equation*}
\end{theorem}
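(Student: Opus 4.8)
The plan is to deduce \autoref{thm:to_subadditivity_for_q_proj_varieties} from \autoref{thm:subadditivity} by reducing to the situation of a log canonical fiber space. First I would observe that $f : X \to Y$ is a surjective morphism of normal projective varieties, and that, since $(X,D)$ is log-smooth and log canonical and the generic fiber of $f$ is a log-smooth slice, $(X_\eta, D_\eta)$ has log canonical (indeed log-smooth) singularities. Hence $f : (X,D) \to Y$ satisfies conditions (1)--(3) of \autoref{def:fiber_space}, i.e., it is a \lfs in the sense of that definition, and the hypothesis that $K_{X_\eta}+D_\eta$ is big is exactly the extra hypothesis needed to invoke \autoref{thm:subadditivity}. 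Applying that theorem gives
\begin{equation*}
  \kappa(K_X + D) \geq \kappa(Y) + \kappa(K_{X_\eta} + D_\eta).
\end{equation*}

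The remaining point is to replace $\kappa(Y)$ by $\kappa(K_Y + E)$, which is \emph{smaller or equal}, so this is the easy direction. Concretely, since $(Y,E)$ is log-smooth and log canonical, $E \geq 0$ and $K_Y + E \geq K_Y$, so $\kappa(K_Y + E) \geq \kappa(K_Y) = \kappa(Y)$ would go the wrong way; instead the correct comparison is the opposite: because $E \geq 0$ we have $H^0(Y, \sO_Y(m(K_Y+E))) \supseteq H^0(Y, \sO_Y(mK_Y))$, giving $\kappa(K_Y+E)\geq \kappa(Y)$. Thus $\kappa(Y)$ is the smaller quantity and the inequality $\kappa(K_X+D) \geq \kappa(Y) + \kappa(K_{X_\eta}+D_\eta)$ is in fact \emph{weaker} than the desired one. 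So I would instead argue directly with $K_Y+E$: the clean way is to apply \autoref{thm:subadditivity} not to $f:(X,D)\to Y$ but to incorporate $f^*E$. The key observation is that $D \geq f^*E$, so $K_X + D \geq K_X + f^*E = K_{X/Y} + f^*(K_Y+E)$, hence for every $m$ divisible enough
\begin{equation*}
  H^0\bigl(X, \sO_X(m(K_X+D))\bigr) \supseteq H^0\bigl(X, \sO_X(m(K_{X/Y} + f^*(K_Y+E)))\bigr),
\end{equation*}
and the log-Kodaira dimension of the right side can be estimated by a weak positivity / Viehweg fiber-product argument exactly as in the proof of \autoref{thm:subadditivity}, now with the base bundle $\sO_Y(K_Y+E)$ in place of $\sO_Y(K_Y)$.

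More precisely, the plan is: write $D = D' + f^*E$ with $D' \geq 0$ (using $D\geq f^*E$), and note $D'_\eta = D_\eta$ since $f^*E$ is $f$-vertical; so $K_{X_\eta} + D'_\eta = K_{X_\eta} + D_\eta$ is big and $\kappa(K_{X_\eta}+D'_\eta) = \kappa(K_{X_\eta}+D_\eta)$. Now run the proof of \autoref{prop:subadditivity}/\autoref{thm:subadditivity} for the fiber space $f : (X,D') \to Y$, which produces a positivity statement of the shape: $f_* \sO_X(q(K_{X/Y} + D'))$ (or an appropriate fiber-power variant) is, after twisting, generically globally generated, yielding $\kappa(K_{X/Y}+D') \geq \kappa(K_{X_\eta}+D_\eta)$ together with $\dim Y$ worth of extra sections coming from the base. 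Feeding in $f^*(K_Y+E)$: since $K_X + D = K_{X/Y} + D' + f^*(K_Y+E)$ and $f_*\sO_X(mq(K_{X/Y}+D'))$ has the above positivity, tensoring with $\sO_Y(mq(K_Y+E))$ and using the standard inequality $\kappa(\mathcal{L}\otimes f_*\mathcal F) \geq \kappa(\mathcal L) + (\text{rank-}1\text{ quotient contribution of }\mathcal F)$ — formalized via the weak-positivity machinery of \autoref{lem:wp_big_properties} and \autoref{def:wp_big} — gives
\begin{equation*}
  \kappa(K_X+D) \geq \kappa(K_Y+E) + \kappa(K_{X_\eta}+D_\eta).
\end{equation*}

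The main obstacle is matching the precise form of the positivity output of \autoref{thm:subadditivity} (which is phrased in terms of $\kappa(Y)$, i.e. $K_Y$ itself, and of $\var f$) with a statement robust enough to absorb the boundary $E$ on the base. The cleanest route is probably not to black-box \autoref{thm:subadditivity} but to re-run its proof verbatim with the single change of replacing $K_Y$ by $K_Y+E$ throughout: every step that uses properties of $K_Y$ (Viehweg's fiber-product covering trick, the weak positivity of $f_*\omega_{X/Y}^{\otimes q}$-type sheaves, and the final $\kappa$-estimate) goes through unchanged because $E\geq 0$ only adds effective sections on the base and never interferes with the relative constructions over the generic point. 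I expect verifying that the log-smoothness and log canonicity of $(Y,E)$ is preserved under the auxiliary resolutions and base changes in that proof — so that $K_Y+E$ keeps behaving like a genuine log canonical divisor — to be the one place requiring genuine care rather than routine bookkeeping.
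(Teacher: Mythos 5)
Your decomposition $D' := D - f^*E$ and the idea of feeding $K_Y + E$ into the base-positivity slot is exactly the paper's decomposition, so you have found the right reduction. Where you go astray is in the middle-to-end of the proposal: you did not notice that \autoref{prop:subadditivity} is \emph{already} stated for an arbitrary $\bQ$-Cartier divisor $M$ on $Y$ with $\kappa(M)\geq 0$, not for $K_Y$ specifically. Consequently there is nothing to ``re-run verbatim'': one simply applies \autoref{prop:subadditivity} to the log canonical fiber space $f:(X,D')\to Y$ with $M := K_Y + E$ (having first noted $\kappa(K_Y+E)\geq 0$, else the statement is vacuous), and the identity $K_{X/Y} + D' + f^*M = K_X + D$ together with $D'_\eta = D_\eta$ gives the conclusion in one line. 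Because $M$ in that proposition is an arbitrary divisor of nonnegative Kodaira dimension, your anticipated obstacle --- that ``the log-smoothness and log canonicity of $(Y,E)$ must be preserved under the auxiliary resolutions and base changes'' --- does not arise; the proposition never uses any structure of $M$ beyond $\kappa(M)\geq 0$, and the resolutions it performs happen on the \emph{total} space $(X,D')$, not on a pair structure of $Y$. Finally, your hand-wavy appeal to ``the standard inequality $\kappa(\sL \otimes f_*\sF)\geq\ldots$'' is not quite how the paper's \autoref{prop:subadditivity} is proved (it goes through the moduli space and a comparison with the relative log canonical model), but since you are free to black-box that proposition as stated this is a moot point.

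So: right reduction, but you proposed substantial unnecessary work because you missed the generality of \autoref{prop:subadditivity}'s hypothesis on $M$; also add the one-line observation that you may assume $\kappa(K_Y+E)\geq 0$ to be in the proposition's range of applicability.
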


In the next corollary we use the notion of Kodaira dimension of an arbitrary
algebraic variety $X$. It is defined via finding a resolution $X_0'$ of $X$ with a
projective compactification $X'$ such that $D':=(X' \setminus X_0')_{\red}$ is simple
normal crossing, and then setting
\begin{equation*}
  \kappa(X) := \kappa(K_{X'} + D').
\end{equation*}
The following statement follows immediately from
\autoref{thm:to_subadditivity_for_q_proj_varieties}.

\begin{corollary}
\label{cor:subadditivty_q_proj}
  Let $f : X \to Y $ be a dominant map of (not necessarily proper) algebraic
  varieties such that the generic fiber has maximal Kodaira dimension. Then
  \begin{equation*}
    \kappa(X) \geq \kappa\left(X_\eta \right)  + \kappa(Y).
  \end{equation*}
\end{corollary}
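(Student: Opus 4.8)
The plan is to deduce \autoref{cor:subadditivty_q_proj} from \autoref{thm:to_subadditivity_for_q_proj_varieties} by reducing to the compact log-smooth situation via the definition of Kodaira dimension of a non-proper variety. First I would choose a resolution $X_0'$ of $X$ together with a smooth projective compactification $X'$ for which $D' := (X' \setminus X_0')_{\mathrm{red}}$ is simple normal crossing, so that by definition $\kappa(X) = \kappa(K_{X'} + D')$. Similarly choose such data $(Y', E')$ for $Y$ with $\kappa(Y) = \kappa(K_{Y'} + E')$. After replacing $X'$ by a further blow-up (which does not change $\kappa(K_{X'}+D')$, since we are compactifying a fixed smooth variety and the log canonical class of an snc compactification is a birational invariant of the interior) I may assume the rational map $X' \dashrightarrow Y'$ induced by $f$ is an actual morphism, and by blowing up $Y'$ and then $X'$ again I may arrange that this morphism is a log-smooth fiber space in the sense of \autoref{thm:to_subadditivity_for_q_proj_varieties}.

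The key technical point to arrange is the hypothesis $D' \geq f^* E'$. Here I would argue as follows: a prime divisor $P$ of $Y'$ lying in $E'$ is either contained in $Y' \setminus Y$, or it is a divisor of $Y$ over which $f$ is not smooth or $X \to Y$ is ``bad'' — in any case its preimage in $X$ meets $X' \setminus X_0'$ or the relevant exceptional locus, so after enough blow-ups of $X'$ every component of $f^* E'$ appears in $D'$. More carefully: components of $f^*E'$ are either horizontal or vertical; a vertical component maps into $\Supp E' \subseteq Y' \setminus Y$ together with possibly a finite set, so its generic point lies over $Y' \setminus Y$, hence it is not dominated by a point of $X$, hence (after resolving so that $X_0' = X$ exactly) it is contained in $X' \setminus X_0'$ and so appears in $D'$ with coefficient $1 \geq \coeff f^*E'$; a horizontal component $Q$ of $f^*E'$ lies over some component $P$ of $E'$, and since $P \cap Y$ has been removed in forming $E'$, the generic point of $Q$ lies over a point outside $Y$ only if $Q$ itself is removed — but a horizontal component need not be removed, so instead one blows up $X'$ along (the closure of) $f^{-1}(P \cap Y)$ enough times until the log-smooth model has the property. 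In practice the cleanest route is: take $Y' \setminus Y$ to contain the full non-smooth and non-log-canonical locus of the original $f$ (enlarging $Y'\setminus Y$ costs nothing for the formula since $\kappa(Y)$ is computed from $Y$), so that over $Y$ the map $f$ is already a nice fiber space, then $f^*E'$ is automatically vertical and supported on $f^{-1}(Y'\setminus Y) \subseteq X' \setminus X_0'$, giving $D' \geq f^*E'$ for free.

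With the model set up, $(X', D') \to (Y', E')$ is a surjective map of log-smooth log canonical projective pairs with $D' \geq f^* E'$, and the general fiber $(X'_\eta, D'_\eta)$ is, up to removing the part of $D'$ over $Y'\setminus Y$, just a smooth projective compactification of the general fiber $X_\eta$ by a boundary; since $X_\eta$ has maximal Kodaira dimension, $K_{X'_\eta} + D'_\eta$ is big. Then \autoref{thm:to_subadditivity_for_q_proj_varieties} gives
\begin{equation*}
  \kappa(K_{X'} + D') \geq \kappa(K_{X'_\eta} + D'_\eta) + \kappa(K_{Y'} + E'),
\end{equation*}
which translates to $\kappa(X) \geq \kappa(X_\eta) + \kappa(Y)$, as claimed.

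The main obstacle I expect is the bookkeeping in the reduction step — making precise that one can simultaneously (i) compactify $X$ and $Y$ by snc boundaries computing the respective Kodaira dimensions, (ii) turn $f$ into an honest log-smooth fiber space, (iii) enforce $D' \geq f^*E'$, and (iv) keep the general fiber's log canonical class big — all by blow-ups that do not disturb the quantities being compared. None of these is deep individually, but care is needed because enlarging $X' \setminus X_0'$ is \emph{not} allowed (it would change $\kappa(X)$) while enlarging $Y' \setminus Y$ \emph{is} allowed (it does not change $\kappa(Y)$, only potentially shrinks $Y$, which is harmless); exploiting this asymmetry is the crux of making the argument clean, and is presumably why the statement is phrased as an immediate corollary.
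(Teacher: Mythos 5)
The approach — compactify both $X$ and $Y$ with snc boundaries computing the respective log-Kodaira dimensions, arrange the induced map to be a log-smooth fiber space, and apply \autoref{thm:to_subadditivity_for_q_proj_varieties} — is certainly the intended route, and the asymmetry you identify (enlarging $Y'\setminus Y$ is harmless, enlarging $X'\setminus X_0'$ is not) is a genuine and useful observation, since $\kappa(U)\geq\kappa(Y)$ for dense open $U\subseteq Y$.

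However, there is a real gap at the step where you try to enforce $D'\geq f^*E'$. You assert that a vertical component $Q$ of $f^*E'$ ``appears in $D'$ with coefficient $1\geq\coeff_Q f^*E'$,'' and again in the final reduction that $f^*E'$ being supported on $f^{-1}(Y'\setminus Y)\subseteq X'\setminus X_0'$ gives $D'\geq f^*E'$ ``for free.'' Neither of these is correct: $\coeff_Q f^*E'$ equals the multiplicity of $Q$ in the fiber over $f(Q)$, which can be strictly greater than $1$ (multiple fibers). In that case $f^*E'\leq D'$ fails, and since \autoref{thm:to_subadditivity_for_q_proj_varieties} requires $(X',D')$ to be log canonical (coefficients $\leq 1$) while also satisfying $D'\geq f^*E'$, you cannot repair this by enlarging $D'$. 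Blowing up $X'$ or $Y'$, or shrinking $Y$, does not remove the multiplicity, so the reduction as written does not go through (for example, a surface fibered over $\bP^1$ with genus $\geq 2$ fibers and a multiple fiber $2F$ over $0$, restricted to $\bG_m$, already breaks the argument: $D'=F+X'_\infty$ but $f^*E'=2F+X'_\infty$).

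The standard way to close this gap is a finite (Kawamata-type) cover $\sigma\colon Z\to Y'$ ramified exactly along $E'$ with sufficiently divisible ramification indices: by Hurwitz $K_Z+E_Z=\sigma^*(K_{Y'}+E')$ with $E_Z:=(\sigma^{-1}E')_{\red}$, so $\kappa(K_Z+E_Z)=\kappa(Y)$; and because the cover is ramified only over $E'$, whose preimage lies inside $D'$, a resolution $W$ of the main component of $X'\times_{Y'}Z$ satisfies $\kappa(K_W+D_W)=\kappa(K_{X'}+D')$; finally, after the base change and a further modification the new $f^*E_Z$ can be made to have coefficients $\leq 1$. This extra step is precisely the well-known subtlety in passing from the proper version of $C_{n,m}$ to the open version $\bar C_{n,m}$, and it is what makes the statement not quite the pure bookkeeping your last paragraph suggests.
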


\begin{proposition}
  \label{prop:subadditivity}
  Let $f : (X, D) \to Y $ be a \lfs such that
  \begin{enumerate}
  \item $K_{X_\eta} + D_{\eta}$ is big, where $\eta$ is the generic point of $Y$,
  \item $(X,D)$ is log-smooth, and
  \item $Y$ is smooth.
  \end{enumerate}
  Further let $M$ be a $\bQ$-Cartier divisor on $Y$ with $\kappa(M) \geq 0$. Then
  \begin{equation*}
    \kappa(K_{X/Y} + D + f^*M) \geq  \kappa(M)  +
    \kappa(K_{X_\eta} + D_{\eta} ). 
  \end{equation*}
  Furthermore, if $(X_\eta,D_\eta)$ is klt, then
  \begin{equation*}
    \kappa(K_{X/Y} + D + f^*M) \geq  \max\{\kappa(M), \var f \}  +
    \kappa(K_{X_\eta} + D_{\eta} ). 
  \end{equation*}
\end{proposition}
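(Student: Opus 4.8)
\emph{Strategy.} The plan is to prove both displayed inequalities together. Write $\sF_N := f_*\sO_X\big(N(K_{X/Y}+D)\big)$ for $N>0$ divisible, so that
\begin{equation*}
  h^0\big(X,\,N(K_{X/Y}+D+f^*M)\big)=h^0\big(Y,\,\sF_N\otimes\sO_Y(NM)\big),
\end{equation*}
and the problem is to estimate the growth of the right hand side. Two facts drive the argument. First, $\operatorname{rk}\sF_N=h^0\big(X_\eta,N(K_{X_\eta}+D_\eta)\big)$ grows like $c\,N^{\kappa(K_{X_\eta}+D_\eta)}$ because $K_{X_\eta}+D_\eta$ is big. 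Second, the sheaves $\sF_N$, and more generally the analogous sheaves on the fibre powers $X^{(m)}$ of \autoref{notation:product}, are weakly positive: by \cite[Thm.~1.1]{Fujino_Notes_on_the_weak_positivity_theorems} in the log canonical case (and, in the klt case, alternatively by \autoref{prop:nef} as in the proof of \autoref{thm:pushforward}). For the refinement involving $\var f$ we shall additionally feed in the bigness statement \autoref{cor:big_total_space}.

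\emph{The inequality with $\kappa(M)$.} Here one runs Viehweg's fibre--product argument: since $\kappa(M)\ge 0$ the divisor $M$ is $\bQ$-effective, and over $X^{(m)}$ (with $m$ the rank of $\sF_N$, or larger) one identifies $\bigotimes^{m}\sF_N\otimes\sO_Y(N M)$ with $\big(f^{(m)}_*\sO_{X^{(m)}}(N(K_{X^{(m)}/Y}+D_{X^{(m)}}))\big)\otimes\sO_Y(NM)$ by \autoref{lem:pushforward_tensor_product_isomorphism}, embeds a suitable twist of $\det\sF_N$ into it, and exploits the weak positivity on $X^{(m)}$ (together with \autoref{lem:wp_big_properties}, and with \autoref{prop:nef} in place of Fujino's theorem in the klt case, exactly as in the proof of \autoref{thm:pushforward}) to produce sections. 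Restricting $M$ along its Iitaka fibration --- a birational modification of $Y$, hence harmless for all the Kodaira dimensions involved --- reduces to the case $M$ big on $Y$, where $\sO_Y(NM)$ is a big line bundle and the number of sections of the symmetric powers of the resulting big sheaf on the $\kappa(M)$--dimensional base $Y$ is of the order of $N^{\kappa(M)+\operatorname{rk}\sF_N-1}$; tracing this back through the fibre power and the multiplication maps yields $\kappa(K_{X/Y}+D+f^*M)\ge\kappa(M)+\kappa(K_{X_\eta}+D_\eta)$.

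\emph{The inequality with $\var f$.} Now assume $(X_\eta,D_\eta)$ is klt. Then $K_{X_\eta}+D_\eta$ big forces the existence of a log canonical model of $(X_\eta,D_\eta)$, so over a dense open subset $V_0\subseteq Y$ the relative log canonical model of $(X,D)$ is a family $g\colon(W,\Gamma)\to V_0$ of stable log-varieties with klt general fibre and $\var g=\var f$; the relative log canonical rings of $(X,D)$ and $(W,\Gamma)$ agree, hence so do the relevant Kodaira dimensions over $V_0$. To upgrade this to an equality on all of $Y$ --- the point being that $V_0$ need not be a \emph{big} open set --- I would extend $g$ using \autoref{cor:extending_stable_log_families}: after a generically finite base change $\tau\colon\oY'\to Y$ there is a family $\bar g\colon(\oW,\bar\Gamma)\to\oY'$ of stable log-varieties over a projective base agreeing with $g$ over $\tau^{-1}(V_0)$, and Kodaira dimension is unchanged by the generically finite $\tau$. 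Then \autoref{cor:pullback_from_max_var} produces, after a further finite cover, a maximal--variation family $f'\colon(X',D')\to Y'$ with klt general fibre and $\dim Y'=\var f$; by \autoref{cor:big_total_space}, $K_{X'/Y'}+D'$ is big, so $\kappa(K_{X'/Y'}+D')=\dim X'=\var f+\kappa(K_{X_\eta}+D_\eta)$. Transporting this back through the finite base changes and the birational contraction to the log canonical model, and using that $f^*M$ is $\bQ$-effective (so adding it never decreases $\kappa$), yields $\kappa(K_{X/Y}+D+f^*M)\ge\var f+\kappa(K_{X_\eta}+D_\eta)$; combined with the previous paragraph this gives the asserted $\max$.

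\emph{Main obstacle.} The genuinely delicate part is precisely this ``comparison process'': matching $\kappa(K_{X/Y}+D+f^*M)$ with the Kodaira dimension of the relative log canonical divisor of the stable family sitting over a cover of the moduli space. The subtle points are (a) that the relative log canonical model of $(X,D)$ is a priori available only over a dense --- not necessarily big --- open subset of $Y$, so one must genuinely \emph{extend} the stable family (via \autoref{cor:extending_stable_log_families}, or work with almost projective bases as in \autoref{sec:almost}) rather than merely restrict, in order not to lose sections; (b) keeping track of Kodaira dimension under the generically finite base changes supplied by \autoref{cor:pullback_from_max_var} and \autoref{cor:finite_cover}, and under the birational contraction $X\dashrightarrow W$, where one must check that the discrepancy correction terms are effective, which is where the log-smoothness of $(X,D)$ enters; and (c) verifying the precise hypotheses of the inputs --- that the boundary coefficients are $\le 1$ so Fujino's weak positivity applies, and that $K_Y$ may be taken Cartier when invoking \autoref{prop:nef}. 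Once these bookkeeping issues are settled, the substantive inputs --- \autoref{cor:big_total_space} for the $\var f$ term and the weak positivity of the pluricanonical pushforwards for the $\kappa(M)$ term --- are exactly what makes the estimates close.
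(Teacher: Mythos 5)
The proposal diverges from the paper's proof in structure and, more importantly, has two genuine gaps.

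\textbf{The two inequalities are separated, but the paper unifies them.} You prove the $\kappa(M)$ bound and the $\var f$ bound independently and then take the maximum. The paper instead does a short reduction (Step 0): since big is an open condition and decreasing coefficients can only decrease $\kappa(K_{X/Y}+D+f^*M)$, the non-klt case of the first inequality follows from the klt case; after that one proves the $\max$ inequality in one shot. Specifically, in the paper's Step 7 the same auxiliary linear system $\phi_{V+W}$ on the relative log canonical model — $V$ coming from the bigness of $K_{X''/Y''}+D''$ (\autoref{cor:big_total_space}) and $W$ from $|qM|$ — gives the bound $\dim\phi_{V+W}^{-1}\le\min\{\dim Y-\var f_{\can},\dim Y-\kappa(M)\}$, and the $\max$ emerges automatically. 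Your separate treatment forgoes this economy and forces you to handle the lc case of the $\kappa(M)$ inequality directly, which is where the first gap appears.

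\textbf{Gap 1: the $\kappa(M)$ argument via the Iitaka fibration is not correct as stated.} You claim that ``restricting $M$ along its Iitaka fibration --- a birational modification of $Y$, hence harmless --- reduces to the case $M$ big on $Y$.'' The Iitaka fibration of $M$ is not a birational modification of $Y$: it is a rational map to a variety $Z$ of dimension $\kappa(M)<\dim Y$ (unless $M$ was already big), and restricting $M$ to a general fibre of this map makes $\kappa=0$, not big. To exploit it one needs the more delicate induction where one pushes sections forward to $Z$ and applies a positivity statement there; the growth estimate $h^0\sim N^{\kappa(M)+\operatorname{rk}\sF_N-1}$ you write down does not follow from ``$\sO_Y(NM)$ big'' alone when $\operatorname{rk}\sF_N$ itself grows with $N$. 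As it stands, the section-counting step is not justified. The paper bypasses this entirely by reducing to klt and incorporating $M$ into the linear system $W$ after passing to the stable model.

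\textbf{Gap 2: the ``transport'' in the $\var f$ part is named but not carried out.} You correctly identify that the core difficulty is comparing $\kappa(K_{X/Y}+D+f^*M)$ with the positivity of the stable family over a cover of moduli, and you correctly flag that the log canonical model exists only over a dense (not big) open set, that one must track Kodaira dimension through generically finite covers and a birational contraction, and that discrepancy terms must be controlled. But these are not merely ``bookkeeping issues.'' They are where the paper does its real work: Step 1 (Viehweg's Lemma 7.3, allowing an extra divisor $B$ with $\codim_Y f(B)\ge 2$), Step 5 (local stable reduction over codimension-one points of $Y$ after a ramified generically finite base change), Step 6 (arranging a big open set $Y_0$ over which the relative lc model is a flat stable family isomorphic to the pullback of the reference family), and Step 8 plus \autoref{lem:dualizing_embedding} (the Grothendieck-trace embedding $\omega_{X_n/Y'}\hookrightarrow\beta^*\alpha^*\omega_{X/Y}$ that converts estimates on the base-changed family into estimates on $X$). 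Without these, passing from $\kappa(K_{X'/Y'}+D')=\dim X'$ on the moduli cover back to a lower bound for $\kappa(K_{X/Y}+D+f^*M)$ simply does not go through: one only controls things over a dense open set, and Kodaira dimension is not determined by dense open data. The proposal has the right inputs (\autoref{cor:big_total_space}, \autoref{cor:extending_stable_log_families}, \autoref{cor:pullback_from_max_var}) but is missing the mechanism that makes them combine, which is precisely \autoref{lem:dualizing_embedding} together with Steps 1 and 5--6.
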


\begin{proof}[Proof of \autoref{thm:subadditivity} using
  \autoref{prop:subadditivity}. ]
  Let $\tau: Y' \to Y$ be a resolution of $Y$, and let $X'$ be a resolution of the
  component of $X \times_Y Y'$ that dominates $X$ such that $\pi^* D$ is a simple
  normal crossing divisor, where $\pi : X' \to X$, is the induced map.  Choose
  canonical divisors $K_K$ and $K_{X'}$ such that they agree on the locus where $\pi$
  is an isomorphism. Then define $D'$ and $E$ via
  \begin{equation*}
    K_{X'} + D' = \pi^* (K_X + D ) + E
  \end{equation*}
  such that $E, D' \geq 0$ and have no common components. Then we have $\kappa(K_X +
  D) = \kappa(K_{X'} + D')$, $\kappa\left(K_{X_\eta} + D_{\eta} \right) = \kappa
  \left(K_{X_\eta'} + D_\eta' \right)$ and if $(X_\eta , D_\eta)$ is klt then also
  $\var f = \var f'$, where $f' : (X', D') \to Y'$ is the induced
  morphism. \autoref{prop:subadditivity} applied to $f'$ with $M= K_{Y'}$ completes
  the proof.
\end{proof}

\begin{proof}[Proof of \autoref{thm:to_subadditivity_for_q_proj_varieties} using
  \autoref{prop:subadditivity}. ]
  Let $D':= D- f^*E$, set $M:= K_Y +E$, and apply \autoref{prop:subadditivity} to $f
  : (X, D') \to Y$ and $M$. Notice that we may assume that $\kappa(K_Y+E)\geq 0$,
  since otherwise the statement is trivial.  This yields
  \begin{equation*}\hskip-.1em
    \kappa(K_X +D)=      \kappa(K_{X/Y} + D' + f^*(K_Y + E)) \geq  \kappa(K_Y + E)  + 
    \kappa(K_{X_\eta} + D_{\eta}' ) = \kappa(K_Y + E)  + \kappa(K_{X_\eta} +
    D_{\eta}).\hskip-.7em\qedhere
  \end{equation*}
\end{proof}

The rest of the section concerns proving \autoref{prop:subadditivity}.

\begin{lemma}
  \label{lem:dualizing_embedding}
  Consider the following commutative diagram of normal varieties, where $f$ is flat
  and Gorenstein, $\tau$ is surjective, $\oX:= X \times_Y Y'$ and $X_n$ is the
  normalization of the component of $X \times_Y Y'$ dominating $Y'$.
\begin{equation*}
  \xymatrix@C=6em{
    X \ar[d]_f & \oX \ar[d]_{\of} \ar[l]_{\alpha} & X_n \ar[ld]^{f_n} \ar[l]_{\beta}
    \\ 
    Y & Y' \ar[l]^{\tau}
  }
\end{equation*}
Then, there is a natural embedding $\omega_{X_n/Y'} \hookrightarrow \beta^* \alpha^*
\omega_{X/Y}$.
\end{lemma}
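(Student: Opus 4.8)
The statement is local on $Y'$, and since all the sheaves involved are reflexive (being relative dualizing sheaves of Gorenstein-over-the-base or normal varieties, hence $S_2$), it suffices to produce the embedding over a big open set of $X_n$ and then extend it uniquely. The natural candidate for such a big open set is the preimage of the locus where everything is as nice as possible: where $Y' \to Y$ is flat (or even étale), where $f$ is smooth, and where $X \times_Y Y' \to Y'$ is already normal and irreducible so that $\beta$ is an isomorphism. On that locus the three dualizing sheaves satisfy the standard base-change compatibilities and $\omega_{X_n/Y'} \simeq \beta^*\alpha^*\omega_{X/Y}$ is an honest isomorphism; the content is to see that off that locus the comparison map degenerates to an inclusion rather than going the wrong way.

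The mechanism I would use is the trace map for the finite morphism $\beta : X_n \to \oX$. First, since $f$ is flat and Gorenstein, $\omega_{X/Y}$ is a line bundle and $\omega_{\oX/Y'} \simeq \alpha^*\omega_{X/Y}$ by flat base change for the relative dualizing sheaf (the square defining $\oX$ is Cartesian and $\of$ is again flat Gorenstein). So it is enough to construct a natural embedding $\omega_{X_n/Y'} \hookrightarrow \beta^*\omega_{\oX/Y'}$. Now $X_n \to \oX$ is the normalization of (a component of) a reduced scheme, in particular finite and birational onto its image, and $\oX$ is Gorenstein over $Y'$ hence in particular $\oX$ need not be normal but $\beta$ is finite. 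Using $\omega_{X_n/Y'} \simeq \sHom_{\sO_{\oX}}(\beta_*\sO_{X_n}, \omega_{\oX/Y'})$ (Grothendieck duality for the finite map $\beta$, twisted so that everything is relative to $Y'$), the inclusion $\sO_{\oX} \hookrightarrow \beta_*\sO_{X_n}$ coming from normalization induces, by applying $\sHom_{\sO_{\oX}}(-,\omega_{\oX/Y'})$, a map $\omega_{X_n/Y'} = \beta^!\omega_{\oX/Y'} \to \omega_{\oX/Y'}$ of $\sO_{\oX}$-modules, i.e. the trace; adjointing over $\beta$ gives a map $\beta^*\omega_{\oX/Y'} \to \omega_{\oX/Y'}$ — wait, rather one gets directly the desired map $\omega_{X_n/Y'} \to \beta^{*}\omega_{\oX/Y'}$ after identifying $\beta^{*}\omega_{\oX/Y'} = \omega_{X_n/Y'}[\otimes]\beta^{*}(\text{something})$; the cleanest formulation is that the conductor exact sequence for the normalization $\beta$ realizes $\omega_{X_n/Y'}$ as the subsheaf of $\beta^*\omega_{\oX/Y'}$ of sections vanishing along the conductor, so the inclusion is tautological once one has the base-change isomorphism upstairs. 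I would phrase the final argument via this conductor description, which immediately gives injectivity.

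The step I expect to be the main obstacle is checking that $\beta^*\omega_{\oX/Y'}$ (or $\beta^*\alpha^*\omega_{X/Y}$) is itself a reasonable object — namely that it is torsion-free, or at least torsion-free in codimension one on $X_n$ — so that "the subsheaf of sections vanishing along the conductor" is well-defined and so that the extension-over-a-big-open-set argument applies. Here one uses that $X_n$ is normal, that $\alpha^*\omega_{X/Y}$ is a line bundle (as $\omega_{X/Y}$ is, since $f$ is Gorenstein), and that $\beta$ is finite and birational onto its image with $X_n$ normal, so $\beta^*$ of a line bundle is a line bundle, hence torsion-free. A secondary subtlety is making sure the two base changes — first $\alpha: \oX \to X$, then $\beta: X_n \to \oX$, and the comparison with $f_n = \of\circ\beta$ — are glued compatibly: one should verify $\omega_{X_n/Y'} \simeq \beta^!\of^!\sO_{Y'} \simeq \beta^!\omega_{\oX/Y'}$ using $f_n = \of \circ \beta$ and functoriality of upper-shriek, together with $\omega_{\oX/Y'}\simeq\alpha^*\omega_{X/Y}$ from flat base change, so that the trace/conductor embedding $\omega_{X_n/Y'}\hookrightarrow\beta^*\omega_{\oX/Y'}\simeq\beta^*\alpha^*\omega_{X/Y}$ is exactly the asserted one. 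The naturality claim in the lemma is then automatic, since every map produced (base-change isomorphisms, the conductor inclusion) is canonical.
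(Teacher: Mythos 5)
Your proposal is essentially the paper's argument: both hinge on (i) the flat, Gorenstein base-change isomorphism $\omega_{\oX/Y'}\simeq\alpha^*\omega_{X/Y}$, which shows $\omega_{\oX/Y'}$ is a line bundle; (ii) Grothendieck duality/trace for the finite morphism $\beta$; and (iii) torsion-freeness of the line bundle $\beta^*\omega_{\oX/Y'}$ on the normal $X_n$. The one place you visibly hesitate (``adjointing over $\beta$ \dots wait, rather one gets directly\dots'') is exactly the step the paper makes explicit: pull the $\sO_{\oX}$-linear trace $\beta_*\omega_{X_n/Y'}\to\omega_{\oX/Y'}$ back along $\beta$ to get an $\sO_{X_n}$-linear map $\beta^*\beta_*\omega_{X_n/Y'}\to\beta^*\omega_{\oX/Y'}$, note that the target is torsion-free so this kills torsion, and identify the kernel of the structure map $\beta^*\beta_*\omega_{X_n/Y'}\twoheadrightarrow\omega_{X_n/Y'}$ with precisely the torsion subsheaf (using that $\beta$ is affine and generically an isomorphism and that $\omega_{X_n/Y'}$ is torsion-free). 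The resulting factorization $\omega_{X_n/Y'}\to\beta^*\omega_{\oX/Y'}$ is generically an isomorphism of rank-one torsion-free sheaves, hence injective. Your alternative formulation via the conductor, namely $\omega_{X_n/Y'}\simeq\sHom_{\sO_{\oX}}(\beta_*\sO_{X_n},\omega_{\oX/Y'})$ identified with the conductor ideal times $\beta^*\omega_{\oX/Y'}$ once $\omega_{\oX/Y'}$ is invertible, produces the same $\sO_{X_n}$-linear inclusion and makes injectivity tautological; it is a legitimate equivalent packaging. So there is no gap in substance, only a wobble in exposition at the crux that you should tighten if you write it up, either by following the torsion argument or by carefully stating and justifying the conductor identification as a map of $\sO_{X_n}$-modules.
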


\begin{proof}
  Since $f$ is flat and Gorenstein, $\omega_{\oX/Y'} \simeq \alpha^* \omega_{X/Y}$
  according to \cite[Thm 3.6.1]{Conrad_Grothendieck_duality_and_base_change}. In
  particular, $\omega_{\oX/Y'}$ is a line bundle. Consider then the Gorthendieck
  trace of $\beta$, $\beta_* \omega_{X_n/Y'} \to \omega_{\oX/Y'}$. Pulling this back
  we obtain $\phi : \beta^* \beta_* \omega_{X_n/Y'} \to \beta^*
  \omega_{\oX/Y'}$. \emph{We claim that $\phi$ factors through the natural map $\xi :
    \beta^* \beta_* \omega_{X_n/Y'} \to \omega_{X_n/Y'}$.} For this note first that
  since $\beta^* \omega_{\oX/Y'}$ is a line bundle, it is torsion-free. Hence if
  $\sT$ is the torsion part of $\beta^* \beta_* \omega_{X_n/Y'}$, $\phi$ factors
  through the natural map $\beta^* \beta_* \omega_{X_n/Y'} \to \beta^* \beta_*
  \omega_{X_n/Y'}/\sT$. Therefore, it is enough to show that the latter map is
  isomorphic to $\xi$, that is, that $\ker \xi = \sT$ and that $\xi$ is
  surjective. The surjectivity follows immediately, since $\beta$ is affine and for
  any ring map $A \to B$ and $B$-module $M$, the natural map $M \otimes_A B \to M$ is
  surjective. To show that $\ker \xi \subseteq \sT$ we just note that $\beta$ is
  generically an isomorphism, and hence $\xi$ is generically an isomorphism. The
  opposite containment, that is, that $\ker \xi \supseteq \sT$, follows from the fact
  that $\omega_{X_n/Y'}$ is torsion-free. This concludes our claim. Hence we obtain
  an embedding $\omega_{X_n/Y'} \hookrightarrow \beta^* \omega_{\oX/Y'} \simeq \beta^*
  \alpha^* \omega_{X/Y} $.
\end{proof}

\addtocounter{theorem}{-1}
\addtocounter{equation}{3}
\begin{proof}[Proof of \autoref{prop:subadditivity}]\ 

  \smallskip
  \noindent
  \emph{{\sc Step 0:} Assuming klt.}  If $(X_\eta,D_\eta)$ is not klt, then by
  decreasing the coefficients of $D$ a little all our assumptions remain true, and so
  we may assume that $(X_\eta, D_\eta)$ is klt.

  \smallskip
  \noindent
  \emph{{\sc Step 1:} Allowing an extra divisor avoiding a big open set of the base.}
  According to \cite[Lemma 7.3]{Viehweg_Weak_positivity}, there is a birational
  morphism $\wt Y \to Y$ from a smooth projective variety, and another one from $\wt
  X$ onto the component of $X \times_Y \wt Y$ dominating $\wt Y$, such that for the
  induced map $\wt f : \wt X \to \wt Y$ and for every prime divisor $E \subseteq \wt
  X$, if $\codim_{\wt Y} \wt f(E) \geq 2$, then $E$ is $\wt X \to X$
  exceptional. Furthermore, it follows from the proof of \cite[Lemma
  7.3]{Viehweg_Weak_positivity} that we may choose $\wt X \to X$ and $\wt Y \to Y$ to
  be isomorphisms over the smooth locus of $f$ on $Y$. Let $\rho: \wt X \to X$ and
  $\tau : \wt Y \to Y$ be the induced maps and set $\wt D:= \rho^* D$ and $\wt M:=
  \tau^* M$.
  \begin{subclaim}\label{claim}
    It is enough to prove that for some divisor $0 \leq B$ on $\wt X$, for which
    $\codim_{\wt Y} \wt f(B) \geq 2$ the following holds:
    \begin{equation*}
      \kappa(K_{\wt X/\wt Y} + \wt D + \wt f^{*}\wt M +  B) \geq  \max\{\kappa(\wt
      M), \var       \wt f \}  +       \kappa(K_{\wt{X}_\eta} + \wt{D}_{\eta} ). 
    \end{equation*}
  \end{subclaim}
  \begin{proof}[Proof of \autoref{claim}]
    We have that $X_\eta= \wt{X}_\eta$, $\kappa(K_{X_\eta} + D_{\eta}
    )=\kappa(K_{\wt{X}_\eta} + \wt{D}_{\eta} )$, $\var f= \var \wt f$ and $\kappa(M)=
    \kappa(\wt M)$.  Furthermore, note that since both $Y$ and $\wt{Y}$ are smooth,
    there is an effective divisor $E$ on $\wt{Y}$ such that $K_{\wt{Y}}= \tau^* K_Y +
    E$. In particular, the following holds.
    \begin{equation}
      \label{eq:relativ_canonical_for_modification} 
      K_{\wt{X}/\wt{Y}} = K_{\wt{X}} - \wt{f}^* K_{\wt{Y}}= K_{\wt{X}} - \wt{f}^*
      (\tau^* K_Y + E)= K_{\wt{X}} - \rho^* f^* K_Y - \wt{f}^* E 
    \end{equation}
    Since $B$ is $\rho$ exceptional and $\rho$ is birational, we obtain using
    \autoref{eq:relativ_canonical_for_modification} that for every $m>0$ integer
    \begin{equation*}
      \rho_* \sO_{\wt X}(m(K_{\wt X/\wt{Y}}+B)) \hookrightarrow \rho_* \sO_{\wt
        X}(m(K_{\wt X} - \rho^* f^* K_Y+B))\simeq  \sO_{X}(m(K_{X} -  f^* K_Y))
      \simeq       \sO_X(mK_{X/Y}) 
    \end{equation*}
    Furthermore, by construction, $\wt f^{*}\wt M + \wt D=\rho^*(f^{*}M + D)$ and
    hence for every divisible enough $m>0$ integer there is an injection
    \begin{equation*}
      \rho_* \sO_{\wt X}(m(K_{\wt X/\wt Y} + \wt f^{*}\wt M + \wt D + B)) \hookrightarrow 
      \sO_X(m(K_{X/Y} + D + f^* M)).  
    \end{equation*}
    This shows that $\kappa(K_{\wt X/\wt Y} + \wt D + \wt f^{*}\wt M + B) \leq
    \kappa(K_{X/Y} + D + f^* M)$, which implies the claim.
  \end{proof}

  From now on our goal is to prove that for some $0 \leq B$ for which $\codim_Y f(B)
  \geq 2$,
  \begin{equation}
    \label{eq:subadditivity_goal_1}
    \kappa(K_{X/Y} + D + f^*M +  B) \geq  \max\{\kappa(M), \var f \}  +
    \kappa(K_{X_\eta} + D_{\eta} ). 
  \end{equation}

  \smallskip
  \noindent
  \emph{{\sc Step 2:} Disallowing vertical components of $D$.} If $D$ contains a
  vertical component, after deleting that component from $D$ our assumptions are
  still satisfied. In other words, we may assume that $D$ contains no vertical
  components.

  \smallskip
  \noindent
  \emph{{\sc Step 3:} Replacing $\var f$ by $\var f_{\can}$.}  Let $f_{\can} :
  (X_{\can}, D_{\can}) \to Y$ be the log canonical model of $(X,D)$ over some dense
  open set $U \subseteq Y$ over which $(X,D)$ is klt.  By shrinking $U$ we may assume
  that $f_{\can}$ is a stable family. Note that if $(X,D)$ was klt to start with,
  then $\var f = \var f_{\can}$ (where the latter is taken as the variation as a
  stable family of log-varieties). Hence, in order to obtain
  \autoref{eq:subadditivity_goal_1} it is enough to prove the following inequality:
  \begin{equation}
    \label{eq:subadditivity_goal_2}    
    \kappa(K_{X/Y} + D + f^*M + B) \geq
    \max\{\kappa(M), \var f_{\can} \}  + 
    \kappa(K_{X_\eta} + D_{\eta} ). 
  \end{equation}

  \smallskip
  \noindent
  \emph{{\sc Step 4:} An auxilliary base change.} 
  Set $n:= \dim X - \dim Y$, $v:= \vol \left( K_{X_\eta} + D_{\eta} \right)$, where
  $\eta$ is the generic point of $Y$.  Let $I \subseteq [0,1]$ be a finite
  coefficient set closed under addition (\autoref{def:close_under_addition}) that
  contains the coefficients of $D$.  Let $\mu : U \to \sM_{n,v,I}$ be the moduli map
  associated to $\left(V_{\can}, \left. D_{\can}\right|_{V_{\can}} \right)$, where
  $V_{\can}:=f_{\can}^{-1}(U)$ and let $S \to \sM_{n,v,I}$ be the finite cover
  granted by \autoref{cor:finite_cover}. Define then $Y^{\mathrm{aux}}$ to be the
  resolution of a compactification of a component of $U \times_{\sM_{n,v,I}} S$ that
  dominates $U$. We may further assume that the birational map $\delta :
  Y^{\mathrm{aux}} \dashrightarrow Y$ is a morphism. Let $Y''$ be the normalization
  of the image of $Y^{\mathrm{aux}}$ in $S$ and $f'' : (X'', D'') \to Y''$ the family
  over $Y''$ induced by ${\sf f}\in\sM(S)$ given in
  \autoref{cor:finite_cover}. Then the pullback of this family over $\delta^{-1}(U)$
  is isomorphic to the pullback of $(X_{\can}, D_{\can})$ and hence $\dim Y''= \var
  f_{\can}$. 
  
  \smallskip
  \noindent
  \emph{{\sc Step 5:} Local stable reduction.} That is, we construct a generically
  finite map $Y' \to Y^{\mathrm{aux}}$ and a normal pair $(X', D')$, such that
  \begin{itemize}
  \item $Y'$ is smooth
  \item $X'$ maps birationally onto the component of $X \times_Y Y'$ dominating $Y'$,
  \item $X' \to X \times_Y Y'$ is an isomorphism over the generic point of $Y'$,
  \item if $\tau : Y' \to Y$ and $\rho : X' \to X$ are the induced maps, then $D'
    \leq \rho^* D$, and
  \item $(X',D')$ is a \emph{locally stable family} at every $y \in Y'$ for which 
    $\codim_Y \tau(y)=1$, that is, at every such $y \in Y'$ the following two
    equivalent conditions hold:
    \begin{itemize}
    \item $(X', \overline{X_y'} + D')$ is lc around $X_y'$, where $\overline{X_y'}$
      is the closure of $X_y'$, or equivalently
    \item $(X_y', D_y')$ is slc and $K_{X'} + D'$ is $\bQ$-Cartier around $X_y'$.
    \end{itemize}
  \end{itemize}
  To obtain the above, we apply the process described in \cite[first 6 paragraphs in
  the proof of Thm 12.11]{Kollar_Second_moduli_book} to the fibers of $X \times_Y
  Y^{\mathrm{aux}} \to Y^{\mathrm{aux}}$ over $y \in Y^{\mathrm{aux}}$ mapping to
  codimension one points of $Y$. That is, first we resolve the main component of $X
  \times_Y Y^{\mathrm{aux}}$ to obtain $X^{\mathrm{nc}}$ such that $X^{\mathrm{nc}}$
  is smooth, and if $D^{\mathrm{nc}}$ is the horizontal part of the pullback of $D$
  to $X^{\mathrm{nc}}$, then $(X^{\mathrm{nc}},D^{\mathrm{nc}} + X^{\mathrm{nc}}_y)$
  is a normal crossing pair around $X^{\mathrm{nc}}_y$ for each $y \in
  Y^{\mathrm{aux}}$ mapping to a codimension one point of $Y$. Then a generically
  finite cover of the base with prescribed ramifications at finitely many codimension
  one points (allowing further ramifications at unprescribed points) yields the
  required $Y'$ as in \cite[proof of Thm 12.11]{Kollar_Second_moduli_book}. $X'$ is
  defined to be the normalization of the main component of $X^{\mathrm{nc}}
  \times_{Y^{\mathrm{aux}}} Y'$.

  \smallskip
  \noindent
  \emph{{\sc Step 6:} Choosing nice big open sets.} Let $f' : X' \to Y'$ be the
  natural morphism and let $Y_0 \subseteq Y$ be the big open set over which
  \begin{enumerate}
  \item $f'$ is flat over $\tau^{-1}Y_0$,
  \item $\left(X_0',D|_{X_0'}\right)$ is klt and forms a flat locally stable family of
    log-varieties, where $X_0':= f'^{-1} \tau^{-1} Y_0$.
  \end{enumerate}
  Let $Y_0':=\tau^{-1} Y_0$ and let $f_{\can}' : (X_{\can}', D_{\can}') \to Y_0'$ be
  the log canonical model of $\left(X_0',D|_{X_0'} \right)$ over $Y_0'$. By shrinking
  $Y_0$ (and $Y_0'$ and $X_0'$ compatibly, keeping $Y_0$ big), we may further assume
  that
  \begin{enumerate}[resume]
  \item $f_{\can}'$ is flat (and hence it is a family of stable log-varieties).
  \end{enumerate}
  Let $\eta'$ be the generic point of $Y'$. Then we have $\left(X_{\eta'}',
    D_{\eta'}' \right) \simeq \left(X_\eta, D_\eta \right)_{\eta'}$, since over the
  locus (in $Y$) over which $f$ is smooth and $(X,D)$ is a relative normal crossing
  divisor, $(X',D')$ is isomorphic to $(X,D) \times_Y Y'$. Therefore
  \begin{equation*}
    \left( X_{\can}', D_{\can}' \right)_{\eta'} \simeq \left( X_{\can}, D_{\can}
    \right)_{\eta'} \simeq  \left( X'', D'' \right)_{\eta'}. 
  \end{equation*}
  That is, $(X'', D'') \times_{Y''} Y_0'$ is isomorphic to $\left(X_{\can}',
    D_{\can}'\right)$ over $\eta'$. Equivalently, their $\Isom$ scheme has a rational
  point over $\eta'$. The closure of this rational point yields a section of the
  $\Isom$ scheme over a big open set of $Y_0'$. Hence, by further restricting $Y_0$,
  we may assume that
  \begin{enumerate}[resume]
  \item $(X'', D'') \times_{Y''} Y_0'$ is isomorphic to $(X_{\can}', D_{\can}')$.
  \end{enumerate}

  \smallskip
  \noindent
  \emph{{\sc Step 7:} Bounding $\kappa\left(K_{X_{\can}'/Y_0'} + D_{\can}' +
      f_{\can}'^* \tau^* M \right)$.} According to \autoref{cor:big_total_space},
  $K_{X''/Y''} + D''$ is big. In particular, there is an ample divisor $H$ and an
  effective divisor $E$ on $X''$, such that $H +E \sim q(K_{X''/Y''} + D'')$ for some
  $q>0$ divisible enough. Let $\pi : X_{\can}' \to X''$ be the induced map and let $V
  \subseteq |\pi^*H|$ be a linear system inducing $\pi$. Further let $W \subseteq |q
  f_{\can}'^* \tau^* M|$ be the linear system that identifies with $|qM|$ via the
  natural embedding $|qM| \hookrightarrow |q f_{\can}'^* \tau^* M|$.
  \begin{equation*}
    \xymatrix@C=5em{
      X_{\can}' \ar@{-->}[rrrd]_{\phi_{V + W}} \ar[d]_{\pi= \phi_V} \ar[r]^f
      \ar@{-->}@/^2pc/[rrr]^{\phi_W} & Y' \ar[r]^{\tau} & Y
      \ar@{-->}[r]_{\phi_{|qM|}} & Z \\ 
      X'' & & & X'' \times Z \ar[u] \ar[lll]
    }
  \end{equation*}
  We compute the dimension of a general fiber of $\phi_{V + W}$. For that choose an
  open set $U' \subseteq X_{\can}'$, such that $\phi_{V+W}$ is a morphism over $U'$
  and $\phi_{|qM|}$ is a morphism over $\tau \left( f_{\can}' \left( U' \right)
  \right)$. In the next few sentences, when computing fibers of $\phi_{V + W}$,
  $\phi_W$ and $\phi_{|qM|}$, we take $U'$ and $\tau_* \left(f_{\can}'\right)_* U'$
  as the domain. So, choose $z \in Z$ and $x \in X''$ general. We have $\phi_{V +
    W}^{-1}((x,z))= \phi_W^{-1}(z) \cap \phi_V^{-1}(x)$. Furthermore,
  $\phi_W^{-1}(z)$ is of the form $f_{\can}'^{-1}(Z')$ for a variety $Z'$ of
  dimension $\dim Y - \kappa(M)$. On the other hand, $\phi_V^{-1}(x)$ intersects each
  fiber of $f_{\can}'$ in at most one point and has dimension $\dim Y - \var
  f_{\can}$. Therefore,
  \begin{equation}
    \label{eq:general_fiber}
    \dim \phi_{V + W}^{-1}((x,z)) \leq \min \{\dim Y - \var f_{\can}, \dim Y -
    \kappa(M) \}. 
  \end{equation}
  Hence,     \vskip-2em
  \begin{multline}
    \label{eq:Kodaira_dimension_estimate_1}
    \kappa \left( K_{X_{\can}'/Y_0'} + D_{\can}' + f_{\can}'^* \tau^* M \right) \geq
    \hskip-1.75em \overbrace{\kappa \left( \pi^* H + q f_{\can}'^* \tau^* M
      \right)}^{\txt{\scriptsize $\pi^* H + \pi^* E \sim
        q\left(K_{X_{\can}'/Y_0'}+D_{\can}' \right),$ \\ \tiny { and } $E \geq 0$}}
    \hskip-1.75em
    \geq \dim \im \phi_{V+W} \geq 
    \\ \geq \underbrace{ n + \dim Y - \min \{\dim Y - \var f_{\can}, \dim Y -
      \kappa(M) \}}_{\autoref{eq:general_fiber}}
    = n + \max \{ \var f_{\can}, \kappa(M) \}.
\end{multline}

  \smallskip
  \noindent
  \emph{{\sc Step 8:} Conclusion.} We use here the notations introduced in
  \autoref{lem:dualizing_embedding}. First, note that that
  \begin{equation*}
    H^0 \left(X_0', q \left(  K_{X_0'/Y_0'} + \left(D' + f_*' \tau^* M\right)|_{X_0'}
      \right) \right) = \kappa\left(X_{\can}', q\left( K_{X_{\can}'/Y_0'} + D_{\can}'
        + f_{\can}'^* \tau^* M \right) \right) , 
  \end{equation*}
  Hence, by \autoref{eq:Kodaira_dimension_estimate_1} there is an effective divisior
  $B'$ on $X'$ supported in $X' \setminus X_0'$, such that
  \begin{equation}
    \label{eq:Kodaira_dimension_on_X_prime}
    \kappa( K_{X'/Y'} + D' + f_*' \tau^* M + B' ) \geq n  + \max \{  \var f_{\can},
    \kappa(M) \}. q
  \end{equation}
  Note that $ K_{X'/Y'} + D' + f_*' \tau^* M + B'$ might not be a $\bQ$-Cartier
  divisor, so what we mean by the above statement is that for some $q>0$ divisible
  enough $q$-times multiple of this divisor defines a rational map with image of
  dimension at least $n + \max \{ \var f_{\can}, \kappa(M) \}$.
 
  Let $\gamma : X' \to X_n$ be the induced map and let $\xymatrix{X_n \ar[r]^{\xi} &
    X_n^s \ar[r]^{\zeta} & X}$ be the Stein-factorization of $\alpha \circ
  \beta$. Then
  \begin{equation}
    \label{eq:subadditivity_penultimate}
    \begin{aligned}
      \kappa \left( \zeta^* (K_{X/Y} + D + f^* M) + \xi_* \gamma_* B' \right)
      \geq \underbrace{\kappa \left( \beta^* \alpha^* (K_{X/Y} + D + f^* M) +
          \gamma_* B' \right) }_{\textrm{$\xi$ is birational}}
      \geq 
      \makebox[.05\columnwidth]{}  
      \\ \geq \underbrace{\kappa \left( K_{X_n/Y'} + \beta^* \alpha^* D + \beta^*
          \alpha^* f^* M + \gamma_* B' \right)}_{
        \textrm{\autoref{lem:dualizing_embedding}}}
      =
      \makebox[.3\columnwidth]{}  \\
      = \underbrace{\kappa \left( \gamma_* \left( K_{X'/Y'} + D' + f_*' \tau^* M + B'
          \right)\right)}_{\textrm{$D' \leq \rho^* D$, and $\gamma$ is birational}}
      \geq \underbrace{n + \max \{ \var f_{\can}, \kappa(M) \}}_{\textrm{
          \autoref{eq:Kodaira_dimension_on_X_prime}}}.
    \end{aligned}
  \end{equation}
  Choose now an effective divisor $B$ with support in $X \setminus f^{-1} Y_0$, such
  that $\zeta^* B \geq \xi_* \gamma_* B'$ (recall, $\xi_* \gamma_* B'$ is disjoint
  from $\zeta^{-1} f^{-1} Y_0$). By \autoref{eq:subadditivity_penultimate}, for this
  choice of $B$
  \begin{equation*}
    \kappa(  \zeta^* (K_{X/Y} +  D + f^* M  + B) ) \geq n  + \max \{  \var f_{\can},
    \kappa(M) \} =  
    \kappa(K_{X_\eta} + D_{\eta} ) + \max \{  \var f_{\can},  \kappa(M) \}. 
  \end{equation*}
  Hence, since Kodaira-dimension of a line bundle is invariant under finite pullback,
  \autoref{eq:subadditivity_goal_2} holds.
\end{proof}
\addtocounter{theorem}{1}


\section{
  Almost proper bases}
\label{sec:almost_results}

\begin{lemma}
  \label{lem:deducing_almost_projective}
  Consider the following commutative diagram of normal, irreducible varieties, where
  \begin{enumerate}
  \item $\oY$ and $\oY'$ are projective over $k$
  \item $\tau$ is generically finite,
  \item $Y'= \tau^{-1} Y$,
  \item $Y$ is a big open set of $\oY$,
  \item there are vector bundles $\sF$ and $\sG$ given on $Y$ and $\oY'$
    respectively, such that $\sG$ is big and $\tau^* \sF \simeq \sG|_{Y'}$.
  \end{enumerate}
  \begin{equation*}
    \xymatrix{%
      Y' \ar@{^(->}[r] \ar[d]^{\tau} & \oY' \ar[d]^{\overline{\tau}} \\
      Y \ar@{^(->}[r] &  \oY
    }
  \end{equation*}
  Then $\sF$ is big as well.
\end{lemma}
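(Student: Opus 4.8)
The plan is to convert everything into a comparison of spaces of global sections on the two projective closures, where bigness is detected by the growth rate of $h^0$. First I would extend $\sF$ to a coherent reflexive sheaf $\oF:=\iota_*\sF$ on $\oY$ (with $\iota:Y\hookrightarrow\oY$). Since $Y$ is a big open subset of $\oY$ we have $\Sym^{[m]}\oF|_Y=\Sym^m\sF$ and $h^0(\oY,\Sym^{[m]}\oF)=h^0(Y,\Sym^m\sF)$; so by the reflexive analogue of \autoref{lem:big-is-the-same}, together with the standard fact that a coherent sheaf on an $n$-dimensional almost projective variety is big precisely when $h^0(\Sym^{[m]}(-))$ grows like $m^{\,n+\rk(-)-1}$ (which one extracts from \autoref{def:wp_big} and \autoref{lem:wp_big_properties}), it suffices to prove $h^0(\oY,\Sym^{[m]}\oF)\ge c\,m^{\,n+r-1}$ for $m\gg0$, where $n=\dim\oY$ and $r=\rk\sF=\rk\sG$.

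Next I would use bigness of $\sG$ on the projective variety $\oY'$ to get $h^0(\oY',\Sym^m\sG)\ge c'\,m^{\,n+r-1}$ for $m\gg0$ (note $\dim\oY'=\dim\oY=n$ since $\tau$ is generically finite). Because $\sG|_{Y'}\simeq\tau^*\sF$, and both $\Sym^m$ and restriction to the open set $Y'$ commute with one another for locally free sheaves, $\Sym^m\sG|_{Y'}\simeq\tau^*\Sym^m\sF$; restricting sections from $\oY'$ to the dense open $Y'$ is injective, whence $h^0(Y',\tau^*\Sym^m\sF)\ge c'\,m^{\,n+r-1}$.

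The heart of the argument is pushing these sections down to $Y$ along $\tau$. By adjunction $h^0(Y',\tau^*\Sym^m\sF)=h^0(Y,\Sym^m\sF\otimes\tau_*\sO_{Y'})$. Since $\tau=\bar\tau|_{Y'}:Y'\to Y$ is proper (as $Y'=\bar\tau^{-1}Y$) and generically finite, it is finite outside a closed subset of $Y$ of codimension $\ge2$; hence $\tau_*\sO_{Y'}$ is a torsion-free sheaf of rank $d:=\deg\tau$ on $Y$, and it carries a filtration whose graded pieces are rank-one torsion-free sheaves, each embedding into $\sO_Y(D_i)$ for fixed Weil divisors $D_1,\dots,D_d$. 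Extending the $D_i$ to $\oY$ and bounding them above by a fixed multiple $T\oH$ of an ample divisor $\oH$ on $\oY$ gives
\[ c'\,m^{\,n+r-1}\ \le\ h^0(Y,\Sym^m\sF\otimes\tau_*\sO_{Y'})\ \le\ \sum_{i=1}^{d}h^0\!\bigl(Y,\Sym^m\sF\otimes\sO_Y(D_i)\bigr)\ \le\ d\cdot h^0\!\bigl(\oY,\Sym^{[m]}\oF\otimes\sO_{\oY}(T\oH)\bigr). \]
Finally, restricting $\Sym^{[m]}\oF\otimes\sO_{\oY}(T\oH)$ to $T$ general members of $|\oH|$ one at a time and applying the vector-bundle strengthening of \autoref{lem:upper-bound-on-sections} on each $(n-1)$-dimensional hyperplane section (each restriction contributes only $O(m^{\,(n-1)+r-1})=O(m^{\,n+r-2})$) yields $h^0\bigl(\oY,\Sym^{[m]}\oF\otimes\sO_{\oY}(T\oH)\bigr)\le h^0(\oY,\Sym^{[m]}\oF)+O(m^{\,n+r-2})$; combined with the previous display this forces $h^0(\oY,\Sym^{[m]}\oF)\ge\frac{c'}{2d}\,m^{\,n+r-1}$ for $m\gg0$, i.e.\ $\oF$, and hence $\sF$, is big.

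The hard part is precisely this descent step. A priori one only knows that $\tau^*\Sym^m\sF$ is big on $Y'$, and bigness of a vector bundle does not descend along a finite — let alone a merely generically finite — morphism: the squaring map $\bP^1\to\bP^1$ already shows that the pushforward of a big bundle need not be big. This is why the argument must be carried out at the level of section counts, using that $\tau$ is finite in codimension one (so that $\tau_*\sO_{Y'}$ is pinned down by finitely many divisors) and absorbing the resulting twist into lower-order terms via \autoref{lem:upper-bound-on-sections}; a minor point to record separately is the equivalence between \autoref{def:wp_big} and the $h^0$-growth criterion for reflexive sheaves on possibly singular projective varieties.
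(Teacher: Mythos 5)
Your approach is correct in outline but genuinely different from the paper's: you convert everything into a section-counting argument on the projective closures, whereas the paper works entirely at the level of generic global generation and delegates the descent step to \cite[Lem 1.3]{Viehweg_Zuo_Base_spaces_of_non_isotrivial_families_of_smooth_minimal_models}. Concretely, the paper chooses ample $\sH$ on $\oY$ and $\sA$ on $\oY'$, embeds $\overline\tau^*\sH\hookrightarrow\sA^b$, deduces that $\Sym^{ab}(\tau^*\sF)\otimes\tau^*\sH|_Y^{-1}$ is generically globally generated on $Y'$, then passes through the Stein factorization $Y'\xrightarrow{\nu}Z\xrightarrow{\rho}Y$: the birational part $\nu$ is handled by the projection formula ($\nu_*\sO_{Y'}=\sO_Z$), and for the finite part $\rho$ the Viehweg--Zuo lemma says precisely that generic global generation descends, giving $\Sym^{ab}\sF\otimes\sH^{-1}|_Y$ generically globally generated and hence $\sF$ big. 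You replace all of this by pushing down to $Y$ along $\tau$, filtering $\tau_*\sO_{Y'}$ by rank-one torsion-free pieces bounded by a fixed twist $T\oH$, and absorbing the twist into the $O(m^{n+r-2})$ error term. Your route is more explicit and self-contained in the sense that it does not invoke the Viehweg--Zuo lemma, but it requires two auxiliary inputs that the paper's approach avoids: (i) the equivalence between the paper's Definition~\ref{def:wp_big} of bigness for reflexive sheaves and the $h^0(\Sym^{[m]}(-))\gtrsim m^{n+\rk-1}$ growth criterion, which you rightly flag, and (ii) the higher-rank analogue of Lemma~\ref{lem:upper-bound-on-sections} applied on hyperplane sections, namely that $h^0(\Sym^{[m]}\oF\otimes\sL|_H)=O(m^{(n-1)+r-1})$ for fixed $\sL$ and a general $H$. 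Item (ii) requires care because $\Sym^{[m]}\oF$ is only reflexive, not locally free, and its restriction to $H$ need not be $\Sym^{[m]}(\oF|_H)$; the cleanest way to get the bound is to pass to the projectivization $\bP(\sF)$ and reduce to the line bundle case, at which point one is doing a good chunk of (i) anyway. So the trade-off is: the paper outsources the one hard descent step to a citable lemma tailored to exactly this situation, while your proof replaces it by more elementary but heavier bookkeeping whose supporting lemmas would need to be written out in full.
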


\begin{proof}
  Choose ample line bundles $\sH$ and $\sA$ on $\oY$ and $\oY'$, respectively. Let
  $b>0$ be an integers such that there is an injection $\overline{\tau}^* \sH
  \hookrightarrow \sA^b$. Since $\sG$ is big, there is an integer $a>0$ such that
  $\Sym^{a}(\sG) \otimes \sA^{-1}$ is generically globally generated. Hence, so is
  $\Sym^{ab}(\sG) \otimes \sA^{-b}$. So, by the embedding $\Sym^{ab}(\sG) \otimes
  \sA^{-b} \hookrightarrow \Sym^{ab}(\sG) \otimes \overline{\tau}^* \sH^{-1}$, the
  latter sheaf is generically globally as well. In particular, so is
  \begin{equation*}
    \left. \Sym^{ab}(\sG) \otimes \overline{\tau}^* \sH^{-1} \right|_{Y'} \simeq
    \Sym^{ab}(\tau^*\sF) \otimes \tau^* \sH|_Y^{-1} 
  \end{equation*}
  Let
  \begin{equation*}
    \xymatrix{%
      Y' \ar@/^2pc/[rr]^\tau \ar[r]^\nu &   \ar[r]^\rho Z & Y
    } 
  \end{equation*}
  be the Stein factorization of $\tau$. Then since $\nu$ is birational,
  \begin{equation*}
    \nu_* \left( \Sym^{ab}(\tau^*\sF) \otimes \tau^* \sH|_Y^{-1} \right) \simeq
    \Sym^{ab}(\rho^*\sF) \otimes \rho^* \sH|_Y^{-1} 
  \end{equation*}
  is also generically globally generated. Then \cite[Lem
  1.3]{Viehweg_Zuo_Base_spaces_of_non_isotrivial_families_of_smooth_minimal_models}
  shows that $\Sym^{ab}(\sF) \otimes \sH^{-1}|_Y$ is generically globally generated,
  and hence $\sF$ is big indeed.
\end{proof}

Using \autoref{lem:deducing_almost_projective} and
\autoref{cor:extending_stable_log_families} immediately follow versions of point
\autoref{egy} of \autoref{thm:big_higher_dim_base} and of \autoref{thm:pushforward}
for the almost projective base case.

\begin{corollary}
  If $f : (X, D) \to Y$ is a family of stable log-varieties of maximal variation over
  a normal almost projective variety, then
  \begin{enumerate}
  \item for every divisible enough $q>0$, $\det f_* \sO_X(q (K_X + \Delta))$ is big.
  \item $f_* \sO_X(q(K_{X/Y}+D))$ is big for every divisible enough integer $q>0$,
    provided that $(X,D)$ has klt general fibers over $Y$.
  \end{enumerate}
\end{corollary}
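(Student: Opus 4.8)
The plan is to reduce both assertions to the projective base case — namely to point \autoref{egy} of \autoref{thm:big_higher_dim_base} (equivalently to \autoref{prop:big_higher_dim_base}, whose proof already works over a normal proper base) and to \autoref{thm:pushforward} — and then to transfer the resulting bigness back down by means of \autoref{lem:deducing_almost_projective}. First I would fix a projective closure $Y\hookrightarrow\oY$, so that $Y$ is a big open subset of the projective variety $\oY$. Applying \autoref{cor:extending_stable_log_families} to $f$ and this compactification produces a generically finite proper (hence surjective) morphism $\tau:\oY'\to\oY$ from a normal projective irreducible variety, together with a family of stable log-varieties $\of:(\oX,\oD)\to\oY'$ such that, writing $Y':=\tau^{-1}Y$ and $\tau':=\tau|_{Y'}:Y'\to Y$, one has $(\oX_{Y'},\oD_{Y'})\simeq(X_{Y'},D_{Y'})$ over $Y'$. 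This single extension will serve for both parts of the statement.

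Next I would check that $\of$ still satisfies the hypotheses of the results being invoked. For maximal variation: since $\tau'$ is finite and surjective, the moduli map $\mu:\oY'\to\sM_{n,v,I}$ of $\of$ restricts over the dense open subset $Y'\subseteq\oY'$ to the moduli map of $(X_{Y'},D_{Y'})$, which equals $\mu_f\circ\tau'$ with $\mu_f$ the moduli map of $(X,D)$; hence $\overline{\mu(\oY')}=\overline{\mu(Y')}=\overline{\mu_f(Y)}$ has dimension $\var f=\dim Y=\dim\oY=\dim\oY'$, so $\var\of$ is maximal in the sense of \autoref{def:variation}. Likewise, a general point of $Y'$ maps under $\tau'$ to a general point of $Y$, over which the fibre of $\of$ coincides (as a log-pair) with that of $f$, so if $(X,D)$ has klt general fibre then so does $\of$.

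Now for the two conclusions. For (1): by \autoref{prop:big_higher_dim_base} applied to $\of$, with $T=\oY'$ — which is normal, irreducible and not contained in the proper closed subvariety produced by that statement — there is an integer $q_0>0$ such that $\det\of_*\sO_{\oX}\bigl(q(K_{\oX/\oY'}+\oD)\bigr)$ is big on $\oY'$ for every $q$ divisible by $q_0$. For (2): under the additional klt hypothesis, \autoref{thm:pushforward} applied to $\of$ over the normal projective base $\oY'$ gives that $\of_*\sO_{\oX}\bigl(q(K_{\oX/\oY'}+\oD)\bigr)$ is big on $\oY'$ for every divisible enough $q>0$. In either case, for $q$ divisible enough this pushforward is locally free and compatible with base change (by \autoref{lem:pushforward_nef} together with the base-change compatibility of the line bundle $\sO_{\oX}(q(K_{\oX/\oY'}+\oD))$ recorded after \autoref{notation:restriction_divisor}), so restricting along $Y'\hookrightarrow\oY'$ and using $(\oX_{Y'},\oD_{Y'})\simeq(X_{Y'},D_{Y'})$ identifies
\begin{equation*}
  \of_*\sO_{\oX}\bigl(q(K_{\oX/\oY'}+\oD)\bigr)\big|_{Y'}\;\simeq\;(\tau')^*\bigl(f_*\sO_X(q(K_{X/Y}+D))\bigr),
\end{equation*}
and the same holds after applying $\det$. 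Applying \autoref{lem:deducing_almost_projective} with $\oY,\oY',\tau$ as above, with $\sG$ the (possibly determinant of the) big pushforward on $\oY'$ and $\sF$ the corresponding sheaf on $Y$, then yields that $\sF$ is big — which is precisely assertion (2) (resp.\ assertion (1)).

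The only step that calls for real attention is the verification, in the second paragraph, that extending the family via \autoref{cor:extending_stable_log_families} preserves maximal variation and, for (2), klt-ness of the general fibre; this is exactly the place where one must use that the generically finite base change $\tau'$ is in fact finite and surjective, so that it neither alters the fibres of the family nor lowers the dimension of the image of the moduli map. Everything else — the base-change compatibility of the pushforwards and the identification of the restricted sheaves with pullbacks from $Y$ — is routine bookkeeping.
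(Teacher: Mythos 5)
Your proof is correct and follows exactly the route the paper indicates (the paper's own ``proof'' is a single sentence invoking \autoref{lem:deducing_almost_projective} and \autoref{cor:extending_stable_log_families}): extend the family to a normal projective base via \autoref{cor:extending_stable_log_families}, apply \autoref{prop:big_higher_dim_base} (resp.\ \autoref{thm:pushforward}) there, and descend bigness via \autoref{lem:deducing_almost_projective}. The details you supply — that $\tau'$ being finite surjective preserves maximal variation and the klt-ness of the general fibre, and that cohomology and base change identifies $\of_*\sO_{\oX}(q(K_{\oX/\oY'}+\oD))|_{Y'}$ with $(\tau')^*f_*\sO_X(q(K_{X/Y}+D))$ — are precisely the ones left implicit by the authors, and they check out.
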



\bibliographystyle{skalpha}
\bibliography{includeNice}

\end{document}